\numberwithin{equation}{section}
\renewcommand{\email}[2][]{%
  \ifx\emails\@empty\relax\else{\g@addto@macro\emails{,\space}}\fi%
  \@ifnotempty{#1}{\g@addto@macro\emails{\textrm{(#1)}\space}}%
  \g@addto@macro\emails{#2}%
}
\newtheorem{theorem}{Theorem}[section]
\newtheorem{lemma}[theorem]{Lemma}
\newtheorem{proposition}[theorem]{Proposition}
\theoremstyle{definition}
\newtheorem{definition}[theorem]{Definition}}
\theoremstyle{remark}
\newtheorem{remark}[theorem]{Remark}}
\newcommand{\ice}{\mathsf{Inter}}
\newcommand{\im}{\mathsf{i}}
\newcommand{\Real}{\mathrm{Re}\hspace{0.5mm}}
\newcommand{\Imag}{\mathrm{Im}\hspace{0.5mm}}
\newcommand{\kgeo}{K^{\mathrm{geo}}}
\newcommand{\kbm}{K^{\mathrm{BM}}}
\newcommand{\Leb}{\operatorname{Leb}}
\newcommand{\SFt}{S_2}
\newcommand{\SFb}{S_1}
\newcommand{\GFt}{G_2}
\newcommand{\GFb}{G_1}
\newcommand{\sigmaq}{\sigma_1}
\newcommand{\fq}{f_1}
\newcommand{\hq}{h_1}
\newcommand{\zc}{z_c}
\newcommand{\pq}{p_1}
\newcommand{\sigmap}{\sigma_2}
\newcommand{\hp}{h_2}
\newcommand{\pp}{p_2}
\title{Curve separation in supercritical half-space last passage percolation}
\author{Evgeni Dimitrov} 
\email{edimitro@usc.edu (corresponding author), zhengyez@usc.edu}
\address{Department of Mathematics, University of Southern California,
Los Angeles, CA 90089, USA}
\author{Zhengye Zhou}
\begin{document}

\begin{abstract} We study line ensembles arising naturally in symmetrized/half-space geometric last passage percolation (LPP) on the $N \times N$ square. The weights of the model are geometrically distributed with parameter $q^2$ off the diagonal and $cq$ on the diagonal, where $q \in (0,1)$ and $c \in [0, q^{-1})$. In the supercritical regime $c > 1$, we show that the ensembles undergo a phase transition: the top curve separates from the rest and converges to a Brownian motion under $N^{1/2}$ fluctuations and $N$ spatial scaling, while the remaining curves converge to the Airy line ensemble under $N^{1/3}$ fluctuations and $N^{2/3}$ spatial scaling. Our analysis relies on a distributional identity between half-space LPP and the Pfaffian Schur process. The latter exhibits two key structures: (1) a Pfaffian point process, which we use to establish finite-dimensional convergence of the ensembles, and (2) a Gibbsian line ensemble, which we use to extend convergence uniformly over compact sets.
\end{abstract}

\maketitle

\tableofcontents

%
%
\section{Introduction and main results}\label{Section1}

%
%
\subsection{Half-space geometric last passage percolation}\label{Section1.1} We begin by introducing {\em symmetrized} or {\em half-space} geometric last passage percolation (LPP). The model depends on two parameters $q \in (0,1)$ and $c \in [0, q^{-1})$. Let $W = (w_{i,j}: i,j \geq 1)$ be an array, where $(w_{i,j}: 1 \leq i \leq j)$ are independent geometric variables with $w_{i,j} \sim \mathrm{Geom}(q^2)$ when $ i \neq j$ and $w_{i,i} \sim \mathrm{Geom}(cq)$, and $w_{i,j} = w_{j,i}$ for all $i,j \geq 1$. Here, we write $X \sim \mathrm{Geom}(\alpha)$ to mean $\mathbb{P}(X = k ) = \alpha^k (1- \alpha)$ for $k \in \mathbb{Z}_{\geq 0}$. 

We visualize the weight $w_{i,j}$ as being associated with the point $(i,j)$ on the lattice $\mathbb{Z}^2$, see the left side of Figure \ref{Fig.Grid}. An {\em up-right path} $\pi$ in $\mathbb{Z}^2$ is a (possibly empty) sequence of vertices $\pi = (v_1, \dots, v_r)$ with $v_i \in \mathbb{Z}^2$, and $v_i - v_{i-1} \in \{(0,1), (1,0)\}$. For an up-right path $\pi$ in $\mathbb{Z}_{\geq 1}^2$, we define its {\em weight} by
\begin{equation}\label{Eq.PathWeight}
W(\pi) = \sum_{v \in \pi} w_v,
\end{equation}
and for any $(m,n) \in \mathbb{Z}_{\geq 1}^2$, we define the {\em last passage time} $G_1(m,n)$ by
\begin{equation}\label{Eq.LPT}
G_1(m,n) = \max_{\pi} W(\pi),
\end{equation} 
where the maximum is over all up-right paths from $(1,1)$ to $(m,n)$. 

\begin{figure}[ht]
\centering
\begin{tikzpicture}[scale=0.9]

\def\m{7} 
\def\n{5} 
\def\k{3} 

\definecolor{Bg}{gray}{1.0}        
\definecolor{C1}{gray}{0.9} 
\definecolor{C2}{gray}{0.6} 
\definecolor{C3}{gray}{0.3} 

\begin{scope}[shift={(0,0)}]


  \foreach \j in {1,...,\m}{
      \node at (\j+0.5, -0.5) {\(\j\)};
  }
  \foreach \j in {1,...,\n}{
      \node at (0.35,-0.5 + \j) {\(\j\)};
  }

  \foreach \x/\y in {1/1, 1/2, 1/3, 2/3, 3/3, 3/4, 3/5, 4/5, 5/5, 6/5, 7/5} {
  \fill[C2] (\x,\y) rectangle ++(1,-1);
  
}

\foreach \i in {1,...,\n}{
  \foreach \j in {1,...,\m}{
    \draw[black] (\j,\i) rectangle ++(1,-1);
    \node at (\j+0.5, \i-0.5) {$w_{\j, \i}$};
  }
}

\node at (\m+1.5,-0.5) {$i$};
\node at (0.35,-0.5 + \n + 1) {$j$};

\end{scope}

\begin{scope}[shift={(9,0)}]

  \foreach \x/\y in {1/1, 2/1, 3/1, 4/1, 5/1, 6/1, 7/1} {
  \fill[C1] (\x/2,\y/2) rectangle ++(1/2,-1/2);
  
}

  \foreach \x/\y in {1/2, 2/2, 3/2, 3/2, 4/2, 5/2, 6/2, 7/2} {
  \fill[C2] (\x/2,\y/2) rectangle ++(1/2,-1/2);
  
}

  \foreach \x/\y in {1/3, 2/3, 3/3, 3/3, 4/3, 5/3, 6/3, 7/3} {
  \fill[C3] (\x/2,\y/2) rectangle ++(1/2,-1/2);
  
}

\foreach \x/\y in {1/1, 2/1, 3/1, 3/2, 3/3, 3/4, 3/5} {
  \fill[C1] (5+ \x/2,\y/2) rectangle ++(1/2,-1/2);
  
}

  \foreach \x/\y in {1/2, 2/2, 2/3, 2/4, 2/5, 2/6, 3/6} {
  \fill[C2] (5+ \x/2,\y/2) rectangle ++(1/2,-1/2);
  
}

  \foreach \x/\y in {1/3, 1/4, 1/5, 1/6, 1/7, 2/7, 3/7} {
  \fill[C3] (5+ \x/2,\y/2) rectangle ++(1/2,-1/2);
  
}

\foreach \i in {1,...,3}{
  \foreach \j in {1,...,7}{
    \draw[black] (\j/2,\i/2) rectangle ++(1/2,-1/2);
    \draw[black] (\i/2 +5, \j/2) rectangle ++(1/2,-1/2);
  }
}

\begin{scope}[shift={(1,-1)}]
  \draw[fill=C1] (1,5.5) rectangle ++(0.5,-0.5);
  \node[anchor=west] at (1.5,5.25) {$\pi_1$};
  \draw[fill=C2] (2.5,5.5) rectangle ++(0.5,-0.5);
  \node[anchor=west] at (3,5.25) {$\pi_2$};
  \draw[fill=C3] (4,5.5) rectangle ++(0.5,-0.5);
  \node[anchor=west] at (4.5, 5.25) {$\pi_3$};
\end{scope}
\end{scope}

\end{tikzpicture}
\caption{The left side depicts the array $W = (w_{i,j}: i,j \geq 1)$ and an up-right path $\pi$ (in gray) that connects $(1,1)$ to $(7,5)$. The right side depicts $\min(m,n)$ pairwise disjoint up-right paths, with $\pi_i$ connecting $(1,i)$ to $(m, n-k+i)$ that cover the whole $n \times m$ rectangle.} \label{Fig.Grid}
\end{figure}
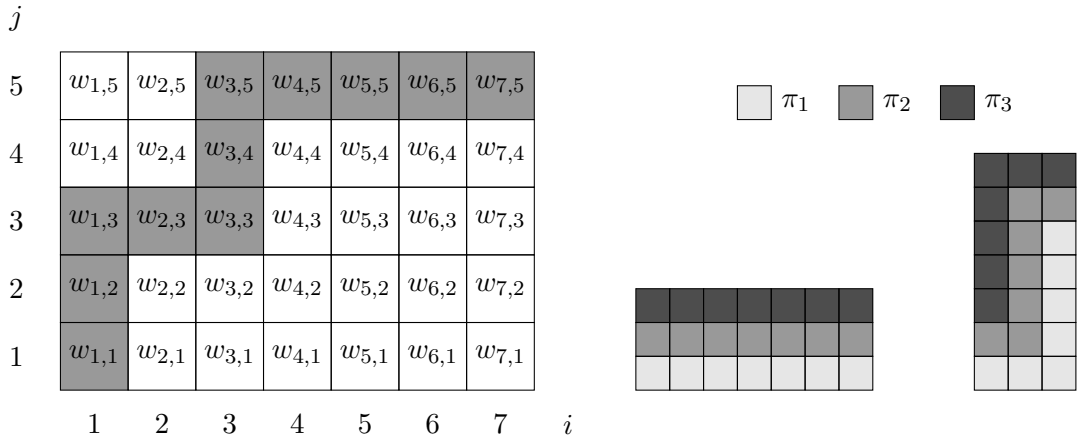

In \cite{BR01c}, building on the previous works \cite{BR01a,BR01b}, Baik and Rains showed that the fluctuations of $G_1(n,n)$ as $n \rightarrow \infty$ are asymptotically described by the GOE Tracy-Widom distribution $F_{\mathrm{GOE}}$ (when $c = 1$), and by the GSE Tracy-Widom distribution $F_{\mathrm{GSE}}$ (when $c \in (0,1)$). We refer the reader to \cite{TW05} for the definitions of $F_{\mathrm{GOE}}$ and $F_{\mathrm{GSE}}$. In addition, \cite{BR01c} showed that if one scales $c = 1 - \varpi \alpha_q n^{-1/3}$ together with $n$, one obtains a one-parameter family of cross-over distributions $F_{\mathrm{cross}}(\cdot; \varpi)$ that interpolate between $F_{\mathrm{GOE}}$ when $\varpi = 0$, $F_{\mathrm{GSE}}$ when $\varpi \rightarrow \infty$, and the normal distribution $\Phi$ when $\varpi \rightarrow -\infty$. 

In \cite{SI04}, Imamura and Sasamoto considered the related {\em polynuclear growth model (PNG)} for the special parameter choices $c = 0$ and $c = 1$. They showed for a fixed $\kappa \in (0,1)$ that the finite-dimensional distributions of $G_1(s,t)$ around the point $(\kappa n, n)$ (under suitable scaling) converge to those of the Airy process. They also found the limits of the finite-dimensional distributions of $G_1(s,t)$ around the point $(n, n)$, and expressed them as certain Fredholm Pfaffians that are different when $c = 0$ and $c = 1$. In \cite{BBNV18}, Betea, Bouttier, Nejjar and Vuleti{\'c}
computed the limits of the finite-dimensional distributions of $G_1(s,t)$ around the point $(n, n)$ when one scales $c = 1 - \varpi \alpha_q n^{-1/3}$ together with $n$, and obtained a one-parameter family of Fredholm Pfaffians (indexed by $\varpi$) -- the ones discovered in \cite{SI04} for $c = 0$ and $c = 1$ correspond to $\varpi = 0$ and $\varpi \rightarrow \infty$, respectively. We mention that the same Fredholm Pfaffians were obtained by Baik, Barraquand, Corwin and Suidan in the context of LPP with exponential weights in \cite{BBCS18}. \\

It turns out that $G_1(m,n)$ can naturally be embedded into a sequence $G(m,n) = (G_k(m,n): k \geq 1)$ of higher-rank last passage times that we describe next. For $k = 1, \dots, \min(m,n)$, we define
\begin{equation}\label{Eq.HRLPT}
G_k(m,n) = \max_{\pi_1, \dots, \pi_k} \left[ W(\pi_1) + \cdots + W(\pi_k) \right],
\end{equation}
where the maximum is over $k$-tuples of pairwise disjoint up-right paths $(\pi_1, \dots, \pi_k)$ with $\pi_i$ connecting the points $(1,i)$ with $(m, n-k + i)$. Note that $G_{\min(m,n)}(m,n) = \sum_{i = 1}^m \sum_{j = 1}^n w_{i,j}$, as one can find $\min(m,n)$ pairwise disjoint paths as above that contain all the vertices in the $n \times m$ rectangle, see the right side of Figure \ref{Fig.Grid}. When $k \geq \min(m,n) + 1$, we cannot find $k$ disjoint paths as above, and so the convention is to set
\begin{equation}\label{Eq.HRLPT2}
G_k(m,n) = \sum_{i = 1}^m \sum_{j = 1}^n w_{i,j} \mbox{ for } k \geq \min(m,n) + 1.
\end{equation}

Our main object of interest is not the $G_{k}(m,n)$'s themselves but rather their successive differences, defined by
\begin{equation}\label{Eq.LPTLambdas}
\lambda_1(m,n) = G_1(m,n) \mbox{ and } \lambda_k(m,n) = G_k(m,n) - G_{k-1}(m,n) \mbox{ for }k\geq 2.
\end{equation}
From \cite[(2.12)]{DY25b}, we know that $\lambda(m,n) = (\lambda_k(m,n): k \geq 1)$ is a {\em partition} (i.e. a decreasing sequence of non-negative integers that is eventually zero). In addition, if we fix $n$ and let $m$ vary from $1$ to $n$, these partitions {\em interlace}, meaning that 
\begin{equation}\label{Eq.InterlaceIntro}
\lambda_1(m,n) \geq \lambda_{1}(m-1,n) \geq \lambda_{2}(m,n) \geq  \lambda_{2}(m-1,n) \geq \cdots, 
\end{equation}
where $\lambda(0,n)$ is the empty partition. By linearly interpolating the points $(m, \lambda_i(m,n))$, we can view $\{\lambda_i(m,n): i \geq 1, 0 \leq m \leq n\}$ as a sequence of random continuous functions, or equivalently as a {\em line ensemble} (as in Definition \ref{CLEDef}), see Figure \ref{Fig.DiscreteLE}.
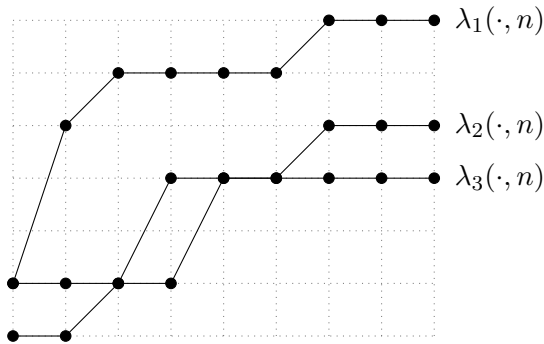
\begin{figure}[ht]
	\begin{center}
		\begin{tikzpicture}[scale=0.7]
		\begin{scope}
        \def\r{0.1}
		\draw[dotted, gray] (0,0) grid (8,6);

        \draw[fill = black] (0,1) circle [radius=\r];
        \draw[fill = black] (1,4) circle [radius=\r];
        \draw[fill = black] (2,5) circle [radius=\r];
        \draw[fill = black] (3,5) circle [radius=\r];
        \draw[fill = black] (4,5) circle [radius=\r];
        \draw[fill = black] (5,5) circle [radius=\r];
        \draw[fill = black] (6,6) circle [radius=\r];
        \draw[fill = black] (7,6) circle [radius=\r];
        \draw[fill = black] (8,6) circle [radius=\r];
        \draw[-][black] (0,1) -- (1,4);
        \draw[-][black] (1,4) -- (2,5);
        \draw[-][black] (2,5) -- (3,5);
        \draw[-][black] (3,5) -- (4,5);
        \draw[-][black] (4,5) -- (5,5);
        \draw[-][black] (5,5) -- (6,6);
        \draw[-][black] (6,6) -- (7,6);
        \draw[-][black] (7,6) -- (8,6);

        \draw[fill = black] (0,1) circle [radius=\r];
        \draw[fill = black] (1,1) circle [radius=\r];
        \draw[fill = black] (2,1) circle [radius=\r];
        \draw[fill = black] (3,3) circle [radius=\r];
        \draw[fill = black] (4,3) circle [radius=\r];
        \draw[fill = black] (5,3) circle [radius=\r];
        \draw[fill = black] (6,4) circle [radius=\r];
        \draw[fill = black] (7,4) circle [radius=\r];
        \draw[fill = black] (8,4) circle [radius=\r];
        \draw[-][black] (0,1) -- (1,1);
        \draw[-][black] (1,1) -- (2,1);
        \draw[-][black] (2,1) -- (3,3);
        \draw[-][black] (3,3) -- (4,3);
        \draw[-][black] (4,3) -- (5,3);
        \draw[-][black] (5,3) -- (6,4);
        \draw[-][black] (6,4) -- (7,4);
        \draw[-][black] (7,4) -- (8,4);

        \draw[fill = black] (0,0) circle [radius=\r];
        \draw[fill = black] (1,0) circle [radius=\r];
        \draw[fill = black] (2,1) circle [radius=\r];
        \draw[fill = black] (3,1) circle [radius=\r];
        \draw[fill = black] (4,3) circle [radius=\r];
        \draw[fill = black] (5,3) circle [radius=\r];
        \draw[fill = black] (6,3) circle [radius=\r];
        \draw[fill = black] (7,3) circle [radius=\r];
        \draw[fill = black] (8,3) circle [radius=\r];
        \draw[-][black] (0,0) -- (1,0);
        \draw[-][black] (1,0) -- (2,1);
        \draw[-][black] (2,1) -- (3,1);
        \draw[-][black] (3,1) -- (4,3);
        \draw[-][black] (4,3) -- (5,3);
        \draw[-][black] (5,3) -- (6,3);
        \draw[-][black] (6,3) -- (7,3);
        \draw[-][black] (7,3) -- (8,3);

        \draw (9.25, 6) node{$\lambda_1(\cdot,n)$};
        \draw (9.25, 4) node{$\lambda_2(\cdot,n)$};
        \draw (9.25, 3) node{$\lambda_3(\cdot,n)$};

		\end{scope}

		\end{tikzpicture}
	\end{center}
	\caption{The figure depicts the top three curves in $\{\lambda_i(m,n): i \geq 1, 0 \leq m \leq n\}$.}
	\label{Fig.DiscreteLE}
\end{figure}

In the joint work with Yang \cite{DY25}, the first author constructed a one-parameter family (indexed by $\varpi$) of {\em half-space Airy line ensembles} as weak limits of the line ensembles $\{\lambda_i(m,n): i \geq 1, m \geq n\}$ as above with $c = 1 - \varpi \alpha_q n^{-1/3}$. The latter are half-space analogues of the Airy line ensemble constructed in \cite{CorHamA}, see also Definition \ref{Def.AiryLE}. In \cite{Z25}, the second author showed that when $c \in (0,1]$ and $\kappa \in (0,1)$ the line ensembles $\{\lambda_{i}(t,n)\}_{i \geq 1}$ for $t$ near $\kappa n$ converge (after a suitable shift and scaling) to the Airy line ensemble. This result continues to hold when $c > 1$, as long as $\kappa \in (0, \kappa_0)$, where $\kappa_0 = \left(\frac{1 - qc}{c - q}\right)^2$. In addition, \cite{Z25} showed that when $c = z_c - \varpi \alpha_q n^{-1/3}$ (for a cerain constant $z_c$), the line ensembles $\{\lambda_{i}(t,n)\}_{i \geq 1}$ for $t$ near $\kappa_0 n$ instead converge (after a suitable shift and scaling) to an Airy wanderer line ensemble from \cite{AFM10,CorHamA,ED24a}. \\

The discussion in the previous paragraph suggests that when $c > 1$, which we call the {\em supercritical regime}, the line ensembles $\{\lambda_{i}(\kappa n ,n)\}_{i \geq 1}$ have one type of behavior for $ \kappa \in (0,\kappa_0)$, which agrees with the {\em subcritical regime} $c \in (0,1)$, and a different one on $(\kappa_0, 1)$. The goal of this paper is to obtain a detailed description of the behavior of the line ensembles on $(\kappa_0, 1)$. As the results in the next section show, what happens on $(\kappa_0, 1)$ is that the top curve $\lambda_1(t,n)$ separates from the rest and behaves like a Brownian motion when appropriately shifted, and scaled horizontally by $n$ and vertically by $n^{1/2}$. On the other hand, the remaining curves $\{\lambda_{i}(t,n)\}_{i \geq 2}$ converge for $t$ near $\kappa n$ for each $\kappa \in (\kappa_0,1)$, under an appropriate shift, and $n^{2/3}$ horizontal and $n^{1/3}$ vertical scaling, to the Airy line ensemble. In Figure \ref{Fig.Simulations} we show simulations of our line ensembles in the subcritical and supercritical regimes. 
\begin{figure}[ht]
	\begin{center}
      \begin{tikzpicture}
        \node[anchor=south west, inner sep=0] (img1) at (0,0) 
            {\includegraphics[width=0.4\textwidth]{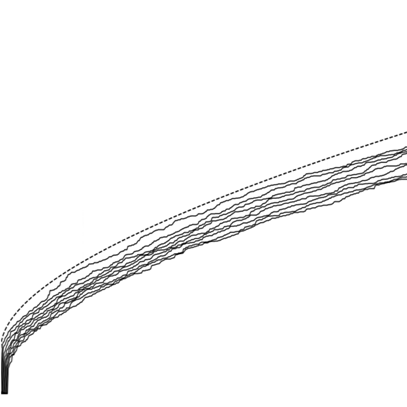}};
    
        \node[anchor=south west, inner sep=0] (img2) at (8.12,0) 
            {\includegraphics[width=0.4\textwidth]{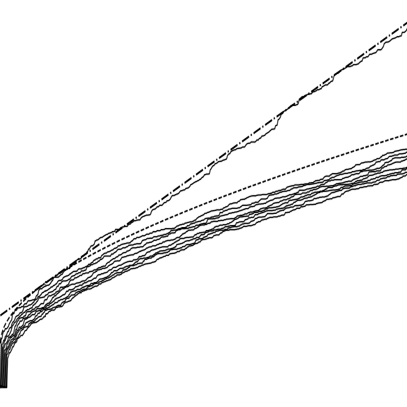}};


        \draw[->][gray] (0.03,0.29) -- (7.2,0.29);
        \draw[->][gray] (0.03,0.29) -- (0.03,6.5);
        \draw[black] (0.5,5.5) rectangle ++(3.5,0.6);
        \draw (7.3,0.29) node[below = 0pt]{$m$};
        \draw (2.8, 5.8) node{$nh^{\mathrm{bot}}(m/n)$};
        \draw[-][line width = 1pt, dash pattern=on 2pt off 1pt][black] (0.6,5.8) -- (1.6,5.8);
        \fill (6.64, 0.29) circle (1.5pt) node[below=0pt] {$n$};
        \fill (0.03, 0.29) circle (1.5pt) node[below=0pt] {$0$};

        \fill (0.03, 2.37) circle (1.5pt) node[left=0pt] {$n$};
        \fill (0.03, 4.45) circle (1.5pt) node[left=0pt] {$2n$};


        \begin{scope}[shift={(8.1,0)}]

        \draw[->][gray] (0.03,0.29) -- (7.2,0.29);
        \draw[->][gray] (0.03,0.29) -- (0.03,6.5);
        \draw (7.3,0.29) node[below = 0pt]{$m$};

        \draw[black] (0.5,5.2) rectangle ++(3.5,1.2);
        \draw (2.8, 6.1) node{$nh^{\mathrm{top}}(m/n)$};
        \draw (2.8, 5.5) node{$nh^{\mathrm{bot}}(m/n)$};

        \draw[line width = 1pt, dash pattern=on 4pt off 1pt on 0.5pt off 1pt][black] (0.6,6.1) -- (1.6,6.1);
        \draw[-][line width = 1pt, dash pattern=on 2pt off 1pt][black] (0.6,5.5) -- (1.6,5.5);
        
        \fill (6.64, 0.29) circle (1.5pt) node[below=0pt] {$n$};
        \fill (0.03, 0.29) circle (1.5pt) node[below=0pt] {$0$};

        \draw[-][line width = 1 pt, dashed][gray] (0.76,0.29) -- (0.76,2);
        \fill (0.76, 0.29) circle (1.5pt) node[below=0pt] {$\kappa_0 n$};

        \fill (0.03, 2.37) circle (1.5pt) node[left=0pt] {$n$};
        \fill (0.03, 4.45) circle (1.5pt) node[left=0pt] {$2n$};

        \end{scope}

\end{tikzpicture}

	\end{center}
	\caption{The figure depicts the top several curves of $\{\lambda_i(m,n): i \geq 1, 0 \leq m \leq n\}$ for $q = 0.5$, $n = 500$, and $c = 0.8$ (left) and $c = 1.4$ (right). In the subcritical regime (left side), the curves concentrate around the function $n h^{\mathrm{bot}}(m/n)$ on $[0,n]$, where $h^{\mathrm{bot}}(\kappa) = \frac{ q (q + 2 \sqrt{\kappa} + q \kappa)}{1-q^2}$. In the supercritical regime (right side), they concentrate around the same curve on $[0, \kappa_0 n]$. On the interval $[\kappa_0 n, n]$ the top curve separates from the rest and follows the straight line $n h^{\mathrm{top}}(m/n)$, where $h^{\mathrm{top}}(\kappa) = \frac{q}{c-q} + \frac{qc \kappa}{1-qc}$, while the remaining curves continue to follow $n h^{\mathrm{bot}}(m/n)$.}
	\label{Fig.Simulations}
\end{figure}

The convergence of the top curve to a Brownian motion is in agreement with the expected Gaussian asymptotic behavior for $\lambda_1(n,n)$, see \cite[Remark 4.3]{BBNV18} and \cite[Theorem 1.3]{BBCS18} for related results in the context of exponential LPP. In addition, top curve separation has recently been established for the half-space log-gamma polymer in \cite{DZ24}, which may be viewed as a positive-temperature analogue of the LPP model studied here. The main contribution of the present paper is to provide an essentially complete description of the mechanism of curve separation, with explicit functional limit theorems not only for the top curve, but also for the remaining ones. The convergence to the Airy line ensemble admits the following heuristic explanation: in the supercritical phase, the top curve drifts to positive infinity, after which the second curve assumes the role of the new top curve, the third becomes the new second, and so on. We expect this phenomenon to persist across a broad class of half-space models in the KPZ universality class, with prototypical examples including exponential LPP and the log-gamma polymer.

A key advantage of the present work is the integrable structure of the model: it admits a Pfaffian point process representation with an explicit correlation kernel given by a double contour integral. This structure provides access to exact formulas which, although still requiring substantial analysis, enable us to obtain a detailed description of the curve separation phenomenon. We anticipate that our techniques can be extended to other integrable half-space models, such as exponential, Bernoulli, Poisson, or Brownian LPP. Although positive-temperature models like the log-gamma polymer are not known to possess a Pfaffian point process structure, we expect that some of the methods developed here --- particularly those related to line ensembles --- can be adapted to that setting. More broadly, the development of techniques capable of rigorously establishing this type of curve separation for general directed polymer models would benefit from the clear description of the underlying physical mechanism that emerges in the present integrable framework.

%
%
\subsection{Main results}\label{Section1.2} In this section we present the main results of the paper. We introduce the scaling of our line ensembles that captures the top curve's behavior in the following definition.
\begin{definition}\label{Def.TopCurveScaledLPP}
Fix $q \in (0,1)$, $c \in (1, q^{-1})$, and set
\begin{equation}\label{Eq.ConstTopIntro}
\kappa_0 = \left( \frac{1 - qc}{c-q}\right)^2 \in (0,1), \hspace{2mm} p^{\mathrm{top}} = \frac{qc}{1- qc}.
\end{equation}
Fix $N \in \mathbb{N}$, and let $\lambda(m,N)$ be as in (\ref{Eq.LPTLambdas}). For $i \geq 1$, define 
\begin{equation}\label{Eq.ExtPaths}
U^{N}_i(s) = \begin{cases} \lambda_i(s,N)  &\mbox{ if } s= 1,\dots, N, \\  0 &\mbox{ if  } s \leq 0, \\ \lambda_i(N,N) &\mbox{ if } s \geq N+1. \end{cases}
\end{equation}
By linearly interpolating $(s,U^{N}_i(s))$, we may view $U_i^{N}$ as random continuous functions on $\mathbb{R}$. Set $C^{\mathrm{top}}_N = \lfloor (c-q)^{-1} qN \rfloor$, and define the rescaled process $\mathcal{U}_1^{\mathrm{top},N}$ on $(\kappa_0, 1)$ by
$$\mathcal{U}_1^{\mathrm{top},N}(t) = [p^{\mathrm{top}} (1 + p^{\mathrm{top}})]^{-1/2} N^{-1/2} \left( U_1^{N}(tN) - C^{\mathrm{top}}_N -  p^{\mathrm{top}} t N   \right) \mbox{ for } t \in (\kappa_0, 1).$$
\end{definition}

The following is the main result we prove for the top curve of our ensembles. Its proof is given in Section \ref{Section7.3}.
\begin{theorem}\label{Thm.Main1} Assume the same notation as in Definition \ref{Def.TopCurveScaledLPP}, and let $(B_t: t \geq 0)$ be a standard Brownian motion. Then, $\mathcal{U}_1^{\mathrm{top},N} \Rightarrow W$ in $C\left((\kappa_0, 1)\right)$ (with the topology of uniform convergence over compact sets), where $W_t = B_{t - \kappa_0}$ for $t \in (\kappa_0, 1)$.
\end{theorem}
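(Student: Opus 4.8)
The plan is to prove the convergence in two independent pieces and combine them in the standard way. Since $C((\kappa_0,1))$ with the topology of uniform convergence on compacts is a Polish space in which weak convergence of a sequence of processes is equivalent to convergence of all finite-dimensional distributions together with tightness of the restriction to each compact subinterval, it suffices to establish the following two statements. \emph{(a) Finite-dimensional convergence:} for every $\kappa_0 < t_1 < \dots < t_k < 1$,
$$\left(\mathcal{U}_1^{\mathrm{top},N}(t_1), \dots, \mathcal{U}_1^{\mathrm{top},N}(t_k)\right) \Longrightarrow \left(W_{t_1}, \dots, W_{t_k}\right),$$
the right side being the centered Gaussian vector with $\mathrm{Cov}(W_s, W_t) = \min(s,t) - \kappa_0$. \emph{(b) Tightness:} for each $[a,b]\subset(\kappa_0,1)$, the laws of $\mathcal{U}_1^{\mathrm{top},N}\big|_{[a,b]}$, $N\geq 1$, are tight in $C([a,b])$. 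Then (a) identifies the limit and (b) provides compactness, and $\mathcal{U}_1^{\mathrm{top},N}\Rightarrow W$ follows.

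For part (a) I would use the distributional identity between the partitions $\lambda(m,N)$ and the Pfaffian Schur process. Under this identity, for fixed $m_1 < \dots < m_k$ the law of $(\lambda_1(m_1,N), \dots, \lambda_1(m_k,N))$ is the probability that a multi-time Pfaffian point process has no points above prescribed levels, and is therefore expressible as a Fredholm Pfaffian with an explicit correlation kernel. Specializing $m_j = \lfloor t_j N\rfloor$ and performing a steepest-descent analysis, the supercritical condition $c>1$ forces the relevant critical point to be absorbed by a simple pole of the integrand coming from the diagonal parameter $cq$; it is the residue at this pole, rather than a Gaussian-type saddle, that governs the asymptotics and produces the $N^{1/2}$ (as opposed to $N^{1/3}$) scaling. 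After centering by $C^{\mathrm{top}}_N + p^{\mathrm{top}} t N$ and rescaling by $[p^{\mathrm{top}}(1+p^{\mathrm{top}})]^{1/2}N^{1/2}$ — noting that $p^{\mathrm{top}}$ and $p^{\mathrm{top}}(1+p^{\mathrm{top}})$ are exactly the mean and variance of a $\mathrm{Geom}(cq)$ variable — I expect the rescaled kernel to converge so that the Fredholm Pfaffian tends to the multivariate Gaussian distribution function with the covariance above. Heuristically, past $\kappa_0 N$ the maximal path devotes a macroscopic portion of its length to the diagonal, where it collects i.i.d.\ $\mathrm{Geom}(cq)$ weights whose order $N^{1/2}$ fluctuations dominate the order $N^{1/3}$ fluctuations of the bulk; these accumulate in $t$ into a Brownian motion, while the deterministic shift $C^{\mathrm{top}}_N$ absorbs the $O(N)$ contribution collected before the path reaches the diagonal. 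The one-point case of this analysis is essentially the Gaussian fluctuation statement in \cite{BBNV18, BBCS18}; the new input is the joint multi-time version together with the verification of the Brownian covariance.

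For part (b) I would use the Gibbsian structure of the line ensemble $\{\lambda_i(\cdot,N)\}_{i\geq 1}$. Conditionally on the endpoint values $\lambda_1(\lfloor aN\rfloor,N)$, $\lambda_1(\lfloor bN\rfloor,N)$ and on the entire curve $\lambda_2(\cdot,N)$, the restriction of $\lambda_1(\cdot,N)$ to $[\lfloor aN\rfloor,\lfloor bN\rfloor]$ is distributed as a random-walk-type bridge conditioned to stay above $\lambda_2(\cdot,N)$. The crucial point in the supercritical regime is \emph{curve separation}: with probability tending to $1$, uniformly for $m/N\in[a,b]$, one has $\lambda_1(m,N)-\lambda_2(m,N)\geq \delta N$ for some $\delta=\delta(a,b)>0$, because $\lambda_1$ concentrates near $N h^{\mathrm{top}}(m/N)$ while $\lambda_2$ (and all lower curves) concentrate near the strictly smaller $N h^{\mathrm{bot}}(m/N)$ on $[\kappa_0,1)$; I would derive this from the one-point Pfaffian estimates (upper bounds for $\lambda_2$, lower bounds for $\lambda_1$) together with monotonicity of last passage times in the endpoints. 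Granting separation, a free bridge on an interval of length $\Theta(N)$ has fluctuations of order $N^{1/2}\ll\delta N$, so the "stay above $\lambda_2$" constraint is inactive with high probability; hence the conditional law of $\lambda_1(\cdot,N)$ on $[\lfloor aN\rfloor,\lfloor bN\rfloor]$ is, up to an event of vanishing probability, that of an unconstrained bridge, whose rescaling obeys uniform Donsker / Kolmogorov--Chentsov-type modulus-of-continuity estimates. Combined with tightness of the endpoint values from part (a), this yields tightness in $C([a,b])$.

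\textbf{Main obstacle.} The essential difficulty is part (a): carrying out the multi-time steepest-descent analysis of the Pfaffian correlation kernel in the supercritical regime — locating and extracting the pole contribution coming from the diagonal parameter, controlling error terms uniformly along suitably deformed contours, and matching the resulting limit with the Gaussian law of $W$, in particular reproducing the covariance $\min(s,t)-\kappa_0$, which records that the limiting Brownian motion starts afresh at $\kappa_0$ (equivalently, that the fluctuations of $\lambda_1$ are of smaller order at the transition point $\kappa_0 N$). A secondary technical point in (b) is making curve separation quantitative and uniform in $m$ over compacts, and transferring the modulus-of-continuity bounds from the free bridge to the conditioned curve with the requisite probabilistic control.
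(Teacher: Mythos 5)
Your two-part decomposition (finite-dimensional convergence plus tightness on compacts) is the right skeleton and matches the paper's, but the paper executes both halves quite differently from what you propose, and the differences are worth noting.

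For part (a), you propose computing the limit of a multi-time Fredholm Pfaffian for gap probabilities, which is the "usual" route and the authors explicitly decline to take it. Instead, they prove only that the correlation kernel converges pointwise on compacts (Proposition~\ref{Prop.KernelConvTop}), which by general results yields \emph{vague} convergence of the associated Pfaffian point processes. They then upgrade vague convergence to finite-dimensional convergence of the top particle by two soft lemmas (\ref{Lem.FDC} and \ref{Lem.tightcri}) that require only one-point tightness, and one-point tightness from above is obtained by estimating the \emph{first} intensity $\mathbb{E}[M^{j,N}([a,\infty))]$ via a single contour integral (Step~3 of Proposition~\ref{Prop.FinitedimEdge}). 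The payoff of the paper's route is that only the first term of the Fredholm Pfaffian is ever estimated; your route requires Hadamard bounds and dominated convergence to control the whole series uniformly in $N$. Your steepest-descent heuristic (the critical point of the exponent landing at the pole from the diagonal parameter $c$, producing $N^{1/2}$ instead of $N^{1/3}$ fluctuations) is essentially correct and is precisely what the paper's Lemma~\ref{Lem.PowerSeriesSG} encodes via the quadratic vanishing of $\SFt$ at $c$ with coefficient $\sigmap^2(\kappa-\kappa_0)/2c^2$, matching the covariance $\min(s,t)-\kappa_0$ through the kernel $\kbm$.

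For part (b), you propose establishing quantitative curve separation $\lambda_1-\lambda_2\gtrsim\delta N$ and then arguing that the conditional bridge law effectively becomes unconstrained. The paper does not need curve separation at all for the top curve: it directly invokes the general tightness criterion \cite[Theorem 5.1]{dimitrov2024tightness} for interlacing geometric line ensembles, verifying only (i) the scaling of the domain, (ii) one-point tightness of the top curve (from Proposition~\ref{Prop.FinitedimEdge}), and (iii) the interlacing Gibbs property of $(L_1,L_2)$ (Lemma~\ref{Lem.InterlacingGibbs}). The criterion handles the avoidance constraint internally. Curve separation of the kind you describe \emph{is} used in the paper, but in Section~\ref{Section8.3} for the \emph{bottom} curves, where the Gibbs property is lost upon discarding the top curve; your proposal imports that difficulty into a place where the paper avoids it. Your version would likely work, but with more bespoke probabilistic estimates than the paper needs here.
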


To state the second main result of the paper we need to introduce the Airy line ensemble, which we accomplish in the following definition.
\begin{definition}\label{Def.AiryLE} The {\em Airy line ensemble} $\mathcal{A} = \{\mathcal{A}_i\}_{i \geq 1}$ is a sequence of real-valued random continuous functions, defined on $\mathbb{R}$, and are strictly ordered in the sense that $\mathcal{A}_i(t) > \mathcal{A}_{i+1}(t)$ for all $i \geq 1$ and $t \in \mathbb{R}$. It is uniquely specified by the following conditions. If one fixes a finite set $\mathsf{S} = \{s_1, \dots, s_m\} \subset \mathbb{R}$ with $s_1 < \cdots < s_m$, then the random measure on $\mathbb{R}^2$, defined by
\begin{equation}\label{Eq.RMS1}
M(A) = \sum_{i \geq 1} \sum_{j = 1}^m {\bf 1} \left\{\left(s_j, \mathcal{A}_{i}(s_j) \right) \in A \right\},
\end{equation}
is a determinantal point process on $\mathbb{R}^2$ with reference measure $\mu_{\mathsf{S}} \times \mathrm{Leb}$, where $\mu_{\mathsf{S}}$ is the counting measure on $\mathsf{S}$, and $\mathrm{Leb}$ is the usual Lebesgue measure on $\mathbb{R}$, and whose correlation kernel is given by the {\em extended Airy kernel}, defined for $x_1, x_2 \in \mathbb{R}$ and $t_1, t_2 \in \mathsf{S}$ by 
\begin{equation}\label{Eq.S1AiryKer}
\begin{split}
K^{\mathrm{Airy}}(t_1,x_1; t_2,x_2) = & -  \frac{{\bf 1}\{ t_2 > t_1\} }{\sqrt{4\pi (t_2 - t_1)}} \cdot e^{ - \frac{(x_2 - x_1)^2}{4(t_2 - t_1)} - \frac{(t_2 - t_1)(x_2 + x_1)}{2} + \frac{(t_2 - t_1)^3}{12} } \\
& + \frac{1}{(2\pi \im)^2} \int_{\mathcal{C}_{\alpha}^{\pi/3}} d z \int_{\mathcal{C}_{\beta}^{2\pi/3}} dw \frac{e^{z^3/3 -x_1z - w^3/3 + x_2w}}{z + t_1 - w - t_2}.
\end{split}
\end{equation}
In (\ref{Eq.S1AiryKer}) we have that $\alpha, \beta \in \mathbb{R}$ are arbitrary subject to $\alpha + t_1 > \beta + t_2$, and $\mathcal{C}_{z}^{\varphi}=\{z+|s|e^{\mathrm{sgn}(s)\im\varphi}, s\in \mathbb{R}\}$ is the infinite curve oriented from $z+\infty e^{-\im\varphi}$ to $z+\infty e^{\im\varphi}$. 
\end{definition}
\begin{remark}\label{Rem.AiryLE1}
There are various formulas for the extended Airy kernel, and the one in (\ref{Eq.S1AiryKer}) comes from \cite[Proposition 4.7 and (11)]{BK08} under the change of variables $u \rightarrow z + t_1$ and $w \rightarrow w + t_2$. 
\end{remark}
\begin{remark}\label{Rem.AiryLE2} The introduction of the extended Airy kernel is frequently attributed to \cite{Spohn} where it arises in the context of the polynuclear growth model, although it appeared earlier in \cite{Mac94} and \cite{FNH99}. A continuous version of the Airy line ensemble was formally constructed in \cite{CorHamA} by taking a weak limit of Brownian watermelons. 
\end{remark}
\begin{remark}\label{Rem.AiryLE3} We mention that the conditions in Definition \ref{Def.AiryLE} uniquely specify the law of $\mathcal{A}$ in view of \cite[Proposition 2.13(3)]{ED24a}, \cite[Corollary 2.20]{ED24a} and \cite[Lemma 3.1]{DM21}.
\end{remark}

We next introduce the scaling of our line ensembles that captures the bottom curves' behavior in the following definition.
\begin{definition}\label{Def.BotCurveScaledLPP}
Fix $q \in (0,1)$, $c \in (1, q^{-1})$, and $\kappa \in (\kappa_0, 1)$, where $\kappa_0$ is as in (\ref{Eq.ConstTopIntro}), and set
\begin{equation}\label{Eq.ConstBotIntro}
\begin{split}
&h^{\mathrm{bot}}(\kappa)  = \frac{ q (q + 2 \sqrt{\kappa} + q \kappa)}{1-q^2}, \hspace{2mm} p^{\mathrm{bot}} = \frac{d}{d\kappa} h^{\mathrm{bot}}(\kappa) = \frac{q (1 + q \sqrt{\kappa})  }{(1-q^2) \sqrt{\kappa} }, \hspace{2mm} \\
& \fq = \frac{q^{1/3}}{2 \kappa^{2/3}(1-q^2)^{2/3} (q + \sqrt{\kappa})^{1/3} (1 + q \sqrt{\kappa})^{1/3}}.
\end{split}
\end{equation}
Set $C^{\mathrm{bot}}_N = \lfloor h^{\mathrm{bot}}(\kappa) N \rfloor$, and with $U^{N}_i$ as in (\ref{Eq.ExtPaths}) define the rescaled processes $\{\mathcal{U}_i^{\mathrm{bot},N} \}_{i \geq 1}$ on $\mathbb{R}$ by
$$\mathcal{U}_i^{\mathrm{bot},N}(t) = [p^{\mathrm{bot}} (1 + p^{\mathrm{bot}})]^{-1/2} N^{-1/3} \left( U_{i+1}^{N}(\lfloor \kappa N \rfloor + t N^{2/3}) - C^{\mathrm{bot}}_N -  p^{\mathrm{bot}} t N^{2/3}   \right) \mbox{ for } t \in \mathbb{R}.$$
We denote by $\mathcal{U}^{\mathrm{bot},N} = \{\mathcal{U}^{\mathrm{bot},N}_i\}_{i \geq 1}$ the corresponding line ensemble as in Definition \ref{CLEDef}.
\end{definition}

The following is the main result we prove for the bottom curves of our ensembles. Its proof is given in Section \ref{Section8.2}.
\begin{theorem}\label{Thm.Main2} Assume the same notation as in Definition \ref{Def.BotCurveScaledLPP}. Then, $\mathcal{U}^{\mathrm{bot},N}  \Rightarrow \mathcal{U}^{\mathrm{bot},\infty}$ in $C\left(\mathbb{N} \times \mathbb{R} \right)$ (as in Definition \ref{CLEDef}). Here, $\mathcal{U}^{\mathrm{bot},\infty} = \{\mathcal{U}^{\mathrm{bot},\infty}_i\}_{i \geq 1}$ is the line ensemble, defined by
$$\mathcal{U}_i^{\mathrm{bot},\infty}(t) =  (2\fq)^{-1/2} \cdot \left( \mathcal{A}_i(\fq t) - \fq^2 t^2 \right) \mbox{ for } i \geq 1, t \in \mathbb{R},$$
where $\mathcal{A} = \{\mathcal{A}_i\}_{i \geq 1}$ is the Airy line ensemble from Definition \ref{Def.AiryLE}.
\end{theorem}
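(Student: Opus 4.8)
The plan is to follow the standard two-step route for convergence of line ensembles — prove convergence of the finite-dimensional distributions, prove tightness in $C(\mathbb{N}\times\mathbb{R})$, and then identify every subsequential limit — using the two faces of the Pfaffian Schur process indicated in the abstract: the Pfaffian point process for the finite-dimensional step, and the Gibbsian line ensemble for the tightness step. The identification at the end uses the characterization of the Airy line ensemble recalled in Remark \ref{Rem.AiryLE3}, together with the elementary fact that applying a fixed affine time-change and vertical parabolic shift to a line ensemble produces one that is again determined by its finite-dimensional laws.

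For the finite-dimensional convergence, fix $s_1<\cdots<s_m$ and write the joint law of $\bigl(\mathcal{U}^{\mathrm{bot},N}_i(s_j)\bigr)_{i\geq 1,\,1\leq j\leq m}$ via the distributional identity with the Pfaffian Schur process, whose correlations are governed by a $2\times 2$ matrix Pfaffian kernel with double-contour-integral entries. Under the centering by $C^{\mathrm{bot}}_N+p^{\mathrm{bot}}tN^{2/3}$ and the $N^{2/3}$ horizontal, $N^{1/3}$ vertical scaling of Definition \ref{Def.BotCurveScaledLPP}, carry out a steepest-descent analysis of this kernel. The relevant critical point of the action is pinned down by $\kappa\in(\kappa_0,1)$, and $h^{\mathrm{bot}}(\kappa)$, $p^{\mathrm{bot}}$, $\fq$ are precisely the leading, first- and second-order data of the local expansion there; thus the leading double integrals converge to the extended Airy kernel $K^{\mathrm{Airy}}$ rescaled by $\fq$ and conjugated by the parabola $\fq^2 t^2$, exactly as in the subcritical analysis of \cite{Zhou25}. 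The genuinely new feature for $\kappa>\kappa_0$ is that deforming the contours into steepest-descent position forces one to cross a pole coming from the diagonal parameter $cq$; the associated residue is supported, in the original variables, around the macroscopic level $C^{\mathrm{top}}_N+p^{\mathrm{top}}tN$ of the escaped top curve, which sits $\Theta(N)$ above the $O(N^{1/3})$ window in which the bottom curves live. Hence the residue contributes $o(1)$ to the correlation functions of $\{\mathcal{U}^{\mathrm{bot},N}_i\}_{i\geq 1}$ — equivalently, one may first restrict the Pfaffian point process to heights below a level $\theta N$ with $h^{\mathrm{bot}}(\kappa)<\theta<h^{\mathrm{top}}(\kappa)$, which removes $\lambda_1$ and hence explains the index shift $i\mapsto i+1$ in Definition \ref{Def.BotCurveScaledLPP} — and one is left with the Airy kernel. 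The delicate point here is not the saddle computation but upgrading pointwise kernel convergence to convergence with Gaussian-type tail bounds uniform over the space variables on compact sets, which is what makes the Pfaffians of the finite correlation arrays converge.

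For tightness, use that the unrescaled ensemble $\{\lambda_i(m,N)\}_{i\geq 1,\,0\leq m\leq N}$ carries the Gibbsian line-ensemble structure inherited from the Pfaffian Schur process, as in \cite{DY25,Zhou25}. Tightness of $\mathcal{U}^{\mathrm{bot},N}$ on compact subsets of $\mathbb{N}\times\mathbb{R}$ then follows from the now-standard bootstrapping argument (see \cite{CorHamA,DM21,ED24a,Zhou25}): the finite-dimensional convergence just established supplies the needed one-point control — an upper bound on the rescaled $\lambda_2$ at a few points and a lower bound on a deeper curve — which, fed into the Gibbs property together with Brownian-bridge comparisons, produces uniform moduli of continuity for all curves. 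The one point requiring care is that the escaped top curve $\lambda_1$ acts as a ceiling for $\lambda_2$ in the Gibbs property, so one must verify that $\lambda_1(m,N)-\lambda_2(m,N)\gg N^{1/3}$ uniformly over the relevant window with probability tending to $1$; on the $N^{1/3}$ vertical scale this pushes the ceiling to $+\infty$ and leaves the resampling estimates intact. This separation bound comes from combining a one-point lower bound for $\lambda_1$ of the type underlying Theorem \ref{Thm.Main1} with a one-point upper bound for $\lambda_2$, using stochastic monotonicity.

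Combining the two steps, $\mathcal{U}^{\mathrm{bot},N}$ is precompact and any subsequential limit has the finite-dimensional laws of $\bigl\{(2\fq)^{-1/2}\bigl(\mathcal{A}_i(\fq t)-\fq^2 t^2\bigr)\bigr\}_{i\geq 1}$; since these determine the law on $C(\mathbb{N}\times\mathbb{R})$, the whole sequence converges to $\mathcal{U}^{\mathrm{bot},\infty}$, proving Theorem \ref{Thm.Main2}. I expect the main obstacle to be the supercritical kernel asymptotics — following the $cq$-pole through the contour deformation and showing, with uniform-in-compacts Gaussian tail control, that its residue is negligible precisely on the $N^{1/3}$ window around $C^{\mathrm{bot}}_N$ — with a secondary obstacle being the quantitative separation $\lambda_1-\lambda_2\gg N^{1/3}$ needed to run the Gibbs machinery for the sub-ensemble $\{\lambda_i\}_{i\geq 2}$.
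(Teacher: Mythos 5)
Your high-level plan — finite-dimensional convergence via the Pfaffian kernel, tightness via the Gibbs property, and identification of the limit via the separating-class property — is the same as the paper's. The substantive divergence is in how each step is executed. For the finite-dimensional step, you propose the classical Fredholm Pfaffian route (``upgrading pointwise kernel convergence to convergence with Gaussian-type tail bounds $\ldots$ which is what makes the Pfaffians of the finite correlation arrays converge''). The paper explicitly avoids this: Section 1.3 points out that this route requires controlling the full (multivariate) Fredholm Pfaffian series, which is heavy. Instead, the paper proves only uniform-on-compacts kernel convergence (Proposition \ref{Prop.KernelConvBottom}), which gives \emph{vague} convergence of the point processes, and then upgrades to finite-dimensional convergence via Lemma \ref{Lem.FDC} after establishing one-point tightness via Lemma \ref{Lem.tightcri}. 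The one-point tightness itself only needs a first-moment estimate, i.e., only the first term of the Fredholm expansion. This is a real simplification you would miss. Relatedly, your attribution of the index shift $i\mapsto i+1$ to a residue from a ``$cq$-pole'' is a heuristic that does not match the paper's mechanism: in the bottom-curve scaling the residue terms $R_{12}^N,R_{22}^N$ are simply exponentially small errors (because $S_1(z_c)<S_1(c)$, Lemma \ref{Lem.DiffS}), and the disappearance of the top curve is instead shown directly by proving $X_1^{j,N}\Rightarrow +\infty$ (equation (\ref{Eq.XEscape}), which crucially uses Theorem \ref{Thm.Main1}) and noting that vague convergence only sees compact sets.

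For tightness, your identification of the obstruction — that $\lambda_1$ acts as a ceiling for $\lambda_2$, so projecting to $i\ge 2$ destroys the interlacing Gibbs property — is correct, and your remedy (``push the ceiling to $+\infty$'') is the right idea. But saying the needed uniform separation $\lambda_1-\lambda_2\gg N^{1/3}$ ``comes from combining a one-point lower bound for $\lambda_1$ with a one-point upper bound for $\lambda_2$'' understates the work: one-point bounds alone do not give a bound uniform over a window of length $\Theta(N^{2/3})$. The paper constructs an auxiliary ensemble $\hat{\mathfrak{L}}^{\mathrm{bot},N}$ (Definition \ref{Def.AuxLE2}) that genuinely satisfies the interlacing Gibbs property by conditional resampling, proves it is close in total variation to the true bottom ensemble via a coupling (Proposition \ref{Prop.Proximity}), and the two key ingredients are Lemma \ref{GLELemma2} (a coupling that drops the top curve when its endpoints are high) and Lemma \ref{GLELemma1} (a no-dip bound for the second curve, which converts one-point endpoint control into a uniform bound along the whole window). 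Your sketch is not wrong, but these lemmas are where the actual content lies, and you would need something equivalent.
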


Let us give a (necessarily somewhat reductive) physical interpretation of the two theorems above. When $c \in (1, q^{-1})$, the weights along the main diagonal $\{w_{i,i}\}_{i \geq 1}$ are sufficiently large that they dominate the variational problem defining $G_{1}(m,n)$. Although we do not investigate the maximizing path geometry in this paper, we expect that the maximizing path makes an essentially linear beeline along the ``rich'' diagonal before departing to its final location. It spends enough time near the diagonal to experience a central limit theorem that ultimately leads to its Gaussian fluctuations.

Once this leading path has effectively harvested the diagonal's excess weight, it creates a form of energetic barrier for the remaining paths. These lower curves are forced slightly away from the diagonal and no longer benefit from its enhanced weights. In this displaced environment, they evolve in a regime that is largely unaffected by the diagonal's dominance and therefore develop Airy line ensemble statistics. In this sense, the top curve extracts the diagonal advantage, while the remaining curves revert to the universal KPZ fluctuation structure characteristic of the subleading ensemble. It would be very interesting to shed more light on the geometry of maximizing paths and to determine how much of the above heuristic description can be rigorously verified. We hope to address this and other questions in future work.

%
%
\subsection{Main ideas and paper outline}\label{Section1.3} The starting point of our analysis is a distributional equality between half-space LPP and the Pfaffian Schur processes from \cite{BR05}. The latter are recalled in Section \ref{Section2.1} and their relationship to LPP is detailed in Proposition \ref{Prop.LPPandSchur}. One key advantage of the Pfaffian Schur process is that it is a Pfaffian point process with an explicit correlation kernel, expressible as a double contour integral. The general formula for the kernel is presented in Proposition \ref{Prop.CorrKernel1}. In addition, we derive two alternative formulas for the correlation kernel in Lemmas \ref{Lem.PrelimitKernelBulk} and \ref{Lem.PrelimitKernelEdge}, which are suitable for asymptotic analysis in the two scaling regimes for the bottom curves and the top curves of our ensembles, respectively.

In Sections \ref{Section4} and \ref{Section5} we establish the uniform over compact sets convergence of our correlation kernels from Lemmas \ref{Lem.PrelimitKernelBulk} and \ref{Lem.PrelimitKernelEdge}. The precise statements are given in Propositions \ref{Prop.KernelConvBottom} and \ref{Prop.KernelConvTop}, and proved using the method of steepest descent. In order to carry out our arguments in these sections, we require suitable descent contours, and estimates for various functions along them, which is the content of Section \ref{Section3}. One of the challenges here was to find contours that work for the full range of parameters $q,c,\kappa$, and also for both of the scalings under consideration. 

The second key feature of the Pfaffian Schur process is that it has the structure of a {\em geometric line ensemble} that satisfies the {\em interlacing Gibbs property}. These terms are introduced in Section \ref{Section6.1} and the line ensemble structure of the Pfaffian Schur processes is detailed in Section \ref{Section6.3}. The main advantage of the interlacing Gibbs property for the present paper is that it effectively reduces the proofs of Theorems \ref{Thm.Main1} and \ref{Thm.Main2} to establishing finite-dimensional convergence, using the framework introduced in \cite{dimitrov2024tightness}. One of the reasons Theorem \ref{Thm.Main1} is established for $(\kappa_0,1)$ and not $[\kappa_0,1]$ is to make the results from \cite{dimitrov2024tightness} directly applicable. With some (modest) effort one could extend the convergence to the endpoints, but we do not pursue this here. 

The usual approach of showing the finite-dimensional convergence of the top curves of our line ensembles goes through computing the limit of a certain Fredholm Pfaffian built from our correlation kernel. The latter is given by an infinite series where each summand converges due to the kernel convergence from Proposition \ref{Prop.KernelConvTop}. The fact that the series converges is then established using a combination of upper-tail kernel estimates and Hadamard's inequality to control the summands, and the dominated convergence theorem to control the full series.

In this paper, however, we take a different approach. The kernel convergence from Proposition \ref{Prop.KernelConvTop} implies vague convergence of point processes, and in Lemma \ref{Lem.FDC} we show that the latter can be improved to finite-dimensional convergence in the presence of one-point tightness. In addition, Lemma \ref{Lem.tightcri} shows that one point tightness essentially follows from tightness from above when vague convergence of point processes is present. To prove tightness from above for our ensembles we simply estimate the expected values of the first moments of the corresponding point processes. At a high level, our approach requires that we only work with the first term (as opposed to the full series) of the Fredholm Pfaffian, thereby reducing technical computations and estimates significantly. The same approach is also employed for the bottom curves, and the advantages there are even more considerable, as for multiple curves one generally needs to deal with the partial derivatives of certain multivariate Fredholm-Pfaffian power series. We mention that our framework should be quite general, and applicable to a wide range of determinantal and Pfaffian point processes.

The finite-dimensional convergence of the top curve is established in Proposition \ref{Prop.FinitedimEdge}, and from it Theorem \ref{Thm.Main1} is quickly deduced in Section \ref{Section7.3} using the interlacing Gibbs property and the tightness criterion from \cite{dimitrov2024tightness}. The finite-dimensional convergence for the bottom curves is established in Proposition \ref{Prop.FinitedimBulk}; however, unlike the top curve case we cannot readily apply the results from \cite{dimitrov2024tightness}. The obstruction is that the interlacing Gibbs property is lost when we project our ensembles to curves of index $i \geq 2$. 

To overcome this, we introduce auxiliary ensembles that do satisfy the Gibbs property, and which are close to the ones we actually want to study. The precise definition of these ensembles and what we mean by ``close'' is detailed in Section \ref{Section8.3}. Roughly speaking these ensembles are built from the original ones by pushing the top curve to $+\infty$, which does not affect the curve distribution too much as the top curve is already macroscopically separated from the others due to Theorem \ref{Thm.Main1}. In Section \ref{Section8.4} we apply the tightness criterion from \cite{dimitrov2024tightness} to our auxiliary ensembles, and deduce tightness of the bottom curves. The latter and the finite-dimensional convergence from Proposition \ref{Prop.FinitedimBulk} then complete the proof of Proposition \ref{Prop.ConvBottom}, which is essentially a restatement of Theorem \ref{Thm.Main2} in terms of Pfaffian Schur processes.

%
%
\subsection{Related works and future directions}\label{Section1.4} 
Most previous works on symmetrized LPP have focused on the behavior of the model near the main diagonal, where the influence of the boundary parameter $c$ is most pronounced. While the asymptotic Gaussianity of $\lambda_1(n,n)$ has been investigated in \cite[Remark 4.3]{BBNV18} and \cite[Theorem 1.3]{BBCS18}, the full Brownian motion limit established in Theorem \ref{Thm.Main1} appears, to the best of our knowledge, to be new. There are, however, related results for positive-temperature analogues of our model, which we now discuss.

In the seminal work \cite{BCD25}, Barraquand, Corwin, and Das initiated a systematic study of general half-space Gibbsian line ensembles, a class that includes our LPP model as a special case. Although \cite{BCD25} focuses primarily on the half-space log-gamma polymer, it introduced a number of key ideas that have since inspired further developments in the study of directed half-space polymers and line ensembles; see \cite{DS25,DZ24,Gins} and the references therein. In particular, \cite{DZ24} investigates the half-space log-gamma polymer in the analogous supercritical regime, referred to there as the {\em bound phase}. The associated half-space log-gamma line ensemble exhibits a curve separation phenomenon between the first and second curves, in the same spirit as our results. 

The analysis in \cite{DZ24} relies almost exclusively on combinatorial arguments and properties of Gibbsian line ensembles, reflecting the fact that the log-gamma model does not enjoy the exact solvability available in our setting. As a consequence, their methods do not yield the functional limit theorems of the type proved here. While the authors obtain a local description of the top curve as a random walk --- an input that could, in principle, be used to establish convergence as in Theorem \ref{Thm.Main1} --- they do not address the distributional limits of the second, third, and subsequent curves. One possible strategy for proving an analogue of Theorem \ref{Thm.Main2} for the log-gamma line ensemble would be to show that the ensemble formed by the second, third, and subsequent curves is tight, globally parabolic, and satisfies the Brownian Gibbs property from \cite{CorHamA}. Establishing tightness of this ensemble would likely require one-point tightness of the second curve, potentially achievable using the arguments of \cite{DS25}, and then applying the tightness criterion in \cite{DW21}. Once tightness is obtained, one could invoke the strong characterization of the Airy line ensemble from \cite{AH23} to complete the argument.

Taken together with \cite{Z25}, the present work provides an essentially complete description of the line ensembles $\{\lambda_k(m,n)\}_{k \geq 1}$ away from the main diagonal $m = n$. What remains is to understand their behavior near the diagonal, where the limiting picture depends sensitively on the value of the boundary parameter $c$: whether $c \in (0,1)$, $c$ is critically scaled near $1$, or $c \in (1, q^{-1})$. So far, only the critical case has been analyzed. In \cite{DY25}, it was shown that under critical scaling, these ensembles converge to a one-parameter family of (critical) half-space Airy line ensembles. The subcritical regime $c \in (0,1)$ and supercritical regime $c \in (1, q^{-1})$ are currently under investigation in \cite{DDY26}. The prevailing expectation is as follows. When $c \in (0,1)$, the ensembles $\{\lambda_k(m,n)\}_{k \geq 1}$, when observed near the diagonal, converge to the pinned half-space Airy line ensemble that was recently constructed in \cite{DSY26}. When $c \in (1, q^{-1})$, the top curve separates from the rest as demonstrated in the present paper, while the remaining curves again converge to the pinned half-space Airy line ensemble, which by now is expected to represent the universal scaling limit for non-critical KPZ half-space models.

%
%
\section{Pfaffian Schur processes}\label{Section2} In this section we introduce a family of {\em Pfaffian Schur processes} and discuss some of their properties. Our models are special cases of the ones introduced in \cite{BR05}, which are themselves Pfaffian analogs of the determinantal Schur processes in \cite{OR03}. We refer interested readers to \cite{BR05, BBNV18, BBCS18} for more background on Pfaffian Schur processes.

%
%
\subsection{Definitions}\label{Section2.1} A {\em partition} is a non-increasing sequence of non-negative integers $\lambda = (\lambda_1 \geq \lambda_2 \geq \cdots)$ that is eventually zero. The {\em weight} of a partition $\lambda$ is given by $|\lambda| = \sum_{i \geq 1} \lambda_i$. There is a single partition of weight $0$, which we denote by $\emptyset$. Given two partitions $\lambda$ and $\mu$, we say that they {\em interlace}, denoted by $\lambda \succeq \mu$ or $\mu \preceq \lambda$, if $\lambda_1 \geq \mu_1 \geq \lambda_2 \geq \mu_2 \geq \cdots$.

For two partitions $\lambda, \mu$ we define the {\em skew Schur polynomial} in a single variable $x$ by
\begin{equation}\label{Eq.SkewSchur}
s_{\lambda/\mu}(x) = {\bf 1}\{ \mu \preceq \lambda\} \cdot x^{|\lambda| - |\mu|}.
\end{equation}
For a partition $\lambda$, we also define the {\em boundary monomial} in a single variable $c$ by
\begin{equation}\label{Eq.Tau}
\tau_{\lambda}(c) = c^{\sum_{j = 1}^{\infty} (-1)^{j-1}\lambda_j} = c^{\lambda_1 - \lambda_2 + \lambda_3 - \lambda_4 + \cdots}.
\end{equation}

With the above notation in place we can define our main object of interest.
\begin{definition}\label{Def.SchurProcess} Fix $N \in \mathbb{N}$ and $c, a_1, \dots, a_N \in [0,\infty)$, such that $ca_i, a_j a_k < 1$ for $i,j,k \in \{1, \dots, N\}$ with $j \neq k$. With these parameters we define the {\em Pfaffian Schur process} to be the probability distribution on sequences of partitions $\lambda^1, \lambda^2, \dots, \lambda^N$, given by
\begin{equation}\label{Eq.SchurProcess}
\mathbb{P}(\lambda^1, \dots, \lambda^N) = \frac{1}{Z} \cdot \tau_{\lambda^1}(c) s_{\lambda^1/\lambda^2}(a_{N}) s_{\lambda^2/\lambda^3}(a_{N-1}) \cdots s_{\lambda^{N}/\lambda^{N+1}}(a_1),
\end{equation}
where $\lambda^{N+1} = \emptyset$, and $Z$ is a normalization constant, computed explicitly in \cite[Proposition 3.2]{BR05}:
$$Z = \prod_{i = 1}^N \frac{1}{1 - ca_i} \times \prod_{1 \leq i < j \leq N} \frac{1}{1 - a_i a_j}.$$
In this paper we almost exclusively work with the above measures when 
\begin{equation}\label{Eq.HomogeneousParameters}
a_1 = a_2 = \cdots = a_N = q \in (0,1), \mbox{ and } c \in (0, q^{-1}).
\end{equation}
\end{definition}
\begin{remark}\label{Rem.BRSpecial} The measures in Definition \ref{Def.SchurProcess} are special cases of the Pfaffian Schur processes in \cite[Section 3]{BR05}, and correspond to setting $\rho_0^+ = c$, $\rho_i^+ = 0$ for $i = 1, \dots, N-1$, and $\rho_i^- = a_{N-i+1}$ for $i = 1, \dots, N$. 
\end{remark}

The following statement explains how the Schur processes are related to the half-space LPP model from Section \ref{Section1.1}. It is a special case of \cite[Theorem 2.7]{DY25b}. 
\begin{proposition}\label{Prop.LPPandSchur} Let $(\lambda^1, \dots,\lambda^N)$ be distributed as in (\ref{Eq.SchurProcess}) with parameters as in (\ref{Eq.HomogeneousParameters}). Then, $(\lambda^1, \dots, \lambda^N)$ has the same distribution as $(\lambda(N,N), \lambda(N-1,N), \dots, \lambda(1,N))$ from (\ref{Eq.LPTLambdas}).
\end{proposition}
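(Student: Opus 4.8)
The plan is to reduce the statement to the cited result \cite[Theorem 2.7]{DY25b} by matching parameters. First I would recall the general correspondence established in \cite{DY25b}: for half-space LPP with a symmetric weight array whose off-diagonal entries are geometric with a product-type parameter $a_ia_j$ and whose diagonal entries are geometric with parameter $ca_i$, the sequence of partitions $(\lambda(N,N), \lambda(N-1,N), \dots, \lambda(1,N))$ formed from the successive differences of higher-rank last passage times (as in (\ref{Eq.LPTLambdas})) is distributed as a Pfaffian Schur process with the specializations $\rho_0^+ = c$, $\rho_i^+ = 0$ for $1 \le i \le N-1$, and $\rho_i^- = a_{N-i+1}$ for $1 \le i \le N$. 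This is precisely the family of measures in Definition \ref{Def.SchurProcess}, as noted in Remark \ref{Rem.BRSpecial}.

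The second step is to check that the homogeneous specialization (\ref{Eq.HomogeneousParameters}), namely $a_1 = \cdots = a_N = q$ and $c \in (0, q^{-1})$, produces exactly the weight distribution of the half-space LPP model from Section \ref{Section1.1}. With $a_i = q$ for all $i$, the off-diagonal weights $w_{i,j}$ ($i \ne j$) are geometric with parameter $a_i a_j = q^2$, and the diagonal weights $w_{i,i}$ are geometric with parameter $c a_i = cq$; this matches the description of $W$ in Section \ref{Section1.1} verbatim. The admissibility conditions $c a_i < 1$ and $a_j a_k < 1$ from Definition \ref{Def.SchurProcess} translate to $cq < 1$ (i.e. $c < q^{-1}$) and $q^2 < 1$, which hold under our standing assumptions $q \in (0,1)$, $c \in (0, q^{-1})$.

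The third step is purely bookkeeping about orientation: I would verify that the indexing convention in \cite[Theorem 2.7]{DY25b} pairs the partition $\lambda^k$ appearing in (\ref{Eq.SchurProcess}) — the one attached to the variable $a_{N-k+1}$ in the skew Schur factor — with $\lambda(N-k+1, N)$ on the LPP side, so that the full ordered tuple $(\lambda^1, \dots, \lambda^N)$ corresponds to $(\lambda(N,N), \lambda(N-1,N), \dots, \lambda(1,N))$ as claimed. The interlacing pattern (\ref{Eq.InterlaceIntro}) for the LPP partitions and the interlacing built into the skew Schur polynomials (\ref{Eq.SkewSchur}) are consistent with this identification, which serves as a sanity check.

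I do not expect a genuine obstacle here, since the proposition is explicitly a special case of an existing theorem; the only real content is confirming the parameter dictionary and the orientation of the indexing. If anything, the one point requiring a little care is making sure the direction in which $m$ decreases from $N$ to $1$ matches the order in which the variables $a_N, a_{N-1}, \dots, a_1$ appear in (\ref{Eq.SchurProcess}), together with the boundary monomial $\tau_{\lambda^1}(c)$ being attached to the partition $\lambda(N,N)$ rather than $\lambda(1,N)$; this is exactly the form in which \cite[Theorem 2.7]{DY25b} is stated for the homogeneous case, so the proof amounts to citing it after the parameter match above.
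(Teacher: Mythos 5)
Your proposal correctly identifies that the proposition reduces to \cite[Theorem~2.7]{DY25b}, and the parameter dictionary you work out in the second step (setting $a_i = q$, checking $cq<1$ and $q^2<1$) is exactly right and matches the paper's implicit use of Remark~\ref{Rem.BRSpecial}. However, there is a gap in the first step that the paper's proof makes visible. \cite[Theorem~2.7]{DY25b} gives the Pfaffian Schur process representation for the partitions $\lambda(v_0),\lambda(v_1),\dots$ recorded along a \emph{down-right} lattice path $\gamma=(v_0,\dots,v_N)$. The tuple in the proposition, $(\lambda(N,N),\lambda(N-1,N),\dots,\lambda(1,N))$, decreases the first coordinate while holding the second fixed, so its index sequence is not a down-right path and the theorem does not apply to it directly — contrary to what your paraphrase of the ``general correspondence'' asserts. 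The paper's proof bridges this by first invoking the almost-sure symmetry \cite[(2.10)]{DY25b}, namely $\lambda(m,n)=\lambda(n,m)$ (a consequence of $w_{i,j}=w_{j,i}$), to rewrite the tuple as $(\lambda(N,N),\lambda(N,N-1),\dots,\lambda(N,1))$, whose index sequence \emph{is} the down-right path $\gamma$ with $v_i=(N,N-i)$; only then is Theorem~2.7 applied (with $M=0$). Your proposal never performs this conversion — you mention the symmetry of the weight array only in passing, as part of the hypotheses of the LPP model — so the ``orientation bookkeeping'' in your third step, as written, would be checking an identification that the cited theorem does not actually furnish. The fix is small but it is a genuine missing step, not merely a notational issue.
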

\begin{proof} From \cite[(2.10)]{DY25b} we have that almost surely $(\lambda(N,N), \lambda(N-1,N), \dots, \lambda(1,N)) = (\lambda(N,N), \lambda(N,N-1), \dots, \lambda(N,1))$. The statement now follows from \cite[Theorem 2.7]{DY25b} applied to $M = 0$ and the down-right path $\gamma = (v_0, \dots, v_N)$ with $v_i = (N, N-i)$.
\end{proof}

%
%
\subsection{Pfaffian point process structure}\label{Section2.2} In this section we explain how one can interpret the Pfaffian Schur processes as a {\em Pfaffian point process} (this is the origin of their name). In what follows we freely use the definitions and notation regarding Pfaffian point processes from \cite[Section 5.2]{DY25}. We also refer the interested reader to \cite[Appendix B]{OQR17} and \cite{R00} for additional background on Pfaffian point processes.

Given two integers $a \leq b$, we let $\llbracket a, b \rrbracket$ denote the set $\{a, a+1, \dots, b\}$. We also set $\llbracket a,b \rrbracket = \emptyset$ when $a > b$, $\llbracket a, \infty \rrbracket = \{a, a+1, a+2 , \dots \}$, $\llbracket - \infty, b\rrbracket = \{b, b-1, b-2, \dots\}$ and $\llbracket - \infty, \infty \rrbracket = \mathbb{Z}$. We fix $m \in \mathbb{N}$ and $1 \leq M_1 < M_2 < \cdots < M_m \leq N$. For a random sequence of partitions $(\lambda^1, \dots, \lambda^N)$, we define the point process $\mathfrak{S}(\lambda)$ on $ \mathbb{R}^2$ (which is supported on $\llbracket 1, m \rrbracket \times \mathbb{Z}$) by
\begin{equation}\label{Eq.PointProcessSchur}
\mathfrak{S}(\lambda)(A) = \sum_{i \geq 1} \sum_{j = 1}^m {\bf 1}\{(j, \lambda_i^{N - M_j + 1} - i) \in A\}.
\end{equation}
Below, we let $C_r$ be the positively oriented zero-centered circle of radius $r > 0$, and let $\operatorname{Mat}_2(\mathbb{C})$ be the set of $2 \times 2$ matrices with complex entries.
\begin{proposition}\label{Prop.CorrKernel1} Assume the same notation as in Definition \ref{Def.SchurProcess} with parameters as in (\ref{Eq.HomogeneousParameters}), and let $\mathfrak{S}(\lambda)$ be as in (\ref{Eq.PointProcessSchur}). Then, $\mathfrak{S}(\lambda)$ is a Pfaffian point process on $\mathbb{R}^2$ with reference measure given by the counting measure on $\llbracket 1, m \rrbracket \times \mathbb{Z}$ and with correlation kernel $\kgeo: (\llbracket 1, m \rrbracket \times \mathbb{Z})^2 \rightarrow \operatorname{Mat}_2(\mathbb{C})$, given as follows. For each $u,v \in \llbracket 1, m \rrbracket$ and $x,y \in \mathbb{Z}$
\begin{equation}\label{Eq.K11Geo}
\begin{split}
\kgeo_{11}(u,x;v,y) = &\frac{1}{(2\pi \im)^2} \oint_{C_{r_1}} dz \oint_{C_{r_1}} dw \frac{z-w}{(z^2 -1)(w^2 - 1)(zw - 1)} \cdot (1 - c/z) (1-c/w) \\
& \times z^{-x} w^{-y} \cdot (1- q/z)^N (1 - q/w)^N(1-qz)^{-M_u} (1- qw)^{-M_v},
\end{split}
\end{equation}
where $r_1 \in (1, q^{-1})$. In addition,
\begin{equation}\label{Eq.K12Geo}
\begin{split}
\kgeo_{12}(u,x;v,y) &= -\kgeo_{21}(v,y; u,x) = \frac{1}{(2\pi \im)^2} \oint_{C_{r^z_{12}}} dz \oint_{C_{r^w_{12}}} dw \frac{zw-1}{z(z-w)(z^2-1)} \cdot \frac{z-c}{w-c} \\
& \times z^{-x} w^{y} \cdot (1- q/z)^N (1 - q/w)^{-N}(1-qz)^{-M_u} (1- qw)^{M_v},
\end{split}
\end{equation}
where $r_{12}^z \in (1, q^{-1})$, $r^w_{12} > \max(c,q)$, and $r_{12}^w < r_{12}^z$ when $u \leq v$, while $r_{12}^z < r_{12}^w$ when $u > v$. Finally, 
\begin{equation}\label{Eq.K22Geo}
\begin{split}
\kgeo_{22}(u,x;v,y) = &\frac{1}{(2\pi \im)^2} \oint_{C_{r_2}}  dz \oint_{C_{r_2}} dw \frac{z-w}{zw -1} \cdot \frac{1}{(z-c)(w-c)}\cdot z^{x} w^{y} \\
& \times (1- q/z)^{-N} (1 - q/w)^{-N}(1-qz)^{M_u} (1- qw)^{M_v},
\end{split}
\end{equation}
where $r_2 > \max(c,q,1)$. 
\end{proposition}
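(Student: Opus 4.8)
The statement to prove is Proposition~\ref{Prop.CorrKernel1}: the Pfaffian point process structure of the Pfaffian Schur process with the explicit correlation kernel $\kgeo$ given by the three double-contour-integral formulas \eqref{Eq.K11Geo}, \eqref{Eq.K12Geo}, \eqref{Eq.K22Geo}.

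\textbf{Overall strategy.} The plan is to derive the result from the known correlation-kernel formula for general Pfaffian Schur processes in \cite{BR05} (or the reformulations in \cite{BBNV18, BBCS18}), specialized to the parameters from Remark~\ref{Rem.BRSpecial}. Concretely, one fixes the specialization $\rho_0^+ = c$, $\rho_i^+ = 0$ for $1 \leq i \leq N-1$, and $\rho_i^- = a_{N-i+1} = q$ for $1 \leq i \leq N$, and plugs it into the Fock-space / contour-integral formula for the Pfaffian correlation kernel of the Pfaffian Schur process. The point process $\mathfrak{S}(\lambda)$ in \eqref{Eq.PointProcessSchur} records the shifted particle positions $\lambda_i^{N-M_j+1} - i$ at the selected ``times'' $N-M_j+1$, which is exactly the standard way Pfaffian Schur processes are read as Pfaffian point processes; so the abstract theorem of \cite{BR05} applies verbatim once the specialization of the kernel is carried out. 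The bulk of the work is therefore the \emph{computation}: taking the general kernel, inserting the geometric/homogeneous parameters, and simplifying the generating functions into the stated form.

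\textbf{Key steps, in order.} First I would recall the general Pfaffian Schur kernel in the form most convenient for specialization. The kernel entries are built from the generating functions $F(z) = \prod_i \frac{1+\rho_i^- z}{\,\cdots\,}$ type expressions; under the homogeneous specialization \eqref{Eq.HomogeneousParameters} the ``minus'' parameters contribute a factor $(1-q/z)^{\pm N}$ (for a variable $z$ placed on a large or small circle depending on which entry) and $(1-qz)^{\mp M}$ from the dual side, while the single nonzero ``plus'' parameter $\rho_0^+ = c$ contributes the factors $(1 - c/z)$, $1/(z-c)$, or $(z-c)/(w-c)$ seen in \eqref{Eq.K11Geo}--\eqref{Eq.K22Geo}. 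Second, I would track the ``Pfaffian at the boundary'' factor coming from $\tau_{\lambda^1}(c)$ — this is the source of the rational prefactors $\frac{z-w}{(z^2-1)(w^2-1)(zw-1)}$, $\frac{zw-1}{z(z-w)(z^2-1)}$, $\frac{z-w}{zw-1}$ in the three entries, which encode the free-fermion two-point function with the half-space (GOE/GSE-type) boundary condition; getting these elementary symmetric rational kernels correct, including the $z \leftrightarrow w$ antisymmetry of $\kgeo_{11}$ and $\kgeo_{22}$ and the antisymmetry relation $\kgeo_{12}(u,x;v,y) = -\kgeo_{21}(v,y;u,x)$, is the delicate bookkeeping. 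Third, I would translate the ``time'' ordering: the selected indices $M_1 < \cdots < M_m$ enter through powers $(1-qz)^{\mp M_u}$, and the ordering constraint $u \leq v$ versus $u > v$ in \eqref{Eq.K12Geo} dictates the relative radii $r_{12}^w < r_{12}^z$ or $r_{12}^z < r_{12}^w$ — this is the usual ``contour nesting depends on the sign of $M_u - M_v$'' phenomenon for extended kernels, and I would verify it by checking which geometric series converge. Fourth, I would pin down the admissible radii: $r_1 \in (1, q^{-1})$ comes from needing $|z| > 1$ (so that $(z^2-1)^{-1}$, $(zw-1)^{-1}$ expand correctly and the $\tau$-factor is handled) and $|z| < q^{-1}$ (so that $(1-qz)^{-M_u}$ is a convergent power series), and similarly $r_2 > \max(c,q,1)$, $r_{12}^z \in (1,q^{-1})$, $r_{12}^w > \max(c,q)$ come from the locations of the poles at $z = \pm 1$, $z = q^{-1}$, $w = c$, $w = q^{-1}$, etc. Finally, I would note that since the formula is a rational contour integral, the particular radii within the stated ranges do not matter (no poles are crossed), which is what makes the kernel well-defined.

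\textbf{Main obstacle.} The conceptual content is entirely contained in \cite{BR05}; the real difficulty is the \emph{algebraic simplification} — starting from the somewhat opaque Fock-space formula for the Pfaffian Schur kernel and massaging the specialized generating functions into precisely the three compact forms stated, keeping all signs, all $z \leftrightarrow w$ symmetries, and all contour constraints exactly right. A secondary subtlety is matching conventions: \cite{BR05} (and \cite{BBNV18, BBCS18}, which use slightly different normalizations and in the exponential-LPP rather than geometric-LPP setting) parametrize the boundary and the point process differently, so one must carefully align the shift $\lambda_i^{N-M_j+1} - i$ in \eqref{Eq.PointProcessSchur} with their conventions and make the change of variables that sends their kernel to ours. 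I would handle this by first verifying the $m=1$, single-partition case against the known Pfaffian formula for $\mathbb{P}(\lambda_1^1 \leq \cdot)$ (which reproduces the Baik--Rains GOE/GSE crossover), then extending to general $m$ by tracking the extra $(1-qz)^{\mp M_u}$ factors and the contour nesting.
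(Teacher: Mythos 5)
Your approach matches the paper's, which justifies this proposition by citing \cite[Theorem 3.3]{BR05} specialized to the parameters of Remark~\ref{Rem.BRSpecial} (all $a_i = q$, boundary specialization $\rho_0^+ = c$), with the index translation $i = m-u+1$, $j = m-v+1$, $u = x$, $v = y$ and a change of variables $w \to 1/w$ inside $K_{12}$ and $w \to 1/w$, $z \to 1/z$ inside $K_{22}$. One pitfall your outline does not flag: the published formula for $K_{22}$ in \cite[Theorem 3.3]{BR05} contains a typo (the denominator factor $(1-zw)$ should read $(zw-1)$, cf.\ \cite[Remark 4.1]{BBCS18} and \cite[Remark 2.6]{G19}), so a naive match against the printed statement would produce a sign error in \eqref{Eq.K22Geo}.
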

\begin{remark}\label{Rem.CorrKernel} Proposition \ref{Prop.CorrKernel1} is a special case of \cite[Theorem 3.3]{BR05} with the convention $i = m - u + 1$, $j = m - v + 1$, $u = x$ and $v = y$, corresponding to the choice of specializations as in Remark \ref{Rem.BRSpecial} with all $a_i = q$. The only differences between our formulas and those in \cite[Theorem 3.3]{BR05}, see also \cite[Section 4.2]{BBCS18}, are that we changed variables $w \rightarrow 1/w$ within $K_{12}$ and $w \rightarrow 1/w, z \rightarrow 1/z$ within $K_{22}$. We also mention that the formula for $K_{22}$ in \cite[Theorem 3.3]{BR05} has a small typo and the factor $(1-zw)$ in the denominator of the integrand should be replaced with $(zw-1)$, cf. \cite[Remark 4.1]{BBCS18} and \cite[Remark 2.6]{G19}.
\end{remark}

%
%
\subsection{Parameter scaling for the bottom curves}\label{Section2.3} In this section we introduce a rescaling of the Pfaffian Schur processes from Definition \ref{Def.SchurProcess}, which captures the asymptotic behavior of $\{ \lambda_i^j: i \geq 2, j \in \llbracket 1, N \rrbracket\}$. Subsequently, we derive an alternative formula for the correlation kernel from Proposition \ref{Prop.CorrKernel1} that is suitable for asymptotic analysis in this scaling.

We summarize various parameters that appear throughout the paper in the following definition. 
\begin{definition}\label{Def.ParametersBulk} Fix $q \in (0,1)$, $c \in (1, q^{-1})$, and set
\begin{equation}\label{Eq.Kappa0}
\kappa_0 = \left( \frac{1 - qc}{c-q}\right)^2 \in (0,1).
\end{equation}
For $\kappa \in (\kappa_0, 1)$, we define
\begin{equation}\label{Eq.ParBulk1}
\begin{split}
&\sigmaq = \frac{q^{1/3} (q + \sqrt{\kappa})^{5/3} }{\kappa^{1/6} (1 - q^2)^{2/3}(1+q \sqrt{\kappa})^{1/3}}, \hspace{2mm} \fq = \frac{q^{1/3}}{2 \kappa^{2/3}(1-q^2)^{2/3} (q + \sqrt{\kappa})^{1/3} (1 + q \sqrt{\kappa})^{1/3}}, \hspace{2mm} \\
&\zc(\kappa) = \frac{1 + q \sqrt{\kappa}}{q + \sqrt{\kappa}}, \hspace{2mm} \pq= \frac{q (1 + q \sqrt{\kappa})  }{(1-q^2) \sqrt{\kappa} }, \hspace{2mm} \hq  = \frac{ q (q + 2 \sqrt{\kappa} + q \kappa)}{1-q^2}.
\end{split}
\end{equation}
We further set
\begin{equation}\label{Eq.ParEdge2}
\begin{split}
&\pp = \frac{qc}{1- qc}, \hspace{1mm}\sigmap = \sqrt{\pp (1 + \pp)} =  \frac{q^{1/2} c^{1/2}}{1 - qc}, \hspace{1mm} \hp(\kappa) =    \kappa \pp + \frac{q}{c-q} = \frac{qc \kappa ( c- q) + q (1-qc) }{(1- qc)(c -q)}.
\end{split}
\end{equation}
We mention that the above parameters satisfy $q^{-1} > c > \zc > 1 > q$. When $\kappa$ is clear from the context, we drop it from the notation and simply write $\zc$ and $\hp$. 
\end{definition}

We next introduce how we rescale our random partitions in the following definition.
\begin{definition}\label{Def.ScalingBulk} Assume the same parameters as in Definition \ref{Def.ParametersBulk}. Fix $m \in \mathbb{N}$, $t_1, \dots, t_m \in \mathbb{R}$ with $t_1 < t_2 < \cdots < t_m$, and set $\mathcal{T} = \{t_1, \dots, t_m\}$. We also define for $t \in \mathcal{T}$ the quantity $T_{t} = T_t(N) =  \lfloor t N^{2/3} \rfloor$ and the lattice $\Lambda_t(N) = a_t(N) \cdot \mathbb{Z} + b_t(N)$, where 
\begin{equation}\label{Eq.LatticeBulk}
a_t(N) = (\sigmaq\zc)^{-1} N^{-1/3}, \mbox{ and } b_t(N) = (\sigmaq\zc)^{-1} N^{-1/3} \cdot \left( - \hq N - \pq T_{t} \right).
\end{equation}

We let $\mathbb{P}_N$ be the Pfaffian Schur process from Definition \ref{Def.SchurProcess} with parameters as in (\ref{Eq.HomogeneousParameters}). Here, we assume that $N$ is sufficiently large so that 
\begin{equation}\label{Eq.LargeNBulk}
N \geq \lfloor \kappa N \rfloor +  T_{t_m}(N) >  \lfloor \kappa N \rfloor +  T_{t_{m-1}}(N) > \cdots >  \lfloor \kappa N \rfloor +  T_{t_1}(N) \geq 1.
\end{equation}
If $(\lambda^1, \dots, \lambda^N)$ have law $\mathbb{P}_N$, we define the random variables
\begin{equation}\label{Eq.XsBulk}
X_i^{j,N} = (\sigmaq\zc)^{-1} N^{-1/3} \cdot \left( \lambda_i^{N - \lfloor \kappa N \rfloor   -  T_{t_j} + 1} - \hq N - \pq T_{t_j}  - i\right) \mbox{ for } i \in \mathbb{N} \mbox{ and } j \in \llbracket 1, m \rrbracket.
\end{equation}
\end{definition}

We next introduce certain functions, contours and measures that will be used to define our alternative correlation kernel.
\begin{definition}\label{Def.SGBulk} Assume the same parameters as in Definition \ref{Def.ParametersBulk}. For $z \in \mathbb{C} \setminus \{0, q, q^{-1}\}$, we introduce the functions
\begin{equation}\label{Eq.SGBulk}
    \begin{split}
        &\SFb(z) = \log(1-q/z) - \kappa \cdot \log(1-qz) - \hq \cdot \log(z), \hspace{2mm} \bar{\SFb}(z) = \SFb(z) - \SFb(\zc), \\
        &\GFb(z) = -\log(1-qz) - \pq \cdot \log(z), \hspace{2mm} \bar{\GFb}(z) = \GFb(z) - \GFb(\zc).
    \end{split}
\end{equation}
In equation (\ref{Eq.SGBulk}) and in the rest of the paper we always take the principal branch of the logarithm.
\end{definition}

\begin{definition}\label{Def.ContoursBulk} Assume the same parameters as in Definition \ref{Def.ParametersBulk}. Suppose $x \in [\zc, c]$, $\theta \in (0, \pi)$, $R > q^{-1}$ and $r \in [0, q^{-1} - c]$. With this data we define the contour $C(x, \theta, R,r)$ as follows. Let $z^{\pm}$ be the points where the rays $\{x + te^{\pm \im \theta}: t \geq 0\}$ intersect the $x$-centered circle of radius $r$, and let $\zeta^{\pm}$ be the points where they intersect the $0$-centered circle of radius $R$. The contour $C(x, \theta, R,r)$ consists of three oriented segments that connect $\zeta^-$ to $z^-$, $z^-$ to $z^+$, and $z^+$ to $\zeta^+$, as well as the counterclockwise oriented circular arc of the $0$-centered circle that connects $\zeta^+$ to $\zeta^-$. See the left side of Figure \ref{Fig.ContoursBulk}. We also recall that $C_r$ is the positively oriented zero-centered circle of radius $r > 0$. 
\end{definition}

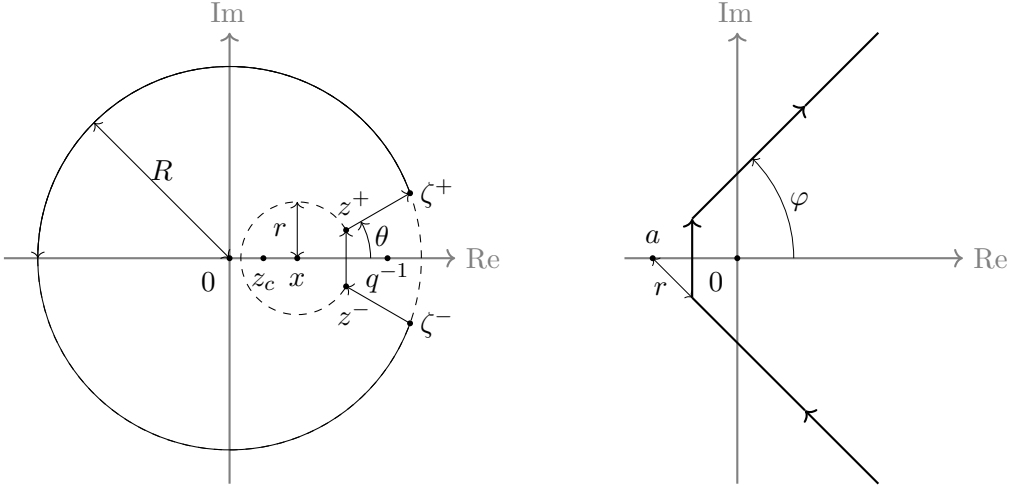
\begin{figure}[h]
    \centering
     \begin{tikzpicture}[scale=0.75]

        \def\tra{9} 
        \draw[->,thick,gray] (-4,0)--(4.0,0) node[right] {$\Real$};
  \draw[->,thick,gray] (0,-4)--(0,4) node[above] {$\Imag$};

  \def\x{1.2}   
  \def\R{3.4}   
  \def\r{1.0}   
  \def\th{30}   

  \node (O) at (0,0) {};
  \node (X) at (\x,0) {};
  
  \path (X) ++(\th:1) coordinate (Xp);
  \path (X) ++(-\th:1) coordinate (Xm);

  \path[name path=BigCirc]   (O) circle[radius=\R];
  \path[name path=SmallCirc] (X) circle[radius=\r];
  \path[name path=RayPlus]   (X) -- ($(X)+10*(\th:1)$);
  \path[name path=RayMinus]  (X) -- ($(X)+10*(-\th:1)$);

  \path[name intersections={of=RayPlus and BigCirc,by=ZetaPlus}];
  \path[name intersections={of=RayMinus and BigCirc,by=ZetaMinus}];
  \path[name intersections={of=RayPlus and SmallCirc,by=zPlus}];
  \path[name intersections={of=RayMinus and SmallCirc,by=zMinus}];


  \draw[dashed] (O) circle[radius=\R];
  \draw[dashed] (zPlus) arc (30:330:\r);

  \draw[->] (ZetaMinus) -- (zMinus);
  \draw[->] (zMinus) -- (zPlus);
  \draw[->] (zPlus) -- (ZetaPlus);
 \draw[<->,thin] (0,0) -- ++(135:{\R})node[pos=0.5, above] {$R$};
 \draw[<->,thin] (\x,0) -- ++(90:{\r}) node[pos=0.5, left] {$r$};
  \draw let
      \p1 = (ZetaPlus),
      \p2 = (ZetaMinus),
      \n1 = {atan2(\y1,\x1)},                    
      \n2 = {atan2(\y2,\x2)},                    
      \n3 = {ifthenelse(\n2<\n1, \n2+360, \n2)}, 
      \n4 = {(\n1+\n3)/2}                        
    in
      [->] (ZetaPlus) arc[start angle=\n1, end angle=\n4, radius=\R];

  \draw let
      \p1 = (ZetaPlus),
      \p2 = (ZetaMinus),
      \n1 = {atan2(\y1,\x1)},
      \n2 = {atan2(\y2,\x2)},
      \n3 = {ifthenelse(\n2<\n1, \n2+360, \n2)}
    in
      (ZetaPlus) arc[start angle=\n1, end angle=\n3, radius=\R];

  
  \fill (O) circle (1.5pt) node[below left=2pt] {$0$};
  \fill (X) circle (1.5pt) node[below=2pt] {$x$};
   \fill (0.6,0) circle (1.5pt) node[below=2pt]{$z_c$};
  \fill (2.8,0) circle (1.5pt) node[below=0pt, yshift = 2pt] {$q^{-1}$};
  \fill (ZetaPlus)  circle (1.5pt) node[right=0pt] {$\zeta^{+}$};
  \fill (ZetaMinus) circle (1.5pt) node[right=0pt] {$\zeta^{-}$};
  \fill (zPlus)     circle (1.5pt) node[above = 2pt, xshift = 3pt]  {$z^{+}$};
  \fill (zMinus)    circle (1.5pt) node[below =2pt, xshift = 3pt] {$z^{-}$};

  \draw[->] (\x+1.3,0) arc (0:30:1.3);
  \node at (\x+1.5,0.4) {$\theta$};

        \draw[->, thick, gray] ({\tra -2},0)--({\tra + 4},0) node[right]{$\Real$};
        \draw[->, thick, gray] ({\tra + 0},-4)--({\tra + 0},4) node[above]{$\Imag$};
        \fill (\tra, 0) circle (1.5pt) node[below left=2pt] {$0$};

        \fill (\tra - 1.5,0) circle (1.5pt) node[above =2pt] {$a$};
        \draw[->, thick] ({\tra -0.8},-0.7)--({\tra -0.8},0.7);
        \draw[->, thick] ({\tra -0.8},0.7)--({\tra + -0.8 + 2},0.7 + 2);
        \draw[-, thick] ({\tra + -0.8 + 2},0.7 + 2)--({\tra -0.8 + 3.3},4);
         \draw[-, thick] ({\tra -0.8 + 2},-2 - 0.7)--({\tra -0.8},-0.7);
        \draw[->, thick] ({\tra + -0.8 + 3.3},-4)--({\tra -0.8 + 2},-2 - 0.7);

         \draw[<->, very thin] (\tra -1.5,0)--(\tra - 0.8, -0.7) node[pos=0.5, below left, xshift = 2pt, yshift = 2pt] {$r$};
        \draw[->, very thin] (\tra + 1,0) arc (0:45:2.5);
        \node at (\tra + 1.1,1) {$\varphi$};

    \end{tikzpicture} 
    \caption{The left side depicts the contours $C(x,\theta, R,r)$ from Definition \ref{Def.ContoursBulk}. The right side depicts the contours $\mathcal{C}_a^{\varphi}[r]$, defined above (\ref{Eq.I11Vanish}).}
    \label{Fig.ContoursBulk}
\end{figure}

\begin{definition} \label{Def:ScaledLatticeMeasures}
Fix $m \in \mathbb{N}$,  $t_1 < \cdots < t_m$, and set $\mathcal{T} = \{t_1, \dots, t_m\}$. If $\nu = (\nu_{t_1}, \dots, \nu_{t_m})$ is an $m$-tuple of locally finite measures on $\mathbb{R}$, we define the (locally finite) measure $\mu_{\mathcal{T},\nu}$ on $\mathbb{R}^2$ by
\begin{equation}\label{Eq.MuToNu}
\mu_{\mathcal{T},\nu}(A) = \sum_{t \in \mathcal{T}} \nu_t(A_{t}), \mbox{ where } A_{t} = \{ y \in \mathbb{R}: (t,y) \in A\}.
\end{equation}
\end{definition}

With the above notation in place we can state the main result of this section.

\begin{lemma}\label{Lem.PrelimitKernelBulk} Assume the same notation as in Definitions \ref{Def.ParametersBulk}, \ref{Def.ScalingBulk}, and \ref{Def.SGBulk}. In addition, fix any $\theta \in (\pi/4, \pi/2)$ and $R > q^{-1}$, and let 
\begin{equation}\label{Eq.PrelimitContours}
    \begin{split}
        &\Gamma_N = C(\zc,\theta, R, \sec(\theta) N^{-1/3}), \hspace{2mm} \gamma_N = C\left(\zc, 2\pi/3, \sqrt{\zc^2 + N^{-1/6} - \zc N^{-1/12}}, 0  \right), \\
        &\tilde{\gamma}_N = C\left(\zc, \pi/2, \sqrt{\zc^2 + N^{-1/6}}, 0  \right)
    \end{split}
\end{equation}
be as in Definition \ref{Def.ContoursBulk}. Let $M^N$ be the point process on $\mathbb{R}^2$, formed by $\{(t_j, X_i^{j,N}): i \geq 1, j \in \llbracket 1, m\rrbracket \}$. Then, for all large $N$ (depending on $q,\kappa, c, \mathcal{T}$ and $\theta$) the $M^N$ is a Pfaffian point process with reference measure $\mu_{\mathcal{T},\nu(N)}$ and correlation kernel $K^N$ that are defined as follows. 

The measure $\mu_{\mathcal{T},\nu(N)}$ is as in Definition \ref{Def:ScaledLatticeMeasures} for $\nu(N) = (\nu_{t_1}(N), \dots, \nu_{t_m}(N))$, where $\nu_{t}(N)$ is $(\sigmaq\zc)^{-1} N^{-1/3}$ times the counting measure on $\Lambda_{t}(N)$. 

The correlation kernel $K^N: (\mathcal{T} \times \mathbb{R}) \times (\mathcal{T} \times \mathbb{R}) \rightarrow\operatorname{Mat}_2(\mathbb{C})$ takes the form
\begin{equation}\label{Eq:BulkKerDecomp}
\begin{split}
&K^N(s,x; t,y) = \begin{bmatrix}
    K^N_{11}(s,x;t,y) & K^N_{12}(s,x;t,y)\\
    K^N_{21}(s,x;t,y) & K^N_{22}(s,x;t,y) 
\end{bmatrix} \\
&= \begin{bmatrix}
    I^N_{11}(s,x;t,y) & I^N_{12}(s,x;t,y) + R^N_{12}(s,x;t,y) \\
    -I^N_{12}(t,y;s,x) - R^N_{12}(t,y;s,x) & I^N_{22}(s,x;t,y) + R^N_{22}(s,x;t,y) 
\end{bmatrix},
\end{split}
\end{equation}
where $I^N_{ij}(s,x;t,y), R^N_{ij}(s,x;t,y)$ are defined as follows. The kernels $I^N_{ij}$ are given by
\begin{equation}\label{Eq.DefIN11Bulk}
\begin{split}
&I^N_{11}(s,x;t,y) = \frac{1}{(2\pi \im)^{2}}\oint_{\Gamma_N} dz \oint_{\Gamma_N} dw F_{11}^N(z,w) H_{11}^N(z,w) \mbox{, where }\\
& F^N_{11}(z,w) = e^{N\bar{\SFb}(z) + N\bar{\SFb}(w)} \cdot e^{T_s \bar{\GFb}(z) + T_t \bar{\GFb}(w)} \cdot e^{- \sigmaq \zc x N^{1/3} \log (z/\zc) - \sigmaq \zc y N^{1/3} \log(w/\zc)  }, \\
&H^N_{11}(z,w) = \sigmaq \zc N^{1/3} \cdot  \frac{(z-w)( 1 - c/z) (1 - c/w)(1-qz)^{\kappa N - \lfloor \kappa N \rfloor}(1-qw)^{\kappa N - \lfloor \kappa N \rfloor} }{(z^{2}-1)(w^{2}-1)(zw-1)};
 \end{split}
\end{equation}
\begin{equation}\label{Eq.DefIN12Bulk}
\begin{split}
&I^N_{12}(s,x;t,y) = \frac{1}{(2\pi \im)^{2}}\oint_{\Gamma_N} dz \oint_{\gamma_N} dw F_{12}^N(z,w) H_{12}^N(z,w) \mbox{, where }\\
& F^N_{12}(z,w) = e^{N\bar{\SFb}(z) - N\bar{\SFb}(w)} \cdot e^{T_s \bar{\GFb}(z) - T_t \bar{\GFb}(w)} \cdot e^{- \sigmaq \zc x N^{1/3} \log (z/\zc) + \sigmaq \zc y N^{1/3} \log(w/\zc)  }, \\
&H^N_{12}(z,w) =  \sigmaq \zc N^{1/3} \cdot \frac{(zw - 1)(z-c)(1-qz)^{\kappa N - \lfloor \kappa N \rfloor}}{z (z-w)(z^2 - 1) (w-c)(1-qw)^{\kappa N - \lfloor \kappa N \rfloor}} ;
\end{split}
\end{equation}
\begin{equation}\label{Eq.DefIN22Bulk}
\begin{split}
&I^N_{22}(s,x;t,y) = \frac{1}{(2\pi \im)^{2}}\oint_{\gamma_N} dz \oint_{\gamma_N} dw F_{22}^N(z,w) H_{22}^N(z,w) \mbox{, where }\\
& F^N_{22}(z,w) = e^{-N\bar{\SFb}(z) - N\bar{\SFb}(w)} \cdot e^{-T_s \bar{\GFb}(z) - T_t \bar{\GFb}(w)} \cdot e^{ \sigmaq\zc x N^{1/3} \log (z/\zc) + \sigmaq \zc y N^{1/3} \log(w/\zc)  }, \\
&H^N_{22}(z,w) =   \sigmaq \zc N^{1/3}\cdot \frac{(z-w)}{(zw - 1)(z- c)(w - c)(1-qz)^{\kappa N - \lfloor \kappa N \rfloor}(1-qw)^{\kappa N - \lfloor \kappa N \rfloor}}.
\end{split}
\end{equation}
The kernels $R^N_{ij}$ are given by
\begin{equation}\label{Eq.DefRN12Bulk}
\begin{split}
R^N_{12}(s,x;t,y) = &\frac{-{\bf 1}\{s > t \} \cdot \sigmaq \zc N^{1/3} }{2 \pi \im} \oint_{\tilde{\gamma}_{N}}\frac{dz}{z} e^{(T_s - T_t) \bar{\GFb}(z)} \cdot e^{ \sigmaq \zc N^{1/3}(y-x)  \log (z/\zc)} \\
& + \frac{\sigmaq \zc N^{1/3}}{2\pi \im} \oint_{\Gamma_N} dz \frac{F_{12}^N(z,c) (zc-1) (1-qz)^{\kappa N - \lfloor \kappa N \rfloor}}{z(z^2-1)(1-qc)^{\kappa N - \lfloor \kappa N \rfloor}};
\end{split}
\end{equation}
\begin{equation}\label{Eq.DefRN22Bulk}
\begin{split}
R^N_{22}(s,x;t,y) = & \frac{\sigmaq \zc N^{1/3}}{2\pi \im} \oint_{\gamma_N} dz \frac{F_{22}^N(z,c)}{(c z - 1)(1-qz)^{\kappa N - \lfloor \kappa N \rfloor}(1-qc)^{\kappa N - \lfloor \kappa N \rfloor}}\\
&- \frac{\sigmaq \zc N^{1/3}}{2\pi \im} \oint_{\gamma_N} dw \frac{F_{22}^N(c,w)}{(c  w - 1)(1-qc)^{\kappa N - \lfloor \kappa N \rfloor}(1-qw)^{\kappa N - \lfloor \kappa N \rfloor}}.
\end{split}
\end{equation}
\end{lemma}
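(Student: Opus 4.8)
The plan is to derive the three alternative kernel formulas in~\eqref{Eq.DefIN11Bulk}--\eqref{Eq.DefRN22Bulk} from the base formulas~\eqref{Eq.K11Geo}--\eqref{Eq.K22Geo} of Proposition~\ref{Prop.CorrKernel1} by substituting the scaling of Definition~\ref{Def.ScalingBulk} and performing contour deformations. First I would recall that $M^N$ is the pushforward of the Pfaffian point process $\mathfrak{S}(\lambda)$ from~\eqref{Eq.PointProcessSchur} under the affine maps $x \mapsto a_{t_j}(N) x + b_{t_j}(N)$ in the second coordinate, with $M_j = N - \lfloor \kappa N\rfloor - T_{t_j} + 1$; since a Pfaffian point process stays Pfaffian under a change of the reference measure by an affine bijection, $M^N$ is Pfaffian with reference measure $\mu_{\mathcal{T},\nu(N)}$ and kernel obtained by a conjugation (the off-diagonal entries pick up Jacobian-type factors while the $2\times 2$ Pfaffian structure is preserved — this is exactly the computation in \cite[Section 5.2]{DY25}). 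It then remains to plug $x \to \lambda$-variable $= \sigmaq\zc N^{1/3} x + \hq N + \pq T_{t_j} + i$ into~\eqref{Eq.K11Geo}--\eqref{Eq.K22Geo} and to match powers: the $z^{-x}$, $(1-q/z)^N$, $(1-qz)^{-M_u}$ factors combine into $\exp(N\bar{\SFb}(z) + T_s\bar{\GFb}(z) - \sigmaq\zc x N^{1/3}\log(z/\zc))$ after pulling out the base point $\zc$ via the definitions of $\bar{\SFb}, \bar{\GFb}$ in~\eqref{Eq.SGBulk} and absorbing the constant $e^{-N\SFb(\zc) - T_s\GFb(\zc)}$-type prefactors into the conjugation; the leftover rational prefactor and the fractional power $(1-qz)^{\kappa N - \lfloor\kappa N\rfloor}$ (coming from $M_u = \kappa N - (\kappa N - \lfloor \kappa N\rfloor) - T_{t_j} + 1$, where the $-1$ shift is eaten by the $-i$ in the exponent of $X_i^{j,N}$) go into $H^N_{ij}$, and the extra $\sigmaq\zc N^{1/3}$ is the Jacobian. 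I would do this carefully for the $(1,1)$ entry first, then for $(1,2)$ and $(2,2)$, noting the constant prefactors cancel in the Pfaffian because they enter $K_{12}$ with reciprocal factors and $K_{11}, K_{22}$ with matching pairs, which is why they may simply be conjugated away.

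The second, more delicate task is to justify the specific contours in~\eqref{Eq.PrelimitContours} and to extract the residue terms $R^N_{12}, R^N_{22}$. The formulas~\eqref{Eq.K11Geo}--\eqref{Eq.K22Geo} hold for $r_1 \in (1,q^{-1})$, $r_2 > \max(c,q,1)$, and nested radii for $K_{12}$; I would deform these circular contours to the $\zc$-anchored contours $\Gamma_N, \gamma_N, \tilde\gamma_N$ of Definition~\ref{Def.ContoursBulk}. For $I^N_{11}$ the integrand in~\eqref{Eq.K11Geo} is analytic in the annulus $1 < |z| < q^{-1}$ — the factor $1-c/z$ is entire away from $0$ and $c > q^{-1}$ is \emph{not} inside — so deforming from $C_{r_1}$ to $\Gamma_N$ (which stays in $(1,q^{-1})$ since $\sec\theta N^{-1/3}$ is small and $R$ can be taken just below $q^{-1}$ after an initial harmless enlargement, or one notes the pole at $q^{-1}$ only appears through $(1-qz)^{-M_u}$ which for the \emph{bottom} exponent $\kappa N - \lfloor\kappa N\rfloor \in [0,1)$ is integrable — actually here $M_u$ is still $O(N)$, so $\Gamma_N$ must be kept strictly inside $|z| < q^{-1}$, forcing $R < q^{-1}$; I would state $R > q^{-1}$ only after cancelling the integer part of the power, i.e. the $R^N$ surgery) picks up no residues. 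For $I^N_{12}$ and $R^N_{12}$: here $w$ lives near $c$ and there is a genuine pole at $w = c$ from $\frac{z-c}{w-c}$; deforming the $w$-contour from $C_{r^w_{12}}$ (a circle of radius $> \max(c,q)$ hence enclosing $c$) outward to $\gamma_N$ (which is $\zc$-anchored with $\zc < c$, hence does \emph{not} enclose $c$) crosses the pole at $w = c$, producing the single integral $\oint_{\Gamma_N} F^N_{12}(z,c)(\cdots)\,dz$ in~\eqref{Eq.DefRN12Bulk}; the additional $-{\bf 1}\{s>t\}$ term comes from the $z = w$ pole encountered when $u > v$ (i.e. $M_u$ and $M_v$ ordered so that the nesting $r_{12}^z < r_{12}^w$ must flip), leaving a residue at $z = w$ of $\frac{zw-1}{z(z^2-1)}\big|_{z=w}$ times the product $F^N(z,c)/F^N(\cdot)$ collapsing to $\frac1z e^{(T_s-T_t)\bar\GFb(z) + \sigmaq\zc N^{1/3}(y-x)\log(z/\zc)}$, integrated over $\tilde\gamma_N$. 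Similarly, for $I^N_{22}$ and $R^N_{22}$ one deforms both contours in~\eqref{Eq.K22Geo} from large circles (enclosing $c$) inward to $\gamma_N$ (not enclosing $c$), crossing the poles at $z = c$ and $w = c$ from $\frac1{(z-c)(w-c)}$ and producing the two single integrals in~\eqref{Eq.DefRN22Bulk}; the $z=w$ interaction contributes nothing to $K_{22}$ since $\frac{z-w}{zw-1}$ vanishes at $z=w$.

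The main obstacle I anticipate is the \textbf{bookkeeping of the integer-part powers and the contour-radius constraints near $q^{-1}$}. The exponents $(1-qz)^{-M_u}$ have $M_u = N - \lfloor \kappa N\rfloor - T_{t_j} + 1$, whereas the clean exponential form $e^{N\bar\SFb(z) + T_s\bar\GFb(z)}$ wants integer multiples $N$ and $T_s = \lfloor t_s N^{2/3}\rfloor$ of $\log(1-qz)$; the discrepancy is precisely the fractional power $(1-qz)^{\kappa N - \lfloor\kappa N\rfloor}$ together with a single $(1-qz)^{-1}$ (and its companions from $\log z$), all of which must be tracked into $H^N_{ij}$ — getting every one of these boundary terms and the sign of each residue correct, while simultaneously ensuring the deformed contours avoid all singularities (the pole of $(1-qz)^{-M_u}$ at $q^{-1}$ for $I^N_{11}$, which is why $\Gamma_N$ for the $(1,1)$-block genuinely cannot cross $q^{-1}$ — note $\Gamma_N = C(\zc,\theta,R,\sec\theta N^{-1/3})$ has its outer arc at radius $R > q^{-1}$, so one must argue the net contribution of the portion beyond $q^{-1}$ vanishes, presumably because the integer part of the power has been removed leaving only the integrable fractional part — this is the subtle point that makes the choice $R > q^{-1}$ consistent). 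I would organize the proof so that one first rewrites each base integrand exactly, then does the contour deformation as a separate clean step invoking Cauchy's theorem plus explicit residue computations, and finally verifies the Pfaffian structure is preserved under the affine reference-measure change by citing \cite[Section 5.2]{DY25} and \cite[Appendix B]{OQR17}.
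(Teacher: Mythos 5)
Your overall strategy matches the paper's proof: push the Pfaffian point process $\mathfrak{S}(\lambda)$ forward through the affine scaling of Definition~\ref{Def.ScalingBulk} and conjugate the kernel using the change-of-variables rules in \cite[Section~5.2]{DY25}, then deform the circular contours of Proposition~\ref{Prop.CorrKernel1} to $\Gamma_N$, $\gamma_N$, $\tilde\gamma_N$, collecting the residues at $w=c$, $z=c$ (and, when $s>t$, at $z=w$) as the $R^N$ terms. The residue bookkeeping you sketch for $K_{12}$ and $K_{22}$ is essentially what the paper does.

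However, there is a genuine gap in your handling of the deformation $C_{r_1}\to\Gamma_N$ for the $(1,1)$ block, and it reflects a misreading of the geometry of $\Gamma_N$. You write that since $(1-qz)^{-M_u}$ with $M_u=O(N)$ has a pole at $z=q^{-1}$, the contour must stay inside $|z|<q^{-1}$ (``forcing $R<q^{-1}$''), and you then try to reconcile the hypothesis $R>q^{-1}$ by arguing that ``the net contribution of the portion beyond $q^{-1}$ vanishes, presumably because the integer part of the power has been removed leaving only the integrable fractional part.'' This argument is incorrect and would not go through: after the conjugation, the integer part of the power of $(1-qz)$ has \emph{not} been removed from the integrand; it is merely rewritten inside $e^{N\bar{\SFb}(z)+T_s\bar{\GFb}(z)}$ (which still contains the term $-\kappa N\log(1-qz)-T_s\log(1-qz)$), and the factor $(1-qz)^{\kappa N-\lfloor\kappa N\rfloor}$ in $H^N_{11}$ is a positive fractional power, i.e.\ a zero, not the remaining piece of a pole. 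More to the point, no vanishing is needed at all. The issue is not whether the contour reaches modulus $|z|>q^{-1}$, but whether it winds around the single point $z=q^{-1}$. By construction (Definition~\ref{Def.ContoursBulk}, left panel of Figure~\ref{Fig.ContoursBulk}), $C(\zc,\theta,R,\sec\theta\,N^{-1/3})$ carves out the wedge bounded by the two rays $\{\zc+te^{\pm\im\theta}\}$ and the inner vertical segment at $\Real z=\zc+N^{-1/3}$; this excluded wedge contains the whole real-axis segment $(\zc+N^{-1/3},R)$ and in particular $q^{-1}$. Thus $\Gamma_N$ has winding number zero around $q^{-1}$, the same as $C_{r_1}$ with $r_1\in(1,q^{-1})$, so Cauchy's theorem applies directly with no residue and no vanishing estimate. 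This is exactly why the lemma can (and must) allow $R>q^{-1}$: the outer arc of $\Gamma_N$ is taken large enough to eventually absorb the point $c$ for the $K_{12}$ and $K_{22}$ deformations, yet $q^{-1}$ still lies outside the contour because it sits in the carved-out wedge. As a small additional slip, you write ``$c>q^{-1}$ is not inside,'' but the standing hypothesis is $c\in(1,q^{-1})$, i.e.\ $c<q^{-1}$; the factor $1-c/z$ in $\kgeo_{11}$ contributes a zero at $z=c$, not a pole, so its location is irrelevant to that deformation in any case.
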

\begin{proof} From Definition \ref{Def.ParametersBulk} we have $q^{-1} > c > \zc > 1$, and so we can find $N_0$, depending on $q, \kappa, c$ and $\theta$, such that for $N \geq N_0$ we have $q^{-1} - c \geq  \sec(\theta) N^{-1/3}$ and also for $z \in \Gamma_N$ and $w \in \gamma_N$
\begin{equation}\label{Eq.ContoursNestedBulk}
|z| \geq \zc + N^{-1/3} > \zc \geq |w| > \zc - N^{-1/12} > 1.   
\end{equation}
Throughout the proof we assume that $N$ is sufficiently large so that $N \geq N_0$ and (\ref{Eq.LargeNBulk}) holds. Let $f: \mathbb{R} \rightarrow \mathbb{R}$ be a piece-wise linear increasing bijection such that $f(i) = t_i$ for $i \in \llbracket 1, m \rrbracket$. Define $\phi_N: \mathbb{R}^2 \rightarrow \mathbb{R}^2$ through 
$$\phi_N(s, x) = \left(f(s), (\sigmaq\zc)^{-1} N^{-1/3} \cdot \left( x- \hq N - \pq \lfloor f(s) N^{2/3} \rfloor \right) \right),$$   
and observe that $M^N = \mathfrak{S}(\lambda) \phi_N^{-1}$, where $\mathfrak{S}(\lambda)$ is as in (\ref{Eq.PointProcessSchur}) for $M_j = \lfloor \kappa N \rfloor + T_{t_j}$. It follows from Proposition \ref{Prop.CorrKernel1} and the change of variables formula \cite[Proposition 5.8(5)]{DY25} with the above $\phi_N$, \cite[Proposition 5.8(4)]{DY25} with 
\begin{equation}
f(s,x) =  \exp \left(\sigmaq \zc x N^{1/3} \cdot \log (\zc) - T_s \GFb(\zc) - N \SFb(\zc) \right),
\end{equation}
and \cite[Proposition 5.8(4)]{DY25} with $c_1 = c_2 = (\sigmaq\zc)^{1/2} N^{1/6}$ that $M^N$ is a Pfaffian point process with reference measure $\mu_{\mathcal{T},\nu(N)}$ and correlation kernel $\tilde{K}^N: (\mathcal{T} \times \mathbb{R})^2 \rightarrow\operatorname{Mat}_2(\mathbb{C})$, given by
\begin{equation*}
\tilde{K}^N(s,x;t,y) = \begin{bmatrix} \sigmaq \zc N^{1/3} f(s,x)f(t,y) \kgeo_{11}(\tilde{s},\tilde{x}; \tilde{t},\tilde{y}) &  \sigmaq \zc N^{1/3} \frac{f(s,x)}{f(t,y)} \kgeo_{12}(\tilde{s},\tilde{x}; \tilde{t},\tilde{y}) \\ \sigmaq \zc N^{1/3} \frac{f(t,y)}{f(s,x)} \kgeo_{21}(\tilde{s},\tilde{x}; \tilde{t},\tilde{y}) & \sigmaq \zc N^{1/3} \frac{1}{f(s,x)f(t,y)} \kgeo_{22}(\tilde{s},\tilde{x}; \tilde{t},\tilde{y}) \end{bmatrix},
\end{equation*}
where $\kgeo$ is as in Proposition \ref{Prop.CorrKernel1}, $\tilde{s} = f^{-1}(s)$, $\tilde{t} = f^{-1}(t)$ and
\begin{equation*}
\tilde{x} = \hq N + \pq T_s + \sigmaq \zc N^{1/3} x, \hspace{2mm}\tilde{y} = \hq N + \pq T_t + \sigmaq \zc N^{1/3} y.
\end{equation*}
All that remains is to show that $\tilde{K}^N$ agrees with $K^N$ as in the statement of the lemma.\\

We note that the following identities hold:
\begin{equation}\label{Eq.ChangeVarsBulk}
\begin{split}
&z^{\mp \tilde{x}} (1-q/z)^{\pm N}(1-qz)^{\mp \kappa N \mp T_s} f(s,x)^{\pm 1} = e^{\pm N \bar{\SFb}(z) \pm T_s \bar{\GFb}(z) \mp \sigmaq \zc x N^{1/3} \log(z/\zc) }, \\
&w^{\mp \tilde{y}} (1-q/w)^{\pm N}(1-qw)^{\mp \kappa N \mp T_t} f(t,y)^{\pm 1} = e^{ \pm N \bar{\SFb}(w) \pm T_t \bar{\GFb}(w) \mp \sigmaq \zc y  N^{1/3} \log(w/\zc)}.
\end{split}
\end{equation}

{\bf \raggedleft Matching $K^N_{11}$.} We may deform both contours $C_{r_1}$ in the definition of $\kgeo_{11}$ in Proposition \ref{Prop.CorrKernel1} to $\Gamma_N$ without crossing any of the poles of the integrand and hence without affecting the value of the integral by Cauchy's theorem. The reason we do not cross any poles is because $\Gamma_N$ encloses the unit circle $C_1$, see (\ref{Eq.ContoursNestedBulk}). After we perform the deformation, multiply by $\sigmaq \zc N^{1/3} f(s,x)f(t,y)$ and apply (\ref{Eq.ChangeVarsBulk})  we obtain $\tilde{K}^N_{11}( s,x; t,y) = I^N_{11}(s,x; t,y)$.\\

{\bf \raggedleft Matching  $K^N_{12}$ and $K^N_{21}$.} Since $\tilde{K}^N$ and $K^N$ are both skew-symmetric, it suffices to match $K^N_{12}$. We proceed to deform $C_{r_{12}^w}$ to $\gamma_N$, and $C_{r_{12}^z}$ to $\Gamma_N$. If $s \leq t$, then in the process of deformation we cross only the simple pole at $w = c$. If $s > t$, we further cross the simple pole at $z = w$. By the Residue theorem we obtain
\begin{equation}\label{eq:K12Res}
\begin{split}
&\kgeo_{12}(\tilde{s},\tilde{x}; \tilde{t},\tilde{y}) = \frac{1}{(2\pi \im)^{2}}\oint_{\Gamma_N} d  z \oint_{\gamma_N}  dw  \frac{zw-1}{z(z-w)(z^{2}-1)} \cdot \frac{z-c}{w-c} \cdot  z^{-\tilde{x} } w^{\tilde{y} }  \\
&  \times     (1-q/z)^{N} (1-q/w)^{-N}(1-qz)^{- \lfloor \kappa N \rfloor - T_s}   (1-qw)^{\lfloor \kappa N \rfloor + T_t} \\
& + \frac{1}{2\pi \im}\oint_{\Gamma_N} d  z  \frac{zc-1}{z(z^{2}-1)} \cdot  z^{-\tilde{x} } c^{\tilde{y} }  \cdot  (1-q/z)^{N} (1-q/c)^{-N}(1-qz)^{- \lfloor \kappa N \rfloor - T_s}   (1-qc)^{\lfloor \kappa N \rfloor + T_t} \\
& + \frac{-{\bf 1}\{s > t\} }{2\pi \im} \oint_{\Gamma_N} \frac{dz}{z} (1-q/z)^{T_t-T_s} \cdot z^{\tilde{y}-\tilde{x}}.
\end{split}
\end{equation}
Using (\ref{Eq.ChangeVarsBulk}), one readily verifies that 
\begin{equation}\label{eq:K12Match1}
\begin{split}
&\sigmaq \zc N^{1/3}\cdot \frac{f(s,x)}{f(t,y)} \times [\mbox{lines 1 and 2 in (\ref{eq:K12Res})}] =  I_{12}^N(s,x; t,y) , \mbox{ and } \\
&\sigmaq \zc N^{1/3} \cdot \frac{f(s,x)}{f(t,y)} \times [\mbox{lines 3 and 4 in (\ref{eq:K12Res})}] = R_{12}^N(s,x;t,y).
\end{split}
\end{equation}
We mention that in the second line in (\ref{eq:K12Match1}), when matching the fourth line in (\ref{eq:K12Res}) with the first line in (\ref{Eq.DefRN12Bulk}) we deformed $\Gamma_N$ to $\tilde{\gamma}_N$, which can be done without crossing any poles. Equations (\ref{eq:K12Res}) and (\ref{eq:K12Match1}) show that $\tilde{K}^N_{12}( s,x; t,y) = I^N_{12}(s,x; t,y) + R^N_{12}(s,x;t,y)$.\\

{\bf \raggedleft Matching  $K^N_{22}$.} Starting from the formula for $\kgeo_{22}$ in Proposition \ref{Prop.CorrKernel1} with $r_2 $ large (say $r_2 \geq 1 + c$), we deform both contours $C_{r_2}$ to $\gamma_N$. In the process of deformation we pick up two residues from the simple poles at $w = c$ and $z = c$, see (\ref{Eq.ContoursNestedBulk}). We thus obtain the formula
\begin{equation}\label{eq:K22Res1}
\begin{split}
&\kgeo_{22}(\tilde{s},\tilde{x}; \tilde{t},\tilde{y}) = \frac{1}{(2\pi \im)^{2}}\oint_{\gamma_N} d  z \oint_{\gamma_N}  dw \frac{z-w}{zw-1} \cdot \frac{1}{(z-c)(w-c)} \cdot z^{\tilde{x}}w^{\tilde{y}} \\
&\times  (1-q/z)^{-N}(1-q/w)^{-N} (1-qz)^{\lfloor \kappa N \rfloor + T_s} (1-qw)^{\lfloor \kappa N \rfloor + T_t} \\
& + \frac{1}{2\pi \im} \oint_{\gamma_N} d z  \frac{1}{zc-1} \cdot  z^{\tilde{x}}c^{\tilde{y}}  \cdot (1-q/z)^{-N}(1-q/c)^{-N} (1-qz)^{\lfloor \kappa N \rfloor + T_s} (1-qc)^{\lfloor \kappa N \rfloor + T_t} \\
& - \frac{1}{2\pi \im} \oint_{\gamma_N} d w  \frac{1}{wc-1} \cdot  c^{\tilde{x}}w^{\tilde{y}}  \cdot (1-q/c)^{-N}(1-q/w)^{-N} (1-qc)^{\lfloor \kappa N \rfloor + T_s} (1-qw)^{\lfloor \kappa N \rfloor + T_t}.
\end{split}
\end{equation}
Using (\ref{Eq.ChangeVarsBulk}), one readily verifies that 
\begin{equation}\label{eq:K22Match1}
\begin{split}
&\sigmaq \zc N^{1/3} \cdot \frac{1}{f(s,x)f(t,y)} \times [\mbox{lines 1 and 2 in (\ref{eq:K22Res1})}] = I_{22}^N(s,x; t,y) \\
&\sigmaq \zc N^{1/3} \cdot  \frac{1}{f(s,x)f(t,y)} \times [\mbox{lines 3 and 4 in (\ref{eq:K22Res1})}] = R_{22}^N(s,x; t,y).
\end{split}
\end{equation}
Equations (\ref{eq:K22Res1}) and (\ref{eq:K22Match1}) show that $\tilde{K}^N_{22}( s,x; t,y) = I^N_{22}(s,x; t,y) + R^N_{22}(s,x;t,y)$.
\end{proof}

%
%
\subsection{Parameter scaling for the top curve}\label{Section2.4} 
In this section we introduce a rescaling of the Schur processes from Definition \ref{Def.SchurProcess}, which captures the asymptotic behavior of $\{ \lambda_1^j: j \in \llbracket 1, N \rrbracket\}$. Subsequently we derive an alternative formula for the correlation kernel from Proposition \ref{Prop.CorrKernel1} that is suitable for the asymptotic analysis in this scaling.

We introduce how we rescale our random partitions in the following definition.
\begin{definition}\label{Def.ScalingEdge} Assume the same parameters as in Definition \ref{Def.ParametersBulk}. We further fix $m \in \mathbb{N}$, $\kappa_1, \dots, \kappa_m$, such that 
\begin{equation}\label{IneqKappa}
\kappa_0 < \kappa_1 < \kappa_2 < \cdots < \kappa_m < 1,
\end{equation}
and set $\mathcal{T} = \{\kappa_1, \dots, \kappa_m\}$. For $\kappa \in \mathcal{T}$, we define the lattice $\Lambda_{\kappa}(N) = a_{\kappa}(N) \cdot \mathbb{Z} + b_{\kappa}(N)$, where 
\begin{equation}\label{Eq.LatticeEdge}
a_{\kappa}(N) = \sigmap^{-1} N^{-1/2}, \mbox{ and } b_{\kappa}(N) =  - \sigmap^{-1}N^{1/2} \cdot \hp(\kappa).
\end{equation}

We let $\mathbb{P}_N$ be the Pfaffian Schur process from Definition \ref{Def.SchurProcess} with parameters as in (\ref{Eq.HomogeneousParameters}). Here, we assume that $N$ is sufficiently large so that 
\begin{equation}\label{Eq.LargeNEdge}
N \geq \lfloor \kappa_m N \rfloor >  \lfloor \kappa_{m-1} N \rfloor > \cdots > \lfloor \kappa_1 N \rfloor \geq 1.
\end{equation}
If $(\lambda^1, \dots, \lambda^N)$ have law $\mathbb{P}_N$, we define the random variables
\begin{equation}\label{Eq.YEdge}
Y_i^{j,N} = \sigmap^{-1} N^{-1/2} \cdot \left( \lambda_i^{N - \lfloor \kappa_j N \rfloor + 1} - \hp(\kappa_j)  N  - i\right) \mbox{ for } i \in \mathbb{N} \mbox{ and } j \in \llbracket 1, m \rrbracket.
\end{equation}
\end{definition}

We next introduce certain functions that will be used to define our alternative correlation kernel.
\begin{definition}\label{Def.SGEdge} Assume the same parameters as in Definition \ref{Def.ParametersBulk}. For $z \in \mathbb{C} \setminus \{0, q, q^{-1}\}$, we introduce the functions
\begin{equation}\label{Eq.SGEdge}
    \begin{split}
        &\SFt(\kappa,z) = \log(1-q/z) - \kappa \cdot \log(1-qz) - \hp (\kappa)\cdot \log(z), \hspace{2mm} \bar{\SFt}(\kappa,z) = \SFt(\kappa,z) - \SFt(\kappa, c), \\
       &\GFt(z) = -\log(1-qz) - \pp \cdot \log(z), \hspace{2mm} \bar{\GFt}(z) = \GFt(z) - \GFt(c).
    \end{split}
\end{equation}
When $\kappa$ is clear from the context, we drop it from the notation and simply write $\SFt(z)$. 
\end{definition}

With the above notation in place we can state the main result of this section.
\begin{lemma}\label{Lem.PrelimitKernelEdge} Assume the same notation as in Definitions \ref{Def.ParametersBulk}, \ref{Def.ScalingEdge}, and \ref{Def.SGEdge}. In addition, fix parameters $\theta_{\kappa_1}, \dots, \theta_{\kappa_m} \in (\pi/4, \pi/2)$, $R_{\kappa_1}, \dots, R_{\kappa_m} > q^{-1}$, and the contours
\begin{equation}\label{Eq.ContoursEdge}
\begin{split}
&\Gamma_{\kappa, N} = C\left(c,\theta_{\kappa}, R_\kappa, \sec(\theta_{\kappa}) N^{-1/2}\right), \hspace{2mm} \gamma_{\kappa} = C_{\zc(\kappa)}\mbox{ for } \kappa \in \mathcal{T}, \mbox{ and } \\
& \tilde{\gamma}_N = C\left(c, \pi/2, \sqrt{c^2 + N^{-1/6}}, 0 \right),
\end{split}
\end{equation}
as in Definition \ref{Def.ContoursBulk}. Let $M^N$ be the point process on $\mathbb{R}^2$, formed by $\{(\kappa_j, Y_i^{j,N}): i \geq 1, j \in \llbracket 1, m\rrbracket \}$. Then, for all large $N$ (depending on $q, c, \mathcal{T}$ and $\{\theta_{\kappa}: \kappa \in \mathcal{T}\}$) the $M^N$ is a Pfaffian point process with reference measure $\mu_{\mathcal{T},\nu(N)}$ and correlation kernel $K^N$ that are defined as follows. 

The measure $\mu_{\mathcal{T},\nu(N)}$ is as in Definition \ref{Def:ScaledLatticeMeasures} for $\nu(N) = (\nu_{\kappa_1}(N), \dots, \nu_{\kappa_m}(N))$, where $\nu_{\kappa}(N)$ is $\sigmap^{-1} N^{-1/2}$ times the counting measure on $\Lambda_{\kappa}(N)$. 

The correlation kernel $K^N: (\mathcal{T} \times \mathbb{R}) \times (\mathcal{T} \times \mathbb{R}) \rightarrow\operatorname{Mat}_2(\mathbb{C})$ takes the form
\begin{equation}\label{Eq:EdgeKerDecomp}
\begin{split}
&K^N(s,x; t,y) = \begin{bmatrix}
    K^N_{11}(s,x;t,y) & K^N_{12}(s,x;t,y)\\
    K^N_{21}(s,x;t,y) & K^N_{22}(s,x;t,y) 
\end{bmatrix} \\
&= \begin{bmatrix}
    I^N_{11}(s,x;t,y) & I^N_{12}(s,x;t,y) + R^N_{12}(s,x;t,y) \\
    -I^N_{12}(t,y;s,x) - R^N_{12}(t,y;s,x) & I^N_{22}(s,x;t,y) + R^N_{22}(s,x;t,y)
\end{bmatrix},
\end{split}
\end{equation}
where $I^N_{ij}(s,x;t,y), R^N_{ij}(s,x;t,y)$ are defined as follows. The kernels $I^N_{ij}$ are given by
\begin{equation}\label{Eq.DefIN11Edge}
\begin{split}
&I^N_{11}(s,x;t,y) = \frac{1}{(2\pi \im)^{2}}\oint_{\Gamma_{s, N}} dz \oint_{\Gamma_{t, N}} dw F_{11}^N(z,w) H_{11}^N(z,w) \mbox{, where }\\
& F^N_{11}(z,w) = e^{N\bar{\SFt}(s, z) + N\bar{\SFt}(t, w)} \cdot e^{-  \sigmap x N^{1/2} \log (z/c) -  \sigmap y N^{1/2} \log(w/c)  }, \\
&H^N_{11}(z,w) = \sigmap N^{1/2} \cdot  \frac{(z-w)( 1 - c/z) (1 - c/w)(1-qz)^{s N - \lfloor s N \rfloor}(1-qw)^{t N - \lfloor t N \rfloor} }{(z^{2}-1)(w^{2}-1)(zw-1)(1-qc)^{s N - \lfloor s N \rfloor}(1-qc)^{t N - \lfloor t N \rfloor} };
\end{split}
\end{equation}
\begin{equation}\label{Eq.DefIN12Edge}
\begin{split}
&I^N_{12}(s,x;t,y) = \frac{1}{(2\pi \im)^{2}}\oint_{\Gamma_{s, N}} dz \oint_{\gamma_{t}} dw F_{12}^N(z,w) H_{12}^N(z,w) \mbox{, where }\\
& F^N_{12}(z,w) = e^{N\bar{\SFt}(s,z) - N\bar{\SFt}(t,w)}\cdot e^{-  \sigmap x N^{1/2} \log (z/c) +  \sigmap y N^{1/2} \log(w/c)  }, \\
&H^N_{12}(z,w) =   \sigmap N^{1/2} \cdot \frac{(zw - 1)(z-c)(1-qz)^{s N - \lfloor s N \rfloor}(1-qc)^{t N - \lfloor t N \rfloor}}{z (z-w)(z^2 - 1) (w-c)(1-qw)^{t N - \lfloor t N \rfloor}(1-qc)^{s N - \lfloor s N \rfloor}} ;
\end{split}
\end{equation}
\begin{equation}\label{Eq.DefIN22Edge}
\begin{split}
&I^N_{22}(s,x;t,y) = \frac{1}{(2\pi \im)^{2}}\oint_{\gamma_s} dz \oint_{\gamma_{t}} dw F_{22}^N(z,w) H_{22}^N(z,w) \mbox{, where }\\
& F^N_{22}(z,w) = e^{-N\bar{\SFt}(s,z) - N\bar{\SFt}(t, w)}\cdot e^{  \sigmap x N^{1/2} \log (z/c) +  \sigmap y N^{1/2} \log(w/c)  }, \\
&H^N_{22}(z,w) =  \sigmap N^{1/2}\cdot \frac{(z-w)(1-qc)^{s N - \lfloor s N \rfloor}(1-qc)^{t N - \lfloor t N \rfloor}}{(zw - 1)(z- c)(w - c)(1-qz)^{s N - \lfloor s N \rfloor}(1-qw)^{t N - \lfloor t N \rfloor}}.
\end{split}
\end{equation}
The kernels $R^N_{ij}$ are given by
\begin{equation}\label{Eq.DefRN12Edge}
\begin{split}
&R^N_{12}(s,x;t,y) = \frac{-{\bf 1}\{s > t \}   \sigmap N^{1/2} }{2 \pi \im}  \oint_{\tilde{\gamma}_N} dz e^{(s-t)N \bar{\GFt}(z)} \cdot e^{\sigmap (-  x +  y) N^{1/2} \log(z/c)  } \\
&\times \frac{(1-qz)^{sN - \lfloor sN \rfloor}(1-qc)^{tN - \lfloor tN \rfloor}}{z(1-qz)^{tN - \lfloor tN \rfloor}(1-qc)^{sN - \lfloor sN \rfloor}} + \frac{\sigmap N^{1/2}}{2\pi \im} \oint_{\Gamma_{s,N}} dz \frac{F_{12}^N(z,c) (zc-1) (1-qz)^{s N - \lfloor s N \rfloor}}{z(z^2-1)(1-qc)^{s N - \lfloor s N \rfloor}};
\end{split}
\end{equation}
\begin{equation}\label{Eq.DefRN22Edge}
\begin{split}
R^N_{22}(s,x;t,y) = & \frac{\sigmap N^{1/2}}{2\pi \im} \oint_{\gamma_s} dz \frac{F_{22}^N(z,c)(1-qc)^{s N - \lfloor s N \rfloor}}{(c z - 1)(1-qz)^{s N - \lfloor s N \rfloor}}\\
&- \frac{\sigmap N^{1/2}}{2\pi \im} \oint_{\gamma_t} dw \frac{F_{22}^N(c,w)(1-qc)^{t N - \lfloor t N \rfloor}}{(c  w - 1)(1-qw)^{t N - \lfloor t N \rfloor}}.
\end{split}
\end{equation}
\end{lemma}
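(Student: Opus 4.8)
The plan is to mirror the proof of Lemma \ref{Lem.PrelimitKernelBulk} essentially verbatim, adjusting only the parameter scalings, the center point of the contours (now $c$ rather than $\zc$), and the exponents ($N^{1/2}$ rather than $N^{1/3}$). First I would record, exactly as in the previous lemma, that by Definition \ref{Def.ParametersBulk} we have $q^{-1} > c > 1$, so for all $N$ large (depending on $q,c,\theta_\kappa$) we have $q^{-1} - c \geq \sec(\theta_\kappa) N^{-1/2}$ for every $\kappa \in \mathcal{T}$, and the contours are nested in the sense that for $z \in \Gamma_{\kappa, N}$ and $w \in \gamma_{\kappa'} = C_{\zc(\kappa')}$ one has $|z| \geq c + N^{-1/2} > c > \zc(\kappa') = |w| > 1$; this uses $c > \zc(\kappa') > 1$ from Definition \ref{Def.ParametersBulk}. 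Next I would introduce the piecewise-linear bijection $f:\mathbb{R}\to\mathbb{R}$ with $f(i) = \kappa_i$, define the map $\phi_N(s,x) = \bigl(f(s), \sigmap^{-1} N^{-1/2}(x - \hp(f(s)) N - s)\bigr)$ — note here there is no $\pq T_s$ correction term since the shift in \eqref{Eq.YEdge} is purely $\hp(\kappa_j) N$ — and observe that $M^N = \mathfrak{S}(\lambda)\phi_N^{-1}$ where $\mathfrak{S}(\lambda)$ is as in \eqref{Eq.PointProcessSchur} with $M_j = \lfloor \kappa_j N \rfloor$.

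Then I would invoke the change-of-variables and conjugation/rescaling results \cite[Proposition 5.8(5) and 5.8(4)]{DY25} — once with $\phi_N$, once with the conjugating function $f(s,x) = \exp\bigl(\sigmap x N^{1/2} \log(c) - N \SFt(f^{-1}(s), c)\bigr)$ (observe there is no $\GFt$ conjugation factor here, since the lattice shift carries no $T_s$ term), and once with $c_1 = c_2 = \sigmap^{1/2} N^{1/4}$ — to conclude that $M^N$ is a Pfaffian point process with the stated reference measure and correlation kernel
\begin{equation*}
\tilde K^N(s,x;t,y) = \begin{bmatrix} \sigmap N^{1/2} f(s,x) f(t,y)\, \kgeo_{11} & \sigmap N^{1/2} \tfrac{f(s,x)}{f(t,y)}\, \kgeo_{12} \\ \sigmap N^{1/2} \tfrac{f(t,y)}{f(s,x)}\, \kgeo_{21} & \sigmap N^{1/2} \tfrac{1}{f(s,x)f(t,y)}\, \kgeo_{22} \end{bmatrix},
\end{equation*}
with all $\kgeo$ entries evaluated at $\tilde s = f^{-1}(s)$, $\tilde t = f^{-1}(t)$, $\tilde x = \hp(s) N + \sigmap N^{1/2} x$, $\tilde y = \hp(t) N + \sigmap N^{1/2} y$. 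It remains to match $\tilde K^N$ with $K^N$ as in the statement. For this I would record the analogue of \eqref{Eq.ChangeVarsBulk}: using $\SFt(\kappa,z) = \log(1-q/z) - \kappa\log(1-qz) - \hp(\kappa)\log z$ and $\bar{\SFt}(\kappa,z) = \SFt(\kappa,z) - \SFt(\kappa,c)$, one has
\begin{equation*}
z^{\mp\tilde x}(1-q/z)^{\pm N}(1-qz)^{\mp \lfloor s N\rfloor} f(s,x)^{\pm 1} = e^{\pm N\bar{\SFt}(s,z) \mp \sigmap x N^{1/2}\log(z/c)} \cdot (1-qz)^{\pm(sN - \lfloor sN\rfloor)}(1-qc)^{\mp(sN-\lfloor sN\rfloor)},
\end{equation*}
and similarly for $w,\tilde y,t$; note the fractional-part factors $(1-qz)^{sN - \lfloor sN\rfloor}(1-qc)^{-(sN-\lfloor sN\rfloor)}$, which is precisely why these appear in $H^N_{ij}$ in \eqref{Eq.DefIN11Edge}--\eqref{Eq.DefIN22Edge}, whereas in the bulk case a single such factor sufficed.

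With this identity in hand the three matchings proceed exactly as before. For $K^N_{11}$: deform both copies of $C_{r_1}$ in \eqref{Eq.K11Geo} out to $\Gamma_{s,N}$ and $\Gamma_{t,N}$ respectively (no poles crossed since these contours enclose the unit circle), multiply by $\sigmap N^{1/2} f(s,x) f(t,y)$, and apply the change-of-variables identity to obtain $\tilde K^N_{11} = I^N_{11}$. For $K^N_{12}$ (and $K^N_{21}$ by skew-symmetry): deform $C_{r^w_{12}}$ to $\gamma_t = C_{\zc(t)}$ and $C_{r^z_{12}}$ to $\Gamma_{s,N}$; since $r^w_{12} > \max(c,q)$ and $\zc(t) > c$, crossing $w = c$ contributes a residue, and when $s > t$ the pole at $z = w$ is also crossed, producing the two extra one-dimensional integrals; after conjugation and (for the $s > t$ term) a further harmless deformation of $\Gamma_{s,N}$ to $\tilde\gamma_N$, the residue at $w=c$ becomes the second line of \eqref{Eq.DefRN12Edge} and the $z=w$ residue becomes the first line, giving $\tilde K^N_{12} = I^N_{12} + R^N_{12}$. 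For $K^N_{22}$: starting from \eqref{Eq.K22Geo} with $r_2$ large, deform both contours to $\gamma_s, \gamma_t$, picking up residues at $w = c$ and $z = c$ (which lie outside $C_{\zc(\cdot)}$ since $\zc > c$... — wait, $\zc > c$ is false; rather $c > \zc$, so $c$ lies \emph{outside} $C_{\zc}$ and is indeed crossed when shrinking $r_2$ down to $\zc$); after conjugation these become the two terms of \eqref{Eq.DefRN22Edge}, so $\tilde K^N_{22} = I^N_{22} + R^N_{22}$. The main obstacle — such as it is — is purely bookkeeping: one must correctly propagate the fractional-part factors $(1-qz)^{sN-\lfloor sN\rfloor}(1-qc)^{-(sN-\lfloor sN\rfloor)}$ (and their $w,t$ counterparts) through every residue computation and conjugation so that the $H^N_{ij}$ and $R^N_{ij}$ come out exactly as written, and one must double-check the nesting/pole-ordering inequalities $c > \zc(\kappa) > 1$ and $q^{-1} > c$ that govern which residues are picked up in each deformation; there is no genuine analytic difficulty beyond that already present in Lemma \ref{Lem.PrelimitKernelBulk}.
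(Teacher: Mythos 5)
Your strategy mirrors the paper's argument exactly, and the overall structure is correct: establish the contour nesting, push the point process through $\phi_N$, conjugate the kernel, and match via residues. However, there is one substantive inconsistency and a few small slips.

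The substantive issue is your conjugating function. You propose $f(s,x) = \exp\bigl(\sigmap x N^{1/2}\log c - N \SFt(s,c)\bigr)$, but the paper's conjugation also carries the factor $(1-qc)^{-(sN - \lfloor sN\rfloor)}$. Your own displayed change-of-variables identity, which places both $(1-qz)^{\pm(sN-\lfloor sN\rfloor)}$ and $(1-qc)^{\mp(sN-\lfloor sN\rfloor)}$ on its right-hand side, only holds if $f(s,x)$ contains that extra factor. With $f(s,x)$ as you wrote it, only the $(1-qz)^{\pm(sN - \lfloor sN\rfloor)}$ term would emerge, and the resulting $H^N_{ij}$ would lack the $(1-qc)^{\mp(sN-\lfloor sN\rfloor)}$ denominators appearing in \eqref{Eq.DefIN11Edge}--\eqref{Eq.DefRN22Edge}, so the final kernel would not match the lemma's stated form. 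The $(1-qz)^{sN-\lfloor sN\rfloor}$ piece comes from trading $\lfloor\kappa_j N\rfloor$ for $\kappa_j N$ in the exponent of $\kgeo$; the matching $(1-qc)$ piece is injected precisely by the conjugation. (Unlike the bulk case, here these fractional parts genuinely vary across $\mathcal{T}$ since each $\kappa_j$ gives a different $\kappa_j N - \lfloor \kappa_j N\rfloor$, which is why the $(1-qc)$ normalization is needed for the later limit computations.)

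A few minor slips: your $\phi_N$ has an extraneous $-s$ in the second coordinate; the $-i$ offset of \eqref{Eq.YEdge} is already built into $\mathfrak{S}(\lambda)$ via \eqref{Eq.PointProcessSchur}, so no correction is needed there. The expression $\SFt(f^{-1}(s),c)$ should read $\SFt(s,c)$, since after applying $\phi_N$ the first coordinate $s \in \mathcal{T}$ is itself the $\kappa$-value, not the discrete index. Finally, in the $K_{12}$ deformation you assert $\zc(t) > c$, which is backwards: Definition \ref{Def.ParametersBulk} gives $c > \zc(t) > 1$, and it is the ordering $r^w_{12} > c > \zc(t)$ that forces the contour to cross the simple pole at $w = c$ when shrinking $C_{r^w_{12}}$ to $C_{\zc(t)}$. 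You do catch and correct this in the $K_{22}$ paragraph, so it is clearly a momentary slip rather than a misunderstanding.
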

\begin{proof} From Definition \ref{Def.ParametersBulk} we can find $N_0$, depending on $q, c, \mathcal{T}$ and $\{\theta_{\kappa}: \kappa \in \mathcal{T}\}$, such that for $N \geq N_0$ and $\kappa \in \mathcal{T}$, we have $q^{-1} - c \geq \sec(\theta_{\kappa}) N^{-1/2}$, and also for $z \in \Gamma_{\kappa, N}$ and $w \in \gamma_{\kappa}$, we have 
\begin{equation}\label{Eq.ContoursNestedEdge}
|z| \geq c + N^{-1/2} > \zc(\kappa) = |w| > 1.   
\end{equation}
Throughout the proof we assume that $N$ is sufficiently large so that $N \geq N_0$ and (\ref{Eq.LargeNEdge}) holds.

Let $f: \mathbb{R} \rightarrow \mathbb{R}$ be a piece-wise linear increasing bijection, such that $f(i) = \kappa_i$ for $i \in \llbracket 1, m \rrbracket$. Define $\phi_N: \mathbb{R}^2 \rightarrow \mathbb{R}^2$ through 
$$\phi_N(s, x) = \left(f(s),  \sigmap^{-1}N^{-1/2} \cdot \left( x- \hp(f(s)) N  \right) \right),$$   
and observe that $M^N = \mathfrak{S}(\lambda) \phi_N^{-1}$, where $\mathfrak{S}(\lambda)$ is as in (\ref{Eq.PointProcessSchur}) for $M_j = \lfloor \kappa_j N \rfloor$. It follows from Proposition \ref{Prop.CorrKernel1} and the change of variables formula \cite[Proposition 5.8(5)]{DY25} with the above $\phi_N$, \cite[Proposition 5.8(4)]{DY25} with 
\begin{equation}
f(s,x) =  \exp \left( \sigmap x N^{1/2} \cdot \log (c)  - N \SFt(s,c) \right) \cdot \frac{1}{(1-qc)^{sN - \lfloor sN \rfloor}},
\end{equation}
and \cite[Proposition 5.8(4)]{DY25} with $c_1 = c_2 =  \sigmap^{1/2} N^{1/4}$ that $M^N$ is a Pfaffian point process with reference measure $\mu_{\mathcal{T},\nu(N)}$ and correlation kernel $\tilde{K}^N: (\mathcal{T} \times \mathbb{R}) \times (\mathcal{T} \times \mathbb{R}) \rightarrow\operatorname{Mat}_2(\mathbb{C})$, given by
\begin{equation*}
\tilde{K}^N(s,x; t,y) = \begin{bmatrix}  \sigmap N^{1/2} f(s,x)f(t,y) \kgeo_{11}(\tilde{s},\tilde{x}; \tilde{t},\tilde{y}) & \sigmap N^{1/2} \frac{f(s,x)}{f(t,y)} \kgeo_{12}(\tilde{s},\tilde{x}; \tilde{t},\tilde{y}) \\ \sigmap N^{1/2} \frac{f(t,y)}{f(s,x)} \kgeo_{21}(\tilde{s},\tilde{x}; \tilde{t},\tilde{y}) & \sigmap N^{1/2}\frac{1}{f(s,x)f(t,y)} \kgeo_{22}(\tilde{s},\tilde{x}; \tilde{t},\tilde{y}) \end{bmatrix},
\end{equation*}
where $\kgeo$ is as in Proposition \ref{Prop.CorrKernel1}, $\tilde{s}  = f^{-1}(s)$, $\tilde{t} =  f^{-1}(t)$ and
\begin{equation*}
\tilde{x} = \hp(s) N  +  \sigmap N^{1/2} x, \hspace{2mm}\tilde{y} = \hp(t) N  +  \sigmap N^{1/2} y.
\end{equation*}
All that remains is to show that $\tilde{K}^N$ agrees with $K^N$ as in the statement of the lemma.

We note that we have the following identities
\begin{equation}\label{Eq.ChangeVarsEdge}
\begin{split}
&z^{\mp \tilde{x}} (1-q/z)^{\pm N}(1-qz)^{\mp \kappa N } f(s,x)^{\pm 1} = e^{\pm N \bar{\SFt}(\kappa, z)  \mp \sigmap x N^{1/2} \log(z/c) } \cdot \frac{1}{(1-qc)^{\pm sN \mp \lfloor sN \rfloor}}, \\
&w^{\mp \tilde{y}} (1-q/w)^{\pm N}(1-qw)^{\mp \kappa N } f(t,y)^{\pm 1} = e^{ \pm N \bar{\SFt}(\kappa, w) \mp \sigmap y  N^{1/2} \log(w/c)} \cdot\frac{1}{(1-qc)^{\pm tN \mp \lfloor tN \rfloor}}.
\end{split}
\end{equation}
From this point on the proof is essentially the same as that of Lemma \ref{Lem.PrelimitKernelBulk}, where instead of using (\ref{Eq.ChangeVarsBulk}) we use (\ref{Eq.ChangeVarsEdge}). We omit the details.
\end{proof}

%
\section{Preliminary estimates}\label{Section3} In this section we establish various properties of the functions $\SFb, \GFb$ from Definition \ref{Def.SGBulk} and $\SFt, \GFt$ from Definition \ref{Def.SGEdge}. In particular, we find estimates for these functions near their critical points in Section \ref{Section3.1}, estimates for them along circular contours in Section \ref{Section3.2} and along the contours $C(x,\theta, R,0)$ from Definition \ref{Def.ContoursBulk} in Section \ref{Section3.3}.

%
\subsection{Power series expansions}\label{Section3.1} In this section we establish a few basic statements about the power series expansions of the functions $\SFb, \GFb, \SFt, \GFt$.

\begin{lemma}\label{Lem.PowerSeriesSG} Assume the notation from Definitions \ref{Def.ParametersBulk}, \ref{Def.SGBulk} and \ref{Def.SGEdge}. There exist constants $\delta_0 \in (0,1)$ and $C_0 > 0$, depending on $q, \kappa$ and $c$, such that $\SFb(z), \GFb(z)$ are analytic in the disk $\{|z- \zc| < 2\delta_0\}$, the functions $\SFt(z), \GFt(z)$ are analytic in the region $\{|z- c| < 2\delta_0\}$, and the following statements hold. If $|z - \zc| \leq \delta_0$, then
\begin{equation}\label{Eq.TaylorS1}
\left| \SFb(z) - \SFb(\zc) - (\sigmaq^3/3)(z- \zc)^3  \right| \leq C_0 |z - \zc|^4,
\end{equation}
\begin{equation}\label{Eq.TaylorG1}
\left| \GFb(z) - \GFb(\zc) - \fq\sigmaq^2 (z- \zc)^2  \right| \leq C_0 |z - \zc|^3. 
\end{equation}
If $|z-c| \leq \delta_0$, then
\begin{equation}\label{Eq.TaylorS2}
\left| \SFt(z) - \SFt(c) - [\sigmap^2(\kappa - \kappa_0)/2c^2] (z-c)^2 \right| \leq C_0 |z - c|^3,
\end{equation}
\begin{equation}\label{Eq.TaylorG2}
\left| \GFt(z) - \GFt(c) - (\sigmap^2/2c^2) (z-c)^2 \right| \leq C_0 |z - c|^3.
\end{equation}
\end{lemma}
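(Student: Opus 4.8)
```latex
\textbf{Proof proposal.} The plan is to treat all four estimates uniformly as Taylor-remainder bounds. First I would establish the analyticity claims. The functions $\SFb, \GFb$ are built out of $\log(1-q/z)$, $\log(1-qz)$ and $\log z$; the singularities of the principal branches occur at $z = 0$, $z = q$, and $z = q^{-1}$ (and where the arguments cross the negative real axis, but near the positive point $\zc \in (1, q^{-1})$ this is not an issue). Since Definition \ref{Def.ParametersBulk} guarantees $q^{-1} > c > \zc > 1 > q$, the point $\zc$ has positive distance to $\{0, q, q^{-1}\}$ depending only on $q, \kappa, c$; choosing $\delta_0$ less than a quarter of that distance makes $\SFb, \GFb$ analytic on $\{|z - \zc| < 2\delta_0\}$. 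The identical argument with $c$ in place of $\zc$ (again $c \in (1, q^{-1})$ with positive distance to $\{0,q,q^{-1}\}$) handles $\SFt, \GFt$ on $\{|z - c| < 2\delta_0\}$. Shrinking $\delta_0$ once more if necessary we may also assume $\bar{\SFb}, \bar{\GFb}$ etc.\ are bounded on these disks.

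Next I would identify the low-order Taylor coefficients. For \eqref{Eq.TaylorG1}: compute $\GFb'(z) = \frac{q}{1-qz} - \frac{\pq}{z}$, and the choice $\pq = \frac{q(1+q\sqrt{\kappa})}{(1-q^2)\sqrt{\kappa}}$ together with $\zc = \frac{1+q\sqrt{\kappa}}{q+\sqrt{\kappa}}$ is precisely the one making $\GFb'(\zc) = 0$ — this should be a direct algebraic check. Then $\GFb''(\zc)$ is computed from $\GFb''(z) = \frac{q^2}{(1-qz)^2} + \frac{\pq}{z^2}$, and one verifies the algebraic identity $\tfrac12 \GFb''(\zc) = \fq \sigmaq^2$ using the explicit formulas for $\fq, \sigmaq$ in \eqref{Eq.ParBulk1}; this is the bookkeeping step. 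Then \eqref{Eq.TaylorG1} is Taylor's theorem with Lagrange-type remainder: $\GFb(z) - \GFb(\zc) - \tfrac12\GFb''(\zc)(z-\zc)^2 = \frac{1}{2\pi\im}\oint (z-\zc)^3 \frac{\GFb(\xi)}{(\xi - z)(\xi-\zc)^3}\,d\xi$ over the circle $|\xi - \zc| = 2\delta_0$, which for $|z - \zc| \le \delta_0$ is bounded by $C_0|z-\zc|^3$ with $C_0$ depending on $\delta_0$ and $\sup_{|\xi-\zc|=2\delta_0}|\GFb(\xi)|$. For \eqref{Eq.TaylorS1} the analogous computation shows $\SFb'(\zc) = \SFb''(\zc) = 0$ (this is why $\zc$ is a double critical point of $\SFb$ — it should drop out of the same algebra, or be quoted from the parameter definitions) and $\tfrac16 \SFb'''(\zc) = \sigmaq^3/3$, so one expands to third order and the remainder is $O(|z-\zc|^4)$ by the same contour bound. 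Estimates \eqref{Eq.TaylorS2} and \eqref{Eq.TaylorG2} are the same scheme at the point $c$: here $\GFt'(c) = \frac{q}{1-qc} - \frac{\pp}{c}$, and $\pp = \frac{qc}{1-qc}$ makes this vanish, while $\tfrac12\GFt''(c) = \sigmap^2/(2c^2)$ follows from $\sigmap^2 = \pp(1+\pp)$ after simplification; for $\SFt$ one checks $\SFt'(c) = 0$ (using $\hp(\kappa)$'s definition) and that $\tfrac12 \SFt''(c)$ equals $\sigmap^2(\kappa-\kappa_0)/(2c^2)$, where the factor $\kappa - \kappa_0$ appears because at $\kappa = \kappa_0$ the second derivative also degenerates — this identity should reduce to $\kappa_0 = \left(\frac{1-qc}{c-q}\right)^2$ after clearing denominators.

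All four bounds then follow from a single lemma: if $g$ is analytic on $\{|z - p| < 2\delta_0\}$ with $\sup_{|\xi - p| = 2\delta_0}|g(\xi)| \le A$, then for each $k$ and $|z - p| \le \delta_0$ one has $\bigl| g(z) - \sum_{j=0}^{k-1} \tfrac{g^{(j)}(p)}{j!}(z-p)^j \bigr| \le \frac{A}{\delta_0^{k}(2\delta_0 - \delta_0)/(2\delta_0)} \cdot \cdots \le C_0 |z-p|^k$, with $C_0$ depending only on $A$ and $\delta_0$; apply with $k=4$, $p = \zc$, $g = \SFb$, with $k=3$, $p = \zc$, $g = \GFb$, and with $k = 3$, $p = c$ for $g \in \{\SFt, \GFt\}$, after substituting the vanishing of the relevant lower coefficients. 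Taking $C_0$ to be the maximum of the four constants and $\delta_0$ the minimum of the four radii gives the statement.

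The main obstacle I anticipate is purely computational: verifying the exact algebraic identities matching derivatives of $\SFb, \GFb, \SFt, \GFt$ at their critical points to the prescribed combinations of $\sigmaq, \fq, \sigmap, \kappa_0$. In particular confirming that $\zc$ is a genuine \emph{double} zero of $\SFb'$ (equivalently a triple critical point of $\SFb$) and that the $\SFt''(c)$ coefficient carries exactly the factor $\kappa - \kappa_0$ requires careful simplification; these are the steps where the specific forms of the parameters in \eqref{Eq.ParBulk1}, \eqref{Eq.ParEdge2} are forced, and a sign or factor error there would propagate. Everything else — analyticity, and converting a known Taylor coefficient into a remainder bound — is routine.
```
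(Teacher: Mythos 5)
Your proposal is correct and takes essentially the same route as the paper: establish analyticity by locating the singularities of the logarithms and picking $\delta_0$ smaller than the distance to $\{0,q,q^{-1}\}$, then identify the vanishing low-order derivatives and the leading nonzero Taylor coefficients at the critical point, and finally invoke analyticity to bound the Taylor remainder. The paper's proof is more terse — it simply records the derivative values (e.g.\ $\SFb'(\zc)=\SFb''(\zc)=0$, $\SFb'''(\zc)=2\sigmaq^3$, $\SFt''(c)=\sigmap^2(\kappa-\kappa_0)/c^2$, and so on) and states that the estimates follow, leaving the Cauchy-integral remainder bound implicit, whereas you spell it out; this is only a difference in exposition, not in the argument.
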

\begin{proof} Note that as $q < 1 < z_c < c < q^{-1}$, the functions $\SFb(z), \GFb(z)$ and $\SFt(z), \GFt(z)$ are analytic in their respective region with $\delta_0 = (1/2) \cdot \min(1, z_c - q, q^{-1} - c)$. By a direct computation we have $\SFb'(\zc) = \SFb''(\zc) = \GFb'(\zc) = 0$, and 
$$\SFb'''(\zc) = \frac{2q (q + \sqrt{\kappa})^{5}}{\kappa^{1/2} (1+q \sqrt{\kappa}) (1 - q^2)^2}= 2 \sigmaq^3, \hspace{2mm} \GFb''(\zc) = \frac{q (q + \sqrt{\kappa})^3}{\kappa (1 - q^2)^2 (1 + q \sqrt{\kappa})} = 2\fq\sigmaq^2,$$
which imply (\ref{Eq.TaylorS1}) and (\ref{Eq.TaylorG1}). Another direct computation gives $\SFt'(c) = \GFt'(c) = 0$, and 
$$\SFt''(c) = \frac{q (q c + \sqrt{\kappa} c - 1 - q \sqrt{\kappa})(1 + \sqrt{\kappa} c - q \sqrt{\kappa} - qc) }{c (c-q)^2 (1 - qc)^2} = \frac{\sigmap^2(\kappa - \kappa_0)}{c^2} \mbox{, } \GFt''(c) = \frac{q}{c(1-qc)^2} = \frac{\sigmap^2}{c^2},$$
which imply (\ref{Eq.TaylorS2}) and (\ref{Eq.TaylorG2}).
\end{proof}

\begin{lemma}\label{Lem.DecayNearCritTheta} Assume the notation in Lemma \ref{Lem.PowerSeriesSG}, and fix $\theta \in (\pi/4, \pi/2)$. There exist constants $\delta_1 \in (0,\delta_0]$ and $\epsilon_1 > 0$, depending on $q, \kappa, c, \theta$, such that for $r \in [0, \delta_1]$
\begin{equation}\label{Eq.CritDecayS1}
\Real \left [\SFb(z) - \SFb(\zc) \right] \leq -\epsilon_1 \cdot r^3, \mbox{ if } z = \zc + r e^{\pm \im \theta}, 
\end{equation}
\begin{equation}\label{Eq.CritDecayS2}
\Real \left [\SFt(z) - \SFt(c) \right] \leq -\epsilon_1 \cdot r^2, \mbox{ if } z = c+r e^{\pm \im \theta}.
\end{equation}
\end{lemma}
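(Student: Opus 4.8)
The plan is to leverage the Taylor expansions from Lemma \ref{Lem.PowerSeriesSG} together with the explicit leading-order behavior of the cubic (resp. quadratic) term along the rays $z = \zc + re^{\pm\im\theta}$ (resp. $z = c + re^{\pm\im\theta}$). First I would treat the edge estimate (\ref{Eq.CritDecayS2}), which is the simpler of the two. From (\ref{Eq.TaylorS2}) we have $\SFt(z) - \SFt(c) = [\sigmap^2(\kappa-\kappa_0)/2c^2](z-c)^2 + O(|z-c|^3)$, and since $\kappa > \kappa_0$ the coefficient $\sigmap^2(\kappa-\kappa_0)/2c^2$ is a strictly positive real number. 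For $z = c + re^{\pm\im\theta}$ we have $(z-c)^2 = r^2 e^{\pm 2\im\theta}$, so
\begin{equation*}
\Real\left[ [\sigmap^2(\kappa-\kappa_0)/2c^2](z-c)^2 \right] = [\sigmap^2(\kappa-\kappa_0)/2c^2] \cdot r^2 \cos(2\theta).
\end{equation*}
Because $\theta \in (\pi/4, \pi/2)$, we have $2\theta \in (\pi/2, \pi)$, hence $\cos(2\theta) < 0$; call this negative constant $-\eta$ with $\eta = -\cos(2\theta) > 0$. Thus the leading term contributes $-[\sigmap^2(\kappa-\kappa_0)\eta/2c^2] r^2$, while the error is bounded by $C_0 r^3$. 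Choosing $\delta_1 \le \delta_0$ small enough that $C_0 \delta_1 \le \tfrac12 [\sigmap^2(\kappa-\kappa_0)\eta/2c^2]$ absorbs the error into half the leading term, giving (\ref{Eq.CritDecayS2}) with $\epsilon_1 = \tfrac12 [\sigmap^2(\kappa-\kappa_0)\eta/2c^2]$ (valid on the given ray range for both signs of $\theta$, since $\cos$ is even).

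For the bulk estimate (\ref{Eq.CritDecayS1}) the mechanism is the same but with a cubic term, so the sign of $\cos(3\theta)$ matters. From (\ref{Eq.TaylorS1}), $\SFb(z) - \SFb(\zc) = (\sigmaq^3/3)(z-\zc)^3 + O(|z-\zc|^4)$ with $\sigmaq > 0$ real. For $z = \zc + re^{\pm\im\theta}$,
\begin{equation*}
\Real\left[ (\sigmaq^3/3)(z-\zc)^3 \right] = (\sigmaq^3/3) r^3 \cos(3\theta).
\end{equation*}
Here I would use that $\theta \in (\pi/4, \pi/2)$ forces $3\theta \in (3\pi/4, 3\pi/2)$, on which $\cos$ is strictly negative; set $\eta' = -\cos(3\theta) > 0$. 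Then the leading term is $-(\sigmaq^3 \eta'/3) r^3$, and choosing $\delta_1$ small enough (now also requiring $C_0 \delta_1 \le \tfrac12 \cdot \sigmaq^3 \eta'/3$) gives (\ref{Eq.CritDecayS1}) with $\epsilon_1 = \sigmaq^3\eta'/6$; taking the minimum of the two $\delta_1$'s and $\epsilon_1$'s produced above yields a single pair of constants that works for both inequalities. I would note explicitly that the hypothesis $\theta \in (\pi/4,\pi/2)$ is exactly what makes both $\cos(2\theta)$ and $\cos(3\theta)$ strictly negative, which is the geometric reason these are valid descent directions.

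The only genuinely delicate point — and the one I would flag as the main obstacle — is making sure the error bounds from Lemma \ref{Lem.PowerSeriesSG} are used in a uniform way over the whole ray segment $r \in [0,\delta_1]$ and over both signs $\pm$, and that the constants $C_0, \sigmaq, \sigmap, \kappa_0$ are all treated as fixed quantities depending only on $q,\kappa,c$ (so the resulting $\delta_1, \epsilon_1$ depend only on $q,\kappa,c,\theta$, as claimed). There is no real analytic difficulty beyond this bookkeeping: the estimate is a one-line consequence of "cubic/quadratic leading term with a strictly negative real part dominates a quartic/cubic remainder near the critical point." I would write the proof compactly, first disposing of (\ref{Eq.CritDecayS2}) via $\cos(2\theta)<0$, then (\ref{Eq.CritDecayS1}) via $\cos(3\theta)<0$, and conclude by intersecting the two constraints on $\delta_1$ and taking the smaller $\epsilon_1$.
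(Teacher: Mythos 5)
Your proposal is correct and matches the paper's proof essentially line for line: both arguments isolate the leading cubic (resp.\ quadratic) term from the Taylor expansions of Lemma~\ref{Lem.PowerSeriesSG}, observe that $\cos(3\theta)<0$ and $\cos(2\theta)<0$ precisely because $\theta\in(\pi/4,\pi/2)$, and then shrink $\delta_1$ so that $C_0\delta_1$ absorbs half of the leading coefficient. The only cosmetic difference is that the paper defines a single $\epsilon_1=\min(-A\cos(3\theta),-A\cos(2\theta))$ with $A=\min(\sigmaq^3/6,\sigmap^2(\kappa-\kappa_0)/4c^2)$ up front, whereas you produce the two constants separately and then take the minimum at the end.
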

\begin{proof} Set $A = \min (\sigmaq^3/6, \sigmap^2(\kappa - \kappa_0)/4c^2)$ and define $\epsilon_1 = \min \left( -A \cos(3\theta), - A \cos(2\theta) \right)$. We pick $\delta_1 \in (0, \delta_0]$ small enough so that $C_0 \delta_1 \leq \epsilon_1$.
If $z = \zc + r e^{\pm \im \theta}$, we have $\Real \left[ (z- \zc)^3 \right] = r^3 \cos(3 \theta)$, and so by (\ref{Eq.TaylorS1}) we conclude for $r \in [0, \delta_1]$ 
$$ \Real \left [\SFb(z) - \SFb(\zc) \right] \leq r^3 \left[(\sigmaq^3/3)\cos(3 \theta) + C_0 \delta_1\right] \leq r^3[-2\epsilon_1 + C_0\delta_1] \leq - \epsilon_1 \cdot r^3 .$$
If $z = c + r e^{\pm\im \theta}$, we have $\Real \left[ (z- c)^2 \right] = r^2 \cos(2 \theta)$, and so by (\ref{Eq.TaylorS2}) we conclude for $r \in [0, \delta_1]$ 
$$ \Real \left [\SFt(z) - \SFt(c) \right] \leq r^2 \left[\sigmap^2(\kappa - \kappa_0)/2c^2 \cos(2 \theta) + C_0 \delta_1\right] \leq r^2 \left[-2\epsilon_1 + C_0 \delta_1\right] \leq -\epsilon_1 \cdot r^2.$$
\end{proof}

\begin{lemma}\label{Lem.DecayNearCritGen} Assume the notation in Lemma \ref{Lem.PowerSeriesSG}. There exist constants $\delta_2 \in (0,\delta_0]$ and $\epsilon_2 > 0$, depending on $q, \kappa, c$, that satisfy the following statements for $r \in [0, \delta_2]$:
\begin{equation}\label{Eq.CritGrowS1}
\Real \left [\SFb(z) - \SFb(\zc) \right] \geq \epsilon_2 \cdot r^3, \mbox{ if } z = \zc + r e^{\pm \im 2\pi/3};
\end{equation}
\begin{equation}\label{Eq.CritDecayG1}
\Real \left [\GFb(z) - \GFb(\zc) \right] \leq -\epsilon_2 \cdot r^2, \mbox{ if } z = \zc + r e^{\pm \im \pi/2};  
\end{equation}
\begin{equation}\label{Eq.CritDecayG2}
\Real \left [\GFt(z) - \GFt(c) \right] \leq -\epsilon_2 \cdot r^2, \mbox{ if } z = c + r e^{\pm \im \pi/2}.
\end{equation}
\end{lemma}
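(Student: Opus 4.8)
### Proof Plan

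The plan is to imitate the proof of Lemma \ref{Lem.DecayNearCritTheta}, since the three inequalities are all of the same type: we already have exact second- and third-order Taylor data for $\SFb$ and $\GFb$ around $\zc$, and for $\GFt$ around $c$, from Lemma \ref{Lem.PowerSeriesSG}. Concretely, recall from that lemma that $\SFb'(\zc) = \SFb''(\zc) = 0$ and $\SFb'''(\zc) = 2\sigmaq^3 > 0$, that $\GFb'(\zc) = 0$ and $\GFb''(\zc) = 2\fq\sigmaq^2 > 0$, and that $\GFt'(c) = 0$ and $\GFt''(c) = \sigmap^2/c^2 > 0$. The sign positivity of these leading coefficients (all of $\sigmaq, \fq, \sigmap$ are positive by Definition \ref{Def.ParametersBulk}) is exactly what makes the three claimed directional estimates go through, once we feed in the right angle.

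First I would set $A = \min(\sigmaq^3/6,\ \fq\sigmaq^2/2,\ \sigmap^2/4c^2) > 0$ and define
$$\epsilon_2 = \min\bigl(\, -A\cos(2\pi), \ -A\cos(\pi), \ -A\cos(\pi)\,\bigr)$$
— more carefully: for (\ref{Eq.CritGrowS1}) the relevant quantity is $\Real[(z-\zc)^3] = r^3\cos(3\cdot 2\pi/3) = r^3\cos(2\pi) = r^3$, so the cubic term $(\sigmaq^3/3)r^3$ is \emph{positive}; for (\ref{Eq.CritDecayG1}) and (\ref{Eq.CritDecayG2}) the relevant quantity is $\Real[(z-\zc)^2] = r^2\cos(\pi) = -r^2$, so the quadratic terms $\fq\sigmaq^2(z-\zc)^2$ and $(\sigmap^2/2c^2)(z-c)^2$ each contribute $-\,(\text{positive})\,r^2$. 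Thus I would take $\epsilon_2$ to be, say, $\tfrac12\min(\sigmaq^3/3,\ \fq\sigmaq^2,\ \sigmap^2/2c^2) > 0$, and then choose $\delta_2 \in (0,\delta_0]$ small enough that $C_0\delta_2 \leq \epsilon_2$, with $C_0, \delta_0$ the constants from Lemma \ref{Lem.PowerSeriesSG}.

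With these choices, for (\ref{Eq.CritGrowS1}) I plug $z = \zc + re^{\pm\im 2\pi/3}$ into (\ref{Eq.TaylorS1}): the leading term is $(\sigmaq^3/3)\Real[(z-\zc)^3] = (\sigmaq^3/3)r^3$, and the error is at most $C_0 r^4 \leq C_0\delta_2 r^3 \leq \epsilon_2 r^3$, so $\Real[\SFb(z)-\SFb(\zc)] \geq (\sigmaq^3/3 - \epsilon_2)r^3 \geq \epsilon_2 r^3$ after shrinking $\epsilon_2$ appropriately (take $\epsilon_2 \leq \sigmaq^3/6$). For (\ref{Eq.CritDecayG1}), plug $z = \zc + re^{\pm\im\pi/2}$ into (\ref{Eq.TaylorG1}): the leading term is $\fq\sigmaq^2\Real[(z-\zc)^2] = -\fq\sigmaq^2 r^2$, the error is $\leq C_0 r^3 \leq \epsilon_2 r^2$, giving $\Real[\GFb(z)-\GFb(\zc)] \leq (-\fq\sigmaq^2 + \epsilon_2)r^2 \leq -\epsilon_2 r^2$ (take $\epsilon_2 \leq \fq\sigmaq^2/2$). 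For (\ref{Eq.CritDecayG2}), the identical argument with (\ref{Eq.TaylorG2}) and $z = c + re^{\pm\im\pi/2}$, using $\GFt''(c)/2 = \sigmap^2/2c^2 > 0$, yields $\Real[\GFt(z)-\GFt(c)] \leq -\epsilon_2 r^2$ (take $\epsilon_2 \leq \sigmap^2/4c^2$). Taking $\epsilon_2$ to be the minimum of the three constants and $\delta_2$ correspondingly small handles all cases simultaneously.

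There is really no main obstacle here: the lemma is a routine consequence of the already-established Taylor expansions combined with the observation that the chosen angles ($2\pi/3$ for the cubic, $\pi/2$ for the quadratics) make the leading real parts have the right sign. The only thing to be mildly careful about is bookkeeping — ensuring a \emph{single} pair of constants $(\delta_2, \epsilon_2)$ works for all three inequalities, which is why I take minima at the end; and keeping straight that (\ref{Eq.CritGrowS1}) is a lower bound (growth) whereas (\ref{Eq.CritDecayG1})–(\ref{Eq.CritDecayG2}) are upper bounds (decay), so the sign of the leading term must be checked in each case.
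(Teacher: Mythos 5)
Your proposal is correct and, after the self-corrected "more carefully" detour (the initial display $\epsilon_2 = \min(-A\cos(2\pi),\dots)$ would have been negative), arrives at exactly the paper's argument: take $\epsilon_2 = \tfrac12\min(\sigmaq^3/3,\ \fq\sigmaq^2,\ \sigmap^2/2c^2)$, pick $\delta_2 \in (0,\delta_0]$ with $C_0\delta_2 \leq \epsilon_2$, and plug the angles $2\pi/3$ (so $\Real[(z-\zc)^3]=r^3$) and $\pi/2$ (so $\Real[(z-\zc)^2]=-r^2$) into the Taylor bounds of Lemma \ref{Lem.PowerSeriesSG}. The choice of constants, the error comparison $C_0 r \leq C_0 \delta_2 \leq \epsilon_2$, and all three sign checks coincide with the paper's proof.
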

\begin{proof} Set $\epsilon_2 = (1/2) \min (\sigmaq^3/3, \fq \sigmaq^2, \sigmap^2/2c^2 )$ and pick $\delta_2 \in (0, \delta_0]$ so that $C_0 \delta_2 \leq \epsilon_2$. If $z = \zc + r e^{\pm \im 2\pi/3}$, we have $\Real \left[ (z- \zc)^3 \right] = r^3$, and so by (\ref{Eq.TaylorS1}) we conclude for $r \in [0, \delta_2]$ 
$$ \Real \left [\SFb(z) - \SFb(\zc) \right] \geq r^3 \left[(\sigmaq^3/3) - C_0 \delta_2\right] \geq r^3 \left[2\epsilon_2 - \epsilon_2\right] \geq \epsilon_2 \cdot r^3.$$
If $z = \zc + r e^{\pm \im \pi/2}$, we have $\Real \left[ (z- \zc)^2 \right] = -r^2$, and so by (\ref{Eq.TaylorG1}) we conclude for $r \in [0, \delta_2]$ 
$$ \Real \left [\GFb(z) - \GFb(\zc) \right] \leq r^2 \left[-\fq \sigmaq^2 + C_0 \delta_2\right] \leq r^2 \left[ -2 \epsilon_2 + \epsilon_2 \right] \leq - \epsilon_2 \cdot r^2.$$
If $z = c + r e^{\pm\im \pi/2}$, we have $\Real \left[ (z- c)^2 \right] = -r^2$, and so by (\ref{Eq.TaylorG2}) we conclude for $r \in [0, \delta_2]$ 
$$ \Real \left [\GFt(z) - \GFt(c) \right] \leq r^2 \left[-(\sigmap^2/2c^2) + C_0 \delta_2\right] \leq r^2 \left[ -2 \epsilon_2 + \epsilon_2 \right] \leq - \epsilon_2 \cdot r^2.$$
\end{proof}

We also require the following two simple results.
\begin{lemma}\label{Lem.DiffS} Assume the notation from Definitions \ref{Def.ParametersBulk} and \ref{Def.SGBulk}. Then,
\begin{equation}\label{Eq.DiffS}
\SFb(\zc) - \SFb(c) < 0. 
\end{equation}
\end{lemma}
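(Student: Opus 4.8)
The plan is to show that the real function $x \mapsto \SFb(x)$ is \emph{strictly increasing} on $[\zc, c]$, which immediately yields $\SFb(\zc) < \SFb(c)$, i.e.\ (\ref{Eq.DiffS}). Recall from Definition \ref{Def.ParametersBulk} that $q^{-1} > c > \zc > 1 > q$, so the whole interval $[\zc,c]$ lies in $(q,q^{-1})$; for $x$ in this range we have $1 - q/x \in (0,1)$, $1 - qx \in (0,1)$ and $x > 0$, so (with the principal branch) $\SFb$ is a smooth, real-valued function there. Thus it suffices to prove $\SFb'(x) > 0$ for all $x \in (\zc, q^{-1})$.

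First I would compute the derivative directly from Definition \ref{Def.SGBulk},
\[
\SFb'(z) = \frac{q}{z(z-q)} + \frac{\kappa q}{1-qz} - \frac{\hq}{z},
\]
and put it over the common denominator $D(z) = z(z-q)(1-qz)$, so that $\SFb'(z) = N(z)/D(z)$ with
\[
N(z) = q(1-qz) + \kappa q\, z(z-q) - \hq (z-q)(1-qz),
\]
a quadratic polynomial whose coefficient of $z^2$ equals $q(\kappa + \hq) > 0$ (since $q,\kappa,\hq > 0$). Now I invoke Lemma \ref{Lem.PowerSeriesSG}, where it is shown that $\SFb'(\zc) = \SFb''(\zc) = 0$. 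Since $D$ is analytic and nonvanishing near $\zc$, we get $N(\zc) = 0$ from the first condition, and then (by the quotient rule $\SFb''(\zc) = N'(\zc)/D(\zc)$, as $N(\zc)=0$) also $N'(\zc) = 0$. Hence $\zc$ is a double root of the quadratic $N$, so
\[
N(z) = q(\kappa + \hq)(z-\zc)^2, \qquad\text{i.e.}\qquad \SFb'(z) = \frac{q(\kappa + \hq)(z-\zc)^2}{z(z-q)(1-qz)}.
\]

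To finish, I observe that for every $z \in (\zc, q^{-1})$ the numerator is strictly positive (as $z \neq \zc$), and each of the factors $z$, $z-q$, $1-qz$ in the denominator is strictly positive (using $z > \zc > q$ and $z < q^{-1}$). Therefore $\SFb'(z) > 0$ on $(\zc, q^{-1})$, so $\SFb$ is strictly increasing on $[\zc, q^{-1})$; since $\zc < c < q^{-1}$, this gives $\SFb(\zc) < \SFb(c)$, which is (\ref{Eq.DiffS}). There is essentially no serious obstacle: the only step needing a moment's thought is the perfect-square factorization of $N$, and this is handed to us for free by the degenerate-critical-point information $\SFb'(\zc) = \SFb''(\zc) = 0$ already recorded in Lemma \ref{Lem.PowerSeriesSG}; the rest is bookkeeping of signs.
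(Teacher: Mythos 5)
Your proof is correct and follows the same essential approach as the paper: both show that $\SFb'(x) > 0$ on $(\zc, c)$ by exhibiting $\SFb'$ as a perfect square over a manifestly positive denominator, and then conclude monotonicity. The paper simply states the factored formula
\[
\SFb'(x) = \frac{q\,(1 - qx - \sqrt{\kappa}\,x + \sqrt{\kappa}\,q)^2}{x(1-q^2)(1-qx)(x-q)}
\]
as the result of a ``direct computation,'' whereas you recover the same identity more structurally: you observe that the numerator is a quadratic, and then import the critical-point information $\SFb'(\zc)=\SFb''(\zc)=0$ from the proof of Lemma~\ref{Lem.PowerSeriesSG} to conclude that $\zc$ is a double root, forcing the perfect-square factorization $N(z) = q(\kappa + \hq)(z-\zc)^2$. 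These agree: since $1 - qx - \sqrt{\kappa}x + \sqrt{\kappa}q = (q+\sqrt{\kappa})(\zc - x)$ and $\kappa + \hq = (q+\sqrt{\kappa})^2/(1-q^2)$, your expression and the paper's are identical. Your route has the small advantage of not requiring one to verify the perfect-square identity by brute-force algebra, reusing instead a fact already established; otherwise the two arguments are the same.
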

\begin{proof} Recall that $q^{-1} > c> \zc > 1 >q$. By a direct computation we get for $x \in (\zc, c)$
$$\SFb'(x) = \frac{q (1 - qx - \sqrt{\kappa}x+\sqrt{\kappa} q)^2}{x(1-q^2)(1-qx)(x - q)} > 0,$$
which implies (\ref{Eq.DiffS}).
\end{proof}

\begin{lemma}\label{Lem.DiffS2} Assume the notation from Definitions \ref{Def.ParametersBulk} and \ref{Def.SGEdge}. Then,
\begin{equation}\label{Eq.DiffS2}
\SFt(\zc) - \SFt(c) > 0. 
\end{equation}
\end{lemma}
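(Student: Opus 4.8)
The plan is to follow the same strategy as the proof of Lemma~\ref{Lem.DiffS}: show that $x \mapsto \SFt(\kappa, x)$ is strictly decreasing on the real interval $(\zc, c)$, so that
\[
\SFt(\zc) - \SFt(c) = - \int_{\zc}^{c} \SFt'(x)\, dx > 0,
\]
which is exactly \eqref{Eq.DiffS2}. Since $q < \zc < c < q^{-1}$ (Definition~\ref{Def.ParametersBulk}), $\SFt(\kappa,\cdot)$ is real-analytic on a neighbourhood of $[\zc, c]$, so it suffices to prove $\SFt'(x) < 0$ for every $x \in (\zc, c)$.

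First I would differentiate the formula for $\SFt$ in Definition~\ref{Def.SGEdge} and clear denominators, obtaining
\[
\SFt'(z) = \frac{P(z)}{z(z-q)(1-qz)}, \qquad P(z) = q(\kappa + \hp)\, z^{2} - \big[ q^{2}(1+\kappa) + \hp(1+q^{2}) \big]\, z + q(1+\hp).
\]
For $z \in (\zc, c) \subset (1, q^{-1})$ each of $z$, $z-q$, $1-qz$ is strictly positive, so the sign of $\SFt'(z)$ is that of $P(z)$. The leading coefficient $q(\kappa + \hp)$ is positive because $\hp = \kappa \pp + q/(c-q) > 0$, and $P(c) = 0$ since $\SFt'(c) = 0$ by Lemma~\ref{Lem.PowerSeriesSG}. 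Hence $P$ factors as $P(z) = q(\kappa+\hp)(z - c)(z - z^{*})$, where Vieta's relation gives the second root $z^{*} = (1 + \hp)\big/\big(c(\kappa + \hp)\big)$.

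The crux of the argument — and the step I expect to be the main obstacle — is to show $z^{*} < \zc$, i.e.\ that the other critical point of $\SFt$ lies strictly to the left of $\zc$. Using the identities
\[
1 + \hp = \frac{c\big[(1-qc) + \kappa q(c-q)\big]}{(1-qc)(c-q)}, \qquad \kappa + \hp = \frac{\kappa(c-q) + q(1-qc)}{(1-qc)(c-q)},
\]
one simplifies $z^{*} = \big[(1-qc) + \kappa q(c-q)\big]\big/\big[\kappa(c-q) + q(1-qc)\big]$, and then a direct computation (common denominator, cancellation) yields
\[
z^{*} - \zc = \frac{(1-q^{2})\,\sqrt{\kappa}\,\big[(1-qc) - \sqrt{\kappa}\,(c-q)\big]}{\big(\kappa(c-q) + q(1-qc)\big)\big(q + \sqrt{\kappa}\big)}.
\]
Its denominator is positive, and since $\kappa > \kappa_{0} = \big((1-qc)/(c-q)\big)^{2}$ we have $\sqrt{\kappa}\,(c-q) > 1-qc$, so the bracketed factor in the numerator is negative; therefore $z^{*} < \zc$.

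Finally, with $z^{*} < \zc < c$ in hand, for every $x \in (\zc, c)$ we get $x - z^{*} > 0$ and $x - c < 0$, so $P(x) < 0$, whence $\SFt'(x) < 0$ on $(\zc, c)$; integrating gives \eqref{Eq.DiffS2}. The only non-routine point is the cancellation producing the displayed formula for $z^{*} - \zc$; it is purely algebraic and parallels the perfect-square factorization of $\SFb'$ used implicitly in the proof of Lemma~\ref{Lem.DiffS}.
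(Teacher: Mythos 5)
Your proof is correct and takes essentially the same approach as the paper: differentiate $\SFt$, reduce to the sign of a quadratic numerator $P(z)$ on $(\zc,c)$, factor out the known root $z=c$, and use $\kappa > \kappa_0$ to handle the remaining factor. The only cosmetic difference is that the paper checks the sign of the linear cofactor at both endpoints $\zc$ and $c$, whereas you explicitly compute the second root $z^{*}$ and show $z^{*} < \zc$; the algebra involved is the same.
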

\begin{proof}  By a direct computation we get for $x \in [\zc, c]$
$$\SFt'(x) = \frac{q(c-x) f(x)}{x (x- q) (1 - qx)(1 - qc)(c-q)}, $$
where 
$$f(x) = (-\kappa c + \kappa q + q^2c - q)x + (\kappa q c - q^2 \kappa - qc + 1). $$
One directly computes
$$f(c) = (c-q)^2 (\kappa_0 - \kappa) < 0,\mbox{ and } f(\zc) = \sqrt{\kappa} (1- q^2)( \zc -  c)< 0,$$
where we used that $q^{-1} > c> \zc > 1 >q$ and $\kappa > \kappa_0$. Since $f(x)$ is linear, we conclude that $f(x) < 0$ for all $x \in [\zc,c]$, and so $\SFt'(x) < 0$, implying (\ref{Eq.DiffS2}).
\end{proof}

%
\subsection{Estimates along circles}\label{Section3.2} Throughout this and the next section we denote $b = \sqrt{\kappa}$ to ease the notation. In order to not repeat ourselves too much in our arguments, we introduce the following auxiliary functions that interpolate $\SFb, \SFt$ and $\GFb, \GFt$.
\begin{definition}\label{Def.SG} Assume the same parameters as in Definition \ref{Def.ParametersBulk}, and let $p_c \in [z_c, c]$, $b = \sqrt{\kappa}$. For $z \in \mathbb{C} \setminus \{0, q, q^{-1}\}$ we define the functions
\begin{equation}\label{Eq.SFun}
S(z) = \log(1-q/z) - b^2 \log(1 - qz) - \frac{qb^2 p_c^2-(q^2b^2 +q^2)p_c+q}{(p_c-q)(1-qp_c)} \cdot \log (z).
\end{equation}
\begin{equation}\label{Eq.GFun}
G(z) = -\log(1-qz) - \frac{qp_c}{1- qp_c} \cdot \log (z).
\end{equation}
Notice that when $p_c = z_c$ we have $S(z) = \SFb(z)$, $G(z) = \GFb(z)$ as in Definition \ref{Def.SGBulk}, and when $p_c = c$ we have $S(z) = \SFt(z)$, $G(z) = \GFt(z)$ as in Definition \ref{Def.SGEdge}.
\end{definition}

The next two lemmas show that the real parts of $\SFb, \SFt$ and $\GFb, \GFt$ vary monotonically over circular contours.
\begin{lemma}\label{Lem.SmallCircleS} Assume the notation from Definitions \ref{Def.ParametersBulk}, \ref{Def.SGBulk} and \ref{Def.SGEdge}. If $R \in (0, \zc]$ and $z(\theta) = R e^{\pm \im \theta}$, then
\begin{equation}\label{Eq.SmallCircleS}
\frac{d}{d\theta} \Real[\SFb(z(\theta))] > 0 \mbox{ and } \frac{d}{d\theta} \Real[\SFt(z(\theta))] > 0 \mbox{ for } \theta \in (0, \pi).
\end{equation}
\end{lemma}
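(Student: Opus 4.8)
The plan is to compute the $\theta$-derivative of $\Real[S(Re^{\im\theta})]$ explicitly for the interpolating function $S$ from Definition \ref{Def.SG} (which specializes to $\SFb$ when $p_c = \zc$ and to $\SFt$ when $p_c = c$), and show it is positive for all $\theta \in (0,\pi)$ and all $R \in (0,\zc]$. Since $\frac{d}{d\theta}\Real[S(Re^{\im\theta})] = \Real[\im R e^{\im\theta} S'(Re^{\im\theta})] = -\Imag[Re^{\im\theta} S'(Re^{\im\theta})]$, it suffices to control the imaginary part of $z S'(z)$ on the circle $|z| = R$. Writing $z = Re^{\im\theta}$ and using $z \frac{d}{dz}\log(1 - q/z) = \frac{q/z}{1-q/z} = \frac{q}{z - q}$, $z\frac{d}{dz}\log(1-qz) = \frac{-qz}{1-qz}$, and $z\frac{d}{dz}\log z = 1$, we get
\begin{equation*}
z S'(z) = \frac{q}{z-q} + \frac{b^2 q z}{1 - qz} - \frac{qb^2 p_c^2-(q^2b^2+q^2)p_c+q}{(p_c-q)(1-qp_c)}.
\end{equation*}
The last term is real, so it does not contribute to $\Imag[zS'(z)]$. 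Thus the whole computation reduces to showing $\Imag\!\left[\frac{q}{z-q} + \frac{b^2 q z}{1-qz}\right]$ has a definite sign (namely negative, to make the $\theta$-derivative positive) when $|z| = R \le \zc$ and $\Imag(z) > 0$, i.e. $\theta \in (0,\pi)$; the case $z = Re^{-\im\theta}$ follows by conjugation symmetry.

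The key step is therefore the sign analysis of $\Imag\!\left[\frac{q}{z-q} + \frac{b^2 q z}{1-qz}\right]$. Writing $z = x + \im y$ with $y > 0$, one has $\Imag\frac{q}{z-q} = \frac{-qy}{|z-q|^2}$ and $\Imag\frac{b^2 q z}{1-qz} = \Imag\frac{b^2 q z(1 - q\bar z)}{|1-qz|^2} = \frac{b^2 q y (1 - q x) - b^2 q y \cdot (-qx)}{|1-qz|^2}$... more carefully, $z(1-q\bar z) = z - q|z|^2$, so $\Imag[z(1-q\bar z)] = y$, giving $\Imag\frac{b^2 q z}{1-qz} = \frac{b^2 q y}{|1-qz|^2}$. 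Hence
\begin{equation*}
\Imag[z S'(z)] = qy\left( \frac{b^2}{|1-qz|^2} - \frac{1}{|z - q|^2}\right),
\end{equation*}
and since $y > 0$ for $\theta \in (0,\pi)$, the desired inequality $\Imag[zS'(z)] < 0$ is equivalent to $b^2 |z-q|^2 < |1 - qz|^2$, i.e. $|b(z-q)| < |1 - qz|$. With $|z| = R$, expanding gives $b^2(R^2 - 2q\Real(z) + q^2) < 1 - 2q\Real(z) + q^2 R^2$, which rearranges to $(1 - b^2)(1 - q^2 R^2) > 0$ after collecting terms — wait, let me recheck: $b^2 R^2 + b^2 q^2 < 1 + q^2 R^2$ (the $-2q\Real(z)$ terms cancel), i.e. $b^2 q^2 - 1 < q^2 R^2 - b^2 R^2 = R^2(q^2 - b^2)$. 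Since $R \le \zc = \frac{1 + qb}{q+b}$, one checks $R^2 \le \zc^2$ and the inequality $b^2 q^2 - 1 < R^2(q^2 - b^2)$ must be verified for the worst case; because $b = \sqrt\kappa < 1$ and $q < 1$, both sides' signs and the bound $R \le \zc$ combine to give it. The cleanest route is to verify it directly at $R = \zc$ (where, I expect, the inequality becomes an identity or a clean strict inequality reflecting that $\zc$ is exactly where the two radii balance for the relevant critical behavior) and note monotonicity in $R$.

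The main obstacle I anticipate is pinning down exactly why $R \le \zc$ is the right threshold: the inequality $b^2|z-q|^2 < |1-qz|^2$ on $|z| = R$ reduces to a bound on $R$ that I expect reads $R^2 < \frac{1 - q^2 b^2}{? }$ or similar, and I need to confirm that $\zc^2 = \left(\frac{1+qb}{q+b}\right)^2$ satisfies this with the correct (non-strict vs. strict) relation, so that the hypothesis $R \in (0,\zc]$ is exactly what makes the claim work and the derivative is strictly positive on the open interval $(0,\pi)$ even at the endpoint $R = \zc$. I would handle this by substituting $R = \zc$ and simplifying; if it turns out $b^2|z-q|^2 = |1-qz|^2$ identically on $|z| = \zc$ (which would be consistent with $\zc$ being the modulus where the ``$S$-part'' degenerates), then strict positivity at $R = \zc$ needs the factor $qy > 0$ together with the cancellation being only at isolated $\theta$ — but more likely the map $R \mapsto R^2(q^2 - b^2) - (b^2q^2 - 1)$ is monotone and strictly positive on $(0, \zc]$, finishing the proof. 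Finally, I note the argument never used $p_c$ beyond it entering only the real additive constant, which is why the same computation simultaneously handles $\SFb$ and $\SFt$.
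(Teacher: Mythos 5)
Your setup is correct and follows the same route as the paper: write $\frac{d}{d\theta}\Real[S(Re^{\im\theta})] = -\Imag[zS'(z)]$, differentiate the interpolating function $S$ from Definition \ref{Def.SG} so that the constant $C$ drops out of the imaginary part, and reduce to a sign condition on $\frac{q}{z-q} + \frac{b^2 q z}{1-qz}$. The simplification $\Imag[z(1-q\bar z)]=y$ is also right, giving $\Imag[zS'(z)] = qy\bigl(\tfrac{b^2}{|1-qz|^2}-\tfrac{1}{|z-q|^2}\bigr)$, and the lemma reduces to $b^2|z-q|^2 < |1-qz|^2$ for $|z|=R\leq \zc$, $\theta\in(0,\pi)$.

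The algebra after that point has a genuine error. You claim the $-2q\Real(z)$ terms cancel, but the left side carries $-2qb^2\Real(z)$ while the right side carries $-2q\Real(z)$; since $b=\sqrt\kappa<1$, these do not cancel, and a $\theta$-dependent term of size $-2q(1-b^2)\Real(z)$ survives. The correct reduction is
\begin{equation*}
\cos\theta \;<\; \frac{R^2q^2+1-b^2(R^2+q^2)}{2qR(1-b^2)}\quad\mbox{for all }\theta\in(0,\pi),
\end{equation*}
which (since $\cos\theta<1$ on $(0,\pi)$) requires the right-hand side to be at least $1$; equivalently $(1-qR)^2 - b^2(R-q)^2 \geq 0$, i.e.\ $\bigl(1+bq - R(q+b)\bigr)\bigl(1-bq-R(q-b)\bigr)\geq 0$. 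Your simplified inequality $R^2(b^2-q^2) < 1 - b^2q^2$ is only the statement that the numerator above is positive; it is strictly weaker than the needed bound numerator $\geq 2qR(1-b^2)$, and in particular does not control the regime $\cos\theta$ near $1$. So even setting aside that the final step is left as ``I would verify this by\ldots,'' the chain of inequalities as written would not close the argument. To repair it, keep the $\Real(z)$ term, divide by $2qR(1-b^2)>0$, and verify the factored inequality for $R\in(0,\zc]$; the second factor vanishes exactly at $R=\zc$ (this is where the threshold comes from), and the first factor is a linear function of $R$ that is positive at both $R=0$ and $R=\zc$, which finishes the proof.
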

\begin{proof} Let $S(z)$ be as in Definition \ref{Def.SG}. Set $z(\theta) = Re^{\pm \im \theta}$, and note that 
\begin{equation}\label{Eq.CircularDerivativeS}
\frac{d}{d\theta} \Real[S(z(\theta))] = Rq \sin(\theta) \cdot\frac{R^2 q^2+1-2\cos(\theta)Rq-b^2\left( R^2 - 2\cos(\theta)Rq + q^2  \right)}{\left(R^2 - 2\cos(\theta)Rq + q^2\right)\left(R^2q^2+1-2\cos(\theta)Rq\right)}.
\end{equation}
We claim that for $R \in (0, \zc]$ we have
\begin{equation}\label{Eq.CosineUB}
\frac{R^2q^2+1-b^2\left( R^2 + q^2  \right)}{2Rq(1-b^2)} \geq 1.
\end{equation}
If (\ref{Eq.CosineUB}) holds, then we obtain for $\theta \in (0, \pi)$ that  
$$ \cos(\theta) < \frac{R^2q^2+1-b^2\left( R^2 + q^2  \right)}{2Rq(1-b^2)} \mbox{, and so }R^2 q^2+1-2\cos(\theta)Rq-b^2\left( R^2 - 2\cos(\theta)Rq + q^2  \right) > 0.$$
The last displayed equation, and (\ref{Eq.CircularDerivativeS}) then imply $\frac{d}{d\theta} \Real[S(z(\theta))] > 0$ for $\theta \in (0,\pi)$. Setting $p_c = \zc$ or $p_c = c$ in the last inequality gives the statements in the lemma.

To see why (\ref{Eq.CosineUB}) holds, we clear denominators and see it is equivalent to 
$$(1 -qb -Rq+Rb)(1 - Rb - qR + bq) \geq 0.$$
Since $R \in (0, \zc]$, have $1 - Rb - qR + bq \geq 0$, and so it suffices to show that for $R \in [0,\zc]$
$$f(R) \geq 0, \mbox{ where }f(R) = 1 -qb -Rq+Rb.$$
The latter holds as $f(R)$ is a linear function, $f(0) = 1 -qb > 0$, and $f(\zc) = \frac{b + q - 2q^2b}{b+q} > 0$.
\end{proof}

\begin{lemma}\label{Lem.MedCircles} Assume the notation from Definitions \ref{Def.ParametersBulk}, \ref{Def.SGBulk} and \ref{Def.SGEdge}. If $R \in (0, \infty)$ and $z(\theta) = R e^{\pm \im \theta}$, then
\begin{equation}\label{Eq.SmallCircleG}
\frac{d}{d\theta} \Real[\GFb(z(\theta))] < 0 \mbox{ and } \frac{d}{d\theta} \Real[\GFt(z(\theta))] < 0 \mbox{ for } \theta \in (0, \pi).
\end{equation}
\end{lemma}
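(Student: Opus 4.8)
The plan is to mirror the proof of Lemma~\ref{Lem.SmallCircleS} by working with the interpolating function $G(z)$ from Definition~\ref{Def.SG}. Since $G(z) = \GFb(z)$ when $p_c = \zc$ and $G(z) = \GFt(z)$ when $p_c = c$, it suffices to prove that $\frac{d}{d\theta}\Real[G(z(\theta))] < 0$ for every $\theta\in(0,\pi)$ and every $p_c\in[\zc,c]$, where $z(\theta) = Re^{\pm\im\theta}$ and $R\in(0,\infty)$ is fixed; specializing $p_c$ then yields both claimed inequalities at once.

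First I would observe that the term $-\tfrac{qp_c}{1-qp_c}\log z$ in $G$ plays no role: its real part is $-\tfrac{qp_c}{1-qp_c}\log R$, a $\theta$-independent constant on the circle of radius $R$. Thus only $-\log(1-qz)$ contributes, and a direct computation gives
\[
\Real\left[-\log\bigl(1-qz(\theta)\bigr)\right] = -\tfrac12\log\bigl|1-qRe^{\pm\im\theta}\bigr|^2 = -\tfrac12\log\bigl(1 - 2qR\cos\theta + q^2R^2\bigr),
\]
and hence
\[
\frac{d}{d\theta}\Real[G(z(\theta))] = -\,\frac{qR\sin\theta}{1 - 2qR\cos\theta + q^2R^2}.
\]

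Finally I would check that the right-hand side is strictly negative on $(0,\pi)$: the numerator is positive since $q,R>0$ and $\sin\theta>0$, while the denominator equals $|1-qRe^{\pm\im\theta}|^2$, which is strictly positive for $\theta\in(0,\pi)$ — it is bounded below by $(1-qR)^2>0$ when $qR\neq1$, and equals $2(1-\cos\theta)>0$ when $qR=1$. This completes the argument. There is no real obstacle here: the only point needing a moment's care is the positivity of the denominator, which is transparent once it is written as a squared modulus, together with the remark that the logarithmic term in $z$ is constant along circular contours and therefore drops out of the $\theta$-derivative — this is exactly the structural reason the statement holds for all $R>0$, in contrast with Lemma~\ref{Lem.SmallCircleS} where the restriction $R\le\zc$ was needed.
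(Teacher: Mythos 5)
Your proof is correct and takes essentially the same route as the paper: both use the interpolating function $G$ from Definition \ref{Def.SG}, compute the $\theta$-derivative of its real part on the circle $z(\theta)=Re^{\pm\im\theta}$, and obtain the same expression $-\frac{qR\sin\theta}{1-2qR\cos\theta+q^2R^2}<0$, then specialize $p_c\in\{\zc,c\}$. Your additional remarks (the $\log z$ term being constant on circles, and recognizing the denominator as $|1-qz|^2>0$) merely make explicit what the paper leaves implicit.
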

\begin{proof} Let $G(z)$ be as in Definition \ref{Def.SG}. Set $z(\theta) = Re^{\pm \im \theta}$ and note that 
\begin{equation*}
\frac{d}{d\theta} \Real [G(z(\theta))] = -\frac{Rq \sin(\theta)}{R^2q^2 + 1 - 2Rq \cos(\theta)} < 0 \mbox{ for } \theta \in (0, \pi).
\end{equation*} 
Setting $p_c = \zc$ or $p_c = c$ in the last inequality gives the statement of the lemma.
\end{proof}

%
\subsection{Estimates along large contours}\label{Section3.3} The goal of this section is to establish the following lemma. 
\begin{lemma}\label{Lem.BigContour} Assume the notation from Definitions \ref{Def.ParametersBulk}, \ref{Def.SGBulk}, \ref{Def.ContoursBulk} and \ref{Def.SGEdge}. There exist $R_0 > q^{-1}$, $\theta_0 \in (\pi/4, \pi/2)$ and a function $\psi:(0,\infty) \rightarrow (0,\infty)$, depending on $q,\kappa,c$, such that for any $\varepsilon > 0 $ 
\begin{equation}\label{Eq.DecayBigContour}
\begin{split}
\Real[\SFb(z) - \SFb(\zc)] &\leq - \psi(\varepsilon) \mbox{ if } z \in C(\zc, \theta_0, R_0,0) \mbox{ and } |z - \zc| \geq \varepsilon, \\
\Real[\SFt(z) - \SFt(c)] &\leq - \psi(\varepsilon) \mbox{ if } z \in C(c, \theta_0, R_0,0) \mbox{ and } |z -c| \geq \varepsilon.
\end{split}
\end{equation}
\end{lemma}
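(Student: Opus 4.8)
The plan is to prove the two inequalities in Lemma~\ref{Lem.BigContour} using the interpolating function $S(z)$ from Definition~\ref{Def.SG}, since a statement for $S$ with $p_c = \zc$ gives the bound for $\SFb$ and with $p_c = c$ the bound for $\SFt$. Fix $\theta_0 \in (\pi/4, \pi/2)$ (to be chosen) and $R_0 > q^{-1}$; the contour $C(p_c, \theta_0, R_0, 0)$ consists of two straight rays emanating from $p_c$ at angles $\pm\theta_0$ until they hit the circle $C_{R_0}$, together with the circular arc of $C_{R_0}$ closing it up. I would treat the rays and the arc separately, and I would want the bound to be uniform in $p_c \in [\zc, c]$ so that a single choice of $R_0, \theta_0$ works for both functions (indeed the real statement only needs the two endpoint values, but uniformity makes the argument cleaner and costs nothing).

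\textbf{Step 1: the circular arc.} On $C_{R_0}$ with $R_0$ large, $\Real[S(R_0 e^{\pm\im\theta})]$ is monotone in $\theta$ by the mechanism behind Lemma~\ref{Lem.SmallCircleS} --- but note Lemma~\ref{Lem.SmallCircleS} is stated only for $R \le \zc$, so here I need the large-$R$ analogue. From the explicit derivative formula \eqref{Eq.CircularDerivativeS}, for $R$ large the numerator $R^2 q^2 + 1 - 2\cos\theta\, Rq - b^2(R^2 - 2\cos\theta\, Rq + q^2)$ has leading term $R^2 q^2 (1 - b^2/q^2)$; since $b = \sqrt{\kappa} < 1 < q^{-1}$ we have $b^2 < q^{-2}$... actually $b^2/q^2$ could exceed $1$, so I must be careful: the sign for large $R$ is that of $q^2 - b^2$, which can be negative. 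So on the large circle $\Real[S]$ is \emph{decreasing} in $\theta$ when $b > q$ and increasing when $b < q$; in either case it is monotone, so on the arc (which is bounded away from the positive real axis once we pick $R_0$ past the rays' exit angle) $\Real[S(z)]$ is dominated by its value at the arc endpoints $\zeta^\pm = R_0 e^{\pm\im\theta_0}$. Then I bound $\Real[S(\zeta^\pm)] - S(\zc)$: as $R_0 \to \infty$, $\Real[S(R_0 e^{\pm\im\theta})] = (1 - b^2 - \text{coeff})\log R_0 + O(1)$, and a short check of the coefficient of $\log z$ in \eqref{Eq.SFun} shows it equals something making the total coefficient of $\log R_0$ strictly negative (this is exactly the statement that $\SFb, \SFt \to -\infty$ at $\infty$, consistent with $h^{\mathrm{bot}}, h^{\mathrm{top}} > 1$), so by choosing $R_0$ large enough we get $\Real[S(z) - S(\zc)] \le -\psi_1$ on the whole arc for some constant $\psi_1 > 0$ depending only on $q, \kappa, c$; note this bound does not even need the $|z - \zc| \ge \varepsilon$ hypothesis.

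\textbf{Step 2: the rays.} On the segment $z = p_c + r e^{\pm\im\theta_0}$ for $r \in [0, r_{\max}(R_0)]$, I want $\Real[S(z) - S(\zc)] \le -\psi(\varepsilon)$ whenever $|z - \zc| \ge \varepsilon$; equivalently, since $|z - p_c| = r$ and $|p_c - \zc| \le c - \zc$, for $r$ bounded below by some $\varepsilon' = \varepsilon'(\varepsilon)$. For small $r$ this is Lemma~\ref{Lem.DecayNearCritTheta}: there $\Real[\SFb(z) - \SFb(\zc)] \le -\epsilon_1 r^3$ and $\Real[\SFt(z) - \SFt(c)] \le -\epsilon_1 r^2$ on $\{z = p_c + re^{\pm\im\theta}: r \le \delta_1\}$ for $\theta \in (\pi/4,\pi/2)$; I would want the interpolated version, i.e.\ that $\Real[S(z) - S(p_c)]$ decays like $-\epsilon_1 r^2$ uniformly in $p_c \in [\zc,c]$ for $r \le \delta_1$, which follows the same way from the Taylor expansion of $S$ at $p_c$ (its second derivative at $p_c$ is $\sigmap^2(\kappa - \kappa_0)/c^2$-type for $p_c = c$ and vanishes for $p_c = \zc$, where one gets third-order decay instead). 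Combined with Lemma~\ref{Lem.DiffS} ($\SFb(\zc) - \SFb(c) < 0$, so moving the base point from $p_c$ to $\zc$ only helps for $\SFb$) resp.\ Lemma~\ref{Lem.DiffS2}, this handles $r \in [\varepsilon', \delta_1]$. For $r \in [\delta_1, r_{\max}]$ --- the "bulk" part of the ray, bounded away from $\zc$ and from $\infty$ --- I argue by a compactness/continuity argument: I claim that for a suitable choice of $\theta_0$, $\Real[S(z)] < \Real[S(p_c)]$ for \emph{every} $z = p_c + re^{\pm\im\theta_0}$ with $r > 0$ (not just $r$ small), uniformly in $p_c \in [\zc, c]$; then on the compact set $\{\delta_1 \le r \le r_{\max},\ p_c \in [\zc,c]\}$ the continuous function $\Real[S(p_c + re^{\pm\im\theta_0})] - \Real[S(p_c)]$ attains a strictly negative maximum, giving the bound.

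\textbf{The main obstacle} is exactly this last claim: showing that one can choose a single angle $\theta_0 \in (\pi/4, \pi/2)$ so that $r \mapsto \Real[S(p_c + re^{\pm\im\theta_0})]$ is strictly decreasing (or at least stays below its value at $r=0$) for \emph{all} $r > 0$ and \emph{all} $p_c$ in the range --- this is the "finding a descent contour that works for the full range of parameters" difficulty flagged in the introduction. The natural route is to compute $\frac{d}{dr}\Real[S(p_c + re^{\im\theta})]$ and show its sign is controlled: writing $z = p_c + re^{\im\theta}$, this derivative equals $\Real[e^{\im\theta} S'(z)]$, and $S'(z)$ is an explicit rational function with numerator a cubic (or quartic) in $z$ whose roots include $z = \zc$ (double root, in the $p_c = \zc$ case). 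One would factor $S'(z)$, track how $\Real[e^{\im\theta} S'(p_c + re^{\im\theta})]$ behaves --- it is negative for small $r$ by the Taylor computation, it is negative for large $r$ since $S'(z) \sim (\text{negative const})/z$ there, and in between one rules out sign changes by a Rouch\'e-type or discriminant argument, possibly after shrinking $\theta_0$ toward $\pi/2$ and enlarging $R_0$ so that the ray stays in a region where $S'$ has no inconvenient zeros. Since $S$ interpolates $\SFb$ and $\SFt$ linearly in the coefficient of $\log z$ and the zero set of $S'$ moves continuously, it should suffice to verify the no-sign-change property at the two endpoints $p_c \in \{\zc, c\}$ and invoke continuity, but making this rigorous --- rather than just plausible from the explicit formulas --- is where the real work lies.
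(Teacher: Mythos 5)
Your decomposition of $C(p_c,\theta_0,R_0,0)$ into arc, near-critical segment, and bulk-of-ray is the right shape, and you correctly isolate the real difficulty: establishing a \emph{global} descent along the ray $r \mapsto p_c + re^{\pm\im\theta_0}$, uniformly over $p_c \in \{z_c,c\}$, for some fixed $\theta_0$ in the open interval $(\pi/4,\pi/2)$. However you leave that claim unresolved (``making this rigorous \dots is where the real work lies''), so the proposal has a genuine gap, and the route you sketch to fill it (factor $S'$, rule out sign changes of $\Real[e^{\im\theta_0}S'(z)]$ along the ray by a discriminant/Rouch\'e argument) is not how the paper handles it.

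The paper's resolution sidesteps the need to prove global descent at an \emph{interior} angle. Lemma~\ref{Lem.SlantedDescent} proves global descent only at the two \emph{boundary} angles $\pi/2$ (when $b\geq q$) and $\pi/4$ (when $b\leq q$), where the sign of $\frac{d}{dt}\Real[S(z(t))]$ reduces to checking that a couple of explicit quadratic polynomials have negative discriminant --- a much more tractable computation than tracking zeros of $S'$ along a generic ray. Then $\theta_0$ is chosen close to $\pi/2$ (resp.\ $\pi/4$), and the descent on the $\theta_0$-ray is deduced in two separate regimes. For $|z-p_c|\geq\delta_2$, uniform continuity of $\Real[S]$ on the compact sector between $|z-p_c|=\delta_2$ and $|z|=R_0$ transfers the strict negativity from $\pi/2$ to nearby $\theta_0$; but this argument cannot reach $r\to 0$ because the margin vanishes there. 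For $|z-p_c|\in[\varepsilon,\delta_2]$, the paper instead uses a circle-comparison trick: from (\ref{RE2}), $\Real[S(Re^{\im\phi})]$ is unimodal in $\phi$ on each origin-centered circle, so for $\theta_0\in[\theta_1,\theta_2]$ the value at $z=p_c+re^{\im\theta_0}$ is bounded by $\max(\Real[S(\zeta_1)],\Real[S(\zeta_2)])$, where $\zeta_i$ lies on the \emph{same origin-centered circle} as $z$ but on the ray from $p_c$ at angle $\theta_i$; taking $(\theta_1,\theta_2)=(\pi/3,\pi/2)$ (resp.\ $(\pi/4,\pi/3)$), the bound at $\pi/3$ comes from the Taylor estimate in Lemma~\ref{Lem.DecayNearCritTheta}, and the bound at $\pi/2$ (resp.\ $\pi/4$) from Lemma~\ref{Lem.SlantedDescent}.

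One more issue worth flagging in your Step 2: you propose to use Lemma~\ref{Lem.DecayNearCritTheta} \emph{directly at $\theta_0$} for the near-critical range $r\in[\varepsilon',\delta_1]$. This is fine in principle for a fixed interior $\theta_0$, but note that its decay constant $\epsilon_1(\theta) = \min(-A\cos(3\theta),-A\cos(2\theta))$ degenerates to $0$ as $\theta\to\pi/2$ (since $\cos(3\pi/2)=0$). If you take $\theta_0$ close to $\pi/2$ to make the descent claim hold (which is what the paper's continuity argument requires), then the constant in your near-critical estimate deteriorates. The circle-comparison trick is precisely what decouples the choice of $\theta_0$ (pushed toward $\pi/2$ for the bulk-of-ray bound) from the angle $\pi/3$ at which the Taylor bound is invoked (kept fixed, with good constants). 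Your proposal does not account for this tension.

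Finally, a small correction to Step 1: you are right that Lemma~\ref{Lem.SmallCircleS} is stated only for $R\leq z_c$, so it does not directly give monotonicity on the large circle. But you don't need monotonicity there --- the paper's Lemma~\ref{Lem.BigCircles} shows via (\ref{RE2}) that $\Real[S(Re^{\im\phi})]$ is unimodal on $[0,\pi]$, so its maximum over the circle is at $\phi=0$ or $\phi=\pi$, and both $S(\pm R)\to-\infty$ as $R\to\infty$; this gives the $-M$ bound on the \emph{whole} circle $C_{R_0}$, not just the arc, which is all that is used.
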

The proof of the lemma is given at the end of the section after we establish several auxiliary statements. Throughout this section we continue to denote $b = \sqrt{\kappa}$.

\begin{lemma}\label{Lem.SlantedDescent} Assume the notation from Definitions \ref{Def.ParametersBulk} and \ref{Def.SG}. If $b \in [q, 1)$, then
\begin{equation}\label{QE3}
\frac{d}{dt} \Real [S(z(t))] < 0 \mbox{, where } t> 0, \hspace{2mm} z(t) = p_c + \im t.
\end{equation}
If $b \in (0,q]$, then
\begin{equation}\label{QE4}
\frac{d}{dt} \Real [S(z(t))] < 0 \mbox{, where } t> 0, \hspace{2mm} z(t) = p_c + te^{\im\pi/4}.
\end{equation}
\end{lemma}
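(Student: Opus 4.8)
The statement is a pair of inequalities asserting that the real part of $S$ decreases along a straight ray emanating from $p_c$, with the direction of the ray (vertical vs.\ slanted at angle $\pi/4$) depending on whether $b = \sqrt{\kappa}$ is at least $q$ or at most $q$. The natural approach is a direct computation of $\frac{d}{dt}\Real[S(z(t))]$ and a sign analysis of the resulting rational function of $t$. Recall that $S'(z) = \frac{q/z^2}{1-q/z} + \frac{qb^2}{1-qz} - \frac{A}{z}$, where $A = \frac{qb^2p_c^2 - (q^2b^2+q^2)p_c + q}{(p_c-q)(1-qp_c)}$ is the coefficient of $\log z$ in \eqref{Eq.SFun}; equivalently $S'(z) = \frac{q}{z(z-q)} + \frac{qb^2}{1-qz} - \frac{A}{z}$. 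Then $\frac{d}{dt}\Real[S(z(t))] = \Real[S'(z(t)) \cdot z'(t)]$, where $z'(t) = \im$ in the first case and $z'(t) = e^{\im\pi/4}$ in the second.

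\textbf{First case ($b \in [q,1)$, vertical ray).} I would write $z = z(t) = p_c + \im t$ and compute $\Real[\im \cdot S'(z)]$, clearing denominators over the three terms $z(z-q)$, $1-qz$ and $z$. Since $|z|^2 = p_c^2 + t^2$ and the ray is vertical, each factor like $z(z-q)$, $1-qz$ has a tractable modulus; after multiplying through by the (positive) product of the squared moduli of the denominators, one is left with a polynomial in $t^2$ (odd powers of $t$ drop out by the symmetry $t \mapsto -t$ combined with complex conjugation, or can be tracked directly) whose sign must be shown negative for all $t > 0$. The key algebraic input is that the coefficient $A$ was engineered precisely so that $S'(p_c) = 0$ — indeed $A$ is chosen to make $z = p_c$ a critical point — so the leading behavior near $t = 0$ is governed by $S''(p_c)$ or $S'''(p_c)$, and one should see the constant term (the $t^0$ coefficient of the polynomial obtained after clearing denominators and extracting the factor of $t$) vanish, leaving an overall factor of $t$ times something strictly negative. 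The condition $b \geq q$ should enter as exactly the sign condition needed to keep that remaining polynomial negative; I expect it to reduce, after factoring, to a manifestly negative expression such as $-(1-qp_c)$ or $-(b-q)(\cdots)$ times a sum of positive terms, using $q^{-1} > c \geq p_c > q$ and the bounds $\zc, c \in (1, q^{-1})$ from Definition \ref{Def.ParametersBulk}.

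\textbf{Second case ($b \in (0,q]$, slanted ray).} This is handled the same way with $z'(t) = e^{\im\pi/4} = \frac{1+\im}{\sqrt 2}$, so $\frac{d}{dt}\Real[S(z(t))] = \frac{1}{\sqrt 2}\bigl(\Real[S'(z)] - \Imag[S'(z)]\bigr)$ with $z = p_c + te^{\im\pi/4}$. Again I would clear denominators and obtain a polynomial in $t$ whose sign I need to control; the $\pi/4$ tilt is chosen so that the worst term (which obstructed the vertical ray when $b < q$) now has the right sign, analogously to how the contour choice in Lemma \ref{Lem.DecayNearCritTheta} uses $\theta \in (\pi/4,\pi/2)$ to make $\cos(3\theta), \cos(2\theta) < 0$. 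The main obstacle in both cases is purely computational bookkeeping: the polynomials involved have degree up to $4$ or so in $t$ (with $p_c, q, b$ as parameters), and one must factor out the zero at $t = 0$ coming from $S'(p_c) = 0$ and then verify strict negativity of the quotient on $t > 0$ uniformly over the admissible parameter range. I would organize this by substituting the explicit value of $A$, simplifying using a computer algebra system to identify the factorization, and then presenting the factored form with each factor's sign justified from $p_c \in [\zc, c] \subset (1, q^{-1})$ and the case hypothesis on $b$ versus $q$. A useful sanity check along the way: when $p_c = \zc$ the function $S$ equals $\SFb$, for which we already know from Lemma \ref{Lem.DecayNearCritGen} and Lemma \ref{Lem.SmallCircleS} the local and circular descent behavior, so the $t\to 0$ asymptotics of the polynomial must be consistent with $S'''(\zc) = 2\sigmaq^3 > 0$.
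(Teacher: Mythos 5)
Your plan — writing $\frac{d}{dt}\Real S(z(t)) = \Real[S'(z(t))\,z'(t)]$, clearing denominators, pulling out the factor of $t$ that comes from $S'(p_c)=0$, and then doing a sign analysis of the remaining polynomial using $p_c\in[\zc,c]$ together with the case hypothesis on $b$ versus $q$ — is exactly the argument the paper gives (in the $\pi/4$ case the explicit factor turns out to be $t^2$ rather than $t$, since $\Real[S''(p_c)e^{\im\pi/2}]=0$ as well). One small correction to your parenthetical: after extracting $t$, the constant term of the remaining quadratic-in-$t^2$ is $c_1=-p_c\bigl[b^2(p_c-q)^2-(1-qp_c)^2\bigr]$, which is only $\leq 0$ (vanishing iff $p_c=\zc$), not identically zero; the paper closes the argument not by a clean factorization but by checking that all three coefficients of $P(x)=a_1x^2+b_1x+c_1$ are nonpositive with $a_1<0$, and the hypothesis $b\geq q$ enters precisely in showing the middle coefficient $b_1<0$ via a discriminant computation, while $p_c\geq\zc$ handles $c_1\leq 0$.
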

\begin{proof} We set for convenience
\begin{equation}\label{QE4.5}
C = \frac{qb^2 p_c^2-(q^2 b^2 +q^2)p_c+q}{(p_c-q)(1-qp_c)} ,
\end{equation}
so that 
\begin{equation}\label{QE5}
\begin{split}
&S(z) = \log (1 - q/z) - b^2 \log(1 - qz) - C \log(z), \mbox{ and } S'(z) = \frac{Q(z)}{(q-z)(qz-1) z}, \\
&\mbox{ where } Q(z) = (qb^2 + Cq)z^2 + (- C - Cq^2 - q^2 - q^2b^2)z + q + Cq.
\end{split}
\end{equation}
Fix a complex parameter $u \in \mathbb{C}$, set $z(t) = p_c + u t$, and note that
\begin{equation}\label{QE6}
\begin{split}
&\frac{d}{dt} \Real [S(z(t))] \hspace{-0.5mm} = \hspace{-0.5mm} \Real [S'(z(t)) z'(t)] \hspace{-0.5mm} = \hspace{-0.5mm}\frac{\Real [u (q - p_c - \bar{u}t)(qp_c+q\bar{u}t-1) (p_c+\bar{u}t)  Q(p_c + ut) ]}{|q-p_c-ut|^2|qp_c+qut-1|^2|p_c+ut|^2}.
\end{split}
\end{equation}

{\bf \raggedleft Proof of (\ref{QE3}).} In this step we assume $b \in [q,1)$. Setting $u = \im$ in (\ref{QE6}), we get
\begin{equation}\label{QE7}
\frac{d}{dt} \Real \left[S(z(t)) \right] =  \frac{qt }{\left((q-p_c)^2 + t^2 \right)\left((qp_c-1)^2 + q^2t^2 \right)(p_c^2 + t^2)}\cdot P(t^2), 
\end{equation}
where $P(x) = a_1 x^2 + b_1 x + c_1$ with
\begin{equation}\label{QE8}
\begin{split}
&a_1 = -q(b^2 + C), \hspace{2mm} c_1 = -p_c[ b^2 (p_c - q)^2 - (1-qp_c)^2]\mbox{, }b_1 = \frac{a_2 p_c^2 + b_2 p_c + c_2}{(p_c-q)(1-qp_c)},\\
&  \mbox{ with } a_2 = q^4 + 2q^2b^2 - 2q^2 - b^2, \hspace{2mm} b_2 = 3q + qb^2 - q^3 - 3q^3 b^2, \hspace{2mm} c_2 = q^4 b^2 - 1.
\end{split}
\end{equation}
We mention that in deriving the above we used the formula for $C$ from (\ref{QE4.5}).

In view of (\ref{QE7}) and (\ref{QE8}), to prove (\ref{QE3}) it suffices to show that
\begin{equation}\label{QE9}
a_1 < 0, \hspace{2mm} b_1 < 0 \mbox{, and } c_1 \leq 0.
\end{equation}
From our parameter choice in Definition \ref{Def.ParametersBulk}, and the fact that $p_c \in [z_c, c] \subseteq [q, 1/q]$, we have $C,q > 0$, and so $a_1 < 0$. Since $p_c \geq \zc$, we have
$$p_c \geq \zc \iff p_c \geq \frac{1 + b q}{q + b}  \iff b (p_c - q) \geq 1- qp_c,$$
which implies that $c_1 \leq 0$. Finally, we observe that the quadratic polynomial $a_2x^2 + b_2x + c_2$ has discriminant
$$b_2^2 - 4a_2c_2 = (1-q^2)^2 \cdot (2b+q-qb^2-2q^2b) \cdot (2q^2b -qb^2+q-2b),$$
which is negative when $b \geq q$. Consequently, $a_2x^2 + b_2x + c_2 < 0$ for all $x \in \mathbb{R}$, and in particular $b_1 < 0$. Overall, we conclude (\ref{QE9}) and hence (\ref{QE3}).\\

{\bf \raggedleft Proof of (\ref{QE4}).} In this step we assume $b \in (0,q]$. Setting $u = e^{\im \pi/4}$ in (\ref{QE6}), we get  
\begin{equation}\label{QE10}
\begin{split}
&\frac{d}{dt} \operatorname{Re} S(z(t)) = \frac{at^2 }{|q-p_c-ut|^2|q p_c+qut-1|^2|p_c+ut|^2} \cdot \left(b^2Q_1(t) + Q_2(t) \right), \mbox{ where } \\
&Q_1(t) = -\frac{s(q^2 -  s^2 q p_c - s q t +   p_c^2 + s p_c t +  t^2)( 2 q p_c +   s q t - 1)}{1-q p_c } \\
&Q_2(t) =\frac{(s q^2 p_c^2 + 2 q^2 p_c t +  s q^2 t^2 - 2  s q p_c - 2 q t +  s)( q - 2 p_c -  s t)}{ p_c-q} 
\end{split}
\end{equation}
and we have set $s = 2^{1/2}$. In view of (\ref{QE10}), to prove (\ref{QE4}) it suffices to show
\begin{equation}\label{QE11}
Q_1(t) < 0 \mbox{ and } Q_2(t) < 0 \mbox{ for $t > 0$}.
\end{equation}

From our parameter choice in Definition \ref{Def.ParametersBulk} and the fact that $p_c \in [\zc, c]$, we have 
$$q - 2 p_c -  s t < 0, \mbox{ and }s q^2 p_c^2 + 2 q^2 p_c t +  s q^2 t^2 - 2  s q p_c - 2 q t +  s > 0.$$
Indeed, for the first one we use $p_c \geq \zc \geq 1 > q$, and for the second one we see that the quadratic polynomial in $t$ has discriminant $-4q^2(1-qp_c)^2 < 0$. Consequently, $Q_2(t) < 0$. Similarly, we have $Q_1(t) < 0$, since
$$q^2 -  s^2 q p_c - s q t +   p_c^2 + s p_c t +  t^2 > 0, \mbox{ and } 2 q p_c +   s q t - 1 > 0 .$$
The first inequality holds as the quadratic polynomial in $t$ has discriminant $-2(p_c-q)^2 < 0$. The second inequality holds as $sq = \sqrt{2} q > 0$, and
$$2qp_c \geq 2q \cdot \frac{1 + qb}{q+b} \geq 2q \cdot \frac{1 + q^2}{2q} \geq 1 + q^2 > 1,$$
where in the first inequality we used $p_c \geq \zc$, in the second we used that $b \in (0, q]$.
Overall, we conclude (\ref{QE11}) and hence  (\ref{QE4}).
\end{proof}

\begin{lemma}\label{Lem.BigCircles} Assume the notation from Definitions \ref{Def.ParametersBulk} and \ref{Def.SG}. For any $M > 0$ we can find $\mathsf{R} > q^{-1}$, depending on $q,b,p_c,M$, such that for $R \geq \mathsf{R}$ and $\theta \in [-\pi, \pi]$
\begin{equation}\label{RE1}
 \Real [S(Re^{\mathbf{i}\theta}) - S(p_c) ]\leq -M.
\end{equation}
\end{lemma}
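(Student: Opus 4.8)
The plan is to extract the dominant term in $\Real[S(Re^{\im\theta})]$ as $R\to\infty$, show that its coefficient forces divergence to $-\infty$ uniformly in $\theta$, and then conclude by monotonicity in $R$. I would begin from the expansion recorded in (\ref{QE5}), namely $S(z) = \log(1-q/z) - b^2\log(1-qz) - C\log z$ with $C$ as in (\ref{QE4.5}), and use that for the principal branch $\Real[\log w] = \log|w|$. Thus for $z = Re^{\im\theta}$ with $\theta\in[-\pi,\pi]$,
\[
\Real[S(Re^{\im\theta})] = \log\bigl|1 - qR^{-1}e^{-\im\theta}\bigr| - b^2\log\bigl|1 - qRe^{\im\theta}\bigr| - C\log R .
\]

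Next I would bound the three terms uniformly in $\theta$. For $R\ge 2q$ one has $q/R\le 1/2$, so the first term is at most $2q/R\le 1$ in absolute value. For the second term, $|1 - qRe^{\im\theta}|^2 = 1 - 2qR\cos\theta + q^2R^2 \ge (qR-1)^2$, hence $\log|1 - qRe^{\im\theta}|\ge \log(qR-1)\ge \log(qR/2)$ once $R\ge 2/q$, so $-b^2\log|1-qRe^{\im\theta}|\le -b^2\log(qR/2)$. The third term equals $-C\log R$ exactly. Combining, for every $R\ge\max(2q,2/q)$ and every $\theta$,
\[
\Real[S(Re^{\im\theta})] \le 1 - b^2\log(q/2) - (b^2+C)\log R .
\]
The uniformity in $\theta$ is automatic here, since $\theta$ enters the real part only through $|1 - qRe^{\im\theta}|$, which is minimized at $\theta = 0$.

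Finally I would invoke $b^2 + C > 0$. Since $b = \sqrt{\kappa}\in(0,1)$ we have $b^2>0$, while $C>0$ follows from the parameter constraints in Definition \ref{Def.ParametersBulk} together with $p_c\in[\zc,c]$; this is exactly the positivity used in the proof of Lemma \ref{Lem.SlantedDescent} (it underlies the assertion $a_1 = -q(b^2+C)<0$ there), and it may also be checked directly by verifying that $b^2 p_c^2 - q(b^2+1)p_c + 1>0$ for $p_c\in[q,q^{-1}]$. Moreover $S(p_c)$ is a finite real number, because $q<p_c<q^{-1}$ makes each of $1 - q/p_c$, $1 - qp_c$, and $p_c$ a positive real. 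The right-hand side of the last display is strictly decreasing in $R$ and tends to $-\infty$, so given $M>0$ I would pick $\mathsf{R}>q^{-1}$ with $\mathsf{R}\ge\max(2q,2/q)$ large enough that $(b^2+C)\log\mathsf{R}\ge M + 1 - b^2\log(q/2) - S(p_c)$; then $\Real[S(Re^{\im\theta}) - S(p_c)]\le -M$ for all $R\ge\mathsf{R}$ and $\theta\in[-\pi,\pi]$, which is (\ref{RE1}). I expect the only point requiring care to be the sign of $b^2+C$, which is handled by the earlier computation; everything else is a routine uniform estimate.
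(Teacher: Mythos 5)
Your proof is correct, and it takes a somewhat different route than the paper's. The paper reuses the circular-derivative formula (\ref{Eq.CircularDerivativeS}) to show that $\theta\mapsto\Real[S(Re^{\im\theta})]$ has at most one interior critical point on $(0,\pi)$, so its maximum is attained at $\theta=0$ or $\theta=\pi$; it then computes the $R\to\infty$ asymptotics of $S(\pm R)$ and observes the coefficient of $\log R$, which equals $-(b^2+C)$, is negative. You instead bound the three summands of $\Real[S(Re^{\im\theta})]$ uniformly in $\theta$ by hand — the first term via $|w|\le 1/2 \Rightarrow |\log(1-w)|\le 2|w|$, the second via $|1-qRe^{\im\theta}|\ge qR-1$, and the third being $\theta$-free — arriving at the same dominant term $-(b^2+C)\log R$ without invoking the earlier derivative computation. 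Your version is more self-contained and elementary; the paper's is slightly shorter because it leverages machinery already in place. Both hinge on the same positivity $b^2+C>0$, which you correctly justify (indeed, the numerator of $C$ satisfies $q\bigl[b^2 p_c(p_c-q)+(1-qp_c)\bigr]>0$ for $p_c\in(q,q^{-1})$, matching the paper's reformulation $b^2+C=\frac{b^2(p_c-q)+q(1-qp_c)}{(p_c-q)(1-qp_c)}$). One minor slip: $\theta$ also enters the real part through the first term $\log|1-qR^{-1}e^{-\im\theta}|$, not only through $|1-qRe^{\im\theta}|$, but since you bound that first term uniformly anyway this does not affect the argument.
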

\begin{proof} Since $S(\bar{z}) = \overline{S(z)}$, it suffices to prove (\ref{RE1}) for $\theta \in [0, \pi]$, which we assume in the sequel. Fix $R > q^{-1}$ and set $z(\theta) = Re^{\mathbf{i}\theta}$. Using (\ref{Eq.CircularDerivativeS}), and that $\sin(\theta) > 0$ for $\theta \in (0, \pi)$, we see that 
\begin{equation}\label{RE2}
\frac{d}{d\theta} \Real[S(z(\theta))] < 0  \mbox{ if and only if } \cos(\theta)>\frac{R^2q^2+1-b^2\left( R^2 + q^2  \right)}{2 Rq(1-b^2)}.
\end{equation}
The latter implies that on $[0,\pi]$ the function $\Real[S(z(\theta))]$ either monotonically decreases, monotonically increases, or it first decreases and then increases. In particular, the maximum of $\Real[S(z(\theta))]$ on $[0,\pi]$ is achieved either at $\theta = 0$, or $\theta = \pi$ (or both). We conclude that 
\begin{equation*}
\max_{\theta \in [0, \pi]} \Real[S(z(\theta))] \leq \max \{S(R), S(-R)\}.
\end{equation*}
The last equation implies the statement of the lemma once we note that 
$$\lim_{R \rightarrow \infty} \max \{S(R), S(-R)\} = - \infty.$$
To see the latter, we can use the definition of $S(z)$ in (\ref{Eq.SFun}), which gives as $R \rightarrow \infty$
$$\Real[S( \pm R)] \sim \left( - b^2 - \frac{qb^2 p_c^2-(q^2b^2 +q^2)p_c+q}{(p_c-q)(1-qp_c)}  \right) \log(R) = - \frac{q(1-q p_c) + b^2 (p_c - q) }{(p_c-q)(1-p_cq)} \cdot \log(R).$$
\end{proof}

With the above results in place we turn to the proof of Lemma \ref{Lem.BigContour}.
\begin{proof}[Proof of Lemma \ref{Lem.BigContour}] Let $S(z)$ be as in Definition \ref{Def.SG}. We will show that if $p_c \in \{\zc, c\}$, then we can find $R_0, \theta_0, \psi$ as in the statement of the lemma, such that for $z \in C(p_c, \theta_0, R_0, 0)$ with $|z - p_c| \geq \varepsilon$
\begin{equation}\label{Eq.Psi1}
\Real[S(z) - S(p_c)] \leq - \psi(\varepsilon).
\end{equation}
The latter statement clearly implies the statement of the lemma.

Before we go into the proof of (\ref{Eq.Psi1}), let us give a brief outline of the argument. From Lemma \ref{Lem.BigCircles} with $M = 1$ we can find $R_0$ sufficiently large, so that for $p_c \in \{\zc,c\}$ and $|z| = R_0$
\begin{equation}\label{Eq.Psi2}
\Real[S(z) - S(p_c)] \leq - 1.
\end{equation}
This specifies our choice of $R_0$. The choice of $\theta_0 \in (\pi/4, \pi/2)$ is different depending on whether $b \in [q,1)$ or $b \in (0,q)$. From Lemma \ref{Lem.SlantedDescent} we have that that $\Real[S(z(t)) - S(p_c)]$ decreases in $ t \geq 0$ when $b \in [q,1)$ for $z(t) = p_c + t e^{\im \pi/2}$, and when $b \in (0,q)$ for $z(t) = p_c + te^{\im \pi /4}$. Consequently, our choice for $\theta_0$ will be very close to $\pi/2$ when $b \in [q,1)$, and very close to $\pi/4$ when $b \in (0,q)$. The precise choice of $\theta_0$ is detailed in the first step below. In the second step we construct the function $\psi$, which is a small positive constant away from the origin, and near the origin takes a more complicated (implicit) form, related to Lemma \ref{Lem.SlantedDescent}. In the third step we show that our choice of $\psi$ satisfies (\ref{Eq.Psi1}). This will be a consequence of our choice of $\theta_0$ and (\ref{Eq.Psi2}) when $z \in C(p_c, \theta_0, R_0,0)$ is far away from $p_c$. To show that $\psi$ satisfies (\ref{Eq.Psi1}) even when $z$ is close to $p_c$, we will use the fact that for $\theta_0 \in [\theta_1, \theta_2]$, we can control $\Real[S(p_c + r e^{\im \theta_0})]$ by quantities of the form $\Real[S(p_c + r_1 e^{\im \theta_1})]$, and $\Real[S(p_c + r_2 e^{\im \theta_2})]$, where $r \asymp r_1$ and $r \asymp r_2$. When $b \in [q,1)$, we will use this statement with $\theta_1 = \pi/3, \theta_2 = \pi/2$, and when $b \in (0,q)$, we will use it with $\theta_1 = \pi/4, \theta_2 = \pi/3$. The bound for the angle $\pi/3$ will follow from Lemma \ref{Lem.DecayNearCritTheta}, and the bound for the other angle will come from Lemma \ref{Lem.SlantedDescent}. We now turn to the details of the proof.\\

Throughout the proof all the constants depend on $q,b,c$ -- we do not list this dependence explicitly.\\

{\bf \raggedleft Step 1.} From Lemma \ref{Lem.BigCircles} with $M = 1$ we can find $R_0 \geq 1 + q^{-1}$, so that (\ref{Eq.Psi2}) holds for $p_c \in \{\zc,c\}$ and $|z-p_c| = R_0$. This specifies our choice of $R_0$ for the rest of the proof. In the rest of this step we specify our choice of $\theta_0$, which depends on whether $b \in [q,1)$ or $b \in (0,q)$. 

From Lemma \ref{Lem.DecayNearCritTheta} we can find $\delta_1 \in (0,1)$ and $\epsilon_1 > 0$, such that for $r \in [0, \delta_1]$ and $p_c \in \{\zc, c\}$ 
\begin{equation}\label{Eq.Psi3}
\Real[S(p_c + re^{\pm \im \pi/3}) - S(p_c)] \leq - \epsilon_1 \cdot r^3.
\end{equation}
We fix $\delta_2 \in (0, \delta_1]$ small enough so that $2q^{-1} \delta_2 + \delta_2^2 \leq \delta_1^2$.

If $b \in [q,1)$, we conclude from Lemma \ref{Lem.SlantedDescent}, and the fact that $\overline{S(z)} = S(\bar{z})$, that for all $t > 0$ 
$$\Real [S(p_c + t e^{\pm \im \pi /2 }) - S(p_c)] < 0.$$
Using the latter and the uniform continuity of $\Real[S(z)]$ over the compact region $D = \{z \in \mathbb{C}: \delta_2 \leq |z-p_c| \leq R_0 \mbox{ and } \operatorname{Arg}(z-p_c) \in [\pi/4, \pi/2] \cup [-\pi/2, -\pi/4] \}$, we conclude that we can find a small enough $\epsilon \in (0,1)$, and $\theta_0 \in [\pi/3, \pi/2)$ sufficiently close to $\pi/2$, so that if $z = p_c + t e^{\pm \im \theta_0}$ with $t > 0$, and $z \in D$
\begin{equation}\label{Eq.Psi4}
\Real[S(z ) - S(p_c)] \leq - \epsilon.
\end{equation}
If $b \in (0,q)$, we similarly conclude from Lemma \ref{Lem.SlantedDescent} that for all $t > 0$ 
$$\Real [S(p_c + t e^{\pm \im \pi /4 }) - S(p_c)] < 0,$$
and so we can find $\epsilon \in (0,1)$, and $\theta_0 \in (\pi/4, \pi/3]$ sufficiently close to $\pi/4$, so that if $z = p_c + t e^{\pm \im\theta_0}$ with $t > 0$, and $z \in D$, then (\ref{Eq.Psi4}) holds. The latter specifies our choice of $\theta_0$ for the rest of the proof.\\

{\bf \raggedleft Step 2.} In this step we specify our choice of the function $\psi$, which will also depend on whether $b \in [q,1)$ or $b \in (0,q)$. When $\varepsilon \geq \delta_2$, we set 
\begin{equation}\label{Eq.Psi5}
\psi(\varepsilon) =  \epsilon,
\end{equation}
where we recall that $\epsilon$ is the implicit constant satisfying (\ref{Eq.Psi4}). Note that $\epsilon$ is different depending on whether $b \in [q,1)$ or $b \in (0,q)$.

When $b \in [q,1)$, we have from Lemma \ref{Lem.SlantedDescent} that there is a function $\psi_{\pi/2}:(0, \infty) \rightarrow (0, \infty)$, such that for $r \geq \varepsilon > 0$
\begin{equation}\label{Eq.Psi6}
\Real[S(p_c + re^{\pm \im \pi/2}) - S(p_c)] \leq - \psi_{\pi/2}(\varepsilon).
\end{equation}
Using the above implicit function, we define for $\varepsilon \in (0,\delta_2)$
\begin{equation}\label{Eq.Psi7}
\psi(\varepsilon) =  \min \left( \epsilon_1 q^3\varepsilon^6/(1+q)^3, \psi_{\pi/2}(\varepsilon), \epsilon \right).
\end{equation}
When $b \in (0,q)$, Lemma \ref{Lem.SlantedDescent} implies that there is a function $\psi_{\pi/4}:(0, \infty) \rightarrow (0, \infty)$, such that for $r \geq \varepsilon > 0$ 
\begin{equation}\label{Eq.Psi8}
\Real[S(p_c + re^{\pm \im \pi/4}) - S(p_c)] \leq - \psi_{\pi/4}(\varepsilon).
\end{equation}
As before, we define for $\varepsilon \in (0,\delta_2)$
\begin{equation}\label{Eq.Psi9}
\psi(\varepsilon) =  \min \left( \epsilon_1 \varepsilon^3, \psi_{\pi/4}(\varepsilon/2), \epsilon \right).
\end{equation}
Equations (\ref{Eq.Psi5}) and (\ref{Eq.Psi7}) define $\psi$ when $b \in [q,1)$, and (\ref{Eq.Psi5}) and (\ref{Eq.Psi9}) define $\psi$ when $b \in [q,1)$ for the rest of the proof.\\

{\bf \raggedleft Step 3.} In this final step we show that the $\psi$ from Step 2 satisfies (\ref{Eq.Psi1}). Suppose first that $\varepsilon \geq \delta_2$ and $|z - p_c| \geq \varepsilon$. If $|z| = R_0$ (i.e. $z$ is in the circular arc of the contour $C(p_c, \theta_0, R_0,0)$), we have
$$\Real[S(z) - S(p_c)] \leq - 1 \leq -\epsilon = -\psi (\varepsilon),$$
where the first inequality used (\ref{Eq.Psi2}), the second used that $\epsilon \in (0,1)$ and the third used (\ref{Eq.Psi5}). If instead $|z| < R_0$, then $z \in D$ as in Step 1, and so from (\ref{Eq.Psi4}) we conclude
$$\Real[S(z) - S(p_c)] \leq -\epsilon = -\psi (\varepsilon).$$
The last two displayed equations verify (\ref{Eq.Psi1}) when $\varepsilon \geq \delta_2$.

In the remainder we fix $\varepsilon \in (0, \delta_2)$ and proceed to verify (\ref{Eq.Psi1}) for $|z - p_c| \geq \varepsilon$. The argument from the previous paragraph implies for $|z - p_c| > \delta_2$ that
$$\Real[S(z) - S(p_c)] \leq -\epsilon \leq - \psi(\varepsilon),$$
where in the last inequality we used (\ref{Eq.Psi7}) and (\ref{Eq.Psi9}). Consequently, we may assume that $|z - p_c| \in [\varepsilon, \delta_2]$. As  $R_0 \geq q^{-1} + 1 > p_c + \delta_2 \geq |z|$, we see that $z$ is on one of the straight segments in the contour $C(p_c, \theta_0, R_0,0)$. Without loss of generality (using that $\overline{S(z)} = S(\bar{z})$), we may assume that $z = p_c + r e^{ \im \theta_0}$ for some $r \in [\varepsilon, \delta_2]$.\\

Suppose first that $b \in [q, 1)$. Let $\theta_1 = \pi/3$, $\theta_2 = \pi/2$, and note that $\theta_0 \in [\theta_1, \theta_2]$ by construction. For $i \in \{1,2\}$ we let $\zeta_i$ be the point where the ray $\{p_c + t e^{\im \theta_i}: t \geq 0\}$ intersects the zero-centered circle of radius $R = |z|$. Then, we have $|z| = R e^{\im \phi_0}$, $\zeta_i = R e^{\im \phi_i}$ with $\phi_0 \in [\phi_1, \phi_2]$, see Figure \ref{Fig.S3}. Also, we have $\zeta_i = p_c + r_i e^{\im \theta_i}$. From the argument after (\ref{RE2}) we have that over $\phi \in (0,\pi)$ the function $\Real[S(R e^{\im \phi})]$ is either increasing, decreasing, or first decreases and then increases. Consequently,
\begin{equation}\label{Eq.Psi10}
\Real[S(z)] \leq \max \left( \Real[S(\zeta_1)] , \Real[S(\zeta_2)]  \right) = \max \left( \Real[S(p_c + r_1 e^{\im \theta_1})], \Real[S(p_c + r_2 e^{\im \theta_2})]  \right).
\end{equation}
We also observe the following inequalities
\begin{equation}\label{Eq.QW1}
r_2 \geq r \mbox{ and } \delta_2 \geq r \geq r_1 \geq qr^2/(1+q).
\end{equation}
Indeed, from the law of cosines we have 
$$r_2^2 = r^2 + 2 p_c r \cos(\theta_0),$$
which implies the first inequality in (\ref{Eq.QW1}). Similarly, from the law of cosines we have 
$$r^2 + 2p_c r \cos(\theta_0) = r_1^2 + p_c r_1,$$
and since $\cos(\theta_0) \in [0,1/2]$ we conclude
$$r \geq r_1 \mbox{ and } r_1 (1 + q^{-1}) \geq r_1^2 + p_c r_1 \geq r^2.$$
The last displayed equation implies the second set of inequalities in (\ref{Eq.QW1}). 

\begin{figure}[h]
    \centering
     \begin{tikzpicture}[scale=1]
     \useasboundingbox (-2,-0.5) rectangle (20,4.5); 
        \def\tra{0} 

        \draw[->, thick, gray] (\tra,0)--(\tra + 6,0) node[right]{$\Real$};
        \draw[->, thick, gray] (\tra ,0)--( \tra ,4) node[above]{$\Imag$};
  \def\x{\tra}   
  \def\R{5}   
  \def\pct{4}   
  \def\tho{60} 
  \def\tht{90} 

  \node (O) at (\x,0) {};
  \node (X) at (\tra + \pct,0) {};
  
  \path (X) ++(\tho:1) coordinate (Xp);
  \path (X) ++(\tht:1) coordinate (Xm);

  \path[name path=BigCirc]   (O) circle[radius=\R];
  \path[name path=RayOne]   (X) -- ($(X) +10*(\tho:1)$);
  \path[name path=RayTwo]  (X) -- ($(X) +10*(\tht:1)$);

  \path[name intersections={of=RayOne and BigCirc,by=zetaOne}];
  \path[name intersections={of=RayTwo and BigCirc,by=zetaTwo}];

  \draw[-,gray] (\x,0) -- (\x + \R, 0);
  \draw[-, thick] ($(X)$) -- (zetaOne);
  \draw[-, thick] ($(X)$) -- (zetaTwo);
  \draw[-, thin] (\x,0) -- (zetaOne);
  \draw[-, thin] (\x,0) -- (zetaTwo);
  \pgfmathsetmacro{\xR}{\x + \R}

  \draw let
      \p1 = ($(zetaTwo) - (\tra,0)$),
      \n1 = {atan2(\y1,\x1)},                    
    in
      [-, dashed, gray] (\xR,0)  arc[start angle=0, end angle=\n1, radius=\R];   

   \fill (\x + \R,0)     circle (1.5pt) node[below = 2pt]  {$R$};
   \fill (\x,0)     circle (1.5pt) node[below = 2pt]  {$0$};
   \fill (X)     circle (1.5pt) node[below = 2pt]  {$p_c$};
   \fill (zetaOne)   circle (1.5pt) node[right]  {$\zeta_1$};
   \fill (zetaTwo)   circle (1.5pt) node[above]  {$\zeta_2$};
 
   \pgfmathsetmacro{\xR}{\x + 2}

  \draw let
      \p1 = ($(zetaOne) - (\tra,0)$),
      \n1 = {atan2(\y1,\x1)},                    
    in
      [->] (\xR,0) arc[start angle=0, end angle=\n1, radius=2];
  \node at (\xR+0.2,0.3) {$\phi_1$};

  \pgfmathsetmacro{\xR}{\x + 2.5}    
  \draw let
      \p1 = ($(zetaTwo) - (\tra,0)$),
      \n1 = {atan2(\y1,\x1)},                    
    in
      [->] (\xR,0) arc[start angle=0, end angle=\n1, radius=2.5];   
    \node at (\xR,1.1) {$\phi_2$};
    
    \pgfmathsetmacro{\xR}{\x + \R - 0.5}    
    \draw[->] (\xR,0) arc[start angle=0, end angle= 60, radius=0.5]; 
    \node at (\xR + 0.2,0.3) {$\theta_1$};

\path let
    \p1 =  ($(zetaOne) - (\tra,0)$),
    \p2 =  ($(zetaTwo) - (\tra,0)$),
    \n1 = {atan2(\y1,\x1)},
    \n2 = {atan2(\y2,\x2)},
    \n3 = {ifthenelse(\n2<\n1, \n2+360, \n2)}, 
    \n4 = {(\n1+\n3)/2}                        
  in
    coordinate (M) at (\n4:\R); 

  \fill ($(M) + (\tra,0)$) circle (2pt) node[right, yshift = 1pt] {$z$};


\def\tra{8} 

        \draw[->, thick, gray] (\tra,0)--(\tra + 6,0) node[right]{$\Real$};
        \draw[->, thick, gray] (\tra ,0)--( \tra ,4) node[above]{$\Imag$};
  \def\x{\tra}   
  \def\R{5}   
  \def\pct{4}   
  \def\tho{45} 
  \def\tht{60} 

  \node (O) at (\x,0) {};
  \node (X) at (\tra + \pct,0) {};
  
  \path (X) ++(\tho:1) coordinate (Xp);
  \path (X) ++(\tht:1) coordinate (Xm);

  \path[name path=BigCirc]   (O) circle[radius=\R];
  \path[name path=RayOne]   (X) -- ($(X) +10*(\tho:1)$);
  \path[name path=RayTwo]  (X) -- ($(X) +10*(\tht:1)$);

  \path[name intersections={of=RayOne and BigCirc,by=zetaOne}];
  \path[name intersections={of=RayTwo and BigCirc,by=zetaTwo}];

  \draw[-,gray] (\x,0) -- (\x + \R, 0);
  \draw[-, thick] ($(X)$) -- (zetaOne);
  \draw[-, thick] ($(X)$) -- (zetaTwo);
  \draw[-, thin] (\x,0) -- (zetaOne);
  \draw[-, thin] (\x,0) -- (zetaTwo);
  \pgfmathsetmacro{\xR}{\x + \R}

  \draw let
      \p1 = ($(zetaTwo) - (\tra,0)$),
      \n1 = {atan2(\y1,\x1)},                    
    in
      [-, dashed, gray] (\xR,0)  arc[start angle=0, end angle=\n1, radius=\R];   

   \fill (\x + \R,0)     circle (1.5pt) node[below = 2pt]  {$R$};
   \fill (\x,0)     circle (1.5pt) node[below = 2pt]  {$0$};
   \fill (X)     circle (1.5pt) node[below = 2pt]  {$p_c$};
   \fill (zetaOne)   circle (1.5pt) node[right]  {$\zeta_1$};
   \fill (zetaTwo)   circle (1.5pt) node[above]  {$\zeta_2$};
 
   \pgfmathsetmacro{\xR}{\x + 2.5}

  \draw let
      \p1 = ($(zetaOne) - (\tra,0)$),
      \n1 = {atan2(\y1,\x1)},                    
    in
      [->] (\xR,0) arc[start angle=0, end angle=\n1, radius=2.5];
  \node at (\xR+0.22,0.23) {$\phi_1$};

  \pgfmathsetmacro{\xR}{\x + 3.5}    
  \draw let
      \p1 = ($(zetaTwo) - (\tra,0)$),
      \n1 = {atan2(\y1,\x1)},                    
    in
      [->] (\xR,0) arc[start angle=0, end angle=\n1, radius=3.5];   
    \node at (\xR + 0.22,0.4) {$\phi_2$};
    
    \pgfmathsetmacro{\xR}{\x + \R - 0.5}    
    \draw[->] (\xR,0) arc[start angle=0, end angle= 45, radius=0.5]; 
    \node at (\xR + 0.2,0.2) {$\theta_1$};

\path let
    \p1 =  ($(zetaOne) - (\tra,0)$),
    \p2 =  ($(zetaTwo) - (\tra,0)$),
    \n1 = {atan2(\y1,\x1)},
    \n2 = {atan2(\y2,\x2)},
    \n3 = {ifthenelse(\n2<\n1, \n2+360, \n2)}, 
    \n4 = {(\n1+\n3)/2}                        
  in
    coordinate (M) at (\n4:\R); 

  \fill ($(M) + (\tra,0)$) circle (2pt) node[right, yshift = 4pt] {$z$};
    
    \end{tikzpicture} 
    \caption{The figure depicts the points $\zeta_1, \zeta_2$, and the angles $\phi_1, \phi_2$ when $\theta_1 = \pi/3$ and $\theta_2 = \pi/2$ (left), and when $\theta_1 = \pi/4$ and $\theta_2 = \pi/3$ (right).}
    \label{Fig.S3}
\end{figure}
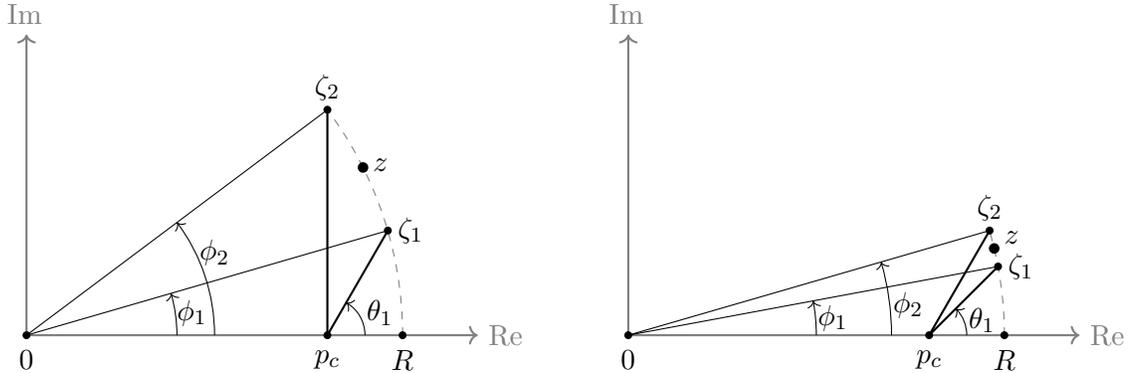

From the second set of inequalities in (\ref{Eq.QW1}) and (\ref{Eq.Psi3}) we conclude
\begin{equation}\label{Eq.QW2}
 \Real[S(p_c + r_1 e^{\im \theta_1})] \leq - \epsilon_1 \cdot r_1^3 \leq -\epsilon_1 q^3 r^6/(1+q)^3 \leq - \epsilon_1 q^3 \varepsilon^6/(1+q)^3.
\end{equation}
From the first inequality in (\ref{Eq.QW1}) and the definition of $\psi_{\pi/2}$ we conclude
\begin{equation}\label{Eq.QW3}
 \Real[S(p_c + r_2 e^{\im \theta_2})] \leq - \psi_{\pi/2}(\varepsilon).
\end{equation}
Combining (\ref{Eq.Psi10}), (\ref{Eq.QW2}) and (\ref{Eq.QW3}) with the definition of $\psi$ in (\ref{Eq.Psi7}) establishes (\ref{Eq.Psi1}) when $\varepsilon \in (0, \delta_2)$ and $b \in [q,1)$.\\

Suppose finally that $b \in (0,q)$. We now set $\theta_1 = \pi/4$, $\theta_2 = \pi/3$ and note that $\theta_0 \in [\theta_1, \theta_2]$ by construction. We define $\zeta_1, \zeta_2, r_1, r_2, \phi_1, \phi_2$ as in the previous two paragraphs (for the new angles $\theta_1, \theta_2$) and note that in place of (\ref{Eq.QW1}) we have
\begin{equation}\label{Eq.QW4}
\delta_1 \geq r_2 \geq r \mbox{ and }  r_1 \geq r/2.
\end{equation}
Indeed, from the law of cosines and our choice of $\delta_2$ we have 
$$r_2^2 + p_c r_2 = r^2 + 2 p_c r \cos(\theta_0) \leq \delta_2^2 + 2q^{-1} \delta_2 \leq \delta_1^2,$$
implying the first set of inequalities in (\ref{Eq.QW4}) as $\cos(\theta_0) \in [1/2,\sqrt{2}/2]$. Another law of cosines gives
$$r^2 + 2p_c r \cos(\theta_0) = r_1^2 + \sqrt{2} p_c r_1,$$
which implies the last inequality in (\ref{Eq.QW4}) once we use $\cos(\theta_0) \in [1/2,\sqrt{2}/2]$.

From the last inequality in (\ref{Eq.QW4}) and the definition of $\psi_{\pi/4}$ we conclude
\begin{equation}\label{Eq.QW5}
 \Real[S(p_c + r_1 e^{\im \theta_1})] \leq - \psi_{\pi/4}(\varepsilon/2).
\end{equation}
From the first set of inequalities in (\ref{Eq.QW4}) and (\ref{Eq.Psi3}) we conclude
\begin{equation}\label{Eq.QW6}
 \Real[S(p_c + r_2 e^{\im \theta_2})] \leq - \epsilon_1 \cdot r_2^3 \leq - \epsilon_1 \cdot r^3 \leq -\epsilon_1 \cdot \varepsilon^3.
\end{equation}
Combining (\ref{Eq.Psi10}), (\ref{Eq.QW5}) and (\ref{Eq.QW6}) with the definition of $\psi$ in (\ref{Eq.Psi9}) establishes (\ref{Eq.Psi1}) when $\varepsilon \in (0, \delta_2)$ and $b \in (0,q)$. This suffices for the proof.
\end{proof}

%
\section{Kernel convergence for the bottom curves}\label{Section4} The goal of this section is to establish the following statement.

\begin{proposition}\label{Prop.KernelConvBottom} Assume the same notation as in Lemma \ref{Lem.PrelimitKernelBulk} with $\theta = \theta_0$, $R = R_0$ as in Lemma \ref{Lem.BigContour}. If $x_N, y_N \in \mathbb{R}$ are sequences such that $\lim_{N \rightarrow \infty} x_N = x$, $\lim_{N \rightarrow \infty} y_N = y$, then for any $s,t \in \mathcal{T}$
\begin{equation}\label{Eq.KernelLimitBottom}
\begin{split}
&\lim_{N \rightarrow \infty} K_{11}^N(s,x_N;t,y_N) = 0, \hspace{2mm} \lim_{N \rightarrow \infty} K_{22}^N(s,x_N;t,y_N) =  0,\\
& \lim_{N \rightarrow \infty} \hspace{-1mm}K^N_{12}(s,x_N; t,y_N) = e^{2\fq^3s^3/3 - 2\fq^3 t^3/3 + \fq s x - \fq ty} K^{\mathrm{Airy}}\left(-\fq s,  x + \fq^2 s^2 ; - \fq t, y + \fq^2 t^2 \right),
\end{split}
\end{equation}
where $K^{\mathrm{Airy}}$ is the extended Airy kernel from (\ref{Eq.S1AiryKer}). 
\end{proposition}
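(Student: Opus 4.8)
The plan is to apply the method of steepest descent to the triple of contour-integral kernels $I^N_{ij}$ and the residue terms $R^N_{ij}$ from Lemma \ref{Lem.PrelimitKernelBulk}, using the decay estimates for $\bar{\SFb}$ and $\bar{\GFb}$ from Section \ref{Section3}. The key structural fact is that on the contour $\Gamma_N = C(\zc, \theta_0, R_0, \sec(\theta_0) N^{-1/3})$ we have $\Real[\bar{\SFb}(z)] \le 0$ with equality only near $\zc$ (Lemmas \ref{Lem.DecayNearCritTheta} and \ref{Lem.BigContour}), while on $\gamma_N = C(\zc, 2\pi/3, \cdot, 0)$ we have $\Real[\bar{\SFb}(w)] \ge 0$ near $\zc$ with decay away from it in the relevant sense (Lemma \ref{Lem.DecayNearCritGen}), so that $e^{\pm N\bar{\SFb}}$ is controlled on the appropriate contour in each of $I^N_{11}, I^N_{12}, I^N_{22}$. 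Since $\bar{\SFb}'(\zc) = \bar{\SFb}''(\zc) = 0$ and $\bar{\SFb}'''(\zc) = 2\sigmaq^3 \ne 0$ by Lemma \ref{Lem.PowerSeriesSG}, the relevant scale near the critical point is $|z - \zc| \sim N^{-1/3}$, which is exactly the scale built into the contours and into the change of variables $X_i^{j,N}$ in Definition \ref{Def.ScalingBulk}.

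First I would localize: show that for any fixed small $\delta > 0$, the contributions to each $I^N_{ij}$ from the portions of the contours with $|z - \zc| \ge \delta$ or $|w - \zc| \ge \delta$ are exponentially small in $N$. This uses Lemma \ref{Lem.BigContour} for the slanted/arc parts of $\Gamma_N$, Lemma \ref{Lem.DecayNearCritGen} together with Lemma \ref{Lem.DiffS} (which gives $\SFb(\zc) - \SFb(c) < 0$, controlling the sign on the large circle $\gamma_N$) for $\gamma_N$, and the fact that $H^N_{ij}$ carries only a polynomial-in-$N$ prefactor $\sigmaq\zc N^{1/3}$ times a bounded rational factor (the $(1-qz)^{\kappa N - \lfloor \kappa N\rfloor}$ terms are bounded since the exponent is in $[0,1)$). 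Next, on the local pieces, I would substitute $z = \zc + \sigmaq^{-1}\zc^{-1}\tilde z N^{-1/3}$ (and similarly for $w$) — matching the lattice spacing $a_t(N)$ — and Taylor-expand using (\ref{Eq.TaylorS1}) and (\ref{Eq.TaylorG1}): $N\bar{\SFb}(z) \to \tilde z^3/3 \cdot (\text{const})$, $T_s\bar{\GFb}(z) \to$ a quadratic in $\tilde z$ scaled by $s$, and the exponential $e^{-\sigmaq\zc x_N N^{1/3}\log(z/\zc)} \to e^{-x\tilde z}$ (using $\log(z/\zc) \sim \sigmaq^{-1}\zc^{-1}\tilde z N^{-1/3}$). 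After absorbing the $N^{1/3}$ prefactor into $d\tilde z\, d\tilde w$ and tracking the $H^N_{ij}$ rational factors at $z = w = \zc$, the rescaled local integrals converge to the double contour integrals defining $K^{\mathrm{Airy}}$ — with the steepest-descent rays of $\Gamma_N$ and $\gamma_N$ at angles $\theta_0$ and $2\pi/3$ deforming to the rays $\mathcal{C}^{\pi/3}_\alpha$, $\mathcal{C}^{2\pi/3}_\beta$ in (\ref{Eq.S1AiryKer}). For $K^N_{11}$ and $K^N_{22}$ the limit is $0$: the prefactor $N^{1/3}$ survives but the Gaussian-type $\GFb$ contributions and the vanishing of the numerator $(z-w)$ at the diagonal critical point force the rescaled integral itself to vanish in the limit (more precisely, after rescaling, $I^N_{11}$ and $I^N_{22}$ have an extra factor of $N^{-1/3}$ relative to a convergent integral because both contour variables sit at the same critical point $\zc$ without the compensating $1/(z-w)$ singularity that $I^N_{12}$ enjoys); I would make this precise by the same substitution and a direct estimate.

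The residue terms require separate but easier handling. For $R^N_{12}$: the first term is the indicator term $-\mathbf 1\{s>t\}\sigmaq\zc N^{1/3}(2\pi\im)^{-1}\oint_{\tilde\gamma_N} z^{-1} e^{(T_s - T_t)\bar{\GFb}(z)} e^{\sigmaq\zc N^{1/3}(y-x)\log(z/\zc)}\,dz$, which under the same rescaling converges to the $\mathbf 1\{t_2 > t_1\}$ Gaussian heat-kernel piece of $K^{\mathrm{Airy}}$ (here Lemma \ref{Lem.DecayNearCritGen}'s estimate (\ref{Eq.CritDecayG1}) on $\bar{\GFb}$ along the vertical contour $\tilde\gamma_N = C(\zc,\pi/2,\cdot,0)$ gives the needed decay, and Lemma \ref{Lem.MedCircles} handles monotonicity over circles); the second term, the single integral $\oint_{\Gamma_N} F^N_{12}(z,c)(\cdots)\,dz$, is exponentially small because $\bar{\SFb}(z)$ has strictly negative real part on $\Gamma_N$ away from $\zc$ while near $\zc$ the evaluation at $w = c$ kills the would-be critical contribution (the factor $e^{-N\bar{\SFb}(c)}$ with $\bar{\SFb}(c) > 0$ by Lemma \ref{Lem.DiffS} provides exponential decay). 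Similarly both terms of $R^N_{22}$ are exponentially small: they involve $F^N_{22}$ evaluated with one variable at $c$, producing a factor $e^{-N\bar{\SFb}(c)}$ that decays exponentially (Lemma \ref{Lem.DiffS}) while the remaining single integral over $\gamma_N$ is bounded. Assembling: $K^N_{11}, K^N_{22} \to 0$, $K^N_{12} = I^N_{12} + R^N_{12} \to$ the stated combination of the heat-kernel term and the double-integral term, which after the conjugation factor $e^{2\fq^3 s^3/3 - 2\fq^3 t^3/3 + \fq s x - \fq t y}$ and the shifts $x \mapsto x + \fq^2 s^2$ is precisely $K^{\mathrm{Airy}}(-\fq s, \cdot; -\fq t, \cdot)$ — the identification of constants being a bookkeeping exercise matching $\sigmaq, \fq$ to the Airy normalization via (\ref{Eq.TaylorS1})–(\ref{Eq.TaylorG1}).

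The main obstacle is the uniform control of the contour integrals in the transition region $N^{-1/3} \ll |z - \zc| \ll \delta$, where neither the cubic Taylor bound (\ref{Eq.TaylorS1}) nor the crude large-contour bound (\ref{Eq.DecayBigContour}) is individually sharp enough; one must patch the local quadratic/cubic estimates of Lemmas \ref{Lem.PowerSeriesSG}–\ref{Lem.DecayNearCritGen} to the global estimate of Lemma \ref{Lem.BigContour} and verify that the $N^{1/3}$ prefactor does not spoil the resulting bound. A secondary technical point is that the contour $\gamma_N$ has a radius depending on $N$ (it is $C(\zc, 2\pi/3, \sqrt{\zc^2 + N^{-1/6} - \zc N^{-1/12}}, 0)$), chosen precisely so that the circular-arc portion stays on the correct side of the critical level set of $\Real \bar{\SFb}$; one must check that this arc contributes negligibly, which is where the combination of Lemma \ref{Lem.SmallCircleS} (monotonicity of $\Real \SFb$ over small circles) and Lemma \ref{Lem.DiffS} is used. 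Everything else — the Taylor expansions, the rescaling, the dominated-convergence passage to the limit on the local pieces — is routine once these estimates are in hand.
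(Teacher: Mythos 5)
Your proposal is correct and follows essentially the same route as the paper: exponential truncation to a constant-radius neighborhood of $\zc$ via Lemmas \ref{Lem.DecayNearCritTheta}, \ref{Lem.DecayNearCritGen}, \ref{Lem.BigContour}, and \ref{Lem.DiffS}; rescaling at scale $N^{-1/3}$ and dominated convergence to obtain the double-integral piece of $K^{\mathrm{Airy}}$ from $I^N_{12}$ and the heat-kernel piece from the $\tilde\gamma_N$-integral in $R^N_{12}$; exponential decay of the remaining residue terms via $\SFb(\zc) - \SFb(c) < 0$; and the $N^{-1/3}$-mismatch of prefactors in $H^N_{11}, H^N_{22}$ (versus the extra $N^{1/3}$ that $1/(z-w)$ buys $H^N_{12}$) to conclude $K^N_{11}, K^N_{22} \to 0$. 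The ``transition region'' you flag as the main obstacle does not actually arise: the Taylor estimate of Lemma \ref{Lem.DecayNearCritTheta} is valid on a disc of constant (N-independent) radius $\delta_1$, and the global estimate of Lemma \ref{Lem.BigContour} kicks in for $|z-\zc| \geq \delta_1$, so the two regimes already meet without any intermediate scale to patch.
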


The proof of Proposition \ref{Prop.KernelConvBottom} is given in Section \ref{Section4.2}. In Section \ref{Section4.1} we derive suitable estimates for the functions that appear in the kernel $K^N$ in Lemma \ref{Lem.PrelimitKernelBulk} along the contours $\Gamma_N, \gamma_N$ and $\tilde{\gamma}_N$.

%
\subsection{Function bounds}\label{Section4.1} 

In what follows we fix parameters as in Definition \ref{Def.ParametersBulk} and $\theta_0, R_0$ as in Lemma \ref{Lem.BigContour}. In addition, we work with the contours $\Gamma_N, \gamma_N, \tilde{\gamma}_N$ as in (\ref{Eq.PrelimitContours}) with $\theta = \theta_0$, $R = R_0$. We also assume that $x, y \in [-L,L]$ for a fixed $L >0$. In the inequalities below we will encounter various constants $A_i,a_i > 0$ with $A_i$ sufficiently large, and $a_i$ sufficiently small, depending on $q, \kappa, c,t_1, \dots, t_m, \theta_0, R_0, L$ -- we do not list this dependence explicitly. In addition, the inequalities will hold provided that $N$ is sufficiently large, depending on the same set of parameters, which we will also not mention further.  

Let $\delta_1(\theta), \epsilon_1(\theta)$ be as in Lemma \ref{Lem.DecayNearCritTheta}, $\delta_2, \epsilon_2$ be as in Lemma \ref{Lem.DecayNearCritGen} and set $\delta = \min(\delta_1(\theta_0), \delta_2), \epsilon = \min(\epsilon_1(\theta_0), \epsilon_2)$. If $z \in \Gamma_N$ and $|z - \zc| \leq \delta$, we have from Lemmas \ref{Lem.PowerSeriesSG} and \ref{Lem.DecayNearCritTheta} that
\begin{equation}\label{Eq.S1BoundZClose}
\Real[\SFb(z) - \SFb(\zc)] \leq - \epsilon |z-\zc|^3 + [\sigmaq^3/3 + \epsilon] \sec^3(\theta_0) N^{-1} + C_0 \sec^4(\theta_0)N^{-4/3}.
\end{equation}
If $z \in \Gamma_N$ and $|z - \zc| \geq \delta \geq \sec(\theta_0) N^{-1/3}$, we have from Lemma \ref{Lem.BigContour} that
\begin{equation}\label{Eq.S1BoundZFar}
\Real[\SFb(z) - \SFb(\zc)] \leq - \psi(\delta).
\end{equation}
If $w \in \gamma_N$ and $|w - \zc| \leq N^{-1/12} \leq \delta$, we have from Lemma \ref{Lem.DecayNearCritGen} that
\begin{equation}\label{Eq.S1BoundWClose}
\Real[\SFb(w) - \SFb(\zc)] \geq  \epsilon |w-\zc|^3. 
\end{equation}
If $w \in \gamma_N$ and $|w - \zc| \geq N^{-1/12}$, we have from Lemma \ref{Lem.SmallCircleS} that
\begin{equation}\label{Eq.S1BoundWFar}
\begin{split}
&\Real[\SFb(w) - \SFb(\zc)] \geq \Real[\SFb(\zc - e^{\pm \im 2\pi/3} N^{-1/12}) - \SFb(\zc)] \geq  \epsilon N^{-1/4}, 
\end{split}
\end{equation}
where in the last inequality we used (\ref{Eq.S1BoundWClose}). 
From Lemma \ref{Lem.PowerSeriesSG} we have for $z \in \Gamma_N \cup \gamma_N$ with $|z - \zc| \leq \delta$ 
\begin{equation}\label{Eq.G1BoundClose}
|\GFb(z) - \GFb(\zc)| \leq  \fq \sigmaq^2 |z-\zc|^2 + C_0 |z-\zc|^3.
\end{equation}
By the boundedness of the contours $\Gamma_N, \gamma_N$, we can find $A_0 > 0$, such that for $z \in \Gamma_N \cup \gamma_N$
\begin{equation}\label{Eq.G1BoundFar}
|\GFb(z) - \GFb(\zc)| \leq  A_0.
\end{equation}
By Taylor expanding the logarithm we can find $A_1 > 0$, such that for $z \in \Gamma_N \cup \gamma_N \cup \tilde{\gamma}_N$
\begin{equation}\label{Eq.G1BoundZLog}
|\log(z/\zc) | \leq  A_1 | z- \zc|.
\end{equation}

From Lemma \ref{Lem.DecayNearCritGen} we have for $z \in \tilde{\gamma}_N$ and $|z- \zc| \leq N^{-1/12} \leq \delta$
\begin{equation}\label{Eq.G1BoundZClose}
\Real[\GFb(z) - \GFb(\zc)] \leq - \epsilon |z-\zc|^2. 
\end{equation}
If $z \in \tilde{\gamma}_N$ and $|z - \zc| \geq N^{-1/12}$, we have from Lemma \ref{Lem.MedCircles} that
\begin{equation}\label{Eq.G1BoundZFar}
\begin{split}
&\Real[\GFb(z) - \GFb(\zc)] \leq \Real[\GFb(\zc + e^{\pm \im \pi/2} N^{-1/12}) - \GFb(\zc)] \leq -\epsilon N^{-1/6},
\end{split}
\end{equation}
where in the last inequality we used (\ref{Eq.G1BoundZClose}).\\

We now proceed to find suitable estimates for the functions $F^N_{ij}$ and $H^N_{ij}$ from (\ref{Eq.DefIN11Bulk}), (\ref{Eq.DefIN12Bulk}) and (\ref{Eq.DefIN22Bulk}). By combining (\ref{Eq.S1BoundZClose}), (\ref{Eq.S1BoundZFar}), (\ref{Eq.G1BoundFar}) and (\ref{Eq.G1BoundZLog}), we conclude that for some $A_2, a_2 > 0$ and all $z,w \in \Gamma_N$ we have 
\begin{equation}\label{Eq.F11Bound}
\begin{split}
&\left| F^N_{11}(z,w)  \right| \leq \exp \left( - \psi(\delta)N/2  \right) \mbox{ if } \max(|z-\zc|, |w-\zc|) \geq \delta, \\
&\left| F^N_{11}(z,w)  \right| \leq \exp \left( - a_2 N (|z-\zc|^3 + |w-\zc|^3) + A_2 N^{2/3} ( |z-\zc|^2 + |w-\zc|^2) + A_2  \right), \\
& \mbox{ if } \max(|z-\zc|, |w-\zc|) \leq \delta.
\end{split}
\end{equation}
By combining (\ref{Eq.S1BoundZClose}-\ref{Eq.G1BoundZLog}), we conclude that for some $A_3, a_3 > 0$ and all $z \in \Gamma_N$, $w \in \gamma_N$ we have 
\begin{equation}\label{Eq.F12Bound}
\begin{split}
&\left| F^N_{12}(z,w)  \right| \leq \exp \left( - (\epsilon/2)N^{3/4}  \right) \mbox{ if } |z-\zc| \geq \delta \mbox{ or }  |w-\zc| \geq N^{-1/12}, \\
&\left| F^N_{12}(z,w)  \right| \leq \exp \left( - a_3 N (|z-\zc|^3 + |w-\zc|^3) + A_3N^{2/3} (|z-\zc|^2 + |w-\zc|^2) + A_3   \right), \\
& \mbox{ if } |z-\zc| \leq \delta \mbox{ and } |w-\zc| \leq N^{-1/12}.
\end{split}
\end{equation}
By combining (\ref{Eq.S1BoundWClose}-\ref{Eq.G1BoundZLog}), we conclude that for some $A_4, a_4 > 0$ and all $z,w \in \gamma_N$ we have 
\begin{equation}\label{Eq.F22Bound}
\begin{split}
&\left| F^N_{22}(z,w)  \right| \leq \exp \left( - (\epsilon/2)N^{3/4}  \right) \mbox{ if } \max(|z-\zc|, |w-\zc|) \geq N^{-1/12}, \\
&\left| F^N_{22}(z,w)  \right| \leq \exp \left( - a_4 N (|z-\zc|^3+ |w-\zc|^3) + A_4 N^{2/3} (|z-\zc|^2 + |w-\zc|^2 ) + A_4  \right), \\
& \mbox{ if } z,w \in \max(|z-\zc|, |w-\zc|) \leq N^{-1/12}.
\end{split}
\end{equation}
As $H^N_{ij}$ are essentially rational functions, we have for some $A_5 > 0$ that
\begin{equation}\label{Eq.HijBound}
\begin{split}
&\left| H^N_{11}(z,w)  \right| \leq A_5 N^{1/3}  \mbox{ if } z, w \in \Gamma_N, \\
&\left| H^N_{12}(z,w)  \right| \leq A_5 N^{2/3}  \mbox{ if } z \in \Gamma_N \mbox{ and } w \in \gamma_N,\\
&\left| H^N_{22}(z,w)  \right| \leq A_5 N^{1/3}  \mbox{ if } z,w \in \gamma_N,
\end{split}
\end{equation}
and also 
\begin{equation}\label{Eq.HijBound2}
\begin{split}
&\left| \sigmaq \zc N^{1/3} z^{-1} \right| \leq A_5 N^{1/3}  \mbox{ if } z \in \tilde{\gamma}_N, \\
&\left| \sigmaq \zc N^{1/3}  \cdot \frac{(zc-1) (1-qz)^{\kappa N - \lfloor \kappa N \rfloor}}{z(z^2-1)(1-qc)^{\kappa N - \lfloor \kappa N \rfloor}} \right| \leq A_5 N^{1/3}  \mbox{ if } z \in \Gamma_N,\\
&\left| \sigmaq \zc N^{1/3} \cdot \frac{1}{(c  w - 1)(1-qc)^{\kappa N - \lfloor \kappa N \rfloor}(1-qw)^{\kappa N - \lfloor \kappa N \rfloor}} \right| \leq A_5 N^{1/3}  \mbox{ if } z \in \gamma_N.
\end{split}
\end{equation}
We mention that the extra $N^{1/3}$ factor in the second line of (\ref{Eq.HijBound}) comes from the $z-w$ term in the denominator of $H^N_{12}$, for which we have $|z-w| \geq N^{-1/3}$ from the way $\Gamma_N, \gamma_N$ are defined. \\

We next find suitable bounds for the functions in (\ref{Eq.DefRN12Bulk}) and (\ref{Eq.DefRN22Bulk}). From (\ref{Eq.S1BoundZClose}), (\ref{Eq.S1BoundZFar}), (\ref{Eq.S1BoundWClose}), (\ref{Eq.S1BoundWFar}), (\ref{Eq.G1BoundFar}) and (\ref{Eq.G1BoundZLog}) we can find $A_6, a_6 > 0$, such that
\begin{equation}\label{Eq.FijBoundinR}
\begin{split}
&\left|F_{12}^N(z,c)\right| \leq \exp\left(A_6 N^{2/3} + N [\SFb(\zc) - \SFb(c)] \right) \leq \exp(-a_6 N)  \mbox{ if } z \in \Gamma_N, \\
&\left|F_{22}^N(z,c)\right| \leq \exp\left(A_6 N^{2/3} + N [\SFb(\zc) - \SFb(c)] \right) \leq \exp(-a_6 N)  \mbox{ if } z \in \gamma_N, \\
&\left|F_{22}^N(c,w)\right| \leq \exp\left(A_6 N^{2/3} + N [\SFb(\zc) - \SFb(c)] \right) \leq \exp(-a_6 N)  \mbox{ if } w \in \gamma_N,
\end{split}
\end{equation}
where in the last inequalities we used that $\SFb(\zc) - \SFb(c) < 0$, as established in (\ref{Eq.DiffS}). Lastly, from (\ref{Eq.G1BoundZLog}), (\ref{Eq.G1BoundZClose}) and (\ref{Eq.G1BoundZFar}), we can find $A_7, a_7 > 0$, such that for $z \in \bar{\gamma}_N$, $s, t \in \mathcal{T}$ with $s > t$
\begin{equation}\label{Eq.G1BoundinR}
\begin{split}
&\left| e^{(T_s - T_t) \bar{\GFb}(z) +  \sigmaq \zc N^{1/3}(y-x)  \log (z/\zc)} \right| \leq e^{- a_7 N^{1/2}} \mbox{ if } |z - \zc| \geq N^{-1/12},\\
&\left| e^{(T_s - T_t) \bar{\GFb}(z) +  \sigmaq \zc N^{1/3}(y-x)  \log (z/\zc)} \right| \leq e^{A_7 - a_7 N^{2/3}|z-\zc|^2} \mbox{ if } |z - \zc| \leq N^{-1/12}.
\end{split}
\end{equation}

%
\subsection{Proof of Proposition \ref{Prop.KernelConvBottom}}\label{Section4.2} For clarity we split the proof into three steps. In the first step we show that we can truncate the contours $\gamma_N, \Gamma_N$ in the formulas for $I_{ij}^N$ in Lemma \ref{Lem.PrelimitKernelBulk} without changing these functions too much, and also get some estimates for $R^N_{12}, R^N_{22}$. In the second step we prove the first line in (\ref{Eq.KernelLimitBottom}), and in the third step we prove the second line in (\ref{Eq.KernelLimitBottom}).   \\

{\bf \raggedleft Step 1.} Fix $L>0$, such that $x_N, y_N \in [-L,L]$. Let $\delta, \epsilon$ be as in the beginning of Section \ref{Section4.1}. Let $\gamma_N(0), \tilde{\gamma}_N(0)$ denote the parts of $\gamma_N, \tilde{\gamma}_N$ that are contained in the disc $\{z: |z - \zc| \leq N^{-1/12}\}$. We also denote by $\Gamma_N(0)$ the part of $\Gamma_N$ that is contained in the disc $\{z: | z -\zc| \leq \delta\}$. Let $I_{11}^{N,0}, I_{12}^{N,0}, I_{22}^{N,0}$ be as in (\ref{Eq.DefIN11Bulk}), (\ref{Eq.DefIN12Bulk}), (\ref{Eq.DefIN22Bulk}) but with $\gamma_N, \Gamma_N$ replaced with $\gamma_N(0), \Gamma_N(0)$. From the first lines in (\ref{Eq.F11Bound}) and (\ref{Eq.HijBound})
\begin{equation}\label{Eq.TruncateI11}
\begin{split}
& \lim_{N \rightarrow \infty} \left| I^N_{11}(s,x_N; t, y_N) -  I^{N,0}_{11}(s,x_N; t, y_N) \right| \leq \lim_{N \rightarrow \infty} \|\Gamma_N\| \cdot \|\Gamma_N\| \cdot  A_5 N^{1/3}e^{- \psi(\delta) N/2} = 0,
\end{split}
\end{equation}
where for a contour $\gamma$, we write $\|\gamma\|$ for its arc-length. From the first line in (\ref{Eq.F12Bound}) and the second line in (\ref{Eq.HijBound})
\begin{equation}\label{Eq.TruncateI12}
\begin{split}
& \lim_{N \rightarrow \infty} \left| I^N_{12}(s,x_N; t, y_N) -  I^{N,0}_{12}(s,x_N; t, y_N) \right| \leq \lim_{N \rightarrow \infty} \|\Gamma_N\| \cdot \|\gamma_N\| \cdot  A_5 N^{2/3}e^{- (\epsilon/2) N^{3/4}} = 0.
\end{split}
\end{equation}
From the first line in (\ref{Eq.F22Bound}) and the third line in (\ref{Eq.HijBound})
\begin{equation}\label{Eq.TruncateI22}
\begin{split}
& \lim_{N \rightarrow \infty} \left| I^N_{22}(s,x_N; t, y_N) -  I^{N,0}_{22}(s,x_N; t, y_N) \right| \leq \lim_{N \rightarrow \infty} \|\gamma_N\| \cdot \|\gamma_N\| \cdot  A_5 N^{1/3}e^{- (\epsilon/2) N^{3/4}} = 0.
\end{split}
\end{equation}

We also note from (\ref{Eq.G1BoundZLog}), the first two lines in (\ref{Eq.HijBound2}), and the first lines in (\ref{Eq.FijBoundinR}) and (\ref{Eq.G1BoundinR}) 
\begin{equation}\label{Eq.R12BulkDecay}
\begin{split}
& \lim_{N \rightarrow \infty} \left| R^N_{12}(s,x_N; t, y_N) - R^{N,0}_{12} \right| \leq \lim_{N \rightarrow \infty}  \|\tilde{\gamma}_N\|  A_5N^{1/3} e^{-a_7 N^{1/2}} +   \|\Gamma_N\| A_5N^{1/3} e^{-a_6 N} = 0,
\end{split}
\end{equation}
where 
\begin{equation}\label{Eq.R120Def}
R^{N,0}_{12} = \frac{-{\bf 1}\{s > t \} \sigmaq \zc N^{1/3} }{2 \pi \im} \int_{\tilde{\gamma}_{N}(0)}\frac{dz}{z} e^{(T_s - T_t) \bar{\GFb}(z)} \cdot e^{ \sigmaq \zc N^{1/3}(y_N-x_N)  \log (z/\zc)} .
\end{equation}
In addition, we have from the third line in (\ref{Eq.HijBound2}) and the last two lines in (\ref{Eq.FijBoundinR})
\begin{equation}\label{Eq.R22BulkDecay}
\begin{split}
& \lim_{N \rightarrow \infty} \left| R^N_{22}(s,x_N; t, y_N) \right| \leq \lim_{N \rightarrow \infty}  2 \|\gamma_N\| \cdot  A_5N^{1/3} e^{-a_6 N} = 0.
\end{split}
\end{equation}

{\bf \raggedleft Step 2.} Recall from Definition \ref{Def.AiryLE} the contour $\mathcal{C}_{a}^{\varphi}=\{a+|s|e^{\mathrm{sgn}(s)\im\varphi}, s\in \mathbb{R}\}$. For $r > 0$ we define the contour $\mathcal{C}_{a}^{\varphi}[r]$, so that outside of the disc $\{z: |z-a| \leq r\}$ it agrees with $\mathcal{C}_{a}^{\varphi}$ and inside the disc it is a vertical segment that connects $a + re^{-\im \varphi}$ and $a + re^{\im \varphi}$, see Figure \ref{Fig.ContoursBulk}. By changing variables $z = \zc + \tilde{z}N^{-1/3}$ and $w = \zc + \tilde{w}N^{-1/3}$, and applying the estimates from the second line in (\ref{Eq.F11Bound}) and the first line in (\ref{Eq.HijBound}) we obtain
\begin{equation}\label{Eq.I11Vanish}
\left|I^{N,0}_{11}(s,x_N; t, y_N) \right| \leq \frac{A_5 N^{-1/3}}{(2\pi)^2} \int_{\mathcal{C}_{0}^{\theta_0}[r_0]} |d\tilde{z}| \int_{\mathcal{C}_{0}^{\theta_0}[r_0]}|d\tilde{w}| e^{- a_2 (|\tilde{z}|^3 + |\tilde{w}|^3) + A_2 ( |\tilde{z}|^2 + |\tilde{w}|^2) + A_2},
\end{equation}
where $r_0 = \sec(\theta_0)$, and $|d\tilde{z}|, |d\tilde{w}|$ denote integration with respect to arc-length. Note that the integral on the right side of (\ref{Eq.I11Vanish}) is finite due to the cubic terms in the exponential. Consequently, the right side in (\ref{Eq.I11Vanish}) vanishes in the $N \rightarrow \infty$ limit due to the factor $N^{-1/3}$. Combining (\ref{Eq.TruncateI11}) and (\ref{Eq.I11Vanish}), we conclude the first limit in (\ref{Eq.KernelLimitBottom}).

Similarly to the previous paragraph, by changing variables $z = \zc +  \tilde{z}N^{-1/3}$ and $w = \zc + \tilde{w}N^{-1/3}$, and applying the estimates from the second line in (\ref{Eq.F22Bound}) and the third line in (\ref{Eq.HijBound}) we obtain
\begin{equation}\label{Eq.I22Vanish}
\left|I^{N,0}_{22}(s,x_N; t, y_N) \right| \leq \frac{A_5 N^{-1/3}}{(2\pi)^2} \int_{\mathcal{C}_{0}^{2\pi/3}} |d\tilde{z}| \int_{\mathcal{C}_{0}^{2\pi/3}}|d\tilde{w}| e^{- a_4 (|\tilde{z}|^3 + |\tilde{w}|^3) + A_4 ( |\tilde{z}|^2 + |\tilde{w}|^2) + A_4},
\end{equation}
which vanishes as $N \rightarrow \infty$ due to the factor $N^{-1/3}$. Combining (\ref{Eq.TruncateI22}), (\ref{Eq.R22BulkDecay}) and (\ref{Eq.I22Vanish}), we conclude the second limit in (\ref{Eq.KernelLimitBottom}).\\

{\bf \raggedleft Step 3.} We start by computing the limit of $I^{N,0}_{12}(s,x_N; t, y_N)$. Changing variables $z = \zc + \tilde{z}N^{-1/3}$, $w = \zc + \tilde{w} N^{-1/3}$, we get
\begin{equation}\label{Eq.I12Bulk1}
\begin{split}
&I^{N,0}_{12}(s,x_N; t, y_N) =  \frac{1}{(2\pi \im)^{2}}\int_{\mathcal{C}_{0}^{\theta_0}[r_0]} d\tilde{z} \int_{\mathcal{C}_{0}^{2\pi/3}} d\tilde{w} {\bf 1} \{|\tilde{w}| \leq N^{1/4}, |\tilde{z}| \leq \delta N^{1/3}\} \\
&\times F_{12}^N(\zc + \tilde{z}N^{-1/3},\zc + \tilde{w}N^{-1/3}) \cdot N^{-2/3} H^{N}_{12}(\zc + \tilde{z}N^{-1/3},\zc + \tilde{w}N^{-1/3}).
\end{split}
\end{equation}
From the Taylor expansion formulas for $\SFb, \GFb$ in Lemma \ref{Lem.PowerSeriesSG} we have the pointwise limit
\begin{equation}\label{Eq.F12PointwiseBulk}
\begin{split}
\lim_{N \rightarrow \infty} F_{12}^N(\zc + \tilde{z}N^{-1/3},\zc + \tilde{w}N^{-1/3}) = e^{\sigmaq^3\tilde{z}^3/3 - \sigmaq^3\tilde{w}^3/3 +s \fq\sigmaq^2\tilde{z}^2 - t\fq \tilde{w}^2 - \sigmaq x \tilde{z} + \sigmaq y \tilde{w}}. 
\end{split}
\end{equation}
In addition, directly from the definition of $H^N_{12}$ in (\ref{Eq.DefIN12Bulk}) we get the pointwise limit
\begin{equation}\label{Eq.H12PointwiseBulk}
\begin{split}
\lim_{N \rightarrow \infty} N^{-2/3}H_{12}^N(\zc + \tilde{z}N^{-1/3},\zc + \tilde{w}N^{-1/3}) = \frac{\sigmaq}{\tilde{z} - \tilde{w}}.
\end{split}
\end{equation}
We also have from the second lines in (\ref{Eq.F12Bound}) and (\ref{Eq.HijBound}) that the integrand in (\ref{Eq.I12Bulk1}) is bounded in absolute value by 
$$ A_5\exp \left( - a_3(|\tilde{z}|^3 + |\tilde{w}|^3) + A_3 (|\tilde{z}|^2 + |\tilde{w}|^2) + A_3 \right).$$ 
The last few observations and the dominated convergence theorem now yield
\begin{equation}\label{Eq.I12Bulk2}
\begin{split}
&\lim_{N \rightarrow \infty} I^{N,0}_{12}(s,x_N; t, y_N) = \frac{1}{(2\pi \im)^{2}}\int_{\mathcal{C}_{0}^{\theta_0}[\sigmaq r_0]} dz \int_{\mathcal{C}_{0}^{2\pi/3}} dw \frac{e^{z^3/3 - w^3/3 +s \fq z^2 - t\fq w^2 - xz +  yw}}{z-w},
\end{split}
\end{equation}
where we also changed variables $z = \sigmaq \tilde{z}$ and $w = \sigmaq \tilde{w}$. \\

We next compute the limit of $R^{N,0}_{12}(s,x_N; t, y_N)$ from (\ref{Eq.R120Def}). Setting $z = \zc + \tilde{z}N^{-1/3}$ gives
\begin{equation}\label{Eq.R12Bulk1}
\begin{split}
&R^{N,0}_{12}(s,x_N; t, y_N) = \frac{-{\bf 1}\{s > t \} \sigmaq \zc }{2 \pi \im} \int_{\im \mathbb{R}} \frac{d\tilde{z}}{\zc + \tilde{z}N^{-1/3}} \cdot  {\bf 1} \{|\tilde{z}| \leq N^{1/4} \} \\
&\times e^{(T_s - T_t) \bar{\GFb}(\zc + \tilde{z}N^{-1/3})} \cdot e^{ \sigmaq \zc N^{1/3}(y_N-x_N)  \log (1 + \zc^{-1} \tilde{z} N^{-1/3})}.
\end{split}
\end{equation}
From the Taylor expansion of $\GFb$ in Lemma \ref{Lem.PowerSeriesSG} the integrand in (\ref{Eq.R12Bulk1}) converges pointwise to 
$$ \frac{1}{\zc} \cdot e^{(s-t) \fq \sigmaq^2 \tilde{z}^2 + \sigmaq (y-x) \tilde{z}}.$$
We also have from the first line in (\ref{Eq.HijBound2}) and the second line in (\ref{Eq.G1BoundinR}) that the integrand in (\ref{Eq.R12Bulk1}) is bounded in absolute value by
$$ A_5 \exp \left(A_7 - a_7|\tilde{z}|^2  \right).$$
The last few observations and the dominated convergence theorem now yield
\begin{equation}\label{Eq.R12Bulk2}
\begin{split}
&\lim_{N \rightarrow \infty} R^{N,0}_{12}(s,x_N; t, y_N) = \frac{-{\bf 1}\{s > t \}  }{2 \pi \im} \int_{\im \mathbb{R}} dz e^{(s-t) \fq z^2 + (y-x)z}
\end{split}
\end{equation}
where we also changed variables $z = \sigmaq \tilde{z}$. \\

Combining (\ref{Eq.TruncateI12}), (\ref{Eq.R12BulkDecay}), (\ref{Eq.I12Bulk2}) and (\ref{Eq.R12Bulk2}), we conclude 
\begin{equation}\label{Eq.K12LimBulk}
\begin{split}
\lim_{N \rightarrow \infty} K^N_{12} (s,x_N; t, y_N) = & -  \frac{{\bf 1}\{s > t \}  }{2 \pi \im} \int_{\im \mathbb{R}} dz e^{(s-t) \fq z^2 + (y-x)z} \\
&+ \frac{1}{(2\pi \im)^{2}}\int_{\mathcal{C}_{0}^{\theta_0}[\sigmaq r_0]} dz \int_{\mathcal{C}_{0}^{2\pi/3}} dw \frac{e^{z^3/3 - w^3/3 +s \fq z^2 - t\fq w^2 - xz +  yw}}{z-w}.
\end{split}
\end{equation}
What remains to prove the second line in (\ref{Eq.KernelLimitBottom}) is to verify that the right side of (\ref{Eq.K12LimBulk}) agrees with
$$A \cdot K^{\mathrm{Airy}}\left(-\fq s,  x + \fq^2 s^2 ; - \fq t, y + \fq^2 t^2 \right), \mbox{ where }  A = e^{2\fq^3s^3/3 - 2\fq^3 t^3/3 + \fq s x - \fq ty} .$$
Using the formula for $K^{\mathrm{Airy}}$ from (\ref{Eq.S1AiryKer}) with changed variables $z \rightarrow z -t_1$ and $w \rightarrow w - t_2$, we see that it suffices to show that
\begin{equation}\label{Eq.MatchBulk}
\begin{split}
& [\mbox{right side of (\ref{Eq.K12LimBulk})}] = -  A \cdot \frac{{\bf 1}\{ s > t\} }{\sqrt{4\pi \fq (s - t)}}  e^{ - \frac{(y + \fq^2 t^2 - x - \fq^2 s^2)^2}{4\fq(s - t)} - \frac{\fq(s - t)(x + y + \fq^2 s^2 + \fq^2 t^2)}{2} + \frac{\fq^3(s - t)^3}{12} }  \\
& + \frac{A}{(2\pi \im)^2} \int_{\mathcal{C}_{\alpha}^{\pi/3}} d z \int_{\mathcal{C}_{\beta}^{2\pi/3}} dw \frac{e^{(z+ \fq s)^3/3 -(x+ \fq^2 s^2)(z+ \fq s ) - (w+ \fq t)^3/3 + (y + \fq^2 t^2)(w-t_2) }}{z - w}.
\end{split}
\end{equation}
where $\alpha > \beta$. After deforming $\mathcal{C}_{0}^{\theta_0}[\sigmaq r_0]$ and $ \mathcal{C}_{0}^{2\pi/3}$ to $\mathcal{C}_{\alpha}^{\pi/3}$ and $\mathcal{C}_{\beta}^{2\pi/3}$, respectively, we see that the second line of (\ref{Eq.K12LimBulk}) agrees with the second line of (\ref{Eq.MatchBulk}). We now change variables $z = \im u$, and directly compute for $s > t$
$$\frac{1 }{2 \pi \im} \int_{\im \mathbb{R}} dz e^{(s-t) \fq z^2 + (y-x)z} =   \frac{1  }{2 \pi} \int_{ \mathbb{R}}  e^{- (s-t) \fq u^2 + \im (y-x)u} du =  \frac{1 }{\sqrt{4 \pi \fq (s-t)} }  \cdot e^{-\frac{(y-x)^2}{4\fq (s-t)}},$$
where in the last equality we used the formula for the characteristic function of a Gaussian with mean $0$ and variance $\frac{1}{2\fq (s-t)}$. Using the last identity one readily shows that the first line of (\ref{Eq.K12LimBulk}) agrees with the first line of (\ref{Eq.MatchBulk}). This completes the proof of (\ref{Eq.MatchBulk}) and hence the proposition.

%
\section{Kernel convergence for the top curve}\label{Section5} The goal of this section is to establish the following statement.

\begin{proposition}\label{Prop.KernelConvTop} Assume the same notation as in Lemma \ref{Lem.PrelimitKernelEdge}, where for each $s \in \mathcal{T}$, we set $\theta_s = \theta_0$, $R_s = R_0$ as in Lemma \ref{Lem.BigContour} for $\kappa = s$. If $x_N, y_N \in \mathbb{R}$ are sequences such that $\lim_{N \rightarrow \infty} x_N = x$, $\lim_{N \rightarrow \infty} y_N = y$, then for any $s,t \in \mathcal{T}$
\begin{equation}\label{Eq.KernelLimitTop}
\begin{split}
&\lim_{N \rightarrow \infty} K_{11}^N(s,x_N;t,y_N) = 0, \hspace{2mm} \lim_{N \rightarrow \infty} K_{22}^N(s,x_N;t,y_N) =  0,\\
& \lim_{N \rightarrow \infty} K^N_{12}(s,x_N; t,y_N) =  \kbm\left(s - \kappa_0,  x  ; t - \kappa_0, y  \right), 
\end{split}
\end{equation}
where 
\begin{equation}\label{Eq.KBM}
\kbm\left(s,  x  ; t, y  \right) = \frac{e^{-x^2/2s}}{\sqrt{2\pi s}} - {\bf 1}\{s > t\} \cdot \frac{e^{-(x-y)^2/2(s-t)}}{\sqrt{2\pi (s-t)}}.
\end{equation}
\end{proposition}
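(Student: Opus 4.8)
The plan is to follow the same three-step structure as the proof of Proposition~\ref{Prop.KernelConvBottom}, but now carry out a steepest-descent analysis of the kernel $K^N$ from Lemma~\ref{Lem.PrelimitKernelEdge} around the critical point $c$ (rather than $\zc$), using the $N^{1/2}$ scaling. The key input is that $\SFt$ has a \emph{quadratic} (not cubic) critical point at $c$: by Lemma~\ref{Lem.PowerSeriesSG}, $\SFt'(c) = 0$ and $\SFt''(c) = \sigmap^2(\kappa-\kappa_0)/c^2 > 0$ since $\kappa > \kappa_0$, so $z=c$ is a genuine saddle. First I would assemble the along-contour estimates analogous to Section~\ref{Section4.1}: on $\Gamma_{\kappa,N} = C(c,\theta_0,R_0,\sec(\theta_0)N^{-1/2})$ one has $\Real[\SFt(z)-\SFt(c)] \le -\epsilon_1|z-c|^2$ near $c$ (Lemma~\ref{Lem.DecayNearCritTheta}, noting $\theta_0$ can be taken in $(\pi/4,\pi/2)$ so $\cos(2\theta_0)<0$) and $\Real[\SFt(z)-\SFt(c)] \le -\psi(\varepsilon)$ away from $c$ (Lemma~\ref{Lem.BigContour}); on $\gamma_\kappa = C_{\zc(\kappa)}$ one uses Lemma~\ref{Lem.SmallCircleS} to get $\Real[\SFt(w)] $ maximized at $w = \zc(\kappa)$ on the circle, together with $\SFt(\zc) - \SFt(c) > 0$ from Lemma~\ref{Lem.DiffS2}, so that $\Real[-N\bar\SFt(w)]$ is uniformly negative there; and for $\GFt$ one uses Lemmas~\ref{Lem.MedCircles} and~\ref{Lem.DecayNearCritGen} with $\GFt''(c) = \sigmap^2/c^2$. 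These bounds show that $K^N_{11}$ and $K^N_{22}$ involve integrands that decay exponentially in $N$ over the whole contours (their leading exponentials are $e^{\pm N\bar\SFt(z) \pm N\bar\SFt(w)}$ with a sign that makes them small), multiplied by a prefactor that is only polynomial, $O(N^{1/2})$; so after truncation to a neighbourhood of $c$ and rescaling $z = c + \tilde z N^{-1/2}$, these two kernels vanish in the limit, as does $R^N_{22}$ (which picks up a factor $e^{N[\SFt(\zc)-\SFt(c)]} \to 0$, or rather its reciprocal, by Lemma~\ref{Lem.DiffS2}).

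Second I would identify the limit of $K^N_{12} = I^N_{12} + R^N_{12}$. For $I^N_{12}$, changing variables $z = c + \tilde z N^{-1/2}$ on $\Gamma_{s,N}$ and $w = c + \tilde w N^{-1/2}$ on the part of $\gamma_t$ near $c$ — but here one must be careful: $\gamma_t = C_{\zc(t)}$ passes through $\zc(t) \ne c$, not through $c$, so unlike the bulk case the $w$-contour is \emph{not} anchored at the critical point. The resolution is that the $w$-integrand carries $e^{-N\bar\SFt(t,w)}$, and since $\Real[-\SFt(t,w)+\SFt(t,c)] \le \Real[-\SFt(t,\zc(t))+\SFt(t,c)] = -(\SFt(\zc)-\SFt(c)) < 0$ uniformly on $\gamma_t$, the entire $w$-integral over $I^N_{12}$ is \emph{exponentially small}; so $I^N_{12} \to 0$. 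Thus the whole limit comes from $R^N_{12}$. Its second term, a single contour integral over $\Gamma_{s,N}$ involving $F^N_{12}(z,c)$, likewise carries $e^{N[\SFt(s,\zc)\text{-type decay}]}$... actually it carries $e^{N\bar\SFt(s,z)}$ with $z$ near $c$ — here the Taylor expansion $\bar\SFt(s,z) \approx [\sigmap^2(s-\kappa_0)/2c^2](z-c)^2$ is genuinely relevant. After $z = c + \tilde z N^{-1/2}$ this term should converge to a Gaussian-type integral; I expect it to contribute the $e^{-x^2/2(s-\kappa_0)}/\sqrt{2\pi(s-\kappa_0)}$ term in $\kbm$. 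The first term of $R^N_{12}$ (the $\indic\{s>t\}$ term over $\tilde\gamma_N = C(c,\pi/2,\sqrt{c^2+N^{-1/6}},0)$, a vertical segment through $c$) carries $e^{(s-t)N\bar\GFt(z)}$; with $\GFt''(c) = \sigmap^2/c^2$, setting $z = c + \tilde z N^{-1/2}$ and using $\bar\GFt(c + \tilde z N^{-1/2}) \approx (\sigmap^2/2c^2)\tilde z^2 N^{-1}$ gives, after the substitution $z = \sigmap \tilde z$ and a Gaussian (characteristic function) computation as in the bulk proof, exactly $-\indic\{s>t\} e^{-(x-y)^2/2(s-t)}/\sqrt{2\pi(s-t)}$. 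Assembling these three pieces reproduces $\kbm(s-\kappa_0, x; t-\kappa_0, y)$.

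Third, the routine bookkeeping: I would establish dominated-convergence bounds for each rescaled integrand (using the second lines of the $F^N_{ij}$-type estimates, which here give Gaussian rather than Airy-type decay, so integrability is actually easier), and verify the contour-truncation errors vanish exactly as in \eqref{Eq.TruncateI12}--\eqref{Eq.R22BulkDecay}, replacing $N^{1/3}$ by $N^{1/2}$ throughout and $\psi(\delta)N$-type bounds by $\epsilon N$-type bounds. One subtlety worth flagging: the factors $(1-qz)^{sN - \lfloor sN\rfloor}$ appearing in $H^N_{ij}$ and $F^N_{ij}$ in the edge kernel are bounded and bounded away from $0$ uniformly on the relevant contours (since $|1-qz|$ stays in a fixed compact subset of $(0,\infty)$), so they contribute a factor tending to $1$ after rescaling and can be absorbed into the constants.

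\textbf{Main obstacle.} The delicate point — and the one genuinely different from the bulk case — is that the $w$-contour $\gamma_t$ does \emph{not} pass through the critical point $c$, so the naive "both contours meet at the saddle" picture fails; one has to argue that the $I^N_{12}$ and $R^N_{22}$ contributions are exponentially suppressed via $\SFt(\zc) - \SFt(c) > 0$ (Lemma~\ref{Lem.DiffS2}), and that the \emph{entire} surviving limit of $K^N_{12}$ comes from the residue terms collected in $R^N_{12}$. Getting the two Gaussian pieces of $\kbm$ to come out with the correct arguments $s - \kappa_0$ and $s-t$ (note the shift by $\kappa_0$ in the first, coming from $\SFt'' \propto (\kappa - \kappa_0)$, versus no shift in the second, coming from $\GFt''$ which has no $\kappa_0$) requires keeping careful track of which exponential ($\bar\SFt$ vs.\ $\bar\GFt$) governs each term, and this is where I would concentrate the care.
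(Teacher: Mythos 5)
Your proposal correctly reproduces the paper's two-step argument: the $\gamma$-contours do not pass through the saddle $c$, so $I^N_{12}$, $I^N_{22}$, and $R^N_{22}$ are exponentially suppressed via $\SFt(\zc)-\SFt(c)>0$ (Lemma~\ref{Lem.DiffS2}), $I^N_{11}$ vanishes via the $N^{-1/2}$ prefactor after truncation and the rescaling $z=c+\tilde z N^{-1/2}$, and the entire limit of $K^N_{12}$ comes from the two pieces of $R^N_{12}$, whose quadratic Taylor expansions of $\bar\SFt$ and $\bar\GFt$ produce the two Gaussians in $\kbm$ with the correct arguments $s-\kappa_0$ and $s-t$. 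One small wording slip to note: the integrand of $I^N_{11}$ is \emph{not} exponentially small near $c$ (it is $O(1)$ there), so the truncation-plus-rescaling argument you immediately invoke is in fact the necessary one for that term; also the rescaling in $R^{N,1}_{12}$ should be $z=\sigmap\tilde z/c$, not $z=\sigmap\tilde z$.
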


The proof of Proposition \ref{Prop.KernelConvTop} is given in Section \ref{Section5.2}. In Section \ref{Section5.1} we derive suitable estimates for the functions that appear in the kernel $K^N$ in Lemma \ref{Lem.PrelimitKernelEdge} along the contours $\{\Gamma_{\kappa,N}, \gamma_{\kappa}: \kappa \in \mathcal{T}\}$ and $\tilde{\gamma}_N$. In Section \ref{Section5.3} we relate $\kbm$ to a standard Brownian motion.

%
\subsection{Function bounds}\label{Section5.1} In what follows we use the notation from Lemma \ref{Lem.PrelimitKernelEdge} and for each $s \in \mathcal{T}$ we set $\theta_s = \theta_0$, $R_s = R_0$, $\psi_s = \psi$ as in Lemma \ref{Lem.BigContour} for $\kappa = s$. In addition, we work with the contours $\{\Gamma_{\kappa, N}, \gamma_{\kappa}: \kappa \in \mathcal{T} \}$ and $\tilde{\gamma}_N$ as in (\ref{Eq.ContoursEdge}). We also assume that $x, y \in [-L,L]$ for a fixed $L >0$. In the inequalities below we will encounter various constants $A_i,a_i > 0$ with $A_i$ sufficiently large, and $a_i$ sufficiently small, depending on $q, c, \mathcal{T}, \{\theta_\kappa: \kappa \in \mathcal{T}\}, \{R_{\kappa}: \kappa \in \mathcal{T}\}$ and $L$ -- we do not list this dependence explicitly. In addition, the inequalities will hold provided that $N$ is sufficiently large, depending on the same set of parameters, which we will also not mention further.  

Let $\delta_1(\theta), \epsilon_1(\theta)$ be as in Lemma \ref{Lem.DecayNearCritTheta}, $\delta_2, \epsilon_2$ be as in Lemma \ref{Lem.DecayNearCritGen} and set 
$$\delta = \min(\{\delta_1(\theta_\kappa) : \kappa \in \mathcal{T} \}, \delta_2), \hspace{2mm} \epsilon = \min( \{\epsilon_1(\theta_\kappa) : \kappa \in \mathcal{T}\}, \epsilon_2), \hspace{2mm} \psi(\varepsilon) = \min_{ \kappa \in \mathcal{T}} \psi_{\kappa}(\varepsilon).$$
Fix $\kappa \in \mathcal{T}$. If $z \in \Gamma_{\kappa, N}$ and $|z - c| \leq \delta$, we have from Lemmas \ref{Lem.PowerSeriesSG} and \ref{Lem.DecayNearCritTheta} that
\begin{equation}\label{Eq.S2BoundZClose}
\Real[\SFt(z) - \SFt(c)] \leq - \epsilon |z-c|^2 + \left( \sigmap^2(\kappa - \kappa_0)/2c^2 + \epsilon  \right)\sec^2(\theta_{\kappa}) N^{-1} + C_0 \sec^3(\theta_{\kappa}) N^{-3/2}.
\end{equation}
If $z \in \Gamma_{\kappa, N}$ and $|z - c| \geq \delta \geq \sec(\theta_{\kappa})N^{-1/2}$, we have from Lemma \ref{Lem.BigContour} that
\begin{equation}\label{Eq.S2BoundZFar}
\Real[\SFt(z) - \SFt(c)] \leq - \psi(\delta).
\end{equation}
If $w \in \gamma_{\kappa}$, we have from Lemmas \ref{Lem.DiffS2} and \ref{Lem.SmallCircleS} that
\begin{equation}\label{Eq.S2BoundW}
\Real[\SFt(w) - \SFt(c)] \geq \SFt(\zc(\kappa)) - \SFt(c) > 0.
\end{equation}
By Taylor expanding the logarithm we can find $A_1 > 0$, such that for $z \in \Gamma_{\kappa} \cup \gamma_{\kappa} \cup \tilde{\gamma}_N$
\begin{equation}\label{Eq.BoundZLogEdge}
|\log(z/c) | \leq  A_1 | z- c|.
\end{equation}
From Lemma \ref{Lem.DecayNearCritGen} we have for $z \in \tilde{\gamma}_N$ and $|z- c| \leq N^{-1/12} \leq \delta$
\begin{equation}\label{Eq.G2BoundZClose}
\Real[\GFt(z) - \GFt(c)] \leq - \epsilon |z-c|^2. 
\end{equation}
If $z \in \tilde{\gamma}_N$ and $|z - c| \geq N^{-1/12}$, we have from Lemma \ref{Lem.MedCircles} that
\begin{equation}\label{Eq.G2BoundZFar}
\begin{split}
&\Real[\GFt(z) - \GFt(c)] \leq \Real[\GFt(c + e^{\pm \im \pi/2} N^{-1/12}) - \GFt(c)] \leq -\epsilon N^{-1/6},
\end{split}
\end{equation}
where in the last inequality we used (\ref{Eq.G2BoundZClose}).\\

We now proceed to find suitable estimates for the functions $F^N_{ij}$ and $H^N_{ij}$ from (\ref{Eq.DefIN11Edge}-\ref{Eq.DefRN22Edge}). From (\ref{Eq.S2BoundZClose}), (\ref{Eq.S2BoundZFar}) and (\ref{Eq.BoundZLogEdge}) we conclude that for some $A_2,a_2 > 0$ and all $s,t \in \mathcal{T}$, $z \in \Gamma_{s, N}$, $w \in \Gamma_{t, N}$ 
\begin{equation}\label{Eq.F11BoundEdge}
\begin{split}
&\left| F^N_{11}(z,w)  \right| \leq \exp \left( - \psi(\delta) N /2  \right) \mbox{ if } \max(|z-c|, |w-c|) \geq \delta, \\
&\left| F^N_{11}(z,w)  \right| \leq \exp \left(A_2 - a_2 N |z-c|^2 - a_2 N |w-c|^2 \right),  \mbox{ if } \max(|z-c|, |w-c|) \leq \delta, \\
&\left| F^N_{12}(z,c)  \right| \leq \exp \left( - \psi(\delta) N /2  \right) \mbox{ if } |z-c| \geq \delta, \\
&\left| F^N_{12}(z,c)  \right| \leq \exp \left(A_2 - a_2 N |z-c|^2 \right),  \mbox{ if } |z-c| \leq \delta.
\end{split}
\end{equation}
From (\ref{Eq.S2BoundZClose}), (\ref{Eq.S2BoundZFar}), (\ref{Eq.S2BoundW}) and (\ref{Eq.BoundZLogEdge}) we conclude that for some $a_3 > 0$ and all $s,t \in \mathcal{T}$ 
\begin{equation}\label{Eq.F12BoundEdge}
\begin{split}
&\left| F^N_{12}(z,w)  \right| \leq \exp \left( - a_3 N  \right) \mbox{ if } z \in \Gamma_{s, N} \mbox{ and } w \in \gamma_{t}, \\
&\left| F^N_{22}(z,w)  \right| \leq \exp \left( - a_3 N  \right) \mbox{ if } z \in \gamma_{s} \mbox{ and } w \in \gamma_{t}, \\
&\left| F^N_{22}(z,c)  \right| \leq \exp \left( - a_3 N  \right) \mbox{ if } z \in \gamma_{s} \mbox{ and }\left| F^N_{22}(c,w)  \right| \leq \exp \left( - a_3 N  \right) \mbox{ if } w \in \gamma_{t}. \\
\end{split}
\end{equation}
As $H^N_{ij}$ are essentially rational functions, we have for some $A_3 > 0$ that
\begin{equation}\label{Eq.HijBoundEdge}
\begin{split}
&\left| H^N_{11}(z,w)  \right| \leq A_3 N^{1/2}  \mbox{ if } z, w \in \cup_{\kappa \in \mathcal{T}} \Gamma_{\kappa, N} \\
& \left| H^N_{12}(z,w)  \right| \leq A_3 N^{1/2}   \mbox{ if } z\in \cup_{\kappa \in \mathcal{T}} \Gamma_{\kappa, N} \mbox{ and } w \in \cup_{\kappa \in \mathcal{T}} \gamma_{\kappa},\\
& \left| H^N_{22}(z,w)  \right| \leq A_3 N^{1/2}  \mbox{ if } z, w\in \cup_{\kappa \in \mathcal{T}}\gamma_{\kappa},
\end{split}
\end{equation}
and also for each  $s, t \in \mathcal{T}$
\begin{equation}\label{Eq.HijBoundEdge2}
\begin{split}
&\left| \sigmap N^{1/2} \cdot \frac{(1-qz)^{sN - \lfloor sN \rfloor}(1-qc)^{tN - \lfloor tN \rfloor}}{z(1-qz)^{tN - \lfloor tN \rfloor}(1-qc)^{sN - \lfloor sN \rfloor}} \right| \leq A_3 N^{1/2} \mbox{ if } z \in \tilde{\gamma}_N, \\
&\left| \sigmap N^{1/2} \cdot \frac{(zc-1) (1-qz)^{s N - \lfloor s N \rfloor}}{z(z^2-1)(1-qc)^{s N - \lfloor s N \rfloor}}\right| \leq A_3 N^{1/2} \mbox{ if } z \in \Gamma_{s, N} , \\
&\left| \sigmap N^{1/2} \cdot \frac{(1-qc)^{s N - \lfloor s N \rfloor}}{(c  z - 1)(1-qz)^{s N - \lfloor s N \rfloor}} \right| \leq A_3 N^{1/2}  \mbox{ if } z \in \gamma_{s}.
\end{split}
\end{equation}
Lastly, from (\ref{Eq.BoundZLogEdge}), (\ref{Eq.G2BoundZClose}) and (\ref{Eq.G2BoundZFar}) we can find $A_4, a_4 > 0$, such that for $z \in \tilde{\gamma}_N$, $s, t \in \mathcal{T}$ with $s > t$
\begin{equation}\label{Eq.G2BoundinR}
\begin{split}
&\left| e^{(s-t) N \bar{\GFt}(z) + \sigmap (-  x +  y) N^{1/2} \log(z/c) } \right| \leq \exp \left( - a_4 N^{5/6} \right) \mbox{ if } |z - c| \geq N^{-1/12},\\
&\left| e^{(s-t) N \bar{\GFt}(z) + \sigmap (-  x +  y) N^{1/2} \log(z/c) } \right| \leq \exp \left(A_4 -  a_4N|z-c|^2  \right) \mbox{ if } |z - c| \leq N^{-1/12}.
\end{split}
\end{equation}

%
\subsection{Proof of Proposition \ref{Prop.KernelConvTop} }\label{Section5.2} For clarity we split the proof into two steps. In the first step we prove the first line in (\ref{Eq.KernelLimitTop}), and in the second step we prove the second line in (\ref{Eq.KernelLimitTop}).   \\

{\bf \raggedleft Step 1.} In this step we prove the first line in (\ref{Eq.KernelLimitTop}) and establish some estimates for $K^N_{12}$. Fix $L>0$, such that $x_N, y_N \in [-L,L]$. Let $\delta, \epsilon$ be as in the beginning of Section \ref{Section5.1}. Let $\tilde{\gamma}_N(0)$ denote the part of $\tilde{\gamma}_N$ that is contained in the disc $\{z: |z - c| \leq N^{-1/12}\}$, and let $\Gamma_{\kappa,N}(0)$ denote the part of $\Gamma_{\kappa,N}$ that is contained in the disc $\{z: |z - c| \leq \delta \}$.  

From the second line in (\ref{Eq.F12BoundEdge}) and the third line in (\ref{Eq.HijBoundEdge})
\begin{equation}\label{Eq.TruncateI22Edge}
\begin{split}
& \lim_{N \rightarrow \infty} \left| I^N_{22}(s,x_N; t, y_N)  \right|  \leq \lim_{N \rightarrow \infty} \|\gamma_{s}\| \cdot \|\gamma_{t}\| \cdot A_3 N^{1/2}e^{- a_3 N} = 0,
\end{split}
\end{equation}
where we recall that for a contour $\gamma$, we write $\|\gamma\|$ for its arc-length. From the third lines in (\ref{Eq.F12BoundEdge}) and (\ref{Eq.HijBoundEdge2}) we get 
\begin{equation}\label{Eq.TruncateR22Edge}
\begin{split}
& \lim_{N \rightarrow \infty} \left| R^N_{22}(s,x_N; t, y_N)  \right|  \leq \lim_{N \rightarrow \infty} \|\gamma_{s}\| \cdot  A_3 N^{1/2}e^{- a_3 N} + \|\gamma_{t}\| \cdot  A_3 N^{1/2}e^{- a_3 N}  = 0.
\end{split}
\end{equation}
From (\ref{Eq.TruncateI22Edge}) and (\ref{Eq.TruncateR22Edge}) we conclude the second limit in (\ref{Eq.KernelLimitTop}).

Let $I_{11}^{N,0}$ be as in (\ref{Eq.DefIN11Edge}) but with $ \Gamma_{s, N}, \Gamma_{t, N}$ replaced with $\Gamma_{s,N}(0), \Gamma_{t, N}(0)$. Using the first lines in (\ref{Eq.F11BoundEdge}) and (\ref{Eq.HijBoundEdge}), we get
\begin{equation}\label{Eq.TruncateI11Edge}
\begin{split}
& \lim_{N \rightarrow \infty} \left| I^N_{11}(s,x_N; t, y_N) -  I^{N,0}_{11}(s,x_N; t, y_N) \right| \leq \lim_{N \rightarrow \infty} \|\Gamma_{s, N}\| \cdot \|\Gamma_{t, N}\| \cdot A_3N^{1/2}e^{- \psi(\delta)N/2} = 0.
\end{split}
\end{equation}
By changing variables $z = c + \tilde{z}N^{-1/2}$ and $w = c + \tilde{w}N^{-1/2}$, and applying the estimates from the second line in (\ref{Eq.F11BoundEdge}) and the first line in (\ref{Eq.HijBoundEdge}), we obtain
\begin{equation}\label{Eq.I11VanishEdge}
\lim_{N \rightarrow \infty} \left|I^{N,0}_{11}(s,x_N; t, y_N) \right| \leq \lim_{N \rightarrow \infty} \frac{A_3 N^{-1/2}}{(2\pi)^2} \int_{\mathcal{C}_{0}^{\theta_s}[r_s]} |d\tilde{z}| \int_{\mathcal{C}_{0}^{\theta_t}[r_t]}|d\tilde{w}| e^{A_2 - a_2 (|\tilde{z}|^2 + |\tilde{w}|^2) } = 0,
\end{equation}
where $r_s = \sec(\theta_{s})$ and $r_t = \sec(\theta_t)$. We recall that $|d\tilde{z}|, |d\tilde{w}|$ denote integration with respect to arc-length, and that the contours $\mathcal{C}_{a}^{\phi}[r]$ were defined above (\ref{Eq.I11Vanish}). Equations (\ref{Eq.TruncateI11Edge}) and (\ref{Eq.I11VanishEdge}) imply the first limit in (\ref{Eq.KernelLimitTop}).\\

In the remainder of this step we get some estimates for $I^N_{12}$ and $R^N_{12}$ that will be used in the second step. From the first line in (\ref{Eq.F12BoundEdge}) and the second line in (\ref{Eq.HijBoundEdge}) we get
\begin{equation}\label{Eq.TruncateI12Edge}
\begin{split}
& \lim_{N \rightarrow \infty} \left| I^N_{12}(s,x_N; t, y_N)  \right| \leq \lim_{N \rightarrow \infty} \|\Gamma_{s, N}\| \cdot \|\gamma_{t}\| \cdot A_3 N^{1/2}e^{- a_3 N} = 0.
\end{split}
\end{equation}
We also get from the third line in (\ref{Eq.F11BoundEdge}), the first two lines in (\ref{Eq.HijBoundEdge2}) and the first line in (\ref{Eq.G2BoundinR})
\begin{equation}\label{Eq.R12EdgeDecay}
\begin{split}
& \lim_{N \rightarrow \infty} \left| R^N_{12}(s,x_N; t, y_N) - R^{N,1}_{12} - R^{N,2}_{12} \right| \leq \lim_{N \rightarrow \infty} \| \Gamma_{s,N}\| \cdot A_3 N^{1/2} e^{-\psi(\delta) N /2} \\
& + \lim_{N \rightarrow \infty} \|\tilde{\gamma}_N \| \cdot A_3 N^{1/2} e^{- a_4 N^{5/6} }  = 0.
\end{split}
\end{equation}
where 
\begin{equation*}
\begin{split}
&R^{N,1}_{12} =\frac{-{\bf 1}\{s > t \}   \sigmap N^{1/2} }{2 \pi \im}  \int_{\tilde{\gamma}_N(0)} H^1_N(z)  dz, \mbox{ and } R^{N,2}_{12} = \frac{\sigmap N^{1/2}}{2\pi \im} \int_{\Gamma_{s,N}(0)} H^2_N(z)dz,\\
& H^1_N(z) = \frac{{\bf 1}\{ |z - c| \leq N^{-1/12}\} (1-qz)^{sN - \lfloor sN \rfloor}(1-qc)^{tN - \lfloor tN \rfloor}e^{(s-t)N \bar{\GFt}(z) + \sigmap (-  x +  y) N^{1/2} \log(z/c)  }}{z(1-qz)^{tN - \lfloor tN \rfloor}(1-qc)^{sN - \lfloor sN \rfloor}}  ,\\
&H^2_N(z)=  \frac{{\bf 1}\{ |z - c| \leq \delta\}  (zc-1) (1-qz)^{s N - \lfloor s N \rfloor}F_{12}^N(z,c)}{z(z^2-1)(1-qc)^{s N - \lfloor s N \rfloor}}.
\end{split}
\end{equation*}

{\bf \raggedleft Step 2.} From (\ref{Eq.TruncateI12Edge}) and (\ref{Eq.R12EdgeDecay}), we see that to establish the second line in (\ref{Eq.KernelLimitTop}) it suffices to show 
\begin{equation}\label{Eq.EdgeRed1}
\lim_{N \rightarrow \infty} R^{N,1}_{12} = - {\bf 1}\{s > t\} \cdot \frac{e^{-(x-y)^2/2(s-t)}}{\sqrt{2\pi (s-t)}} \mbox{ and } \lim_{N \rightarrow \infty} R^{N,2}_{12} = \frac{e^{-x^2/2(s-\kappa_0)}}{\sqrt{2\pi (s-\kappa_0)}}.
\end{equation}

Changing variables $z = c + \tilde{z} N^{-1/2}$, we see that 
\begin{equation*}
R^{N,1}_{12} = \frac{-{\bf 1}\{s > t \}   \sigmap  }{2 \pi \im} \int_{\im \mathbb{R}}  H^1_N(c + \tilde{z}  N^{-1/2} )d\tilde{z}, \hspace{2mm} R^{N,2}_{12} = \frac{\sigmap}{2\pi \im} \int_{\mathcal{C}^{\theta_s}_0[r_s]} H^2_N(c + \tilde{z}  N^{-1/2} )d\tilde{z}.
\end{equation*}
Using the Taylor expansion formulas for $\SFt, \GFt$ from Lemma \ref{Lem.PowerSeriesSG}, we get the pointwise limits
$$\lim_{N \rightarrow \infty} H^1_N(c + \tilde{z}  N^{-1/2} ) = \frac{e^{(s-t)(\sigmap^2/2c^2)\tilde{z}^2 - \sigmap (x-y)\tilde{z}/ c}}{c}  , \hspace{2mm} \lim_{N \rightarrow \infty} H^2_N(c + \tilde{z}  N^{-1/2} ) = \frac{e^{[\sigmap^2 (s - \kappa_0)/2c^2]\tilde{z}^2 - \sigmap x\tilde{z}/ c}}{c}.$$
On the other hand, we have from the fourth line in (\ref{Eq.F11BoundEdge}), the first two lines in (\ref{Eq.HijBoundEdge2}) and the second line in (\ref{Eq.G2BoundinR}) 
$$|H^1_N(c + \tilde{z}  N^{-1/2} )| \leq A_3 \exp\left( A_4 - a_4 |\tilde{z}|^2 \right), \hspace{2mm} |H^2_N(c + \tilde{z}  N^{-1/2} )| \leq A_3 \exp\left( A_2 - a_2 |\tilde{z}|^2 \right).$$
The last two equations and the dominated convergence theorem yield
\begin{equation*}
\lim_{N \rightarrow \infty} R^{N,1}_{12} = \frac{-{\bf 1}\{s > t \}  }{2 \pi \im} \int_{i \mathbb{R}} e^{z^2(s-t)/2 - x \tilde{z} } dz, \hspace{2mm} \lim_{N \rightarrow \infty} R^{N,2}_{12} = \frac{1}{2\pi \im} \int_{\mathcal{C}^{\theta_s}_0[r_s\sigmap/c]} e^{z^2(s-\kappa_0)/2 - x \tilde{z} } dz,
\end{equation*}
where we also changed variables $z = \tilde{z} \sigmap/c$. The last equation implies (\ref{Eq.EdgeRed1}) once we recognize the integrals as the characteristic functions of Gaussian variables.

%
\subsection{Determinantal structure of Brownian motion}\label{Section5.3} In this section we explain the relationship between Brownian motion and the kernel $\kbm$ from (\ref{Eq.KBM}). The precise statement is contained in the following proposition. 

\begin{proposition}\label{Lem.BMDPP} Let $(B_t: t \geq 0)$ be a standard Brownian motion with $B_0 = 0$. Fix $m \in \mathbb{N}$, $0 < t_1 < t_2 < \cdots < t_m$ and set $\mathcal{T} = \{t_1, \dots, t_m\}$. Then, the random measure 
$$M(A) = \sum_{i = 1}^m {\bf 1}\{(t_i, B_{t_i}) \in A\}$$
is a determinantal point process on $\mathbb{R}^2$ with reference measure $\mu_{\mathcal{T}} \times \mathrm{Leb}$ and correlation kernel $\kbm$ as in (\ref{Eq.KBM}). We recall that $\mu_{\mathcal{T}}$ is the counting measure on $\mathcal{T}$ and $\mathrm{Leb}$ is the Lebesgue measure on $\mathbb{R}$.
\end{proposition}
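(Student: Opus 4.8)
The plan is to verify directly that the finite-dimensional distributions of Brownian motion restricted to $\mathcal{T}$ form a determinantal point process with the stated kernel. The starting point is the explicit joint density of $(B_{t_1}, \dots, B_{t_m})$, which by the Markov property factorizes as a product of Gaussian transition densities: writing $g_t(x) = (2\pi t)^{-1/2} e^{-x^2/2t}$, the vector $(B_{t_1}, \dots, B_{t_m})$ has density $\prod_{i=1}^m g_{t_i - t_{i-1}}(x_i - x_{i-1})$ with the conventions $t_0 = 0$, $x_0 = 0$. Since the point process $M$ places exactly one point on each line $\{t_i\} \times \mathbb{R}$, its correlation functions are determined by this density, and the task reduces to an algebraic identity expressing the relevant minors of the density in terms of the kernel $\kbm$.

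The cleanest route is to recognize this as an instance of the general fact (going back to Eynard--Mehta / the theory of ``biorthogonal ensembles'' and, in the one-particle-per-time-slice setting, to the structure underlying the extended kernels) that a measure of the form $\prod \phi_i(x_{i-1}, x_i)$ on points $x_1, \dots, x_m$ has determinantal correlations with kernel built from the convolution semigroup $\phi_{i,j} := \phi_{i+1} * \cdots * \phi_j$ for $i < j$. Concretely, I would set $\phi_{i,j}(x,y) = g_{t_j - t_i}(y - x)$ for $i < j$ (the convolution identity $g_s * g_t = g_{s+t}$ is exactly the Chapman--Kolmogorov relation, so $\phi_{i,j}$ is well-defined and consistent), and then check that
\begin{equation*}
K(t_i, x; t_j, y) = \frac{e^{-x^2/2 t_i}}{\sqrt{2\pi t_i}} - {\bf 1}\{t_i > t_j\}\, \frac{e^{-(x-y)^2/2(t_i - t_j)}}{\sqrt{2\pi(t_i - t_j)}} = \phi_{0,i}(0,x) - {\bf 1}\{i > j\}\, \phi_{j,i}(y,x)
\end{equation*}
satisfies the reproducing property $\int_{\mathbb{R}} K(t_i, x; t_k, z)\, K(t_k, z; t_j, y)\, dz = K(t_i, x; t_j, y)$ for all $i, j$ and all intermediate $k$, together with the normalization that $\det[K(t_i, x_\ell; t_j, x_\ell)]$ reproduces the density up to the correct constant. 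The reproducing property is a short Gaussian computation: the ``single'' term $\phi_{0,i}(0,x)$ absorbs the $k$-integral against whichever piece of $K(t_k, z; t_j, y)$ survives, using $\int \phi_{0,i}(0,z)\phi_{i,k}(z,\cdot) = \phi_{0,k}(0,\cdot)$ when $i < k$ and the indicator bookkeeping when $i \ge k$; the subtracted term telescopes similarly.

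Alternatively — and this may be the shortest write-up — I would directly expand the $m \times m$ determinant $\det\big[K(t_i, x_i; t_j, x_j)\big]_{i,j=1}^m$ by hand. The matrix is lower-triangular ``plus rank-one correction'' in a suitable sense: the term ${\bf 1}\{i > j\}\phi_{j,i}(x_j, x_i)$ is strictly lower-triangular, and the diagonal plus upper part is the rank-one matrix $\phi_{0,i}(0, x_i)$ (constant in the column index $j$). Row-reducing against this structure, the determinant collapses to $\phi_{0,1}(0,x_1) \prod_{i=2}^m \phi_{i-1,i}(x_{i-1}, x_i)$, which is precisely the joint density; the same collapse works for every principal submatrix indexed by $\{i_1 < \cdots < i_r\} \subseteq \{1,\dots,m\}$, giving the $r$-point correlation functions. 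This matches the definition of a determinantal point process with reference measure $\mu_{\mathcal{T}} \times \mathrm{Leb}$.

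The main obstacle is not conceptual but organizational: one must be careful about the indicator conventions (strict versus non-strict inequalities, and the fact that on the diagonal $i = j$ only the ``single'' term contributes), and about the normalization constant — the density is a probability density, so no extra partition function appears, but one should confirm that the determinantal expansion produces it with coefficient exactly $1$. I would also note that since $M$ is supported on a set where each time-slice carries one point, there is no ambiguity coming from the reference measure having atoms in the time direction; the correlation functions against $\mu_{\mathcal{T}} \times \mathrm{Leb}$ are genuinely the finite joint densities of $(B_{t_{i_1}}, \dots, B_{t_{i_r}})$ evaluated pointwise. Everything else is a routine Gaussian integral. (I remark that this proposition is only used to identify the limiting object in Proposition~\ref{Prop.KernelConvTop} with a Brownian motion, so the determinantal structure, rather than any finer property of the kernel, is all that is needed.)
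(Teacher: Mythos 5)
Your second route --- directly expanding the $m \times m$ determinant and observing that the ``rank-one plus strictly lower triangular'' structure collapses it to the product of Gaussian transition densities --- is correct, and it is genuinely different from the paper's argument. The paper does not compute from scratch: it quotes the determinantal kernel for a single Brownian motion from the proof of \cite[Proposition 2.1]{KM10} (specialized to $N=1$, $x_1=0$), evaluates the residue at $z = 0$ in the contour-integral representation of the ``single'' term $S^{u,v}(x,y)$ to obtain $e^{-x^2/2t_u}/\sqrt{2\pi t_u}$, and then pushes the determinantal structure forward from $\llbracket 1, m\rrbracket$ to $\mathcal{T}$ via the change-of-variables lemma \cite[Proposition 2.13(5)]{ED24a}. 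Your direct expansion is self-contained and arguably more illuminating; the paper's is shorter but leans on the external reference.

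Your first suggested route, however, does not work: $\kbm$ does \emph{not} satisfy the reproducing identity $\int_{\mathbb{R}} K(t_i,x;t_k,z)\,K(t_k,z;t_j,y)\,dz = K(t_i,x;t_j,y)$. Writing $g_s(u)=(2\pi s)^{-1/2}e^{-u^2/2s}$ and taking $i>k>j$, the Chapman--Kolmogorov relation $g_s*g_t=g_{s+t}$ gives
\begin{equation*}
\int_{\mathbb{R}} \bigl[g_{t_i}(x) - g_{t_i-t_k}(x-z)\bigr]\bigl[g_{t_k}(z) - g_{t_k - t_j}(z - y)\bigr]\,dz = -g_{t_i}(x) + g_{t_i - t_j}(x - y) = -K(t_i,x;t_j,y),
\end{equation*}
the \emph{negative} of the desired right-hand side. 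An Eynard--Mehta kernel of this type is not a projection, and no semigroup identity is available; the verification must go through the determinant itself, as your second route correctly does. One small addendum to that route: since $\mu_{\mathcal{T}}^{\otimes r}$ charges the diagonal in the time coordinates while $M$ places at most one atom per slice $\{t_j\}\times\mathbb{R}$, one also needs $\det[\kbm(s_a,y_a;s_b,y_b)]_{a,b=1}^r$ to vanish identically whenever two of the $s_a$ coincide. This falls out of the same column-subtraction computation: after sorting so $s_1\le\cdots\le s_r$, subtracting column $d+1$ from column $d$, and expanding along the first row, one is left with a lower-triangular minor whose $d$-th diagonal entry is $-{\bf 1}\{s_{d+1}>s_d\}\,g_{s_{d+1}-s_d}(y_{d+1}-y_d)$, and a tie $s_d=s_{d+1}$ kills one factor. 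With this addition your direct-expansion argument is complete.
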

\begin{proof} Define the random measure on $\mathbb{R}^2$
$$\tilde{M}(A) = \sum_{i = 1}^m {\bf 1}\{ (i, B_{t_i}) \in A \}.$$
From the proof of \cite[Proposition 2.1]{KM10} we have that $\tilde{M}$ is a determinantal point process on $\mathbb{R}^2$ with reference measure $\mu_{\llbracket 1, m \rrbracket} \times \mathrm{Leb}$ and correlation kernel
$$\tilde{K}(u,x; v,y) = S^{u,v}(x,y) - {\bf 1}\{ u > v\} \cdot \frac{e^{-(x-y)^2/2(t_u - t_v)}}{\sqrt{2\pi (t_u - t_v)}},$$
where 
$$S^{u,v}(x,y) = \frac{1}{2\pi \im} \oint_{C_1} dz \frac{e^{-(z-x)^2/2t_u}}{z\sqrt{2\pi t_u}} \int_{\mathbb{R}} dy' \frac{e^{-(y' + \im y)^2/2t_v}}{\sqrt{2\pi t_v}},$$
and $C_1$ is the positively-oriented zero-centered circle of unit radius. The above formula can be deduced from the last displayed equation on \cite[page 482]{KM10} upon setting $N = 1$, $x_1 = 0$, $m = u$, $n = v$ and changing variables $z \rightarrow z/\sqrt{t_u}$, $y' \rightarrow y'/\sqrt{t_v}$. The first integral can be evaluated as the residue at $z = 0$, and the second integral is equal to $1$. Consequently, we get
$$\tilde{K}(u,x; v,y) = \frac{e^{-x^2/2t_u}}{\sqrt{2\pi t_u}} - {\bf 1}\{ u > v\} \cdot \frac{e^{-(x-y)^2/2(t_u - t_v)}}{\sqrt{2\pi (t_u - t_v)}}.$$

Let $f: \mathbb{R} \rightarrow \mathbb{R}$ be a piece-wise linear increasing bijection such that $f(i) = t_i$ for $i \in \llbracket 1, m \rrbracket$. Define $\phi: \mathbb{R}^2 \rightarrow \mathbb{R}^2$ through $\phi(s, x) = \left(f(s),  x\right),$ and note that $M = \tilde{M} \phi^{-1}$. Consequently, from \cite[Proposition 2.13(5)]{ED24a} we conclude that $M$ is determinantal with reference measure $\mu_{\mathcal{T}} \times \mathrm{Leb}$ and correlation kernel $K(x,s;t,y) = \tilde{K}(f^{-1}(s),x; f^{-1}(t),y)$, which from our last formula implies the statement of the proposition.
\end{proof}
\begin{remark}\label{Rem.KMError} We mention that upon inspecting the proof of \cite[Proposition 2.1]{KM10}, we believe that the $y'$ integral in \cite[Equation (2.1)]{KM10} should be over $\mathbb{R} + \im A$ instead of $\mathbb{R}$, where $A \in \mathbb{R}$ is such that $|A| > |z|$ for all $z \in \Gamma(\xi^N)$. The latter can be traced to the first displayed equation on \cite[page 483]{KM10}, where the geometric series is only ensured to converge if $\left| \sqrt{\frac{t_m}{t_n}} \frac{z}{\im y'} \right| < 1$.
\end{remark}

%
\section{Gibbsian line ensembles}\label{Section6} Propositions \ref{Prop.KernelConvBottom} and \ref{Prop.KernelConvTop} will help us show that the convergence in Theorems \ref{Thm.Main1} and \ref{Thm.Main2} occurs in the finite-dimensional sense. In order to conclude the uniform over compacts convergence in those theorems, we will seek to apply the general tightness framework for interlacing geometric line ensembles from \cite{dimitrov2024tightness}. In Section \ref{Section6.1} we introduce the required notation and definitions in that framework, and in Section \ref{Section6.2} we prove some auxiliary results we will require later in the paper. Lastly, in Section \ref{Section6.3} we explain how the Pfaffian Schur processes from Section \ref{Section2} can be interpreted as interlacing geometric line ensembles.

%
\subsection{Definitions}\label{Section6.1} In this section we introduce some basic definitions and notation from \cite{dimitrov2024tightness} and refer the interested reader to the same paper for additional details.

Given an interval $\Lambda \subset \mathbb{R}$, we endow it with the subspace topology of the usual topology on $\mathbb{R}$. We let $(C(\Lambda), \mathcal{C})$ denote the space of continuous functions $f: \Lambda \rightarrow \mathbb{R}$ with the topology of uniform convergence over compacts, see \cite[Chapter 7, Section 46]{Munkres}, and Borel $\sigma$-algebra $\mathcal{C}$. Given a set $\Sigma \subset \mathbb{Z}$, we endow it with the discrete topology and denote by $\Sigma \times \Lambda$ the set of all pairs $(i,x)$ with $i \in \Sigma$ and $x \in \Lambda$ with the product topology. We also denote by $\left(C (\Sigma \times \Lambda), \mathcal{C}_{\Sigma}\right)$ the space of real-valued continuous functions on $\Sigma \times \Lambda$ with the topology of uniform convergence over compact sets and Borel $\sigma$-algebra $\mathcal{C}_{\Sigma}$.

The following defines the notion of a line ensemble.
\begin{definition}\label{CLEDef}
Let $\Sigma \subseteq \mathbb{Z}$ and $\Lambda \subseteq \mathbb{R}$ be an interval. A {\em $\Sigma$-indexed line ensemble $\mathcal{L}$} is a random variable defined on a probability space $(\Omega, \mathcal{F}, \mathbb{P})$ that takes values in $\left(C (\Sigma \times \Lambda), \mathcal{C}_{\Sigma}\right)$. We will often slightly abuse notation and write $\mathcal{L}: \Sigma \times \Lambda \rightarrow \mathbb{R}$, even though it is not $\mathcal{L}$ which is such a function, but $\mathcal{L}(\omega)$ for every $\omega \in \Omega$. For $i \in \Sigma$ we write $\mathcal{L}_i(\omega) = (\mathcal{L}(\omega))(i, \cdot)$ and note that $\mathcal{L}_i: \Omega \rightarrow C(\Lambda)$ is $(\mathcal{C}, \mathcal{F})-$measurable. If $a,b \in \Lambda$ satisfy $a \leq b$, we let $\mathcal{L}_i[a,b]$ denote the restriction of $\mathcal{L}_i$ to $[a,b]$.
\end{definition}

\begin{definition}\label{DefDLE}
Let $\Sigma \subseteq \mathbb{Z}$ and $\llbracket T_0, T_1 \rrbracket$ be a non-empty integer interval in $\mathbb{Z}$. Consider the set $Y$ of functions $f: \Sigma \times \llbracket T_0, T_1 \rrbracket \rightarrow \mathbb{Z}$ such that $f(i, j+1) - f(i,j) \in \mathbb{Z}_{\geq 0}$ when $i \in \Sigma$ and $j \in\llbracket T_0, T_1 -1 \rrbracket$ and let $\mathcal{D}$ denote the discrete $\sigma$-algebra on $Y$. We call a function $g: \llbracket T_0, T_1 \rrbracket \rightarrow \mathbb{Z}$, such that $f( j+1) - f(j) \in \mathbb{Z}_{\geq 0}$ when $j \in\llbracket T_0, T_1 -1 \rrbracket$, an {\em increasing path} and elements in $Y$ are {\em collections of increasing paths}. A $\Sigma$-{\em indexed geometric line ensemble $\mathfrak{L}$ on $\llbracket T_0, T_1 \rrbracket$}  is a random variable defined on a probability space $(\Omega, \mathcal{B}, \mathbb{P})$, taking values in $Y$ such that $\mathfrak{L}$ is a $(\mathcal{B}, \mathcal{D})$-measurable function. Unless otherwise specified, we will assume that $T_0 \leq T_1$ are both integers, although the above definition makes sense if $T_0 = -\infty$, or $T_1 = \infty$, or both.
\end{definition}

We think of geometric line ensembles as collections of random increasing paths on the integer lattice, indexed by $\Sigma$. Observe that one can view an increasing path $L$ on $\llbracket T_0, T_1 \rrbracket$ as a continuous curve by linearly interpolating the points $(i, L(i))$, see Figure \ref{Fig.DiscreteLE}. This allows us to define $ (\mathfrak{L}(\omega)) (i, s)$ for non-integer $s \in [T_0,T_1]$ and to view geometric line ensembles as line ensembles in the sense of Definition \ref{CLEDef}. In particular, we can think of $\mathfrak{L}$ as a random element in $C (\Sigma \times \Lambda)$ with $\Lambda = [T_0, T_1]$. We will often slightly abuse notation and write $\mathfrak{L}: \Sigma \times \llbracket T_0, T_1 \rrbracket \rightarrow \mathbb{Z}$, even though it is not $\mathfrak{L}$ which is such a function, but rather $\mathfrak{L}(\omega)$ for each $\omega \in \Omega$. Furthermore we write $L_i = (\mathfrak{L}(\omega)) (i, \cdot)$ for the index $i \in \Sigma$ path. If $L$ is an increasing path on $\llbracket T_0, T_1 \rrbracket$ and $a, b \in \llbracket T_0, T_1 \rrbracket$ satisfy $a \leq b$, we let $L\llbracket a, b \rrbracket$ denote the restriction of $L$ to $\llbracket a,b\rrbracket$. \\

Let $t_i, z_i \in \mathbb{Z}$ for $i = 1,2$ be given such that $t_1 \leq t_2$ and $z_1 \leq z_2$. We denote by $\Omega(t_1,t_2,z_1,z_2)$ the collection of increasing paths that start from $(t_1,z_1)$ and end at $(t_2,z_2)$, by $\mathbb{P}_{\mathrm{Geom}}^{t_1,t_2, z_1, z_2}$ the uniform distribution on $\Omega(t_1,t_2,z_1,z_2)$ and write $\mathbb{E}^{t_1,t_2,z_1,z_2}_{\mathrm{Geom}}$ for the expectation with respect to this measure. One thinks of the distribution $\mathbb{P}_{\mathrm{Geom}}^{t_1,t_2, z_1, z_2}$ as the law of a random walk with i.i.d. geometric increments with parameter $q \in (0,1)$ that starts from $z_1$ at time $t_1$ and is conditioned to end in $z_2$ at time $t_2$ -- this interpretation does not depend on the choice of $q \in (0,1)$. Notice that by our assumptions on the parameters the state space $\Omega(t_1,t_2,z_1,z_2)$ is non-empty.  

Given $k \in \mathbb{N}$, $T_0, T_1 \in \mathbb{Z}$ with $T_0 < T_1$ and $\vec{x}, \vec{y} \in \mathbb{Z}^k$, we let $\mathbb{P}^{T_0,T_1, \vec{x},\vec{y}}_{\mathrm{Geom}}$ denote the law of $k$ independent geometric bridges $\{B_i: \llbracket T_0, T_1 \rrbracket  \rightarrow \mathbb{Z} \}_{i = 1}^k$ from $B_i(T_0) = x_i$ to $B_i(T_1) = y_i$. Equivalently, this is the law of $k$ independent random increasing paths $B_i \in \Omega(T_0,T_1,x_i,y_i)$ for $i \in \llbracket 1, k \rrbracket$ that are uniformly distributed or just the uniform measure on 
$$\Omega_{\mathrm{Geom}}(T_0, T_1, \vec{x}, \vec{y}) = \Omega(T_0,T_1,x_1,y_1) \times \cdots \times \Omega(T_0,T_1,x_k,y_k).$$
This measure is well-defined provided that $\Omega(T_0,T_1,x_i,y_i)$ are non-empty for $i \in \llbracket 1, k \rrbracket$, which holds if $y_i \geq x_i$ for all $i \in \llbracket 1, k \rrbracket$.

The following definition introduces the notion of an $(f,g)$-interlacing geometric line ensemble, which in simple terms is a collection of $k$ independent geometric bridges, conditioned on interlacing with each other and the graphs of two functions $f$ and $g$.
\begin{definition}\label{DefAvoidingLawBer}
Let $k \in \mathbb{N}$ and $\mathfrak{W}_k$ denote the set of signatures of length $k$, i.e.
\begin{equation}\label{DefSig}
\mathfrak{W}_k = \{ \vec{x} = (x_1, \dots, x_k) \in \mathbb{Z}^k: x_1 \geq  x_2 \geq  \cdots \geq  x_k \}.
\end{equation}
Let $\vec{x}, \vec{y} \in \mathfrak{W}_k$, $T_0, T_1 \in \mathbb{Z}$ with $T_0 < T_1$, and $f: \llbracket T_0, T_1 \rrbracket \rightarrow (-\infty, \infty]$ and $g: \llbracket T_0, T_1 \rrbracket \rightarrow [-\infty, \infty)$ be two functions. With the above data we define the {\em $(f,g)$-interlacing geometric line ensemble on the interval $\llbracket T_0, T_1 \rrbracket$ with entrance data $\vec{x}$ and exit data $\vec{y}$} to be the $\Sigma$-indexed geometric line ensemble $\mathfrak{Q}$ with $\Sigma = \llbracket 1, k\rrbracket$ on $\llbracket T_0, T_1 \rrbracket$ and with the law of $\mathfrak{Q}$ equal to $\mathbb{P}^{T_0,T_1, \vec{x},\vec{y}}_{\mathrm{Geom}}$ (the law of $k$ independent uniform increasing paths $\{B_i: \llbracket T_0, T_1 \rrbracket \rightarrow \mathbb{Z} \}_{i = 1}^k$ from $B_i(T_0) = x_i$ to $B_i(T_1) = y_i$), conditioned on 
\begin{equation}\label{EventInter}
\begin{split}
\ice  = &\left\{ B_i(r-1) \geq B_{i+1}(r)  \mbox{ for all $r \in \llbracket T_0 + 1, T_1 \rrbracket$ and $i \in \llbracket 0 , k \rrbracket$} \right\},
\end{split}
\end{equation}
with the convention that $B_0(x) = f(x)$ and $B_{k+1}(x) = g(x)$.

The above definition is well-posed if there exist $B_i \in \Omega(T_0,T_1,x_i,y_i)$ for $i \in \llbracket 1, k \rrbracket$ that satisfy the conditions in $\ice$. We denote by $\Omega_{\ice}(T_0, T_1, \vec{x}, \vec{y}, f,g)$ the set of collections of $k$ increasing paths that satisfy the conditions in $\ice$ and then the distribution of $\mathfrak{Q}$ is simply the uniform measure on $\Omega_{\ice}(T_0, T_1, \vec{x}, \vec{y}, f,g)$. We denote the probability distribution of $\mathfrak{Q}$ as $\mathbb{P}_{\ice, \mathrm{Geom}}^{T_0,T_1, \vec{x}, \vec{y}, f, g}$ and write $\mathbb{E}_{\ice, \mathrm{Geom}}^{T_0, T_1, \vec{x}, \vec{y}, f, g}$ for the expectation with respect to this measure. If $f=+\infty$ and $g=-\infty$, we drop them from the notation and simply write $\Omega_{\ice}(T_0,T_1,\vec{x},\vec{y})$, $\mathbb{P}^{T_0, T_1, \vec{x},\vec{y}}_{\ice, \mathrm{Geom}}$, and $\mathbb{E}^{T_0, T_1, \vec{x},\vec{y}}_{\ice, \mathrm{Geom}}$.
\end{definition}

The following definition introduces the notion of the interlacing Gibbs property.
\begin{definition}\label{DefSGP}
Fix a set $\Sigma = \llbracket 1, N \rrbracket$ with $N \in \mathbb{N}$ or $N = \infty$ and $T_0, T_1\in \mathbb{Z}$ with $T_0 \leq T_1$. A $\Sigma$-indexed geometric line ensemble $\mathfrak{L} : \Sigma \times \llbracket T_0, T_1 \rrbracket \rightarrow \mathbb{Z}$ is said to satisfy the {\em interlacing Gibbs property} if it is interlacing, meaning that 
$$ L_i(j-1) \geq L_{i+1}(j) \mbox{ for all $i \in \llbracket 1, N - 1 \rrbracket$ and $j \in \llbracket T_0 + 1, T_1 \rrbracket$},$$
and for any finite $K = \llbracket k_1, k_2 \rrbracket \subseteq \llbracket 1, N - 1 \rrbracket$ and $a,b \in \llbracket T_0, T_1 \rrbracket$ with $a < b$ the following holds.  Suppose that $f, g$ are two increasing paths drawn in $\{ (r,z) \in \mathbb{Z}^2 : a \leq r \leq b\}$ and $\vec{x}, \vec{y} \in \mathfrak{W}_k$ with $k=k_2-k_1+1$ altogether satisfy that $\mathbb{P}(A) > 0$ where $A$ denotes the event $$A =\{ \vec{x} = ({L}_{k_1}(a), \dots, {L}_{k_2}(a)), \vec{y} = ({L}_{k_1}(b), \dots, {L}_{k_2}(b)), L_{k_1-1} \llbracket a,b \rrbracket = f, L_{k_2+1} \llbracket a,b \rrbracket = g \},$$
where if $k_1 = 1$ we adopt the convention $f = \infty = L_0$. Then, writing $k = k_2 - k_1 + 1$, we have for any $\{ B_i \in \Omega(a, b, x_i , y_i) \}_{i = 1}^k$ that
\begin{equation}\label{SchurEq}
\mathbb{P}\left( L_{i + k_1-1}\llbracket a,b \rrbracket = B_{i} \mbox{ for $i \in \llbracket 1, k \rrbracket$} \, \vert  A \, \right) = \mathbb{P}_{\ice, \mathrm{Geom}}^{a,b, \vec{x}, \vec{y}, f, g} \left( \cap_{i = 1}^k\{ Q_i = B_i \} \right).
\end{equation}
\end{definition}

%
\subsection{Auxiliary lemmas}\label{Section6.2} In this section we establish two results about the measures $\mathbb{P}^{0, n, \vec{x},\vec{y},f,g}_{\ice, \mathrm{Geom}}$ from Definition \ref{DefAvoidingLawBer}, which we require later in the paper.

\begin{lemma}\label{GLELemma1} Fix $p \in (0, \infty)$, $\epsilon \in (0,1)$, $M^{\mathrm{side}}_1, M^{\mathrm{side}}_2 \in \mathbb{R}$. There exist $N_1 \in \mathbb{N}$, depending on $p, \epsilon, M^{\mathrm{side}}_1, M^{\mathrm{side}}_2$, and $M \in (0, \infty)$, depending on $p, \epsilon$ alone, such that the following holds for all $n \geq N_1$. If we assume that:
\begin{itemize}
\item $\vec{x}, \vec{y}\in \mathfrak{W}_2$ as in (\ref{DefSig}), and $x_2 \geq M^{\mathrm{side}}_1 \cdot n^{1/2}$, $y_2 - pn \geq M^{\mathrm{side}}_2 \cdot n^{1/2}$,
\item $g: \llbracket 0, n \rrbracket \rightarrow [-\infty, \infty)$, 
\item the set $\Omega_{\ice}(0,n,\vec{x},\vec{y}, \infty, g)$ is non-empty,
\end{itemize}
then the following inequality holds 
\begin{equation}\label{Eq.GLELemma1}
\mathbb{P}_{\ice, \mathrm{Geom}}^{0, n, \vec{x}, \vec{y}, \infty, g}\left( n^{-1/2}(Q_2\left(sn\right) - psn)  \geq (1-s) M^{\mathrm{side}}_1  + s M^{\mathrm{side}}_2 - M \mbox{ for all } s\in [0,1]\right) > 1- \epsilon.
\end{equation}
\end{lemma}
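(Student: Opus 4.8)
The plan is to reduce the statement to a lower bound for the bottom curve of an $(f,g)$-interlacing ensemble with $f = \infty$, where the key gain is that the second curve $Q_2$ is stochastically bounded below by a single geometric bridge that is \emph{not} conditioned on $g$ but only on staying below $Q_1$. The first step is a monotonicity/stochastic-domination argument: since $g \le +\infty$ trivially, adding the lower boundary $g$ only pushes curves up, so $\mathbb{P}_{\ice,\mathrm{Geom}}^{0,n,\vec{x},\vec{y},\infty,g}$ stochastically dominates $\mathbb{P}_{\ice,\mathrm{Geom}}^{0,n,\vec{x},\vec{y},\infty,-\infty}$ in the sense that $Q_2$ is larger; hence it suffices to prove \eqref{Eq.GLELemma1} with $g \equiv -\infty$, i.e. for two independent geometric bridges from $\vec{x}$ to $\vec{y}$ conditioned only on $B_1(r-1) \ge B_2(r)$. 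I would take this monotonicity from the general results on interlacing geometric line ensembles in \cite{dimitrov2024tightness} (the analogue of the standard ``raising the boundary raises the ensemble'' lemma for Gibbsian ensembles).

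The second step is to drop the conditioning on $Q_1$ as well. Conditioning a geometric bridge $B_2$ (from $x_2$ to $y_2$, started at time $0$, ended at time $n$) on lying weakly below another independent bridge $B_1$ with $x_1 \ge x_2$, $y_1 \ge y_2$ is again an upward conditioning for $B_2$: by the same monotone-coupling reasoning the law of $Q_2$ under $\mathbb{P}_{\ice,\mathrm{Geom}}^{0,n,\vec{x},\vec{y},\infty,-\infty}$ stochastically dominates the law of an \emph{unconditioned} geometric bridge $B$ with $B(0) = x_2$, $B(n) = y_2$. So it remains to prove: for a uniformly random increasing path $B \in \Omega(0,n,x_2,y_2)$ (equivalently a geometric bridge with parameter $q$), with $x_2 \ge M_1^{\mathrm{side}} n^{1/2}$ and $y_2 - pn \ge M_2^{\mathrm{side}} n^{1/2}$,
\begin{equation*}
\mathbb{P}\left( n^{-1/2}(B(sn) - psn) \ge (1-s) M_1^{\mathrm{side}} + s M_2^{\mathrm{side}} - M \ \text{for all } s \in [0,1] \right) > 1 - \epsilon.
\end{equation*}

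The third step is the estimate for a single geometric bridge. Write $B(sn) = x_2 + (\text{increment over }[0,sn])$; the bridge from $x_2$ to $y_2$ has the same law as $x_2$ plus the first $sn$ steps of a random walk with i.i.d.\ $\mathrm{Geom}(q)$ steps, conditioned on the total over $[0,n]$ equaling $y_2 - x_2$. Its mean path is the linear interpolation $x_2 + s(y_2 - x_2)$, and since $y_2 - x_2 \ge (y_2 - pn) - (\text{nothing}) \ge \dots$ one checks $x_2 + s(y_2-x_2) - psn \ge n^{1/2}\big((1-s)M_1^{\mathrm{side}} + sM_2^{\mathrm{side}}\big)$ using $x_2 \ge M_1^{\mathrm{side}} n^{1/2}$ and $y_2 \ge pn + M_2^{\mathrm{side}} n^{1/2}$ (here one must be slightly careful about the sign of $M_i^{\mathrm{side}}$, but the inequality $a + s(b-a) \ge \min(a,b) \wedge \dots$ handles the convex-combination bookkeeping; this is where $N_1$ and the hypothesis that the state space is non-empty enter to guarantee $y_2 \ge x_2$). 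Then one needs a uniform-in-$s$ fluctuation bound: $\mathbb{P}(\sup_{s \in [0,1]} |B(sn) - \mathbb{E}B(sn)| \ge M n^{1/2}) < \epsilon$ for $M = M(p,\epsilon)$ large and $n \ge N_1$. This follows from Doob's maximal inequality applied to the bridge (a geometric bridge is, after an affine change, a bridge of a mean-zero random walk, and $\sup$ of $|$bridge$|$ has a second moment of order $n$), together with the local CLT / Hoeffding-type control of the constrained walk; the constant $M$ depends only on $p$ (which controls the step variance scale through $y_2 - x_2 \asymp pn$) and $\epsilon$, not on $M_i^{\mathrm{side}}$.

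\textbf{Main obstacle.} The delicate point is the two stochastic-domination reductions: one must cite or prove the correct monotonicity statements for $(f,g)$-interlacing geometric line ensembles — namely that the marginal law of $Q_2$ increases when $g$ is lowered to $-\infty$, and that dropping the interlacing constraint with $Q_1$ (an a.s.\ larger bridge) further decreases $Q_2$ — in the precise stochastic-order sense needed for the uniform-in-$s$ event in \eqref{Eq.GLELemma1}, which is a monotone (up-set) event in $Q_2$. The event ``$Q_2(sn) \ge$ line for all $s$'' is increasing in the path $Q_2$, so first-order stochastic domination of the whole path suffices; the relevant couplings are standard for such ensembles (and are exactly the tools underlying the Gibbs-property framework of \cite{dimitrov2024tightness}), but assembling them cleanly is the part requiring care. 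Once the reduction to a single unconstrained geometric bridge is in place, the remaining estimate is a routine bridge maximal inequality.
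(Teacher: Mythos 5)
Your Step~1 (lowering $g$ to $-\infty$ is a downward perturbation, so $Q_2$ under $\mathbb{P}_{\ice,\mathrm{Geom}}^{0,n,\vec{x},\vec{y},\infty,g}$ stochastically dominates $Q_2$ under $\mathbb{P}_{\ice,\mathrm{Geom}}^{0,n,\vec{x},\vec{y},\infty,-\infty}$) is in the right direction and is indeed what \cite[Lemma 2.12]{dimitrov2024tightness} provides. The paper uses the same monotone coupling, but applies it more aggressively: it simultaneously lowers $g$ and replaces $\vec{x},\vec{y}$ by explicit worst-case vectors $\vec{u}^n,\vec{v}^n$ satisfying $u^n_i \le x_i$, $v^n_i \le y_i$. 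Your Step~3 (a bridge maximal inequality) would be fine if the reduction reached a single free bridge.

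\textbf{The gap.} Step~2 has the monotonicity reversed, and this kills the argument. You claim that conditioning $B_2$ to lie weakly below $B_1$ \emph{raises} $B_2$, so that $Q_2$ under $\mathbb{P}_{\ice,\mathrm{Geom}}^{0,n,\vec{x},\vec{y},\infty,-\infty}$ stochastically dominates an unconditioned bridge from $x_2$ to $y_2$. The opposite is true. Conditionally on a trajectory $B_2 = \beta$, the probability that an independent bridge $B_1$ stays above is a decreasing functional of $\beta$ (the higher $\beta$, the harder it is for $B_1$ to stay above); tilting the free bridge law by this decreasing factor therefore pushes $Q_2$ \emph{down}, not up. Concretely, in the Brownian analogue with equal endpoints, $B_2$ conditioned to stay below $B_1$ is the pointwise minimum of two i.i.d.\ bridges, which is stochastically strictly smaller than a single free bridge. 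So your comparison gives an \emph{upper} bound on $Q_2$, which is useless for the lower bound \eqref{Eq.GLELemma1}. There is no ``drop the interlacing'' monotone coupling in the direction you want; the monotone couplings in \cite{dimitrov2024tightness} change boundary data or the number of curves by \emph{adding} curves below, not by deleting the constraint from a higher curve.

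\textbf{How the paper handles it.} Instead of decoupling $Q_2$ from $Q_1$, the paper keeps the two-curve ensemble intact. After the monotone coupling to the standardized data $(\vec{u}^n, \vec{v}^n, g\equiv -\infty)$, it invokes the invariance principle \cite[Lemma 2.16]{dimitrov2024tightness}: the rescaled two-curve ensemble converges weakly to a pair of avoiding Brownian bridges $(\sigma B_1, \sigma B_2)$. The lower bound for $Q_2$ then follows from the (trivially true, by continuity and tightness) lower bound $\mathbb{P}(\sigma B_2(s) > -M\text{ for all } s)>1-\epsilon/2$ for the limiting object, with $M = M(p,\epsilon)$ chosen for the limit and then transferred back to the prelimit for large $n$. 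This sidesteps any attempt to compare $Q_2$ to a free bridge. If you want to salvage your single-curve route, you would have to bound $Q_2$ from below by something like the second curve of a two-bridge ensemble with even lower endpoints, but at that point you are doing exactly what the paper does, so the cleaner path is the invariance principle for the pair.
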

\begin{remark}\label{Rem.DoesntDip}
In plain words, the above lemma says that a line ensemble with two curves, whose second curve has endpoints that are not too low, does not dip much.
\end{remark}
\begin{proof} Set $\sigma = \sqrt{p(1+p)}$, and let $(B_1, B_2)$ have law $\mathbb{P}_{\mathrm{avoid}}^{0, 1, \vec{z}, \vec{w},  \infty, -\infty}$ as in \cite[Definition 2.14]{dimitrov2024tightness} with $\vec{z} = (0, -\sigma^{-1})$, $\vec{w} = (0, - \sigma^{-1})$. In plain words, $(B_1, B_2)$ are two independent Brownian bridges with $B_1(0) = B_1(1) = 0$, $B_2(0) = B_2(1) = -\sigma^{-1}$, conditioned to avoid each other on $[0,1]$. We fix $M > 0$, sufficiently large (depending on $p$ and $\epsilon$) so that 
\begin{equation}\label{eq:highE2}
\mathbb{P}(\sigma B_2(s) > - M  \mbox{ for all } s\in [0,1]  ) > 1 - \epsilon/2.
\end{equation}
This specifies the choice of $M$ in the lemma.

Define the vectors
$$\vec{u}^n=\left(\lfloor M^{\mathrm{side}}_1 \cdot n^{1/2} \rfloor, \lfloor (M^{\mathrm{side}}_1 - 1 )\cdot n^{1/2} \rfloor\right), \hspace{2mm} \vec{v}^n=\left(\lfloor pn + M^{\mathrm{side}}_2 \cdot n^{1/2}\rfloor, \lfloor pn + (M^{\mathrm{side}}_2 - 1) \cdot n^{1/2} \rfloor\right),$$
and note that for all large $n$
$$u^n_1 \leq x_1, \hspace{2mm} u^n_2 \leq x_2, \hspace{2mm} v^n_1 \leq y_1, \hspace{2mm} v^n_2 \leq y_2, \hspace{2mm} u^n_1 \leq v^n_1, \hspace{2mm} u^n_2 \leq v^n_2.$$ 
The last two inequalities imply $\Omega_{\ice}(0,n,\vec{u}^n,\vec{v}^n, \infty, -\infty) \neq \emptyset$. From the monotone coupling in \cite[Lemma 2.12]{dimitrov2024tightness}, it suffices to show that for all large $n$,
\begin{equation}\label{eq:highE1}
\mathbb{P}_{\ice, \mathrm{Geom}}^{0, n, \vec{u}^n, \vec{v}^n}\left( n^{-1/2}(Q_2\left(sn\right) - psn)  \geq (1-s) M^{\mathrm{side}}_1  + s M^{\mathrm{side}}_2 - M \mbox{ for all } s\in [0,1]\right) > 1- \epsilon.
\end{equation}

From \cite[Lemma 2.16]{dimitrov2024tightness} applied to $d_n = n$, $a = 0$, $b = 1$, $A_n = 0$, $B_n = n$, $f_n = \infty$, $g_n = -\infty$, $\vec{X}^n = \vec{u}^n$, $\vec{Y}^n = \vec{v}^n$, and an affine shift, we conclude that the $\llbracket 1, 2 \rrbracket$-indexed line ensembles $\mathcal{Q}^n$ on $[0,1]$, defined by 
$$\mathcal{Q}^n_i(s) =  n^{-1/2} \cdot \left(Q_i(sn) - psn \right) - (1-s) M^{\mathrm{side}}_1  - s M^{\mathrm{side}}_2  \mbox{ for } t\in [0,1], i = 1,2$$
converge weakly to $(\sigma B_1, \sigma B_2)$, where $(B_1, B_2)$ are as above. The latter and (\ref{eq:highE2}) give
$$\liminf_{n \rightarrow \infty} \mathbb{P}_{\ice, \mathrm{Geom}}^{0, n, \vec{u}^n, \vec{v}^n}(\mathcal{Q}^n_2(s) \geq - M \mbox{ for all } s\in [0,1]) \geq \mathbb{P}(\sigma B_2(s) > - M  \mbox{ for all } s\in [0,1]  ) > 1 - \epsilon/2,$$
which implies (\ref{eq:highE1}).
\end{proof}

We next turn to the second key result of the section.
\begin{lemma}\label{GLELemma2} Fix $\epsilon \in (0,1)$, $p, M^{\mathrm{side}} \in (0,\infty)$, $M^{\mathrm{bot}} \in \mathbb{R}$, $k \in \mathbb{Z}_{\geq 2}$. There exist $N_2 \in \mathbb{N}$ and $M^{\mathrm{sep}} > 0$, depending on all previously listed constants, such that the following holds for all $n \geq N_2$. Suppose that:
\begin{itemize}
\item $\vec{x}, \vec{y} \in \mathfrak{W}_k$ as in (\ref{DefSig}), and $|x_i|  \leq M^{\mathrm{side}} \cdot n^{1/2}$, $|y_i - pn| \leq M^{\mathrm{side}} \cdot n^{1/2}$ for all $i\in \llbracket 2, k \rrbracket$;
\item $x_1 \geq M^{\mathrm{sep}} \cdot n^{1/2}$, $y_1 - pn \geq M^{\mathrm{sep}} \cdot n^{1/2}$;
\item $g: \llbracket 0, n \rrbracket \rightarrow [-\infty, \infty)$ is such that $g(r) \leq pr + M^{\mathrm{bot}} \cdot n^{1/2} $ for all $r \in \llbracket 0, n \rrbracket$;
\item The sets $\Omega_{\ice}(0,n,\vec{x},\vec{y}, \infty, g)$ and $\Omega_{\ice}(0,n,\vec{u},\vec{v}, \infty, g)$ are non-empty, where $u_i = x_{i+1}$, $v_i = y_{i+1}$ for $i \in \llbracket 1, k-1\rrbracket$.
\end{itemize}
Then, there exists a coupling of $\mathfrak{Q} = (Q_1, \dots, Q_k)$, which has law $\mathbb{P}_{\ice, \mathrm{Geom}}^{0, n, \vec{x}, \vec{y}, \infty, g}$, and $\tilde{\mathfrak{Q}} = (\tilde{Q}_1, \dots, \tilde{Q}_{k-1})$, which has law $\mathbb{P}_{\ice, \mathrm{Geom}}^{0, n, \vec{u}, \vec{v}, \infty, g}$ on the same probability space $(\Omega, \mathcal{F}, \mathbb{P})$, such that 
\begin{equation}\label{Eq.EqualEnsembles}
\mathbb{P}\left(\tilde{Q}_{i}(r) = Q_{i+1}(r) \mbox{ for all } r\in \llbracket 0, n\rrbracket, i \in \llbracket 1, k-1 \rrbracket\right) > 1 -\epsilon. 
\end{equation}
\end{lemma}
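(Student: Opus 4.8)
The plan is to exploit the macroscopic separation of the top curve $Q_1$ from the rest, which forces $Q_1$ to stay far above $Q_2$ on all of $\llbracket 0,n \rrbracket$ with high probability; once that is established, the interlacing constraint between $Q_1$ and $Q_2$ becomes inactive, and the conditional law of $(Q_2,\dots,Q_k)$ given a high-lying realization of $Q_1$ coincides with $\mathbb{P}_{\ice,\mathrm{Geom}}^{0,n,\vec u,\vec v,\infty,g}$ by the interlacing Gibbs property; this yields the coupling. Concretely, first I would apply Lemma \ref{GLELemma1} to the two-curve ensemble formed by $Q_1$ and $Q_2$: the endpoints of $Q_1$ are $\geq M^{\mathrm{sep}}n^{1/2}$ and $\geq pn + M^{\mathrm{sep}}n^{1/2}$, so taking $M^{\mathrm{side}}_1 = M^{\mathrm{side}}_2 = M^{\mathrm{sep}}$ there and using the monotonicity of $\mathbb{P}_{\ice,\mathrm{Geom}}$ in the lower boundary (raising $g$ or lowering it only helps, and $Q_2$ bounds what is below $Q_1$), we get that with probability $> 1 - \epsilon/2$,
\begin{equation}\label{Eq.Q1High}
n^{-1/2}(Q_1(sn) - psn) \geq M^{\mathrm{sep}} - M \mbox{ for all } s \in [0,1],
\end{equation}
where $M = M(p,\epsilon)$ is the constant from Lemma \ref{GLELemma1}. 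Some care is needed because Lemma \ref{GLELemma1} is stated for a two-curve $(f,g)$-ensemble with $f = \infty$, whereas here $Q_1$ is the top curve of a $k$-curve ensemble with the same $f = \infty$; but the marginal law of $(Q_1,Q_2)$ inside the $k$-curve ensemble is stochastically above the corresponding two-curve ensemble with lower boundary the graph of $Q_3$ (itself controlled), so after another invocation of the monotone coupling this reduces to the two-curve statement.

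Next I would get a matching upper bound on $Q_2$: since $|x_2|, |y_2 - pn| \leq M^{\mathrm{side}}n^{1/2}$, a standard one-curve fluctuation estimate (Lemma \ref{GLELemma1} applied "from above" to $-Q_2$, or directly the weak convergence to a Brownian bridge used in its proof) gives that with probability $> 1 - \epsilon/2$,
\begin{equation}\label{Eq.Q2Low}
n^{-1/2}(Q_2(sn) - psn) \leq M^{\mathrm{side}} + M' \mbox{ for all } s \in [0,1]
\end{equation}
for a constant $M' = M'(p,\epsilon,M^{\mathrm{side}})$; here one also uses $g(r) \leq pr + M^{\mathrm{bot}}n^{1/2}$ so the lower boundary does not push $Q_2$ up. Now choose $M^{\mathrm{sep}}$ large enough (depending on $p,\epsilon,M^{\mathrm{side}},M^{\mathrm{bot}},k$, and the constants $M,M'$) that $M^{\mathrm{sep}} - M > M^{\mathrm{side}} + M' + 1$. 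On the intersection of the events in \eqref{Eq.Q1High} and \eqref{Eq.Q2Low}, which has probability $> 1 - \epsilon$, we have $Q_1(r) > Q_2(r)$ strictly, in fact $Q_1(r-1) \geq Q_2(r)$, for all $r \in \llbracket 0,n \rrbracket$, i.e. the interlacing between the first and second curves holds with strict slack.

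Finally, I would construct the coupling via the Gibbs property. Let $\mathfrak Q$ have law $\mathbb{P}_{\ice,\mathrm{Geom}}^{0,n,\vec x,\vec y,\infty,g}$, and let $E$ be the (high-probability) event that $Q_1(r-1) \geq Q_2(r)$ for all $r$. Condition on the entire curve $Q_1 = f_1$ and on the event $E$; by Definition \ref{DefAvoidingLawBer}, the conditional law of $(Q_2,\dots,Q_k)$ is $\mathbb{P}_{\ice,\mathrm{Geom}}^{0,n,\vec u,\vec v, f_1, g}$, and on $E$ the top boundary $f_1$ lies strictly above everything the $(f_1,g)$-ensemble can reach, so by a direct check (no increasing path in $\Omega_{\ice}(0,n,\vec u,\vec v,\infty,g)$ is excluded by the $f_1$-constraint when $f_1$ is this high) we get $\mathbb{P}_{\ice,\mathrm{Geom}}^{0,n,\vec u,\vec v, f_1, g} = \mathbb{P}_{\ice,\mathrm{Geom}}^{0,n,\vec u,\vec v,\infty,g}$. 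Thus on $E$ we may take $\tilde{\mathfrak Q} = (Q_2,\dots,Q_k)$; off $E$, sample $\tilde{\mathfrak Q}$ independently from its marginal $\mathbb{P}_{\ice,\mathrm{Geom}}^{0,n,\vec u,\vec v,\infty,g}$ and relabel so that the unconditional law is correct (a standard splicing: on $E$ use the restriction, off $E$ use an independent copy, then note the resulting law of $\tilde{\mathfrak Q}$ is the right one since $E$ is measurable with respect to $\mathfrak Q$ and the conditional law of $\tilde{\mathfrak Q}$ given $\mathfrak Q$ on $E$ is already $\mathbb{P}_{\ice,\mathrm{Geom}}^{0,n,\vec u,\vec v,\infty,g}$). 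Then $\mathbb{P}(\tilde Q_i(r) = Q_{i+1}(r) \ \forall r,i) \geq \mathbb{P}(E) > 1 - \epsilon$, which is \eqref{Eq.EqualEnsembles}.

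I expect the main obstacle to be the bookkeeping in the last paragraph: one must verify carefully that raising the upper boundary from a high-lying $f_1$ to $+\infty$ genuinely does not change the $(f,g)$-interlacing law (i.e. that the constraint $B_1(r-1) \geq f_1(r)$ is automatically satisfied by every configuration in $\Omega_{\ice}(0,n,\vec u,\vec v,\infty,g)$ when $f_1$ is above the deterministic upper envelope of that set), and that the splicing of the two samples produces exactly law $\mathbb{P}_{\ice,\mathrm{Geom}}^{0,n,\vec u,\vec v,\infty,g}$ for $\tilde{\mathfrak Q}$ while keeping $\mathfrak Q$'s law intact. The fluctuation inputs \eqref{Eq.Q1High}–\eqref{Eq.Q2Low} are routine given Lemma \ref{GLELemma1} and the monotone coupling of \cite{dimitrov2024tightness}, and the choice of $M^{\mathrm{sep}}$ is a matter of collecting constants.
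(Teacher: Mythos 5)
Your intuition is right, but the step you flagged as ``bookkeeping'' in your last paragraph is a genuine gap, and the splicing built on it does not yield an exact coupling. The claim that $\mathbb{P}_{\ice,\mathrm{Geom}}^{0,n,\vec u,\vec v,f_1,g} = \mathbb{P}_{\ice,\mathrm{Geom}}^{0,n,\vec u,\vec v,\infty,g}$ when $f_1$ is high is false: $\Omega(0,n,u_1,v_1)$ contains paths that jump to $v_1 = y_2 \approx pn$ at time one, whereas $f_1(0) = x_1$ is of order $M^{\mathrm{sep}} n^{1/2}$, so for large $n$ the constraint $f_1(r-1) \geq B_1(r)$ excludes such paths and the state spaces, hence the uniform measures, differ. (Also, the event $\{Q_1(r-1)\ge Q_2(r)\ \forall r\}$ you literally write has probability one under $\mathbb{P}_{\ice,\mathrm{Geom}}^{0,n,\vec x,\vec y,\infty,g}$; you presumably mean the quantitative separation event, which depends on $(Q_1,Q_2)$ jointly rather than on $Q_1$ alone, and that creates a further issue when you condition on $Q_1 = f_1$.) The correct relationship is that $\mathbb{P}_{\ice,\mathrm{Geom}}^{0,n,\vec u,\vec v,f_1,g}$ equals $\mathbb{P}_{\ice,\mathrm{Geom}}^{0,n,\vec u,\vec v,\infty,g}$ \emph{conditioned} on $B_1$ staying below $f_1$; this is close in total variation to the unconditioned law when the conditioning event has probability near one, but your argument asserts the much stronger exact equality, and even with TV-closeness the splicing off $E$ with an independent copy does not automatically produce the correct marginal for $\tilde{\mathfrak Q}$ without a maximal-coupling step.

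The paper builds the coupling by rejection sampling, which avoids conditioning entirely. It fixes a probability space carrying two independent i.i.d.\ sequences $\{Q_1^m\}_{m\geq 1}$ (free bridges from $x_1$ to $y_1$) and $\{\tilde{\mathfrak Q}^m\}_{m\geq 1}$ (uniform on $\Omega_{\ice}(0,n,\vec u,\vec v,\infty,g)$), lets $K$ be the smallest $m$ for which $Q_1^m$ interlaces with $\tilde Q_1^m$, and sets $\mathfrak Q := (Q_1^K, \tilde Q_1^K,\dots,\tilde Q_{k-1}^K)$, $\tilde{\mathfrak Q} := \tilde{\mathfrak Q}^1$. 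By rejection sampling $\mathfrak Q$ has exactly the law $\mathbb{P}_{\ice,\mathrm{Geom}}^{0,n,\vec x,\vec y,\infty,g}$ and $\tilde{\mathfrak Q}$ exactly $\mathbb{P}_{\ice,\mathrm{Geom}}^{0,n,\vec u,\vec v,\infty,g}$, and they agree on $\{K = 1\}$; the lemma reduces to showing $\mathbb{P}(K = 1) \geq 1-\epsilon$. That last estimate is the separation statement you had in mind, proven by sandwiching $Q_1^1$ below and $\tilde Q_1^1$ above with bridges having deterministic endpoints via the monotone coupling \cite[Lemma 2.12]{dimitrov2024tightness} and then applying the Brownian-bridge invariance principle \cite[Lemma 2.16]{dimitrov2024tightness}. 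Lemma~\ref{GLELemma1} and the event machinery in your first two paragraphs are not needed.
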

\begin{remark}
In plain words, the above lemma says that if we start with an interlacing geometric line ensemble with $k$ curves, whose top curve has very high endpoints, then the bottom $k-1$ curves of this ensemble with high probability behave like an ensemble with $k-1$ curves. I.e., the bottom $k-1$ curves do not feel the effect of the top curve. 
\end{remark}
\begin{proof}
Let $(\Omega, \mathcal{F}, \mathbb{P})$ be a probability space that supports two independent i.i.d. sequences $\{Q_1^m\}_{m \geq 1}$, $\{\tilde{\mathfrak{Q}}^m\}_{m \geq 1}$, where $Q_1^m$ is uniform on $\Omega(0,n,x_1,y_1, \infty, -\infty)$, and $\tilde{\mathfrak{Q}}^m = (\tilde{Q}^m_1, \dots, \tilde{Q}^m_{k-1})$ is uniform on $\Omega_{\ice}(0,n,\vec{u},\vec{v}, \infty, g)$. Let $K$ be the smallest positive integer, such that 
$$ Q^K_1(r-1) \geq \tilde{Q}^K_1(r)  \mbox{ for all $r \in \llbracket 1, n \rrbracket$},$$
and note that $K$ is a geometric random variable with parameter strictly less than $1$, as we assumed $\Omega_{\ice}(0,n,\vec{x},\vec{y}, \infty, g) \neq \emptyset$. Define $\mathfrak{Q}:= (Q_1^K, \tilde{Q}_1^K, \tilde{Q}^K_1, \dots, \tilde{Q}^K_{k-1})$ and $\tilde{\mathfrak{Q}}:= \tilde{\mathfrak{Q}}^1$, and note that $\mathfrak{Q} $ has law $\mathbb{P}_{\ice, \mathrm{Geom}}^{0, n, \vec{x}, \vec{y}, \infty, g}$, while $\tilde{\mathfrak{Q}}$ has law $\mathbb{P}_{\ice, \mathrm{Geom}}^{0, n, \vec{u}, \vec{v}, \infty, g}$ by construction. The latter specifies our coupling, and to conclude (\ref{Eq.EqualEnsembles}), it suffices to show that there exist $N_2 \in \mathbb{N}$ and $M^{\mathrm{sep}} > 0$, such that $\mathbb{P}(K=1)\ge 1-\epsilon$ for $n \geq N_2$, or equivalently
\begin{equation}\label{eq:coupleE1}
\mathbb{P}(Q_1^{1}(r-1) < \tilde{Q}_1^{1} (r) \mbox{ for some } r \in \llbracket 1, n \rrbracket ) \leq \epsilon.
 \end{equation}

Set $\sigma = \sqrt{p(1+p)}$, and define 
$$ z_{i} = w_i = \sigma^{-1} (|M^{\mathrm{bot}}| + M^{\mathrm{side}} + k-i ) \mbox{ for } i \in \llbracket 1, k-1 \rrbracket, \hspace{2mm} \tilde{g}(s) = \sigma^{-1} M^{\mathrm{bot}} \mbox{ for } s \in [0,1].$$
Let $B_1$ be a standard Brownian bridge on $[0,1]$ with $B_1(0) =B_1(1) = 0$, and let $\tilde{\mathcal{B}} = (\tilde{B}_1, \dots, \tilde{B}_{k-1})$ be independent from $B_1$, and with law $\mathbb{P}_{\mathrm{avoid}}^{0, 1, \vec{z}, \vec{w},  \infty, \tilde{g}}$ as in \cite[Definition 2.14]{dimitrov2024tightness}. By the continuity of $B_1, \tilde{B}_1$, we can choose $M^{\mathrm{sep}} > 0$ large enough, so that
 \begin{equation}\label{Eq.OutOfOrder}
        \mathbb{P}(B_1(s) + \sigma^{-1} M^{\mathrm{sep}} \leq \tilde{B}_1(s) \mbox{ for some } s \in [0,1] ) \leq \epsilon/2.
\end{equation}
This specifies our choice of $M^{\mathrm{sep}}$ for the remainder of the proof.\\

We now define  
$$X^n_1 = \lfloor  M^{\mathrm{sep}} \cdot n^{1/2} \rfloor, \hspace{2mm} Y^n_1 = \lfloor  pn + M^{\mathrm{sep}} \cdot n^{1/2} \rfloor, \hspace{2mm} U_{i}^n = \lceil (|M^{\mathrm{bot}}| + M^{\mathrm{side}} + k - i) \cdot n^{1/2} \rceil,  $$ 
$$V_{i}^n = \lceil pn + ( |M^{\mathrm{bot}}| + M^{\mathrm{side}} + k-i) \cdot n^{1/2} \rceil, \mbox{ for $i\in \llbracket 1, k-1 \rrbracket$ }, \hspace{2mm} G_n(r) = pr + M^{\mathrm{bot}} \cdot n^{1/2} \mbox{ for } r \in \llbracket 0, n \rrbracket.$$
From the assumptions in the lemma, we have the following inequalities
$$x_1 \geq X^n_1, \hspace{2mm} y_1 \geq Y^n_1, \hspace{2mm} U_{i}^n \geq u_{i}, \hspace{2mm} V_{i}^n \geq v_{i} \mbox{ for } i \in \llbracket 1,k-1 \rrbracket, \mbox{ and } G_n(r) \geq g(r) \mbox{ for } r\in \llbracket 0, n \rrbracket.$$
We also observe that for all large $n$ the sets $\Omega(0,n,X^n_1,Y^n_1, \infty, -\infty)$, $\Omega(0,n,\vec{U}^n, \vec{V}^n, \infty, G_n)$ are non-empty. For the former this follows from $Y^n_1 \geq X^n_1$, while for the latter it follows from \cite[Lemma 2.16(1)]{dimitrov2024tightness}, with $k$ there equal to $k-1$ here, and
\begin{equation}\label{Eq.ApplConv}
\begin{split}
&d_n = n, \hspace{2mm} \vec{x} = \vec{z}, \hspace{2mm} \vec{y} = \vec{w}, \hspace{2mm} g(t) = \sigma^{-1} M^{\mathrm{bot}}, \hspace{2mm} f(t) = \infty, \hspace{2mm} \vec{X}^n = \vec{U}^n = (U_1^n, \dots, U^{n}_{k-1}), \hspace{2mm} \\
& \vec{Y}^n = \vec{V}^n = (V_1^n, \dots, V^{n}_{k-1}), \hspace{2mm} A_n = 0, \hspace{2mm} B_n = n, \hspace{2mm} a = 0, \hspace{2mm} b = 1.
\end{split}
\end{equation}
From the monotone coupling \cite[Lemma 2.12]{dimitrov2024tightness}, by possibly enlarging $(\Omega, \mathcal{F}, \mathbb{P})$, we may assume that the space supports $L^n_1$ with law $\mathbb{P}_{\mathrm{Geom}}^{0, n, X_1^n, Y_1^n}$, and $\tilde{\mathfrak{L}}^n= (\tilde{L}^n_1, \dots, \tilde{L}^n_{k-1})$ with law $\mathbb{P}_{\ice, \mathrm{Geom}}^{0, n, \vec{U}^n, \vec{V}^n, \infty, G_n}$, such that $L^n_1$ is independent of $\tilde{\mathfrak{L}}^n$, and $\mathbb{P}$-almost surely
$$L^n_1(r) \leq Q_1^1(r) \mbox{, and } \tilde{L}_i^n(r) \geq \tilde{Q}_i^1(r) \mbox{ for } i \in \llbracket 1, k - 1 \rrbracket, r \in \llbracket 0, n \rrbracket.$$
In particular, we see that to conclude (\ref{eq:coupleE1}), it suffices to show for all large $n$ 
\begin{equation}\label{Eq.CloseRed1}
\mathbb{P}(L_1^{n}(r-1) < \tilde{L}_1^{n} (r) \mbox{ for some } r \in \llbracket 1, n \rrbracket ) \leq \epsilon.
\end{equation}

From \cite[Lemma 2.16]{dimitrov2024tightness} with parameters as in (\ref{Eq.ApplConv}), we know that the $\llbracket 1, k-1 \rrbracket$-indexed line ensembles $\tilde{\mathcal{L}}^n = (\tilde{\mathcal{L}}^n_1, \dots, \tilde{\mathcal{L}}^n_{k-1})$ on $[0,1]$, defined by 
$$\tilde{\mathcal{L}}^n_i(s) =  \sigma^{-1} n^{-1/2} \cdot \left(\tilde{L}^n_i(sn) - psn \right) \mbox{ for } i \in \llbracket 1, k -1 \rrbracket, s \in [0,1], $$
converge weakly to $\tilde{\mathcal{B}}$ as above. In addition, by the same lemma for $k = 1$, and parameters as in (\ref{Eq.ApplConv}), but with $x_1 = y_1 = \sigma^{-1} M^{\mathrm{sep}}$ and $\vec{X}^n = X^n_1$, $\vec{Y}^n = Y^n_1$, we know that the sequence $\mathcal{L}^n_1 \in C([0,1])$, defined by
$$\mathcal{L}^n_1(s) =  \sigma^{-1} n^{-1/2} \cdot \left(L^n_1(sn) - psn \right) \mbox{ for } s \in [0,1],$$
converges weakly to $B_1 + \sigma^{-1} M^{\mathrm{sep}}$ as above. As $\tilde{\mathcal{L}}^n$ and $\mathcal{L}^n_1$ are independent, we in fact have that $(\mathcal{L}^n_1, \tilde{\mathcal{L}}^n)$ converge jointly to $(B_1, \tilde{B})$. Consequently, 
\begin{equation*}
\begin{split}
&\limsup_{n \rightarrow \infty} \mathbb{P}(L_1^{n}(r-1) < \tilde{L}_1^{n} (r) \mbox{ for some } r \in \llbracket 1, n \rrbracket ) \\
& \leq \limsup_{n \rightarrow \infty} \mathbb{P}(\mathcal{L}_1^{n}(s-1/n) < \tilde{\mathcal{L}}_1^{n} (s) \mbox{ for some } s \in [1/n, 1] ) \\
& \leq \mathbb{P}(B_1(s) + \sigma^{-1} M^{\mathrm{sep}} \leq \tilde{B}_1(s) \mbox{ for some } s \in [0, 1] ).
\end{split}
\end{equation*}
The last inequality and (\ref{Eq.OutOfOrder}) imply (\ref{Eq.CloseRed1}).
\end{proof}

%
\subsection{Pfaffian Schur processes as line ensembles}\label{Section6.3}
In this section we explain how the Pfaffian Schur processes from Definition \ref{Def.SchurProcess} can be interpreted as $\mathbb{N}$-indexed geometric line ensembles that satisfy the interlacing Gibbs property. The precise statement is contained in the following lemma.

\begin{lemma}\label{Lem.InterlacingGibbs} Assume the same notation as in Definition \ref{Def.SchurProcess} with parameters as in (\ref{Eq.HomogeneousParameters}). Let $\mathfrak{L} = \{L_i\}_{i \geq 1}$ be the $\mathbb{N}$-indexed geometric line ensemble on $\llbracket 0, N \rrbracket$, defined by
\begin{equation}\label{Eq.SchurLE}
L_i(s)  = \lambda^{N-s+1}_i \mbox{ for } i \geq 1, s \in \llbracket 0, N \rrbracket.
\end{equation}
Then, $\mathfrak{L}$ satisfies the interlacing Gibbs property from Definition \ref{DefSGP}. 
\end{lemma}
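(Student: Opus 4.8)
The plan is to verify the two requirements in Definition~\ref{DefSGP} directly from the defining formula \eqref{Eq.SchurProcess} for the Pfaffian Schur process, using the fact that with the homogeneous parameters \eqref{Eq.HomogeneousParameters} each single-variable skew Schur polynomial degenerates, via \eqref{Eq.SkewSchur}, to an interlacing indicator times a pure power of $q$. First I would record that $\mathfrak{L}$ is automatically an interlacing geometric line ensemble: the weight in \eqref{Eq.SchurProcess} is nonzero only when $\lambda^1 \succeq \lambda^2 \succeq \cdots \succeq \lambda^N$, and $\lambda^{j} \succeq \lambda^{j+1}$ means precisely $\lambda^j_1 \geq \lambda^{j+1}_1 \geq \lambda^j_2 \geq \lambda^{j+1}_2 \geq \cdots$; translating through $L_i(s) = \lambda^{N-s+1}_i$ this is exactly the condition $L_i(s-1) \geq L_{i+1}(s)$ and the monotonicity $L_i(s-1) \geq L_i(s)$ that make each $L_i$ an increasing path. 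So $\mathfrak{L}$ takes values in the space $Y$ of Definition~\ref{DefDLE} and is interlacing in the sense of Definition~\ref{DefSGP}.

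Next I would establish the Gibbs resampling identity \eqref{SchurEq}. Fix $K = \llbracket k_1, k_2 \rrbracket$, $a < b$ in $\llbracket 0, N\rrbracket$, and condition on the event $A$ that pins the values $\vec x = (L_{k_1}(a), \dots, L_{k_2}(a))$, $\vec y = (L_{k_1}(b), \dots, L_{k_2}(b))$, and the neighbouring paths $L_{k_1 - 1}\llbracket a,b\rrbracket = f$, $L_{k_2+1}\llbracket a,b\rrbracket = g$ (with $f \equiv \infty$ if $k_1 = 1$). The key observation is that in the product \eqref{Eq.SchurProcess}, all factors $s_{\lambda^j/\lambda^{j+1}}(a_{N-j+1}) = \mathbf{1}\{\lambda^{j+1} \preceq \lambda^j\} q^{|\lambda^j| - |\lambda^{j+1}|}$ and the boundary factor $\tau_{\lambda^1}(c)$ either (i) do not involve the coordinates $\{\lambda^{N-r+1}_i : a \leq r \leq b,\ k_1 \leq i \leq k_2\}$ being resampled, or (ii) involve them only through the interlacing indicators. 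Specifically, writing $\lambda^j$-coordinates in rows $i \in K$ and columns $j$ corresponding to $r \in \llbracket a,b\rrbracket$, the product of the relevant skew-Schur factors telescopes: the total power of $q$ contributed by the resampled block is $\sum q^{(\text{increments})}$, and crucially for each individual increasing path $B_i$ the sum $\sum_{r=a+1}^{b}(B_i(r) - B_i(r-1)) = y_i - x_i$ is fixed by the conditioning. Hence the $q$-weight is constant across all admissible configurations in the block, and the conditional law is the uniform measure on the set of $k$-tuples of increasing paths $\{B_i \in \Omega(a,b,x_i,y_i)\}$ satisfying the interlacing conditions with $f$ above and $g$ below — which is exactly $\mathbb{P}^{a,b,\vec x,\vec y,f,g}_{\ice,\mathrm{Geom}}$ by Definition~\ref{DefAvoidingLawBer}. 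One small point to be careful about: one must check that the boundary monomial $\tau_{\lambda^1}(c) = c^{\lambda^1_1 - \lambda^1_2 + \cdots}$ does not spoil the argument when $b = N$ (i.e. when the resampled block touches the $r = N$ column, $\lambda^1$); but if $k_1 \geq 2$ the relevant coordinates $\lambda^1_i$ for $i \in K$ enter $\tau_{\lambda^1}$ only through fixed endpoint values (since $\lambda^1_i = L_i(N)$ is pinned by $\vec x$ when $a = N$, or does not appear at all when $b \leq N-1$), and if $k_1 = 1$ the top row is $L_1$ whose value $L_1(a)$ at the left endpoint is again pinned, so $\tau_{\lambda^1}$ contributes a constant factor on $A$.

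The main obstacle I expect is the bookkeeping in the telescoping of the $q$-powers and making precise that the conditional weight is genuinely flat — i.e. carefully isolating which factors of \eqref{Eq.SchurProcess} depend on the resampled coordinates and confirming each such dependence is either an interlacing indicator (reproducing the event $\ice$) or a $q$-power that sums to something determined by $A$. This is essentially the standard argument that Schur-type processes are Gibbsian, and I would either carry it out by the telescoping computation sketched above or, more economically, cite the analogous statement for Pfaffian Schur processes if available (e.g. the line-ensemble structure results in \cite{DY25}); in fact, given the paper later refers to ``the line ensemble structure of the Pfaffian Schur processes is detailed in Section~\ref{Section6.3}'', the cleanest route is to reduce \eqref{SchurEq} to the corresponding marginal statement for the determinantal Schur process column-by-column, since conditioning on $f$ and $g$ decouples the block from the rest of the array and the intra-block law is manifestly that of independent uniform increasing paths conditioned to interlace.
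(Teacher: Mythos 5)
Your proposal is correct and reaches the same conclusion, but by a different route than the paper. You attempt to verify the local resampling identity \eqref{SchurEq} directly: you factor the weight \eqref{Eq.SchurProcess} over the resampled block $K \times \llbracket a,b\rrbracket$, observe that the $q$-powers telescope so that the block's contribution depends only on the pinned endpoints $\vec{x},\vec{y}$, and conclude the conditional block law is uniform on the interlacing set. The paper instead avoids the block-level bookkeeping: it conditions on the full slices $\mathfrak{L}(\cdot,0)=\mu^0$, $\mathfrak{L}(\cdot,N)=\mu^N$, uses the same telescoping to show the conditional law of the whole sequence is \emph{uniform on interlacing configurations} (the display \eqref{Eq.SchurInterlacingGibbs}), then expresses the law of any finite $\{L_i\}_{i=1}^m$ as a convex combination of measures $\mathbb{P}_{\ice,\mathrm{Geom}}^{0,N,\vec{x},\vec{y},\infty,g}$, and finishes by citing \cite[Lemma 2.10]{dimitrov2024tightness}, which establishes the interlacing Gibbs property for those bridge measures, together with the fact that the property is preserved under such mixtures. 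Your route is more self-contained (it never invokes an external Gibbs lemma) at the cost of the bookkeeping you flag; the paper's route delegates that bookkeeping to the cited lemma.

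Two small inaccuracies in your handling of the boundary monomial $\tau_{\lambda^1}(c)$, which do not affect the conclusion: since $L_i(N)=\lambda^1_i$, when $b=N$ the coordinates $\lambda^1_i$ for $i\in K$ are pinned by $\vec{y}$, not by $\vec{x}$ (and $a=N$ cannot occur since $a<b\le N$). Also, the case split on $k_1=1$ versus $k_1\ge 2$ is not needed for the $\tau_{\lambda^1}$ discussion and your reasoning in the $k_1=1$ case refers to $L_1(a)$, which is irrelevant to $\tau_{\lambda^1}$; the uniform statement is that $\tau_{\lambda^1}(c)$ depends only on coordinates that are either outside the block ($b<N$) or pinned by $\vec{y}$ ($b=N$), hence is constant on the conditional configuration space in every case. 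Finally, the last sentence of your proposal suggesting to cite the line-ensemble structure of Pfaffian Schur processes detailed in Section~\ref{Section6.3} is circular here, since this lemma is precisely that structural statement.
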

\begin{proof} The fact that $\mathfrak{L}$ is an $\mathbb{N}$-indexed geometric line ensemble that is interlacing follows from Defintion \ref{Def.SchurProcess} and (\ref{Eq.SkewSchur}), which imply that with full probability 
\begin{equation}\label{Eq.InterlaceGibbs}
\lambda^1 \succeq \lambda^2 \succeq \cdots \succeq \lambda^N \succeq \lambda^{N+1} = \emptyset.
\end{equation}
Let $\mu^0$, $\mu^N$ satisfy $\mathbb{P}(\mathfrak{L}(\cdot, 0) = \mu^0, \mathfrak{L}(\cdot, N) = \mu^N) > 0$. From (\ref{Eq.SchurProcess}) and (\ref{Eq.SchurLE}), we have for any sequence of partitions $\{\mu^t: t \in \llbracket 1, N-1\rrbracket\}$ that
\begin{equation}\label{Eq.SchurInterlacingGibbs}
\begin{split}
&\mathbb{P}\left( \cap_{t \in \llbracket 0, N \rrbracket} \{\mathfrak{L}(\cdot, t) = \mu^t\} | \mathfrak{L}(\cdot, 0) = \mu^0, \mathfrak{L}(\cdot, N) = \mu^N \right) \propto \prod_{r \in \llbracket 1, N \rrbracket} s_{\mu^r/\mu^{r-1}}(q)  \\
& \propto \prod_{i \geq 1} q^{\mu^N_i - \mu^0_i} \times {\bf 1} \{\mu^0 \preceq \mu^{1} \preceq \cdots \preceq \mu^N\} \propto {\bf 1} \{\mu^0 \preceq \mu^{1} \preceq \cdots \preceq \mu^N\},
\end{split}
\end{equation}
where in going from the first to the second line we used (\ref{Eq.SkewSchur}), and the various constants of proportionality depend on $c,q,a,b,N,\mu^0, \mu^N$. 

Equation (\ref{Eq.SchurInterlacingGibbs}) shows that for each $m \geq 1$, the law of $\{L_i\}_{i = 1}^m$ is a convex combination of measures of the form $\mathbb{P}_{\ice, \mathrm{Geom}}^{0, N, \vec{x}, \vec{y}, \infty, g}$ as in Definition \ref{DefAvoidingLawBer} with different $\vec{x}, \vec{y}, g$. As each $\mathbb{P}_{\ice, \mathrm{Geom}}^{0, N, \vec{x}, \vec{y}, \infty, g}$ satisfies the interlacing Gibbs property (see \cite[Lemma 2.10]{dimitrov2024tightness}) we conclude the same for $\mathfrak{L}$.
\end{proof}

%
\section{Weak convergence of the top curve}\label{Section7} The goal of this section is to prove Theorem \ref{Thm.Main1}. In Section \ref{Section7.1} we prove two general results that allow us to conclude the finite-dimensional convergence of a sequence of random vectors, that form point processes, for which we know a priori are weakly convergent. These results are then applied in Section \ref{Section7.2} to prove the finite-dimensional convergence of $(Y^{j,N}_1: j \in \llbracket 1, m \rrbracket)$ from Definition \ref{Def.ScalingEdge}, stated as Proposition \ref{Prop.FinitedimEdge}. Finally, in Section \ref{Section7.3} we prove Theorem \ref{Thm.Main1}, using the previously established convergence of $(Y^{j,N}_1: j \in \llbracket 1, m \rrbracket)$ and results from \cite{ED24a}.

%
\subsection{Finite-dimensional convergence for point processes}\label{Section7.1} As we will see later, the kernel convergence established in Proposition \ref{Prop.KernelConvTop} allows us to conclude that the point processes $M^N$ from Lemma \ref{Lem.PrelimitKernelEdge} converge weakly. Our goal is then to upgrade this to finite-dimensional convergence for the vectors $(Y^{j,N}_1: j \in \llbracket 1, m \rrbracket)$. In this section, we establish two general results that facilitate this step. For the sake of potential future applications, we formulate these results in a more general setting than currently needed. Throughout, we freely use the definitions and notation related to point processes from \cite[Section 2]{ED24a}.

We will work with the space $[-\infty,\infty)$, which is homeomorphic to $[0,\infty)$ via the map $x \rightarrow e^x$. Fix $m \in \mathbb{N}$, $t_1 < \cdots < t_m$ and let $\mathcal{T} = \{t_1, \dots, t_m\}$. Suppose that $(X^{j,N}_i: i \geq 1,  j \in \llbracket 1, m \rrbracket)$ is a sequence of random elements in $[-\infty, \infty)^{\infty}$, endowed with the product topology and corresponding Borel $\sigma$-algebra, such that 
\begin{equation}\label{Eq.OrdY}
X^{j,N}_i(\omega) \geq X^{j, N}_{i+1}(\omega) \mbox{ for each $\omega \in \Omega$, $i \geq 1$, and $j \in \llbracket 1, m \rrbracket$.}
\end{equation}
We further suppose that the random measures $M^N$ on $\mathbb{R}^2$, defined by 
\begin{equation}\label{Eq.YMeasForm}
M^N(\omega,A) = \sum_{i \geq 1} \sum_{j = 1}^m {\bf 1}\{(t_j, X^{j,N}_{i}(\omega)) \in A\},
\end{equation}
are locally finite, and hence are point processes. With the above definitions in place, we now state the first key result of the section.
\begin{lemma}\label{Lem.FDC} Assume the same notation as in the previous paragraph. Fix $r \in \mathbb{N}$ and a random vector $X = (X^{j}_i: i \in \llbracket 1, r \rrbracket ,  j \in \llbracket 1, m \rrbracket)$ in $\mathbb{R}^{rm}$, such that 
\begin{equation}\label{Eq.OrdY2}
X^{j}_{i}(\omega) \geq X^{j}_{i + 1}(\omega) \mbox{ for } i \in \llbracket 1, r-1\rrbracket \mbox{ and } j \in \llbracket 1, m \rrbracket,
\end{equation}
and let $M$ be the point process on $\mathbb{R}^2$ formed by $\{(t_j, X^{j}_i): i \in \llbracket 1, r \rrbracket ,  j \in \llbracket 1, m \rrbracket \}$. Suppose further that $M^N$ as in (\ref{Eq.YMeasForm}) converge weakly to $M$, and that $\{X^{j,N}_i\}_{N \geq 1}$ are real-valued and tight in $\mathbb{R}$ for each $i \in \llbracket 1, r \rrbracket ,  j \in \llbracket 1, m \rrbracket$. Then, $(X^{j,N}_i: i \in \llbracket 1, r \rrbracket ,  j \in \llbracket 1, m \rrbracket) \Rightarrow X$ as random vectors in $\mathbb{R}^{rm}$.
\end{lemma}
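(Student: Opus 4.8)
The plan is to deduce finite-dimensional convergence of the ordered coordinate vector from the weak convergence of the associated point processes, using the tightness hypothesis to rule out mass escaping to $-\infty$ and to promote vague/weak convergence to convergence in distribution of a genuine $\mathbb{R}^{rm}$-valued vector. First I would set up the bookkeeping: because of the ordering (\ref{Eq.OrdY}), for each fixed $j$ the points $\{(t_j, X^{j,N}_i)\}_{i\geq 1}$ on the vertical line $\{t_j\}\times\mathbb{R}$ are listed in decreasing order, so the $r$ largest points on that line are exactly $(t_j, X^{j,N}_1),\dots,(t_j, X^{j,N}_r)$. Thus the random vector $(X^{j,N}_i: i\in\llbracket 1,r\rrbracket, j\in\llbracket 1,m\rrbracket)$ is a measurable functional of the point process $M^N$ — namely, on each line $\{t_j\}\times\mathbb{R}$ read off the top $r$ atoms — provided $M^N$ has at least $r$ atoms on each such line with finite position. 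The analogous statement holds for $M$ and $X$ by (\ref{Eq.OrdY2}).

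Next I would quantify the ``top $r$ atoms'' map. For $L>0$ and $a<b$ let $N_{j}(M;a,b)$ denote the number of atoms of $M$ in $\{t_j\}\times[a,b]$. The event that the top $r$ coordinates of $M^N$ on line $j$ all lie in a compact window $[-L,L]$ can be written in terms of the counting functions $N_j(M^N; \cdot,\cdot)$ of $M^N$ evaluated on boxes whose boundaries (in the vertical direction) are chosen to be continuity points of the limiting intensity; weak convergence of $M^N$ to $M$ then gives joint convergence in distribution of these finitely many counts to those of $M$. The tightness hypothesis on each $\{X^{j,N}_i\}_{N\geq1}$ supplies, for every $\epsilon>0$, an $L=L(\epsilon)$ such that with probability at least $1-\epsilon$ all the relevant top-$r$ coordinates lie in $[-L,L]$ for all large $N$; this is exactly what is needed to ensure no mass among the top $r$ atoms escapes to $\pm\infty$, and in particular that $M^N$ genuinely has $\geq r$ finite atoms on each line with probability close to one. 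With these two ingredients, the map ``configuration $\mapsto$ (positions of its top $r$ atoms on each of the $m$ lines)'' is, on a set of configurations of probability $\geq 1-\epsilon$ under both $M^N$ (for large $N$) and $M$, continuous at $\mathbb{P}_M$-almost every configuration; I would verify the a.s.-continuity by noting that the only way the map fails to be continuous at a configuration $\eta$ is if $\eta$ has an atom exactly on the boundary of one of the chosen windows or has multiplicity $>1$ at the $r$-th largest position, and since $M$ is a Pfaffian point process with a locally finite, absolutely continuous (in the vertical variable) intensity, such configurations have $\mathbb{P}_M$-probability zero; these are the events excluded when we choose the window boundaries to be continuity points.

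With the continuous mapping theorem applied to this functional — on the full-probability domain of continuity, and using the tightness to confine everything to compacts — I would conclude that $(X^{j,N}_i: i\in\llbracket1,r\rrbracket, j\in\llbracket1,m\rrbracket)$ converges in distribution, as a random element of $([-\infty,\infty)^{rm}$, topologized as a subspace of the product space$)$, to $(X^j_i)$; the tightness then upgrades this to convergence of genuinely $\mathbb{R}^{rm}$-valued vectors (no coordinate concentrates near $-\infty$ in the limit), which is the claim.

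The main obstacle I expect is the a.s.-continuity/measurability argument: one has to be careful that the ``read off the top $r$ atoms on line $t_j$'' functional is well-defined and continuous $\mathbb{P}_M$-a.s. in the vague topology on point configurations, which forces the choice of window boundaries at continuity points of the intensity and the exclusion of configurations with ties or boundary atoms. A secondary, more technical point is to argue cleanly that weak (rather than merely vague) convergence of $M^N$ is what is assumed and that it suffices — this is where the tightness of the individual coordinates is essential, since vague convergence alone would allow the top atoms to drift off to $-\infty$ and the functional would not pass to the limit. Once these measure-theoretic issues are handled, the rest is a routine application of the continuous mapping theorem together with a standard $\epsilon$-approximation to restrict to compact windows.
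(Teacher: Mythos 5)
Your plan is right in spirit and indeed parallels the paper's: both treat $(X^{j,N}_i)$ as a function of the point process $M^N$ and use the tightness of individual coordinates to rule out escape of the top $r$ atoms. But the way you propose to close the argument — applying the continuous mapping theorem to a ``top-$r$ atoms per line'' functional $\Phi$ — has a genuine gap. In the vague topology on $\mathcal{M}_{\mathbb{R}^2}$, $\Phi$ is \emph{nowhere} continuous: for any configuration $\eta$, the sequence $\eta_n=\eta+\delta_{(t_1,n)}$ converges vaguely to $\eta$ (the extra atom escapes to $+\infty$), while the top atom of $\eta_n$ on line $1$ diverges. Hence the set of discontinuities of $\Phi$ has $\mathbb{P}_M$-probability one, not zero, and the CMT cannot be applied to $\Phi$ directly. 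The compact-truncation you allude to is therefore not a cosmetic refinement but an essential repair: one has to replace $\Phi$ by a localized surrogate $\Phi_L$ (read off the top $r$ atoms inside $\{t_j\}\times[-L,L]$), argue that $\Phi_L$ is continuous off the event that $\eta$ charges the boundary of the boxes, and then carry out an explicit $\epsilon$-approximation linking $\Phi_L(M^N)$ to the full vector via the tightness bounds. You sketch this but do not carry it out, and your formulation ``continuous at $\mathbb{P}_M$-almost every configuration'' is false as stated.

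Two further diagnostic errors. First, ties and multiple atoms do \emph{not} cause discontinuity of the top-$r$ functional (a double atom at $x$ is approximated vaguely by two nearby atoms, and the ordered pair of positions still converges to $(x,x)$); the genuine obstacle is escape to $\pm\infty$, which is controlled by the tightness of $\{X^{j,N}_i\}_{N\geq 1}$. Second, your argument that the ``bad'' boundary configurations have $\mathbb{P}_M$-measure zero invokes $M$ being a Pfaffian point process with absolutely continuous intensity — but Lemma \ref{Lem.FDC} assumes nothing of the sort; $X$ is an arbitrary random vector satisfying (\ref{Eq.OrdY2}), whose marginal laws may have atoms. What actually works is that the set of $L$ for which some $X^j_i$ puts positive mass on $\pm L$ is countable, so an admissible $L$ can be chosen; this is a different argument than the one you gave. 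For comparison, the paper's proof bypasses all functional-continuity issues: after joint tightness and Skorohod, it establishes the pointwise set identity $E_1=E_2$ between $\{\tilde{X}^{j,\infty}_i\le a_{i,j}\ \forall i,j\}$ and $\{M^\infty(\{t_j\}\times(a_{i,j},\infty))<i\ \forall i,j\}$ using only the coordinatewise convergence $X^{j,N_v}_i(\omega)\to\tilde{X}^{j,\infty}_i(\omega)\in\mathbb{R}$ (which simultaneously gives confinement) together with Kallenberg's semicontinuity lemma, with no appeal to any global continuity of a configuration-to-vector map.
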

\begin{proof} The proof we present is an adaptation of \cite[Proposition 2.19]{ED24a}. Set $\tilde{X}^N = (X^{j,N}_i: i \in \llbracket 1, r \rrbracket ,  j \in \llbracket 1, m \rrbracket)$, which by assumption is a tight sequence in $\mathbb{R}^{rm}$. Let $\tilde{X}^{N_v}$ be a weakly convergent subsequence of $\tilde{X}^N$ converging to some random vector $\tilde{X} \in \mathbb{R}^{rm}$. Since $\tilde{X}^N$ is tight, it suffices to show that $\tilde{X}$ and $X$ have the same distribution. From (\ref{Eq.OrdY2}) and the fact that $M$ is the point process on $\mathbb{R}^2$ formed by $\{(t_j, X^{j}_i): i \in \llbracket 1, r \rrbracket ,  j \in \llbracket 1, m \rrbracket \}$, we obtain for any $a_{i,j} \in \mathbb{R}$ that
\begin{equation}\label{Eq.MultiWeak5}
\begin{split}
&\mathbb{P}\left(X_i^j \leq a_{i,j} \mbox{ for } i \in \llbracket 1, r \rrbracket, j \in \llbracket 1, m \rrbracket \right)  \\
&=  \mathbb{P}\left(M(\{t_j\} \times (a_{i,j}, \infty) ) < i \mbox{ for } i \in \llbracket 1, r \rrbracket, j \in \llbracket 1, m \rrbracket \right).
\end{split}
\end{equation}

Note that $M^N$ are random elements of $\mathcal{M}_{\mathbb{R}^2}$ -- the space of locally bounded measures on $\mathbb{R}^2$ with the vague topology -- which is Polish. As $M^N$ are weakly convergent, they are precompact, and hence tight by Prohorov's theorem \cite[Theorem 5.2]{Bill}. It follows that $(\tilde{X}^N, M^N)$, regarded as random elements in $\mathbb{R}^{rm} \times \mathcal{M}_{\mathbb{R}^2}$ with the product topology, are tight. By possibly passing to a further subsequence, still denoted $N_v$, we may assume that $(\tilde{X}^{N_v}, M^{N_v})$ converges weakly to some $(\tilde{X}^{\infty}, M^{\infty})$. By Skorohod's representation theorem \cite[Theorem 6.7]{Bill}, we may assume that $(\tilde{X}^{N_v}, M^{N_v})$ and $(\tilde{X}^{\infty}, M^{\infty})$ are defined on the same probability space $(\Omega, \mathcal{F}, \mathbb{P})$ and for each $\omega \in \Omega$ 
\begin{equation}\label{Eq.MultiWeak1}
M^{N_v}(\omega) \xrightarrow{v} M^{\infty}(\omega) \text { and } X_i^{j,N_v}(\omega) \rightarrow \tilde{X}_i^{j,\infty}(\omega) \in \mathbb{R} \mbox{ for } i \in \llbracket 1, r \rrbracket, j \in \llbracket 1, m \rrbracket.  
\end{equation}
As weak limits are unique, $M^{\infty}$ has the same distribution as $M$, and $\tilde{X}^{\infty}$ has the same distribution as $\tilde{X}$. Thus, it remains to show that $\tilde{X}^{\infty}$ has the same distribution as $X$. In view of (\ref{Eq.MultiWeak5}) and the distributional equality of $M^{\infty}$ and $M$, we see that it suffices to show that for any $a_{i,j} \in \mathbb{R}$
\begin{equation}\label{Eq.MultiWeak6}
\begin{split}
&E_1 = E_2, \mbox{ where } E_1 =  \{ \omega \in \Omega :  \tilde{X}_i^{j, \infty}(\omega) \leq a_{i,j} \mbox{ for } i \in \llbracket 1, r \rrbracket, j \in \llbracket 1, m \rrbracket \},  \\
& E_2 = \{ \omega \in \Omega: M^{\infty}(\omega)(\{t_j\} \times (a_{i,j}, \infty))  < i \mbox{ for } i \in \llbracket 1, r \rrbracket, j \in \llbracket 1, m \rrbracket \}.
\end{split}
\end{equation}

Suppose that $\omega \in E_1$. Fix $\varepsilon > 0$ and $\delta > 0$, such that $\delta < \min(t_{i} - t_{i-1}: i  \in \llbracket 2, m \rrbracket)$. From the pointwise convergence in (\ref{Eq.MultiWeak1}), we have for all large $v$:
$$X_i^{j,N_v}(\omega) < a_{i,j} + \varepsilon \mbox{ for }  i \in \llbracket 1, r \rrbracket, j \in \llbracket 1, m \rrbracket.$$
Combining the latter with (\ref{Eq.OrdY}) and (\ref{Eq.YMeasForm}), we conclude for any fixed $B > \varepsilon + \max_{i \in \llbracket 1, r \rrbracket, j \in \llbracket 1, m \rrbracket} a_{i,j}$, $i \in \llbracket 1, r \rrbracket$, $j \in \llbracket 1, m \rrbracket$ and all large $v$:
$$ M^{N_v}(\omega)\left(\{t_j\} \times [a_{i,j} + \varepsilon, \infty) \right)= M^{N_v}(\omega)\left((t_j - \delta, t_j + \delta) \times [a_{i,j} + \varepsilon, B) \right) \leq i - 1.$$
From the vague convergence in (\ref{Eq.MultiWeak1}) and \cite[Lemma 4.1]{kallenberg2017random}, we conclude for $i \in \llbracket 1, r \rrbracket, j \in \llbracket 1, m \rrbracket$
\begin{equation*}
\begin{split}
 &M^{\infty}(\omega)\left(\{t_j\} \times (a_{i,j} + \varepsilon, \infty) \right) \leq \lim_{B \rightarrow \infty} M^{\infty}(\omega)\left((t_j - \delta, t_j + \delta) \times (a_{i,j} + \varepsilon, B) \right) \\
 & \leq \lim_{B \rightarrow \infty} \liminf_{v \rightarrow \infty}M^{N_v}(\omega)\left((t_j - \delta, t_j + \delta) \times [a_{i,j} + \varepsilon, B) \right) \leq i - 1.
 \end{split}
 \end{equation*}
The last displayed equation implies $\omega \in E_2$ and so $E_1 \subseteq E_2$.

Suppose now that $\omega \in E_1^c$. Then, there exist $i \in \llbracket 1, r \rrbracket$, $j \in \llbracket 1, m \rrbracket$ and $\varepsilon > 0$, depending on $\omega$, such that $\tilde{X}^{j,\infty}_i(\omega) > a_{i,j} + \varepsilon$. From the pointwise convergence in (\ref{Eq.MultiWeak1}) we conclude for all large $v$:
$$X_i^{j,N_v}(\omega) \geq  a_{i,j} + \varepsilon \mbox{ and } X_1^{j,N_v}(\omega) \leq \tilde{X}_1^{j, \infty}(\omega) + \varepsilon.$$
Combining the latter with (\ref{Eq.OrdY}) and (\ref{Eq.YMeasForm}), we conclude for all large $v$:
$$ M^{N_v}(\omega)\left(\{t_j\} \times [a_{i,j} + \varepsilon, \tilde{X}_1^{j, \infty}(\omega) + \varepsilon] \right) \geq i.$$
From the vague convergence in (\ref{Eq.MultiWeak1}) and \cite[Lemma 4.1]{kallenberg2017random}, we find 
\begin{equation*}
\begin{split}
 &M^{\infty}(\omega)\left(\{t_j\} \times [a_{i,j} + \varepsilon, \tilde{X}_1^{j, \infty}(\omega) + \varepsilon] \right) \geq  \limsup_{v \rightarrow \infty}M^{N_v}(\omega)\left(\{t_j\} \times [a_{i,j} + \varepsilon, \tilde{X}_1^{j, \infty}(\omega) + \varepsilon] \right) \geq i.
 \end{split}
 \end{equation*}
The last displayed equation implies $\omega \in E_2^c$ and so $E_1^c \subseteq E_2^c$, concluding the proof of (\ref{Eq.MultiWeak6}).
\end{proof}

To effectively apply Lemma \ref{Lem.FDC}, it is necessary to establish tightness of the random variables $\{X^{j,N}_i\}_{N \geq 1}$ for each fixed $i \in \llbracket 1, r \rrbracket ,  j \in \llbracket 1, m \rrbracket$. By projecting the measures $M^N$ to $\{t_j\} \times \mathbb{R} \cong \mathbb{R}$ for some fixed $j \in \llbracket 1, m \rrbracket$, the original tightness question can be reformulated in terms of point processes on $\mathbb{R}$. The following result provides sufficient conditions under which tightness for such point processes on $\mathbb{R}$ can be concluded.
\begin{lemma}\label{Lem.tightcri} Fix $ r\in \mathbb{N}$. Suppose that $X^N=\left(X_i^N: i \geq 1\right)$ is a sequence of random elements in $[-\infty, \infty)^{\infty}$, such that 
\begin{equation}\label{Eq.tightcriE01}
X_1^N(\omega) \geq X_2^N(\omega) \geq \cdots, \mbox{ and }  X_1^N(\omega), \dots, X^N_r(\omega) \in \mathbb{R}.
\end{equation}
Denote by
\begin{equation}\label{Eq.tightcriE0}
M^N(\omega, A)=\sum_{i \geq 1} 1\left\{X_i^N(\omega) \in A\right\} 
\end{equation}
the corresponding random measures and suppose that $M^N$ are point processes on $\mathbb{R}$. Assume that
\begin{enumerate}
\item $M^N$ converge weakly to a point process $M$ on $\mathbb{R}$ as $N \rightarrow \infty$;
\item $\mathbb{P}(M(\mathbb{R}) \geq r)=1$;
\item $\lim _{a \rightarrow \infty} \limsup_{N \rightarrow \infty} \mathbb{P}\left(X_1^N \geq a \right)=0$.
\end{enumerate}
Then, $\{ X_k^N\}_{N \geq 1}$ forms a tight sequence of real random variables for each $k \in \llbracket 1, r \rrbracket$.
\end{lemma}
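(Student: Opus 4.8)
The plan is to show that for each fixed $k \in \llbracket 1, r \rrbracket$ the family $\{X_k^N\}_{N \geq 1}$ is tight, i.e.\ for every $\epsilon > 0$ there is a compact interval $[-a, a]$ with $\mathbb{P}(X_k^N \in [-a,a]) > 1 - \epsilon$ for all large $N$. Since $X_1^N \geq X_2^N \geq \cdots \geq X_k^N$, control from above on $X_k^N$ comes for free from control on $X_1^N$, which is exactly hypothesis (3): choosing $a$ large enough, $\limsup_N \mathbb{P}(X_k^N \geq a) \leq \limsup_N \mathbb{P}(X_1^N \geq a) < \epsilon/2$. So the entire content is the lower bound: I need to rule out the possibility that $X_k^N \to -\infty$, which by the ordering is the same as saying that the point process $M^N$ does not ``lose'' more than $k-1$ of its lowest points to $-\infty$.

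\textbf{Key steps.} First I would fix $k$ and $\epsilon$, and use hypotheses (1) and (2) together with the Portmanteau theorem for vague convergence of point processes. The natural event is $\{M^N([-a, \infty)) \geq k\}$ for a large threshold $a$; on this event, the ordering (\ref{Eq.tightcriE01}) forces $X_k^N \geq -a$, since at least $k$ of the atoms of $M^N$ lie in $[-a,\infty)$ and the atoms are $X_1^N \geq X_2^N \geq \cdots$. Thus it suffices to show
\begin{equation*}
\lim_{a \rightarrow \infty} \liminf_{N \rightarrow \infty} \mathbb{P}\left( M^N([-a, \infty)) \geq k \right) = 1.
\end{equation*}
To get this, I would combine the upper-tail control (3) with the weak convergence (1). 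Pick $a$ first so that $\limsup_N \mathbb{P}(X_1^N \geq a) < \epsilon/4$, which means with probability $\geq 1 - \epsilon/4$ (for large $N$) all atoms of $M^N$ lie in $(-\infty, a)$, hence $M^N([-a, a)) = M^N([-a, \infty))$ on that event. Now I want to lower-bound $M^N([-a,a))$ — but since $[-a, a)$ is not open I instead work with the open interval $(-b, a')$ for $a' < a$ slightly smaller and $b$ slightly larger than $a$, chosen as continuity points; on the high-probability event above, $M^N((-b, a')) \leq M^N([-a, \infty))$ fails at the wrong end, so more carefully: choose $a' > a$ a continuity point of the limiting intensity and use $M^N((-a, a')) \leq M^N([-a, \infty))$ together with the opposite inequality for the upper end. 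The cleanest route: since $(-a', \infty)$ is open-below but we need an open set, use $M^N\big((-a', a'')\big)$ with $-a' < -a$ and $a'' $ large, and on the event $\{X_1^N < a''\}$ this equals $M^N((-a', \infty)) \geq M^N([-a, \infty))$. By the Portmanteau theorem applied to the open set $(-a', a'')$ (taking $a'$ a continuity point of $\mathbb{E}[M(\{-a'\} \times \cdot)]$, which one can always arrange since only countably many points carry mass),
\begin{equation*}
\liminf_{N \rightarrow \infty} \mathbb{P}\big( M^N((-a', a'')) \geq k \big) \geq \mathbb{P}\big( M((-a', a'')) \geq k \big).
\end{equation*}
Finally, as $a', a'' \to \infty$ the right-hand side increases to $\mathbb{P}(M(\mathbb{R}) \geq k) \geq \mathbb{P}(M(\mathbb{R}) \geq r) = 1$ by hypothesis (2) and monotone convergence of the events. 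Assembling: on the intersection of $\{X_1^N < a''\}$ (probability $\geq 1 - \epsilon/4$) and $\{M^N((-a', a'')) \geq k\}$ (probability $\geq 1 - \epsilon/4$ for large $N$ once $a', a''$ are large enough), we get $X_k^N \geq -a'$, so $X_k^N \in [-a', a'']$ with probability $\geq 1 - \epsilon/2 > 1 - \epsilon$ for all large $N$, and then enlarge the interval slightly to absorb the finitely many small $N$. This is tightness of $\{X_k^N\}_{N \geq 1}$.

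\textbf{Main obstacle.} The delicate point is handling the boundary-continuity issues in the Portmanteau/vague-convergence step — one cannot apply the open-set inequality directly to half-lines or to arbitrary closed intervals, so the argument must be routed through genuinely open sets $(-a', a'')$ whose endpoints are continuity points of the limit, and one must separately argue (via hypothesis (3)) that truncating the upper end at $a''$ does not lose any atoms with high probability. Everything else is bookkeeping: converting ``$k$ atoms above $-a$'' into ``$X_k^N \geq -a$'' via the ordering (\ref{Eq.tightcriE01}), and the monotone limit $\mathbb{P}(M((-a',a'')) \geq k) \uparrow \mathbb{P}(M(\mathbb{R}) \geq k) = 1$. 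I expect the write-up to mirror closely the analogous tightness arguments in \cite{ED24a}.
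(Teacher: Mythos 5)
Your proposal is correct and takes a genuinely different, more elementary route than the paper's proof. The paper (following \cite[Proposition 2.21]{ED24a}) proceeds through subsequential limits: it first deduces tightness of $X^N$ in the compactified space $[-\infty,\infty)^\infty$, passes to a jointly convergent subsequence of $(X^{N_v},M^{N_v})$ via Prohorov's theorem for the Polish space $\mathcal{M}_\mathbb{R}$, applies Skorohod's representation theorem to obtain almost-sure vague convergence, and then argues $\omega$-by-$\omega$ with the vague-convergence inequalities of \cite[Lemma 4.1]{kallenberg2017random} to show $\mathbb{P}(\hat{X}^\infty_r=-\infty)=0$ for every subsequential limit. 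You instead exhibit the required compact intervals directly: hypothesis (3) and the ordering give the upper bound, while for the lower bound you observe that $\{\mu:\mu((-a',a''))\geq k\}$ is open in the vague topology (because $\mu\mapsto\mu(G)$ is vaguely lower semicontinuous for bounded open $G$), so Portmanteau yields $\liminf_N \mathbb{P}(M^N((-a',a''))\geq k)\geq \mathbb{P}(M((-a',a''))\geq k)$, and monotone convergence of events as $a',a''\to\infty$ pushes the right-hand side to $1$ by hypothesis (2); the ordering (\ref{Eq.tightcriE01}) then converts ``at least $k$ atoms above $-a'$'' into ``$X_k^N>-a'$.'' Your route avoids Skorohod's theorem and the subsequence machinery entirely, which is a real simplification, though the paper's heavier setup is reused almost verbatim in the adjacent Lemma \ref{Lem.FDC}, where the joint Skorohod coupling is genuinely needed. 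One small remark: your continuity-point precautions for the endpoints $a',a''$ are unnecessary for the one-sided inequality you invoke, precisely because $\{\mu:\mu(G)\geq k\}$ is already open in $\mathcal{M}_\mathbb{R}$; continuity points would only matter if you wanted the two-sided limit $\lim_N \mathbb{P}(M^N((-a',a''))\geq k)=\mathbb{P}(M((-a',a''))\geq k)$.
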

\begin{proof}
The proof we present is an adaptation of \cite[Proposition 2.21]{ED24a}.
From (\ref{Eq.tightcriE01}) and condition (3) we conclude that the sequence $X^N$ is tight in $[-\infty, \infty)^{\infty}$. Let $X^{N_v}$ be a subsequence that converges weakly to some $\hat{X}^{\infty}$, and note from (\ref{Eq.tightcriE01}) that $\hat{X}^{\infty}_1(\omega) \geq \hat{X}^{\infty}_2(\omega) \geq \cdots$. To prove the proposition, it suffices to show that
\begin{equation}\label{Eq.tightcriE3}
\mathbb{P}\left(\hat{X}_r^{\infty} = -\infty\right)=0.    
\end{equation}

By condition (1), the sequence $M^N$ is weakly convergent and hence forms a tight sequence of random elements in $\mathcal{M}_{\mathbb{R}}$ -- the space of locally bounded measures on $\mathbb{R}$ with the vague topology. Here, we implicitly used that $\mathcal{M}_{\mathbb{R}}$ is a Polish space and Prohorov's theorem \cite[Theorem 5.2]{Bill}. This means that the joint sequence $(X^{N_v},M^{N_v})$ is tight in $[-\infty, \infty)^{\infty} \times \mathcal{M}_{\mathbb{R}}$ (with the product topology), and thus precompact. By passing to a further subsequence, still denoted $N_v$, we may assume that $\left(X^{N_v}, M^{N_v}\right)$ converge weakly to some $\left(X^{\infty}, M^{\infty}\right)$. Furthermore, by Skorohod's representation theorem \cite[Theorem 6.7]{Bill}, we may assume that $( X^{N_v}, M^{N_v})$  and  $(X^{\infty}, M^{\infty} )$ are all defined on the same probability space  $(\Omega, \mathcal{F}, \mathbb{P})$ and for each $\omega \in \Omega$
\begin{equation}\label{Eq.tightcriE1}
M^{N_v}(\omega) \xrightarrow{v} M^{\infty}(\omega) \text { and } X_i^{N_v}(\omega) \rightarrow X_i^{\infty}(\omega) \in[-\infty, \infty).  
\end{equation}
Since weak limits are unique, $\hat{X}^{\infty}$ has the same distribution as $X^{\infty}$, and $M^{\infty}$ has the same distribution as $M$ (as in the statement of the lemma).

Let $U$ be the set of $\omega$, such that $M^{\infty}(\omega)(\mathbb{R})\geq r$. From condition (2), we have $\mathbb{P}(U)=1$. Fixing  $\omega \in U$, we can find $a<b$ with $a$ small and $b$ large enough so that
$$ M^{\infty}(\omega)((a, b))  \geq r.$$
By \cite[Lemma 4.1]{kallenberg2017random}, and the vague convergence in \eqref{Eq.tightcriE1}, there exists $v_0 \in \mathbb{N}$ (depending on $\omega)$, such that for $v \geq v_0$ we have
$$
M^{N_v}(\omega)((a, b)) \geq r.
$$
By combining the last statement with \eqref{Eq.tightcriE01} and \eqref{Eq.tightcriE0}, we conclude for all $\omega \in U$ and $v \geq v_0$ that $X_r^{N_v}(\omega) \geq a$, which by the pointwise convergence in \eqref{Eq.tightcriE1} implies
$$
U \cap\left\{X_r^{\infty}(\omega)=-\infty\right\}=\emptyset.
$$
The last equation, the fact that $X^{\infty} \overset{d}{=} \hat{X}^{\infty}$, and $\mathbb{P}(U)=1$ imply \eqref{Eq.tightcriE3}.
\end{proof}

%
\subsection{Finite-dimensional convergence of the top curve}\label{Section7.2} The goal of this section is to establish the following result.

\begin{proposition}\label{Prop.FinitedimEdge}
 Assume the same notation as in Definition \ref{Def.ScalingEdge}. Then, the sequence of random vectors $(Y_1^{1, N}, \dots, Y_1^{m,N})$ converges in the finite-dimensional sense to $\left(B_{\kappa_1-\kappa_0}, \dots, B_{\kappa_m- \kappa_0}\right)$, where $(B_t: t\geq 0)$ is a standard Brownian motion.
\end{proposition}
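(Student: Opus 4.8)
The plan is to combine the kernel asymptotics from Proposition \ref{Prop.KernelConvTop} with the general point-process machinery of Lemmas \ref{Lem.FDC} and \ref{Lem.tightcri}, and the determinantal identity for Brownian motion from Proposition \ref{Lem.BMDPP}. First I would apply Lemma \ref{Lem.PrelimitKernelEdge} with $\theta_{\kappa} = \theta_0$, $R_{\kappa} = R_0$ as in Lemma \ref{Lem.BigContour} to realize the rescaled top-curve data as a Pfaffian point process $M^N$ with correlation kernel $K^N$. By Proposition \ref{Prop.KernelConvTop}, $K^N_{11} \to 0$, $K^N_{22} \to 0$, and $K^N_{12}(s,x_N;t,y_N) \to \kbm(s-\kappa_0,x;t-\kappa_0,y)$ uniformly over compact $(x,y)$-sets. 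A standard argument (e.g. via convergence of the Pfaffian-Fredholm generating functionals, which for the limiting antisymmetric kernel with vanishing $(1,1)$ and $(2,2)$ blocks reduce to Fredholm determinants of $\kbm$) shows that $M^N$ converges vaguely to the point process $M$ associated to $\{(\kappa_j, B_{\kappa_j - \kappa_0}): j \in \llbracket 1,m\rrbracket\}$, which by Proposition \ref{Lem.BMDPP} (applied to the Brownian motion $t \mapsto B_{t-\kappa_0}$ on $\mathcal{T}$, after the trivial shift $t \mapsto t - \kappa_0$) is the determinantal process with kernel $\kbm$. Here I should be slightly careful: vague convergence of point processes on $\mathbb{R}^2$ needs the limiting intensity to have no atoms escaping to $\pm\infty$, which follows from the Gaussian tails of $\kbm$ and a uniform upper-tail estimate on the first intensity $\mathbb{E}[M^N(\{t_j\}\times[a,\infty))] = \int_a^\infty K^N_{12}(t_j,x;t_j,x)\,dx$ obtained from the function bounds in Section \ref{Section5.1}.

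Next I would upgrade vague convergence to weak convergence and then to finite-dimensional convergence of the vectors $(Y_1^{1,N},\dots,Y_1^{m,N})$. Weak convergence of $M^N$ to $M$ as elements of $\mathcal{M}_{\mathbb{R}^2}$ follows once we know the total masses do not escape, again from the first-intensity bound. To get tightness of each individual $Y_1^{j,N}$ I would project $M^N$ onto $\{\kappa_j\}\times\mathbb{R}\cong\mathbb{R}$ and apply Lemma \ref{Lem.tightcri} with $r = 1$: condition (1) is the projected weak convergence, condition (2) holds since $B_{\kappa_j-\kappa_0}$ is a.s.\ finite so $M$ restricted to $\{\kappa_j\}\times\mathbb{R}$ has at least one point a.s., and condition (3), the upper-tail tightness $\lim_{a\to\infty}\limsup_N \mathbb{P}(Y_1^{j,N}\ge a) = 0$, follows from Markov's inequality applied to $\mathbb{E}[M^N(\{\kappa_j\}\times[a,\infty))]$ together with the decay of $K^N_{12}$ on the diagonal for large argument (which comes from the descent estimates of Lemma \ref{Lem.BigContour} and the bounds collected in Section \ref{Section5.1}). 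Having verified these, Lemma \ref{Lem.tightcri} gives tightness of $\{Y_1^{j,N}\}_N$ for each $j$, and since $(Y_1^{j,N})_{i\ge1}$ is decreasing in $i$ (automatic from the partition structure, via $Y_i^{j,N} = \sigmap^{-1}N^{-1/2}(\lambda_i^{N-\lfloor\kappa_j N\rfloor+1} - \hp(\kappa_j)N - i)$ and $\lambda_i \ge \lambda_{i+1}$), the hypotheses of Lemma \ref{Lem.FDC} with $r = 1$ are met. That lemma then yields $(Y_1^{1,N},\dots,Y_1^{m,N}) \Rightarrow (B_{\kappa_1-\kappa_0},\dots,B_{\kappa_m-\kappa_0})$ as random vectors in $\mathbb{R}^m$, which is precisely the claimed finite-dimensional convergence.

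The main obstacle I anticipate is the first step: rigorously passing from the pointwise/uniform-on-compacts kernel convergence of Proposition \ref{Prop.KernelConvTop} to vague convergence of the Pfaffian point processes $M^N$. This is where one must control the full Pfaffian-Fredholm series (not just its leading term), which the authors explicitly say they want to avoid; the resolution is that vague convergence only requires convergence of $\mathbb{E}[\prod(1 - f(x_i))]$-type functionals against compactly supported test functions $f$, for which the relevant Fredholm Pfaffians are over a \emph{bounded} window, and there the dominated-convergence argument needs only the uniform kernel bounds on a fixed compact set plus Hadamard's inequality — no delicate upper-tail summability is needed because the domain is already compact. The one genuinely necessary global input is the single first-intensity tail bound $\sup_N \mathbb{E}[M^N(\{\kappa_j\}\times[a,\infty))] \to 0$ as $a \to\infty$, which I would extract directly from the estimates on $F^N_{12}, H^N_{12}, R^N_{12}$ in Section \ref{Section5.1} by integrating the diagonal kernel; this is routine but must be done with care to track the $N$-uniformity. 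Everything else — the Brownian determinantal identity, the two abstract point-process lemmas, and the monotonicity of the $Y_i^{j,N}$ — is already in place in the excerpt.
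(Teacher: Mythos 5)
Your proposal follows the paper's own three-step architecture exactly: use Proposition~\ref{Prop.KernelConvTop} to get weak convergence of the Pfaffian point processes $M^N$ to the Brownian determinantal process of Proposition~\ref{Lem.BMDPP}, apply Lemma~\ref{Lem.tightcri} with $r=1$ to reduce one-point tightness to an upper-tail first-intensity bound (which you and the paper both obtain from the diagonal of $K^N_{12}$ and the descent estimates of Section~\ref{Section5.1}), and finish with Lemma~\ref{Lem.FDC}. The one expositional difference is that the paper invokes a cited criterion for weak convergence of Pfaffian point processes from locally uniform kernel convergence, whereas you sketch it from scratch via Fredholm Pfaffians over bounded windows; also, your parenthetical that vague convergence of $M^N$ requires the intensity not to escape to $\pm\infty$ is not quite right — escape is permitted in the vague topology, and the one-sided tail bound you identify is needed only to verify condition (3) of Lemma~\ref{Lem.tightcri}, exactly as you subsequently use it.
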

\begin{proof} We adopt the same notation as in Lemma \ref{Lem.PrelimitKernelEdge}, where for each $s \in \mathcal{T}$, we set $\theta_s = \theta_0$, $R_s = R_0$ as in Lemma \ref{Lem.BigContour} for $\kappa = s$. For clarity, the proof is divided into three steps. In the first step, we assume that $\{Y^{j,N}_1\}_{N \geq 1}$ is tight for each $j \in \llbracket 1, m \rrbracket$, and conclude the proposition by applying Lemma \ref{Lem.FDC}. In the second step, we prove this tightness by assuming the sequence is tight from above -- see (\ref{Eq.TailY1}) -- and invoking Lemma \ref{Lem.tightcri}. In the third step, we verify the tightness from above by estimating the upper-tails of the distributions of $\{Y_1^{j,N}\}_{N \geq 1}$.\\

{\bf \raggedleft Step 1.} We claim that 
\begin{equation}\label{Eq.TightY}
\mbox{ the sequence } \{Y_1^{j,N}\}_{N \geq 1} \mbox{ is tight for each $j \in \llbracket 1, m \rrbracket$.}
\end{equation}
We prove (\ref{Eq.TightY}) in the steps below. Here, we assume its validity and complete the proof of the proposition.\\

We aim to apply Lemma \ref{Lem.FDC} with $r = 1$, 
$$X_i^{j,N} = Y_i^{j,N},  \hspace{2mm} X_1^j = B_{\kappa_j - \kappa_0}, \hspace{2mm} t_j = \kappa_j \mbox{ for } i \geq 1, j \in \llbracket 1, m \rrbracket.$$
Note that condition (\ref{Eq.OrdY}) holds by the definition of $Y_i^{j,N}$, and since $r = 1$, condition (\ref{Eq.OrdY2}) is vacuously satisfied. In addition, the tightness of $\{ X_1^{j,N} \}_{N \geq 1}$ follows from (\ref{Eq.TightY}). Thus, it remains to show that the point processes $M^N$ from Lemma \ref{Lem.PrelimitKernelEdge} converge weakly to the point process $M$ formed by $\{(\kappa_j, B_{\kappa_j - \kappa_0}) : j \in \llbracket 1, m \rrbracket\}$.

By Proposition \ref{Lem.BMDPP} and a change of variables (see \cite[Proposition 2.13(5)]{ED24a} with $\phi(s,x) = (s + \kappa_0, x)$), the $M$ is a determinantal point process on $\mathbb{R}^2$ with reference measure $\mu_{\mathcal{T}} \times \
{Leb}$ and correlation kernel 
$$K^{\mathrm{det}}(s,x;t,y) = \kbm(s-\kappa_0, x; t - \kappa_0,y),$$
where $\kbm$ is as in (\ref{Eq.KBM}). From \cite[Lemma 5.9]{DY25} we conclude that $M$ is a Pfaffian point process on $\mathbb{R}^2$ with the same reference measure and correlation kernel
\begin{equation}\label{Eq.PfKernYLim}
K^{\mathrm{Pf}}(s,x;t,y) = \begin{bmatrix}
    0 & \kbm\left(s - \kappa_0,  x  ; t - \kappa_0, y  \right)\\
    - \kbm\left(t - \kappa_0,y ; s - \kappa_0, x  \right) & 0
\end{bmatrix}.
\end{equation}
From Lemma \ref{Lem.PrelimitKernelEdge} we know (for large $N$) that $M^N$ is a Pfaffian point process on $\mathbb{R}^2$ with correlation kernel $K^N$ as in (\ref{Eq:EdgeKerDecomp}) and reference measure $\mu_{\mathcal{T},\nu(N)}$. Proposition \ref{Prop.KernelConvTop} implies that for each fixed $s, t \in \mathcal{T}$, the kernels $K^{N}(s,\cdot;t,\cdot)$ converge uniformly over compact sets of $\mathbb{R}^2$ to $K^{\mathrm{Pf}}(s,\cdot;t,\cdot)$ from (\ref{Eq.PfKernYLim}). Using that $\kbm\left(s - \kappa_0,  x  ; t - \kappa_0, y  \right)$ is continuous in $x,y$ for fixed $s,t \in \mathcal{T}$ and hence locally bounded, it follows from \cite[Proposition 5.14]{DY25} that $M^N$ converges weakly to a Pfaffian point process $M^{\infty}$ with reference measure $\mu_{\mathcal{T}} \times \mathrm{Leb}$ and correlation kernel $K^{\mathrm{Pf}}$. As $M$ and $M^{\infty}$ are both Pfaffian with the same kernel and reference measure, they have the same law, see \cite[Proposition 5.8(3)]{DY25}. We thus conclude that $M^N$ converge weakly to $M$, and since all other conditions of Lemma \ref{Lem.FDC} are met, the lemma implies the desired result. \\

{\bf \raggedleft Step 2.} We claim that for each $j \in \llbracket 1, m \rrbracket$
\begin{equation}\label{Eq.TailY1}
\lim_{a \rightarrow \infty} \limsup_{N \rightarrow \infty} \mathbb{P}(Y_1^{j,N} \geq a) = 0.
\end{equation}
We establish (\ref{Eq.TailY1}) in the next step. Here, we assume its validity and complete the proof of (\ref{Eq.TightY}). \\

We aim to apply Lemma \ref{Lem.tightcri} with $r = 1$, and $X^N_i = Y^{j,N}_i$ for $i \geq 1$. Note that (\ref{Eq.tightcriE01}) holds by the definition of $Y^{j,N}_i$, and condition (3) in Lemma \ref{Lem.tightcri} is verified by (\ref{Eq.TailY1}). It remains to check conditions (1) and (2). 

Define the point processes $M^{j,N}$ on $\mathbb{R}$ by
\begin{equation}\label{Eq.Mjn}
M^{j, N}(A) = \sum_{i \geq 1} {\bf 1}\{Y_i^{j,N} \in A\}.
\end{equation}
From Lemma \ref{Lem.PrelimitKernelEdge} and \cite[Lemma 5.13]{DY25} we know (for large $N$) that $M^{j,N}$ is a Pfaffian point process on $\mathbb{R}$ with reference measure $\nu_{\kappa_j}(N)$ and correlation kernel $K^{j,N}(x,y) = K^N(\kappa_j, x; \kappa_j,y)$, where $K^N$ is as in (\ref{Eq:EdgeKerDecomp}). From Proposition \ref{Prop.KernelConvTop} we know that $K^{j,N}(x,y)$ converges uniformly over compact sets of $\mathbb{R}^2$ to the kernel
\begin{equation}\label{Eq.KerYLimSlice}
K^{j,\infty}(x,y) = \begin{bmatrix}
    0 & \kbm\left(\kappa_j - \kappa_0,  x  ; \kappa_j - \kappa_0, y  \right)\\
    - \kbm\left(\kappa_j - \kappa_0,  y  ; \kappa_j - \kappa_0, x  \right) & 0
\end{bmatrix},
\end{equation}
where $\kbm$ is as in (\ref{Eq.KBM}). Moreover, the measures $\nu_{\kappa_j}(N)$ converge vaguely to the Lebesgue measure on $\mathbb{R}$. Therefore, by \cite[Proposition 5.10]{DY25} we conclude that $M^{j,N}$ converge weakly to a Pfaffian point process $M^{j,\infty}$ with correlation kernel as in (\ref{Eq.KerYLimSlice}) and with reference measure $\Leb$. This verifies condition (1) in Lemma \ref{Lem.tightcri}.

From our work in Step 1 and \cite[Lemma 5.13]{DY25}, we know that the measure $M^{\mathrm{BM}}$, defined by $ M^{\mathrm{BM}} = {\bf 1}\{B_{\kappa_j - \kappa_0} \in A \}$, is a Pfaffian point process on $\mathbb{R}$ with correlation kernel $K^{j,\infty}$ and reference measure $\mathrm{Leb}$. As $M^{j,\infty}$ and $M^{\mathrm{BM}}$ are both Pfaffian with the same kernel and reference measure, they have the same law -- see \cite[Proposition 5.8(3)]{DY25}. As $M^{\mathrm{BM}}(\mathbb{R}) = 1$ almost surely, it follows that $M^{j,\infty}$ satisfies condition (2) in Lemma \ref{Lem.tightcri}. 

In conclusion, the sequence $\{Y^{j,N}_i\}_{i \geq 1}$ satisfies the assumptions of Lemma \ref{Lem.tightcri} with $r = 1$, from which (\ref{Eq.TightY}) follows.\\

{\bf \raggedleft Step 3.} In this step, we fix $j \in \llbracket 1, m \rrbracket$ and prove (\ref{Eq.TailY1}). If $M^{j,N}$ is as in (\ref{Eq.Mjn}), then for any $a \in \mathbb{R}$
\begin{equation}\label{Eq.TailBoundMoment}
\sum_{i \geq 1} \mathbb{P}\left(Y^{j,N}_i \geq a \right) = \mathbb{E}\left[ \sum_{i \geq 1} {\bf 1}\{Y_i^{j,N} \in  [a, \infty) \}   \right] = \mathbb{E}\left[M^{j,N}([a, \infty)) \right].
\end{equation}
As explained in Step 2, we have (for large $N$) that $M^{j,N}$ is a Pfaffian point process on $\mathbb{R}$ with reference measure $\nu_{\kappa_j}(N)$ and correlation kernel $K^{j,N}(x,y) = K^N(\kappa_j, x; \kappa_j,y)$, where $K^N$ is as in (\ref{Eq:EdgeKerDecomp}). From \cite[(2.13)]{ED24a} and \cite[(5.12)]{DY25} we know for any bounded Borel set $A \subset \mathbb{R}$
$$\mathbb{E}\left[M^{j,N}(A) \right] = \int_A K^N_{12}(\kappa_j, x; \kappa_j,x) \nu_{\kappa_j}(N)(dx).$$
Setting $A = [a,b]$ and letting $b \rightarrow \infty$, we obtain by Lemma \ref{Lem.PrelimitKernelEdge} and monotone convergence that
\begin{equation}\label{Eq.FactMomEdge}
\mathbb{E}\left[M^{j,N}([a, \infty)) \right] = \frac{1}{\sigmap N^{1/2}}\sum_{x \in \Lambda_{\kappa_j}(N), x \geq a_N} K^N_{12}(\kappa_j, x; \kappa_j,x),
\end{equation}
where $a_N = \min\{y \in \Lambda_{\kappa_j}(N): y \geq a\}$. Next, we can replace 
$$K^N_{12}(\kappa_j, x; \kappa_j,x) = I^N_{12}(\kappa_j, x; \kappa_j,x) + R^N_{12}(\kappa_j, x; \kappa_j,x),$$
on the right side of (\ref{Eq.FactMomEdge}), exchange the order of the sum and the integrals in the definitions of $I^N_{12}$ and $R^N_{12}$ from (\ref{Eq.DefIN12Edge}) and (\ref{Eq.DefRN12Edge}), and evaluate the resulting geometric series to obtain
\begin{equation*}
\begin{split}
&\mathbb{E}\left[M^{j,N}([a, \infty)) \right] = U_N(a) + V_N(a), \mbox{ where } U_N(a) = \frac{\sigmap^{-1} N^{-1/2} }{(2\pi \im)^{2}}\oint_{\Gamma_{\kappa_j, N}} \hspace{-3mm} dz \oint_{\gamma_{\kappa_j}} dw  \frac{F_{12}^N(z,w) H_{12}^N(z,w)}{1 - w/z} ,\\
& V_N(a) =  \frac{1}{2\pi \im} \oint_{\Gamma_{\kappa_j,N}} dz \frac{F_{12}^N(z,c) (zc-1) (1-qz)^{\kappa_j N - \lfloor \kappa_j N \rfloor}}{z(z^2-1)(1-qc)^{\kappa_j N - \lfloor \kappa_j N \rfloor}} \cdot \frac{1}{1 - c/z}.
\end{split}
\end{equation*}
Here, $F^N_{12}, H^N_{12}$ are defined as in (\ref{Eq.DefIN12Edge}) with $s =t = \kappa_j$ and $x = y = a_N$. We mention that the two geometric series involved are absolutely convergent due to (\ref{Eq.ContoursNestedEdge}). Combining the last displayed equation with (\ref{Eq.TailBoundMoment}), we see that to prove (\ref{Eq.TailY1}), it suffices to show:
\begin{equation}\label{Eq.EdgeFDRed1}
\limsup_{a \rightarrow \infty} \limsup_{N \rightarrow \infty} |U_N(a)| = 0, \mbox{ and } \limsup_{a \rightarrow \infty} \limsup_{N \rightarrow \infty} |V_N(a)| = 0.
\end{equation}
In the remainder of the step we verify (\ref{Eq.EdgeFDRed1}) using the estimates from Section \ref{Section5.1}. In the inequalities below we will encounter various constants $B_i,b_i > 0$ with $B_i$ sufficiently large, and $b_i$ sufficiently small, depending on $q, c, \mathcal{T}, \{\theta_\kappa: \kappa \in \mathcal{T}\}, \{R_{\kappa}: \kappa \in \mathcal{T}\}$ -- we do not list this dependence explicitly. In addition, the inequalities will hold provided that $N$ is sufficiently large, depending on the same set of parameters, which we will also not mention further.\\

From (\ref{Eq.ContoursNestedEdge}), for $z \in \Gamma_{\kappa_j, N}$, $w \in \gamma_{\kappa_j}$, and $x \geq 0$, we have
\begin{equation*} 
\left| \frac{1}{1 - w/z } \right| \leq \frac{1}{1 - z_c(\kappa_j)/c}, \mbox{ and } \left|e^{-  \sigmap x N^{1/2} \log (z/c) +  \sigmap x N^{1/2} \log(w/c)  } \right| \leq 1.
\end{equation*}
Combining these bounds with (\ref{Eq.S2BoundZClose}), (\ref{Eq.S2BoundZFar}), (\ref{Eq.S2BoundW}) and the second line in (\ref{Eq.HijBoundEdge}), we obtain for some $B_1, b_1 > 0$ and all $a \geq 0$
\begin{equation*} 
\left| \frac{F_{12}^N(z,w) H_{12}^N(z,w)}{1 - w/z} \right| \leq B_1 e^{-b_1 N},
\end{equation*}
where we recall that $F^N_{12}, H^N_{12}$ are as in (\ref{Eq.DefIN12Edge}) with $s =t = \kappa_j$ and $x = y = a_N \geq a$. The last bound and the bounded lengths of $\Gamma_{\kappa_j,N}, \gamma_{\kappa_j}$ imply the first statement in (\ref{Eq.EdgeFDRed1}).\\

From (\ref{Eq.ContoursNestedEdge}), for some $B_2, b_2 > 0$ and all $z \in \Gamma_{\kappa_j, N}$, $x \geq 0$, we have
\begin{equation}\label{Eq.EdgeFDB1} 
\left| \frac{1}{1 - c/z }\right| \leq B_2 N^{1/2}, \mbox{ and } \left|e^{-  \sigmap x N^{1/2} \log (z/c) +  \sigmap x N^{1/2} \log(c/c)  } \right| \leq B_2 e^{-b_2 x}.
\end{equation}
Let $\delta, \epsilon$ be as in the beginning of Section \ref{Section5.1} and let $\Gamma_{\kappa_j,N}(0)$ denote the part of $\Gamma_{\kappa_j,N}$, contained in the disc $\{z: |z - c| \leq \delta \}$. In addition, let $V_N^0(a)$ be defined analogously to $V_N(a)$, but with $\Gamma_{\kappa_j, N}$ replaced with $\Gamma_{\kappa_j, N}(0)$. Combining (\ref{Eq.EdgeFDB1}) with the third line in (\ref{Eq.F11BoundEdge}), the second line in (\ref{Eq.HijBoundEdge2}), and the bounded length of $\Gamma_{\kappa_j, N}$, gives for some $B_3,b_3 > 0$ and all $a \geq 0$
\begin{equation}\label{Eq.EdgeFDB2} 
\left| V_N(a) - V_N^0(a) \right| \leq B_3 e^{-b_3 N - b_2 a_N} \leq B_3 e^{-b_3 N - b_2 a}.
\end{equation}
Next, changing variables $z = c + \tilde{z} N^{-1/2}$, and applying (\ref{Eq.EdgeFDB1}), the fourth line in (\ref{Eq.F11BoundEdge}), and the second line in (\ref{Eq.HijBoundEdge2}) gives for some $B_4,b_4 > 0$ and all $a \geq 0$
\begin{equation}\label{Eq.EdgeFDB3} 
\left|V_N^0(a) \right| \leq B_4 e^{-b_2 a_N} \cdot \int_{\mathcal{C}^{\theta_{\kappa_j}}_0[r_{\kappa_j}]} e^{-b_4|\tilde{z}|^2} |d\tilde{z}|,
\end{equation}
where we recall that $|d\tilde{z}|$ denotes integration with respect to arc-length, $r_{\kappa_j} = \sec(\theta_{\kappa_j})$ and the contours $\mathcal{C}_{a}^{\phi}[r]$ were defined above (\ref{Eq.I11Vanish}). Equations (\ref{Eq.EdgeFDB2}) and (\ref{Eq.EdgeFDB3}) imply the second statement in (\ref{Eq.EdgeFDRed1}). This concludes the proof of (\ref{Eq.EdgeFDRed1}) and hence the proposition.
\end{proof}

%
\subsection{Proof of Theorem \ref{Thm.Main1}}\label{Section7.3} Assume the same parameters as in Definition \ref{Def.ParametersBulk}, and let $\mathbb{P}_N$ be the Pfaffian Schur process from Definition \ref{Def.SchurProcess} with parameters as in (\ref{Eq.HomogeneousParameters}). If $(\lambda^1, \dots, \lambda^N)$ has law $\mathbb{P}_N$, we define the $\llbracket 1, 2 \rrbracket$-indexed geometric line ensembles $\mathfrak{L}^{\mathrm{top}, N} = (L^{\mathrm{top},N}_1, L^{\mathrm{top},N}_2)$ on $\mathbb{Z}$ by
\begin{equation}\label{Eq.DLETop}
L^{\mathrm{top},N}_i(s)  = \begin{cases} \lambda^{N-s+1}_i - \lfloor (c-q)^{-1}qN\rfloor  &\mbox{ if } i \in \llbracket 1, 2 \rrbracket, s \in \llbracket 1, N \rrbracket,  \\
- \lfloor (c-q)^{-1}qN\rfloor  &\mbox{ if } i \in \llbracket 1, 2 \rrbracket, s \leq 0, \\
\lambda^1_i -  \lfloor (c-q)^{-1}qN\rfloor  &\mbox{ if } i \in \llbracket 1, 2 \rrbracket, s \geq N+1. 
\end{cases}
\end{equation}
As mentioned in Section \ref{Section6.1}, by interpolating the points $(s, L^{\mathrm{top},N}_i(s))$ for $s \in \mathbb{Z}$, we may view $\mathfrak{L}^{\mathrm{top}, N}$ as a random element in $C(\llbracket 1, 2 \rrbracket \times \mathbb{R})$. Define the rescaled process $\mathcal{L}^{\mathrm{top},N}_1 \in \mathcal{C}\left((\kappa_0,1)\right) $ by
\begin{equation}\label{Eq.ScaledDLETop}
\mathcal{L}^{\mathrm{top},N}_1(t) = \sigmap^{-1} N^{-1/2} \cdot \left(L^{\mathrm{top},N}_1(tN) -  \pp t N \right) \mbox{ for } t\in (\kappa_0,1).
\end{equation}
From Definition \ref{Def.TopCurveScaledLPP} and Lemma \ref{Prop.LPPandSchur}, we know that $\mathcal{L}^{\mathrm{top},N}_1$ has the same law as $\mathcal{U}_1^{\mathrm{top},N}$. Consequently, it suffices to prove that 
\begin{equation}\label{Eq.ScaledTopWeakConv}
\mathcal{L}^{\mathrm{top},N}_1 \Rightarrow W \mbox{ in } \mathcal{C}\left((\kappa_0,1)\right),
\end{equation}
where $W$ is as in the statement of the theorem.\\

We first verify that $\mathfrak{L}^{\mathrm{top}, N}$ satisfy the hypotheses of \cite[Theorem 5.1]{dimitrov2024tightness} with $p = \pp$, $\sigma = \sigmap$, $K = 1$, $K_N = 2$, $d_N = N$, $\hat{A}_N = \lfloor \kappa_0 N \rfloor$, $\hat{B}_N = N$, $\alpha = \kappa_0$, $\beta = 1$. Clearly,
$$d_N \rightarrow \infty, \hspace{2mm} \hat{A}_N/d_N \rightarrow \alpha, \hspace{2mm} \hat{B}_N/d_N \rightarrow \beta, \hspace{2mm} K_N \rightarrow K+ 1 \mbox{ as } N \rightarrow \infty,$$
verifying the first point in \cite[Theorem 5.1]{dimitrov2024tightness}. In addition, we note for any $m \in \mathbb{N}$ and $1 > \kappa_m > \kappa_{m-1} > \cdots > \kappa_1 > \kappa_0$, that for $j \in \llbracket 1, m \rrbracket$
\begin{equation}\label{Eq.DLETopToY}
\left|\sigmap^{-1} N^{-1/2} \cdot \left(L^{\mathrm{top},N}_1(\lfloor \kappa_j N \rfloor ) - \pp\kappa_j N \right) - Y^{j,N}_1\right| \leq \sigmap^{-1} N^{-1/2},
\end{equation}
where $(Y_1^{1, N}, \dots, Y_1^{m,N})$ is as in Definition \ref{Def.ScalingEdge}. Proposition \ref{Prop.FinitedimEdge} shows  $(Y_1^{1, N}, \dots, Y_1^{m,N})$ converge weakly to $\left(B_{\kappa_1-\kappa_0}, \dots, B_{\kappa_m- \kappa_0}\right)$, which implies that the sequence $\sigmap^{-1} N^{-1/2} \cdot \left(L^{\mathrm{top},N}_1(\lfloor \kappa N \rfloor ) - \pp \kappa N \right)$ is tight for each $\kappa \in (\kappa_0, 1)$, verifying the second point in \cite[Theorem 5.1]{dimitrov2024tightness}. Finally, Lemma \ref{Lem.InterlacingGibbs} establishes that $\mathfrak{L}$ from (\ref{Eq.SchurLE}) satisfies the interlacing Gibbs property from Definition \ref{DefSGP}. Since $\{L^{\mathrm{top},N}_i(s): i \in \llbracket 1, 2\rrbracket \mbox{ and } s \in \llbracket \hat{A}_N, \hat{B}_N \rrbracket \}$ is obtained by a deterministic vertical shift and restriction of $\mathfrak{L}$, it also satisfies the interlacing Gibbs property as a $\llbracket 1, 2 \rrbracket$-indexed ensemble on $\llbracket \hat{A}_N, \hat{B}_N \rrbracket$. This verifies the third point of \cite[Theorem 5.1]{dimitrov2024tightness}, from which we conclude that the sequence $\mathcal{L}^{\mathrm{top},N}_1$ is tight in $C((\kappa_0,1))$.

If $\mathcal{L}^{\infty}$ is any subsequential limit of $\mathcal{L}^{\mathrm{top},N}_1$, we know from (\ref{Eq.DLETopToY}) and Proposition \ref{Prop.FinitedimEdge} that $\mathcal{L}^{\infty}$ has the same finite-dimensional distribution as $W$. As finite-dimensional sets form a separating class in $C((\kappa_0, 1))$, see \cite[Example 1.3]{Bill}, we conclude $\mathcal{L}^{\infty} = W$. Overall, we see that the sequence $\mathcal{L}^{\mathrm{top},N}_1$ is tight and each subsequential limit agrees with $W$, which implies (\ref{Eq.ScaledTopWeakConv}).

%
\section{Weak convergence of the bottom curves}\label{Section8} The goal of this section is to prove Theorem \ref{Thm.Main2}. In Section \ref{Section8.1}, we establish the finite-dimensional convergence of the random vectors $(X^{j,N}_i: i \geq 2, j \in \llbracket 1, m \rrbracket)$ from Definition \ref{Def.ScalingBulk}; see Proposition \ref{Prop.FinitedimBulk}. In Section \ref{Section8.2} we introduce a sequence of geometric line ensembles $\mathfrak{L}^{\mathrm{bot}, N}$ in Definition \ref{Def.AuxLE}, state its weak convergence in Proposition \ref{Prop.ConvBottom}, and deduce Theorem \ref{Thm.Main2} from this result. In Section \ref{Section8.3}, we define an auxiliary family of line ensembles $\hat{\mathfrak{L}}^{\mathrm{bot}, N}$, and show their proximity to $\mathfrak{L}^{\mathrm{bot},N}$ in Proposition \ref{Prop.Proximity}. In Section \ref{Section8.4}, we prove Proposition \ref{Prop.ConvBottom} by combining the finite-dimensional convergence from Proposition \ref{Prop.FinitedimBulk}, the tightness criterion from \cite[Theorem 5.1]{dimitrov2024tightness} (applied to $\hat{\mathfrak{L}}^{\mathrm{bot}, N}$), and the proximity result from Proposition \ref{Prop.Proximity} to conclude tightness for $\mathfrak{L}^{\mathrm{bot}, N}$.

%
\subsection{Finite-dimensional convergence for bottom curves}\label{Section8.1} 

The goal of this section is to establish the following finite-dimensional convergence result.
\begin{proposition}\label{Prop.FinitedimBulk}
 Assume the same notation as in Definition \ref{Def.ScalingBulk}. For $i \geq 1$ and $j \in \llbracket 1, m \rrbracket$ set $\hat{X}_i^{j,N} = X_{i + 1}^{j,N}$. Then, the sequence of random vectors $\hat{X}^N = (\hat{X}_i^{j, N}: i\ge 1,  j \in \llbracket 1, m \rrbracket)$ converges in the finite-dimensional sense to $(\mathcal{A}_i(\fq t_j) - \fq^2 t_j^2: i\ge 1,  j \in \llbracket 1, m \rrbracket)$, where $\mathcal{A} = \{\mathcal{A}_i\}_{i \geq 1}$ is the Airy line ensemble from Definition \ref{Def.AiryLE}. 
\end{proposition}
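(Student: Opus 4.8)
The plan is to mirror the strategy used for the top curve in Proposition \ref{Prop.FinitedimEdge}, but now using the bulk scaling of Lemma \ref{Lem.PrelimitKernelBulk} and the kernel convergence of Proposition \ref{Prop.KernelConvBottom}. The three-step structure is: (1) assuming one-point tightness of each $\hat{X}^{j,N}_i$, conclude finite-dimensional convergence via Lemma \ref{Lem.FDC}; (2) reduce one-point tightness to upper-tail tightness via Lemma \ref{Lem.tightcri}; (3) prove upper-tail tightness by estimating the expected first moment of the relevant point process using the bounds from Section \ref{Section4.1}. The target limit point process is the determinantal process formed by $\{(t_j, \mathcal{A}_i(\fq t_j) - \fq^2 t_j^2)\}$, which by the affine change of variables in Theorem \ref{Thm.Main2} (i.e. $\mathcal{U}^{\mathrm{bot},\infty}_i(t) = (2\fq)^{-1/2}(\mathcal{A}_i(\fq t) - \fq^2 t^2)$, though here we keep the un-rescaled form) corresponds to a kernel obtained from $K^{\mathrm{Airy}}$ by conjugation and reparametrization.

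First I would identify the limiting Pfaffian kernel. The point process $M$ formed by $\{(t_j, \mathcal{A}_i(\fq t_j) - \fq^2 t_j^2): i \geq 1, j \in \llbracket 1,m\rrbracket\}$ is determinantal: starting from the extended Airy kernel $K^{\mathrm{Airy}}$ for $\mathcal{A}$ at the points $\fq t_1 < \cdots < \fq t_m$, applying the change of variables $(s,x) \mapsto (\fq s, x + \fq^2 s^2)$ (as in \cite[Proposition 2.13(5)]{ED24a}) produces a determinantal process with some kernel $\tilde{K}^{\mathrm{det}}$; conjugating by a function of the form $e^{g(t,x)}$ does not change the process (\cite[Proposition 2.13(4)]{ED24a}), and the conjugated kernel is exactly the $K_{12}$-limit in \eqref{Eq.KernelLimitBottom}, namely $e^{2\fq^3 s^3/3 - 2\fq^3 t^3/3 + \fq s x - \fq t y} K^{\mathrm{Airy}}(-\fq s, x + \fq^2 s^2; -\fq t, y + \fq^2 t^2)$. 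Then by \cite[Lemma 5.9]{DY25} this determinantal process is also Pfaffian with the antidiagonal kernel built from the same $K_{12}$-limit, and by \cite[Remark 2.15]{DY25} or the analogous statement in \cite[Proposition 5.8(3)]{DY25} a Pfaffian process is determined by its kernel and reference measure. Combining Proposition \ref{Prop.KernelConvBottom} (which gives $K^N_{11} \to 0$, $K^N_{22} \to 0$, $K^N_{12} \to$ the above) with \cite[Proposition 5.14]{DY25} (the vague-convergence-from-kernel-convergence statement, using that the limiting $K_{12}$ is continuous in $x,y$ hence locally bounded) shows $M^N \Rightarrow M$. The reference measures $\nu_t(N)$, being $(\sigmaq\zc)^{-1}N^{-1/3}$ times counting measure on a lattice of spacing $(\sigmaq\zc)^{-1}N^{-1/3}$, converge vaguely to Lebesgue measure. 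For Step 2, the slice-wise picture: at fixed $t_j$, the point process $M^{j,N}$ converges to a Pfaffian process whose correlation kernel is the $j$-diagonal slice, which corresponds to the Airy process at a single time — an a.s. infinite configuration — hence $\mathbb{P}(M^{j,\infty}(\mathbb{R}) \geq r) = 1$ for every $r$, verifying condition (2) of Lemma \ref{Lem.tightcri}. The ordering \eqref{Eq.tightcriE01} holds by construction of the $X^{j,N}_i$ as shifted/scaled $\lambda^{\cdot}_i$'s.

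The substantive work is Step 3: upper-tail tightness, i.e. $\lim_{a\to\infty}\limsup_{N\to\infty}\mathbb{P}(\hat{X}^{j,N}_1 \geq a) = 0$ for each $j$. As in Proposition \ref{Prop.FinitedimEdge} one writes $\mathbb{P}(\hat{X}^{j,N}_1 \geq a) \leq \sum_{i\geq 1}\mathbb{P}(X^{j,N}_{i+1}\geq a) \leq \mathbb{E}[M^{j,N}([a,\infty))] = \int_{[a,\infty)} K^N_{12}(t_j,x;t_j,x)\,\nu_{t_j}(N)(dx)$, using \cite[(2.13)]{ED24a} and \cite[(5.12)]{DY25} and monotone convergence, and then the contributions of $X^{j,N}_1$ (the top curve) must be subtracted off or shown negligible — actually here $\hat{X}^{j,N}_1 = X^{j,N}_2$, so the expected count $\mathbb{E}[M^{j,N}([a,\infty))]$ counts all of $X^{j,N}_1, X^{j,N}_2, \ldots$ including the top curve, which under the bulk scaling diverges to $+\infty$; so I would instead bound $\mathbb{P}(X^{j,N}_2 \geq a)$ by first conditioning and using the interlacing, or more directly by splitting the kernel's $z$-contour integral and exploiting the factor $(1-c/z)$ — the pole structure at $z=c$ is precisely what separates the top-curve contribution (the $R^N_{12}$ residue term) from the bulk. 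One then substitutes $K^N_{12} = I^N_{12} + R^N_{12}$, exchanges sum and contour integral in \eqref{Eq.DefIN12Bulk} and \eqref{Eq.DefRN12Bulk}, evaluates the resulting geometric series in $x$ (absolutely convergent by \eqref{Eq.ContoursNestedBulk}), and is left with contour integrals carrying a factor $e^{-\sigmaq\zc a_N N^{1/3}\log(z/\zc)}$ that decays like $e^{-b\, a}$ uniformly for $z$ on the descent contour $\Gamma_N$ away from $\zc$, while near $\zc$ one uses the quadratic/cubic decay estimates \eqref{Eq.F12Bound}, \eqref{Eq.FijBoundinR} and the change of variables $z = \zc + \tilde{z}N^{-1/3}$ to get a bound of the form $B\,e^{-b\,a}\int_{\mathcal{C}} e^{-b|\tilde z|^2}|d\tilde z|$ plus exponentially small terms.

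The main obstacle I anticipate is handling the top-curve contribution correctly in Step 3: under the bulk scaling the curve $X^{j,N}_1$ (which we discard, since $\hat X$ starts from index $2$) sits at height of order $N^{2/3}$ above the bulk window in the supercritical regime, so its contribution to $\mathbb{E}[M^{j,N}([a,\infty))]$ does \emph{not} vanish and must be excised before applying Lemma \ref{Lem.tightcri}. The clean way is to note that $\mathbb{E}[M^{j,N}([a,\infty))]$ overcounts by exactly the indicator $\mathbf{1}\{X^{j,N}_1 \geq a\}$ plus lower terms, and that $\mathbb{P}(X^{j,N}_1 \geq a)$ for $a$ in the bulk scale is actually $1$ for large $N$ — so one cannot simply bound $\mathbb{P}(X^{j,N}_2 \geq a) \leq \mathbb{E}[M^{j,N}([a,\infty))]$ and win. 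Instead I would work with the \emph{shifted} point process that removes the top particle, or equivalently bound $\mathbb{P}(X^{j,N}_2 \geq a) \leq \mathbb{E}[M^{j,N}([a,\infty))] - 1$ once $N$ is large enough that $X^{j,N}_1 \geq a$ a.s.; the subtracted $1$ is precisely the residue at $z=c$ in the expansion, i.e. the $R^N_{12}$-type term, whose large-$N$ limit is $\approx 1$ (it is the ``Brownian'' one-point mass), and the remaining $I^N_{12}$-type term is the bulk count which one shows decays in $a$ uniformly in $N$ by the contour estimates above. Making this cancellation rigorous — matching the residue term's limit against the constant $1$ with enough uniformity in $a$ and $N$ — is the delicate point; everything else is a routine adaptation of the arguments already carried out for $K^N_{12}$ in Section \ref{Section4.2} and for the top curve in Section \ref{Section7.2}.
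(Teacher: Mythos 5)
The high-level architecture of your proposal matches the paper's: deduce finite-dimensional convergence from vague convergence of point processes plus one-point tightness, reduce one-point tightness to tightness from above via Lemma~\ref{Lem.tightcri}, and establish tightness from above by estimating $\mathbb{E}[M^{j,N}([a,\infty))]$. You also correctly identify the key structural wrinkle that the bottom-curve case adds over Proposition~\ref{Prop.FinitedimEdge}: the first particle $X_1^{j,N}$ escapes to $+\infty$ under the bulk scaling, so $\mathbb{E}[M^{j,N}([a,\infty))]$ converges to $1+o(1)$ rather than $0$, and this unit must be excised before Lemma~\ref{Lem.tightcri} can be applied.

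However, Step~3 of your proposal contains a genuine error at precisely the technical crux. You write that after substituting $K^N_{12}=I^N_{12}+R^N_{12}$ and exchanging sum and integral, the geometric series in $x$ is ``absolutely convergent by~\eqref{Eq.ContoursNestedBulk}.'' That estimate, $|z|>\zc\geq|w|$ for $z\in\Gamma_N$, $w\in\gamma_N$, does make the $I^N_{12}$ series converge absolutely (ratio $|w/z|<1$). But the relevant $R^N_{12}$ contribution (for $s=t$) carries the factor $F^N_{12}(z,c)$, i.e.\ the $w$-slot is evaluated at $w=c>\zc$. Along $\Gamma_N$ the modulus $|z|$ drops as low as $\zc+N^{-1/3}<c$, so the per-step ratio of the geometric series is $|c/z|>1$ there, and the series diverges --- the paper states this explicitly as the obstruction. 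The fix the paper uses is a \emph{prior} contour deformation: restore $K^N_{12}$ on a nested pair of circles $C_{r^z_{12}}\supset C_{r^w_{12}}$ with $c<r^w_{12}<r^z_{12}<q^{-1}$, where $K^N_{12}$ is a single double integral without an $R$ term and the geometric sum is absolutely convergent; only \emph{after} summing does one deform back to $\gamma_N,\Gamma_N$, picking up the residues at $w=c$ and $z=c$. The residue at $z=c$ of the summed integrand is exactly $1$, which is what cancels $\lim_N\mathbb{P}(X_1^{j,N}\geq a)=1$; the residue at $w=c$ gives $V_N(a)$, which decays via $\SFb(\zc)-\SFb(c)<0$. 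Your attribution of the $+1$ to ``the $R^N_{12}$-type term'' is therefore off: the $R^N_{12}$-type contribution $V_N(a)$ tends to $0$, while the $+1$ arises from a different pole in the re-derived formula. Relatedly, your phrase ``once $N$ is large enough that $X_1^{j,N}\geq a$ a.s.'' should be ``$\mathbb{P}(X_1^{j,N}\geq a)\to 1$'' (equation~\eqref{Eq.XEscape}); the event is not almost sure for finite $N$, and the precise bookkeeping is the exact identity $\mathbb{P}(X_1^{j,N}\geq a)+\sum_{i\geq1}\mathbb{P}(\hat X_i^{j,N}\geq a)=\mathbb{E}[M^{j,N}([a,\infty))]$ with both sides analyzed in the iterated $N\to\infty$, $a\to\infty$ limit.

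One smaller point: your Step~1 establishes $M^N\Rightarrow\hat M$ from the kernel convergence, but what is actually needed is $\hat M^N\Rightarrow\hat M$, and passing between these uses the same fact $X_1^{j,N}\Rightarrow\infty$. The paper makes this explicit; your write-up leaves it implicit in Step~1 and only surfaces the issue in Step~3.
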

\begin{proof} We adopt the same notation as in Lemma \ref{Lem.PrelimitKernelBulk}, with $\theta = \theta_0$, $R = R_0$, as in Lemma \ref{Lem.BigContour}. We also define the point processes $\hat{M}^N $ on $\mathbb{R}^2$ through
\begin{equation}\label{Eq.MTrunc}
\hat{M}^N (A) = \sum_{i \geq 1} \sum_{j = 1}^m {\bf 1}\{(t_j, \hat{X}_i^{j,N}) \in A\}.
\end{equation}
The structure of the proof is generally similar to that of Proposition \ref{Prop.FinitedimEdge} and for clarity is divided into four steps. In the first step, we assume that $\{X^{j,N}_r\}_{N \geq 1}$ is tight for each $r \geq 1, j \in \llbracket 1, m\rrbracket$, and that the measures $\hat{M}^N$ converge weakly. Under these assumptions, we prove the proposition by applying \cite[Proposition 2.19]{ED24a}. In the second step, we prove the weak convergence of $\hat{M}^N$, by leveraging the kernel convergence result from Proposition \ref{Prop.KernelConvBottom}. In the third step, we establish the tightness $\{X^{j,N}_r\}_{N \geq 1}$ by assuming the sequences $\{\hat{X}^{j,N}_1\}_{N \geq 1}$ are tight from above -- see (\ref{Eq.TailX2}) -- and invoking Lemma \ref{Lem.tightcri}. Finally, in the fourth step, we verify this upper tightness condition by estimating the upper-tails of the distributions of $\{\hat{X}_1^{j,N}\}_{N \geq 1}$.\\

{\bf \raggedleft Step 1.} We claim that 
\begin{equation}\label{Eq.TightX}
\mbox{ the sequence } \{\hat{X}_r^{j,N}\}_{N \geq 1} \mbox{ is tight for each $r \geq 1$, $j \in \llbracket 1, m \rrbracket$,}
\end{equation}
and
\begin{equation}\label{Eq.VaugeConvX}
\hat{M}^N \Rightarrow \hat{M},
\end{equation}
where $\hat{M}$ is a Pfaffian point process on $\mathbb{R}^2$ with reference measure $\mu_{\mathcal{T}} \times \Leb$ and correlation kernel
\begin{equation}\label{Eq.LimKerFD}
K^{\mathrm{Pf}}(s,x;t,y) = \begin{bmatrix}
    0 & \frac{f(s,x)}{f(t,y)} \cdot K^{\infty}\left(s,x;t,y\right)\\
    - \frac{f(t,y)}{f(s,x)} \cdot K^{\infty}\left(t,y;s,x\right) & 0
\end{bmatrix}.
\end{equation}
Here, the functions $f(s,x)$ and $K^{\infty}\left(s,x;t,y\right)$ are given by
\begin{equation}\label{Eq.LimKerFD2}
\begin{split}
&K^{\infty}\left(s,x;t,y\right) = K^{\mathrm{Airy}}\left(-\fq s,  x + \fq^2 s^2 ; - \fq t, y + \fq^2 t^2 \right), \hspace{2mm} f(s,x) = e^{2\fq^3s^3/3 + \fq s x},
\end{split}
\end{equation}
where we recall that $K^{\mathrm{Airy}}$ is the extended Airy kernel from Definition \ref{Def.AiryLE}. We prove (\ref{Eq.TightX}) and (\ref{Eq.VaugeConvX}) in the steps below. Here, we assume their validity and complete the proof of the proposition.\\

Applying \cite[Proposition 2.19]{ED24a} with $r = m$ and $X_i^{j,N} = \hat{X}_i^{j,N}$, we conclude that $\hat{X}^N = (\hat{X}_i^{j,N}: i \geq 1, j \in \llbracket 1, m \rrbracket)$ converges in the finite-dimensional sense to a vector $\hat{X}^{\infty} = (\hat{X}^{j, \infty}_i: i \geq 1, j \in \llbracket 1, m \rrbracket)$, and moreover, the random measure $\hat{M}^{\infty}$, defined as in (\ref{Eq.MTrunc}) for $N = \infty$, has the same distribution as $\hat{M}$. In view of \cite[Corollary 2.20]{ED24a}, to complete the proof of the proposition, it suffices to show that
\begin{equation}\label{Eq.MeasEqual}
\hat{M}  \overset{d}{=}  M,
\end{equation}
where the random measure $M$ is defined by 
\begin{equation}\label{Eq.MeasEqual2}
M(A) = \sum_{i \geq 1} \sum_{j = 1}^m {\bf 1}\{(t_j, \mathcal{A}_i(\fq t_j) - \fq^2 t_j^2) \in A\}.
\end{equation}

Using a kernel conjugation (see\cite[Proposition 5.8(4)]{DY25}), and \cite[Lemma 5.9]{DY25}, we know that $\hat{M}$ is a determinantal point process with reference measure $\mu_{\mathcal{T}} \times \Leb$ and correlation kernel 
 \begin{equation*}
 K^{\mathrm{Airy}}\left(-\fq s,  x + \fq^2 s^2 ; - \fq t, y + \fq^2 t^2 \right) = K^{\mathrm{Airy}}\left( \fq t, y + \fq^2 t^2; \fq s,  x + \fq^2 s^2 \right),
\end{equation*}
where the equality follows directly from (\ref{Eq.S1AiryKer}) upon changing variables $z \rightarrow -w$, $w \rightarrow -z$. By kernel transposition (see \cite[Proposition 2.13(4)]{ED24a}), we conclude $\hat{M}$ is a determinantal point process with reference measure $\mu_{\mathcal{T}} \times \Leb$ and correlation kernel
 \begin{equation}\label{Eq.LimKerFDDet}
K^{\mathrm{det}}(s,x;t,y) = K^{\mathrm{Airy}}\left(\fq s,  x + \fq^2 s^2; \fq t,  y + \fq^2 t^2 \right).
\end{equation}

On the other hand, if $\tilde{M}$ is as in (\ref{Eq.RMS1}) with $\mathsf{S} = \{\fq t_1, \dots, \fq t_m\}$, we have that $M = \tilde{M}\phi^{-1}$, where $\phi(s,x) = (\fq^{-1} s, x - s^2)$. From \cite[Proposition 2.13(5)]{ED24a}, we conclude that $M$ is determinantal with reference measure $\mu_{\mathcal{T}} \times \Leb$ and correlation kernel $K^{\mathrm{det}}(s,x;t,y) $ as in (\ref{Eq.LimKerFDDet}). As $M$ and $\hat{M}$ are both determinantal point processes with the same correlation kernel and reference measure, we conclude (\ref{Eq.MeasEqual}), cf. \cite[Proposition 2.13(3)]{ED24a}.\\

{\bf \raggedleft Step 2.} In this step, we prove (\ref{Eq.VaugeConvX}). We first note that $M^N$ from Lemma \ref{Lem.PrelimitKernelBulk} satisfies
\begin{equation}\label{Eq.MTrunc2}
M^N (A) = \hat{M}^N(A) + \sum_{j = 1}^m {\bf 1}\{(t_j, X_1^{j,N}) \in A \}.
\end{equation}
From Lemma \ref{Lem.PrelimitKernelBulk}, we know (for large $N$) that $M^N$ is a Pfaffian point process on $\mathbb{R}^2$ with correlation kernel $K^N$ as in (\ref{Eq:BulkKerDecomp}), and reference measure $\mu_{\mathcal{T},\nu(N)}$. Proposition \ref{Prop.KernelConvBottom} implies that for each fixed $s, t \in \mathcal{T}$, the kernels $K^{N}(s,\cdot;t,\cdot)$ converge uniformly over compact sets of $\mathbb{R}^2$ to $K^{\mathrm{Pf}}(s,\cdot; t, \cdot)$ from (\ref{Eq.LimKerFD}). Using that $K^{\mathrm{Pf}}(s,x;t,y)$ is continuous in $x,y$ for fixed $s,t \in \mathcal{T}$ and hence locally bounded, it follows from \cite[Proposition 5.14]{DY25} that $M^N$ converges weakly to $\hat{M}$. 

Next, from (\ref{Eq.XsBulk}), (\ref{Eq.DLETop}), and (\ref{Eq.ScaledDLETop}), we have
\begin{equation*}
X_1^{j,N} = \frac{\sigmap N^{1/6}}{\sigmaq \zc} \cdot  \mathcal{L}^{\mathrm{top},N}_1\left(N^{-1}\left(\lfloor \kappa N \rfloor + T_{t_j} \right)\right)  + \frac{1}{\sigmaq \zc N^{1/3}} \cdot C_{j,N},
\end{equation*}  
where 
\begin{equation*}
C_{j,N} = \pp \left( \lfloor \kappa N \rfloor + T_{t_j} \right)  + \lfloor (c-q)^{-1}qN\rfloor - \hq N - \pq T_{t_j} - 1 = N \cdot [\hp(\kappa) -\hq] + O(N^{2/3}),
\end{equation*}
and the constant in the big $O$ notation depends on $q,\kappa, c, \mathcal{T}$. From (\ref{Eq.ScaledTopWeakConv}) we know that 
$$\mathcal{L}^{\mathrm{top},N}_1\left(N^{-1}\left(\lfloor \kappa N \rfloor + T_{t_j} \right)\right)  \Rightarrow B_{\kappa - \kappa_0},$$
and from Definition \ref{Def.ParametersBulk} we have
$$\hp(\kappa) -\hq = \frac{q(c-q)(\sqrt{\kappa} - \sqrt{\kappa_0})^2}{(1-q^2)(1-qc)} > 0.$$
The last four equations together yield for each $j \in \llbracket 1, m \rrbracket$
\begin{equation}\label{Eq.XEscape}
X_1^{j,N} \Rightarrow \infty \mbox{ as } N \rightarrow \infty,
\end{equation}
which together with (\ref{Eq.MTrunc2}) and the weak convergence of $M^N$ to $\hat{M}$ give (\ref{Eq.VaugeConvX}).\\

{\bf \raggedleft Step 3.} We claim that for each $j \in \llbracket 1, m \rrbracket$
\begin{equation}\label{Eq.TailX2}
\lim_{a \rightarrow \infty} \limsup_{N \rightarrow \infty} \mathbb{P}(\hat{X}_1^{j,N} \geq a) = 0.
\end{equation}
We establish (\ref{Eq.TailX2}) in the next step. Here, we assume its validity and complete the proof of (\ref{Eq.TightX}). \\

Fix $r \in \mathbb{N}$ and $j \in \llbracket 1, m \rrbracket$. We aim to apply Lemma \ref{Lem.tightcri} for this choice of $r$ and $X_i^N = \hat{X}_i^{j,N}$ for $i \geq 1$. Note that (\ref{Eq.tightcriE01}) holds by the definition of $X^{j,N}_i$, and condition (3) in Lemma \ref{Lem.tightcri} is verified by (\ref{Eq.TailX2}). It remains to check conditions (1) and (2). 

Define the point processes $M^{j,N}, \hat{M}^{j,N}$ on $\mathbb{R}$ by
\begin{equation}\label{Eq.MjnX}
\hat{M}^{j, N}(A) = \sum_{i \geq 1} {\bf 1}\{\hat{X}_i^{j,N} \in A\}, \hspace{2mm} M^{j, N}(A) = \sum_{i \geq 1} {\bf 1}\{X_i^{j,N} \in A\} = \hat{M}^{j, N}(A) + {\bf 1}\{X_1^{j,N} \in A\}.
\end{equation}
From Lemma \ref{Lem.PrelimitKernelBulk} and \cite[Lemma 5.13]{DY25} we know (for large $N$) that $M^{j,N}$ is a Pfaffian point process on $\mathbb{R}$ with reference measure $\nu_{t_j}(N)$ and correlation kernel $K^{j,N}(x,y) = K^N(\kappa_j, x; \kappa_j,y)$, where $K^N$ is as in (\ref{Eq:BulkKerDecomp}). Proposition \ref{Prop.KernelConvBottom} implies that $K^{j,N}(x,y)$ converges uniformly over compact sets of $\mathbb{R}^2$ to the kernel
\begin{equation}\label{Eq.KerXLimSlice}
K^{j,\infty}(x,y) = K^{\mathrm{Pf}}(t_j,x;t_j,y),
\end{equation}
where $K^{\mathrm{Pf}}$ is as in (\ref{Eq.LimKerFD}). Furthermore, the measures $\nu_{t_j}(N)$ converge vaguely to the Lebesgue measure on $\mathbb{R}$. It then follows from \cite[Proposition 5.10]{DY25} that $M^{j,N}$ converge weakly to a Pfaffian point process $M^{j,\infty}$ with correlation kernel as in (\ref{Eq.KerXLimSlice}) and with reference measure $\Leb$. Combining the latter with (\ref{Eq.XEscape}), and (\ref{Eq.MjnX}) implies $\hat{M}^{j,N} \Rightarrow M^{j,\infty}$, which verifies condition (1) in Lemma \ref{Lem.tightcri}.

Arguing as in Step 1, we have that $M^{j,\infty}$ has the same distribution as the point process on $\mathbb{R}$
$$M^j(A) = \sum_{i \geq 1}{\bf 1}\{\mathcal{A}_i(\fq t_j) - \fq^2t_j^2 \in A\},$$
which we recognize as the Airy point process, shifted by $\fq^2t_j^2$. Since the Airy point process has almost surely infinitely many atoms, see \cite[(7.11)]{ED24a}, it follows that $\mathbb{P}(M^{j, \infty}(\mathbb{R}) = \infty) = 1$, verifying condition (2) in Lemma \ref{Lem.tightcri}.

In conclusion, $\{\hat{X}_i^{j,N} \}_{i \geq 1}$ satisfy the conditions of Lemma \ref{Lem.tightcri}, which implies (\ref{Eq.TightX}).\\

{\bf \raggedleft Step 4.} In this final step we fix $j \in \llbracket 1, m \rrbracket$ and prove (\ref{Eq.TailX2}). Using (\ref{Eq.MTrunc2}), and arguing as in (\ref{Eq.TailBoundMoment}) and (\ref{Eq.FactMomEdge}), we obtain
\begin{equation}\label{Eq.MomToTail}
\begin{split}
&\mathbb{P}\left(X^{j,N}_1 \geq a \right)  + \sum_{i \geq 1} \mathbb{P}\left(\hat{X}^{j,N}_i \geq a \right) = \sum_{i \geq 1} \mathbb{P}\left(X^{j,N}_i \geq a \right) \\
&  = \frac{1}{\sigmaq \zc N^{1/3}}\sum_{x \in \Lambda_{\kappa_j}(N), x \geq a_N} K^N_{12}(t_j, x; t_j,x),
\end{split}
\end{equation}
where $K^N_{12}$ is as in (\ref{Eq:BulkKerDecomp}) and $a_N = \min\{y \in \Lambda_{t_j}(N): y \geq a\}$. In addition, from (\ref{Eq.XEscape}) we have for each $a \in \mathbb{R}$
\begin{equation}\label{Eq.MomToTail2}
\lim_{N \rightarrow \infty} \mathbb{P}(X^{j,N}_1 \geq a) = 1.
\end{equation}
Combining (\ref{Eq.MomToTail}) with (\ref{Eq.MomToTail2}), we see that to prove (\ref{Eq.TailX2}), it suffices to show that 
\begin{equation}\label{Eq.FDBulkRed1}
\limsup_{a \rightarrow \infty} \limsup_{N \rightarrow \infty} \left|  \frac{1}{\sigmaq \zc N^{1/3}}\sum_{x \in \Lambda_{\kappa_j}(N), x \geq a_N} K^N_{12}(t_j, x; t_j,x) - 1\right| = 0.
\end{equation}

Our strategy is to adapt the argument from Step 3 of Proposition \ref{Prop.FinitedimEdge}. However, a direct use of the decomposition 
$$K^N_{12}(t_j, x; t_j,x) = I^N_{12}(t_j, x; t_j,x) + R^N_{12}(t_j, x; t_j,x),$$
as in (\ref{Eq:BulkKerDecomp}), results in a geometric series from $R^N_{12}$ that is not absolutely  convergent. To circumvent this, we derive an alternative expression for $K^N_{12}(t_j, x; t_j,x)$. Fix $r_{12}^z \in (1, q^{-1})$, $r_{12}^w  \in (c, r_{12}^z)$ and deform $\gamma_N$ to the circle $C_{r_{12}^w}$, and $\Gamma_N$ to $C_{r_{12}^z}$ in the definition of $I^N_{12}$ in (\ref{Eq.DefIN12Bulk}). In doing so, we cross the simple pole at $w = c$, which exactly cancels $R^N_{12}(t_j, x; t_j,x)$, yielding the identity:
\begin{equation*}
\begin{split}
&K^N_{12}(t,x;t,x) = \frac{1}{(2\pi \im)^{2}}\oint_{C_{r_{12}^z}} dz \oint_{C_{r_{12}^w}} dw F_{12}^N(z,w) H_{12}^N(z,w) ,
\end{split}
\end{equation*}
where $F_{12}^N$, $H_{12}^N$ are as in (\ref{Eq.DefIN12Bulk}) for $x = y$ and $s = t = t_j$. Using the last displayed equation, and evaluating the resulting geometric series gives
\begin{equation*}
\begin{split}
\frac{1}{\sigmaq \zc N^{1/3}}\sum_{x \in \Lambda_{\kappa_j}(N), x \geq a_N} K^N_{12}(t_j, x; t_j,x) =  \frac{1}{(2\pi \im)^{2}}\oint_{C_{r_{12}^z}} dz \oint_{C_{r_{12}^w}} dw \frac{F_{12}^N(z,w) H_{12}^N(z,w) }{1 - w/z},
\end{split} 
\end{equation*}
where $F_{12}^N$, $H_{12}^N$ are as in (\ref{Eq.DefIN12Bulk}) for $x = y = a_N$ and $s = t = t_j$. We may now deform the contours $C_{r_{12}^w}$ and $C_{r_{12}^z}$ back to $\gamma_N$ and $\Gamma_N$, as in Lemma \ref{Lem.PrelimitKernelBulk}, to obtain
\begin{equation}\label{Eq.K12NewDecomp}
\begin{split}
\frac{1}{\sigmaq \zc N^{1/3}}\sum_{x \in \Lambda_{\kappa_j}(N), x \geq a_N} K^N_{12}(t_j, x; t_j,x) =  V_N(a) + U_N(a) + 1,
\end{split} 
\end{equation}
where 
\begin{equation}\label{Eq.K12NewDecomp2}
\begin{split}
&V_N(a) = \frac{1}{2\pi \im} \oint_{\Gamma_N} dz \frac{F_{12}^N(z,c) (zc-1) (1-qz)^{\kappa N - \lfloor \kappa N \rfloor}}{z(z^2-1)(1-qc)^{\kappa N - \lfloor \kappa N \rfloor}} \cdot \frac{1}{1 - c/z}, \\
&U_N(a) = \frac{(\sigmaq \zc)^{-1} N^{-1/3}}{(2\pi \im)^{2}}\oint_{\Gamma_N} dz \oint_{\gamma_N} dw \frac{F_{12}^N(z,w) H_{12}^N(z,w) }{1 - w/z}.
\end{split} 
\end{equation}
We mention that the term $V_N(a)$ in (\ref{Eq.K12NewDecomp}) arose from crossing the simple pole at $w = c$ in the process of deforming $C_{r_{12}^w}$ to $\gamma_N$, and the $1$ comes from subsequently crossing the simple pole at $z = c$ when deforming $C_{r_{12}^z}$ to $\Gamma_N$. In view of (\ref{Eq.K12NewDecomp}), we see that to prove (\ref{Eq.FDBulkRed1}), it suffices to show
\begin{equation}\label{Eq.FDBulkRed2}
\limsup_{a \rightarrow \infty} \limsup_{N \rightarrow \infty} |U_N(a)| = 0, \mbox{ and } \limsup_{a \rightarrow \infty} \limsup_{N \rightarrow \infty} |V_N(a)| = 0.
\end{equation}
In the remainder of the step we verify (\ref{Eq.FDBulkRed2}) using the estimates from Section \ref{Section4.1}. In the inequalities below we will encounter various constants $B_i,b_i > 0$ with $B_i$ sufficiently large, and $b_i$ sufficiently small, depending on $q, \kappa, c, \mathcal{T}, \theta_0, R_0 $ -- we do not list this dependence explicitly. In addition, the inequalities will hold provided that $N$ is sufficiently large, depending on the same set of parameters, which we will also not mention further.\\

From (\ref{Eq.ContoursNestedBulk}), for some $B_1 > 0$ and all $z \in \Gamma_{N}$, $w \in \gamma_{N}$, and $x \geq 0$, we have
\begin{equation}\label{Eq.BulkFDB1}  
\left| \frac{1}{1 - w/z } \right| \leq B_1N^{1/3}, \hspace{2mm} \left| \frac{1}{1 - c/z } \right| \leq B_1, \hspace{2mm} \left|e^{-  \sigmaq \zc x N^{1/2} \log (z/c) +  \sigmaq \zc x N^{1/2} \log(w/c)  } \right| \leq B_1 e^{-b_1 x}.
\end{equation}
Let $\delta, \epsilon$ be as in the beginning of Section \ref{Section4.1}. Let $\Gamma_{N}(0)$ denote the part of $\Gamma_{N}$, within the disc $\{z: |z - c| \leq \delta \}$, and $\gamma_N(0)$ the part of $\gamma_N$, within the disc $\{z: |z - \zc| \leq N^{-1/12}\}$. In addition, let $U_N^0(a)$ be defined analogously to $U_N(a)$, but with $\Gamma_{N}, \gamma_N$ replaced with $\Gamma_{N}(0), \gamma_N(0)$.

Combining (\ref{Eq.BulkFDB1}) with (\ref{Eq.S1BoundZClose}), (\ref{Eq.S1BoundZFar}), the second line in (\ref{Eq.HijBound2}), and the bounded length of $\Gamma_{N}$, gives for some $B_2,b_2 > 0$ and all $a \geq 0$
\begin{equation*}
\left| V_N(a)  \right| \leq \exp \left(B_2 + N [\SFb(\zc) - \SFb(c)] -b_1 a_N \right) \leq \exp(B_2 -b_2 N - b_1 a_N),
\end{equation*} 
where in the last inequality we used that $\SFb(\zc) - \SFb(c) < 0$ from (\ref{Eq.DiffS}). This proves the second statement in (\ref{Eq.FDBulkRed2}).

Combining (\ref{Eq.BulkFDB1}) with (\ref{Eq.S1BoundZClose}), (\ref{Eq.S1BoundZFar}), (\ref{Eq.S1BoundWClose}), (\ref{Eq.S1BoundWFar}), the second line in (\ref{Eq.HijBound}), and the bounded lengths of $\Gamma_{N}, \gamma_N$, gives for some $B_3,b_3 > 0$ and all $a \geq 0$
\begin{equation}\label{Eq.TruncUNBulk}
\left| U_N(a) - U^0_N(a)   \right| \leq \exp \left(B_3 - b_3 N^{3/4}  - b_1 a_N \right)
\end{equation} 
Next, changing variables $z = \zc + \tilde{z} N^{-1/3}$, $w = \zc + \tilde{w} N^{-1/3}$ and applying (\ref{Eq.BulkFDB1}), (\ref{Eq.S1BoundZClose}), (\ref{Eq.S1BoundWClose}) and the second line in (\ref{Eq.HijBound}) gives for some $B_4,b_4 > 0$ and all $a \geq 0$ 
\begin{equation}\label{Eq.BulkFDB3} 
\left|U_N^0(a) \right| \leq B_4 e^{-b_2 a_N} \cdot \int_{\mathcal{C}^{\theta_{0}}_0[r_{0}]}|d\tilde{z}|  \int_{\mathcal{C}^{2\pi/3}_0}  e^{-b_4|\tilde{z}|^3 - b_4 |\tilde{w}|^3} |d\tilde{w}|,
\end{equation}
where we recall that $|d\tilde{z}|, |d\tilde{w}|$ denote integration with respect to arc-length, $r_{0} = \sec(\theta_{0})$ and the contours $\mathcal{C}_{a}^{\phi}[r]$ were defined above (\ref{Eq.I11Vanish}). Equations (\ref{Eq.TruncUNBulk}) and (\ref{Eq.BulkFDB3}) imply the first statement in (\ref{Eq.FDBulkRed2}). This concludes the proof of (\ref{Eq.FDBulkRed2}) and hence the proposition.
\end{proof}

%
\subsection{Proof of Theorem \ref{Thm.Main2}}\label{Section8.2} In this section we introduce a sequence of line ensembles $\mathfrak{L}^{\mathrm{bot},N}$ in Definition \ref{Def.AuxLE}, which are distributionally closely related to the line ensembles in Theorem \ref{Thm.Main2}. In Proposition \ref{Prop.ConvBottom}, whose proof is given in Section \ref{Section8.4}, we show that (upon appropriate rescaling) $\mathfrak{L}^{\mathrm{bot},N}$ converge weakly, and use this result to prove Theorem \ref{Thm.Main2}.

\begin{definition}\label{Def.AuxLE} Assume the same parameters as in Definition \ref{Def.ParametersBulk}, and let $\mathbb{P}_N$ be the Pfaffian Schur process from Definition \ref{Def.SchurProcess} with parameters as in (\ref{Eq.HomogeneousParameters}). If $(\lambda^1, \dots, \lambda^N)$ is distributed according to $\mathbb{P}_N$, we define the $\mathbb{N}$-indexed geometric line ensembles $\mathfrak{L}^N = \{L^N_i\}_{i \geq 1}$ on $\mathbb{Z}$ by
\begin{equation}\label{Eq.DLEBot}
L^{N}_i(s)  = \begin{cases} \lambda^{N - \lfloor \kappa N \rfloor - s + 1}_{i} - \lfloor  \hq N \rfloor  &\mbox{ if } i \geq 1, s + \lfloor \kappa N \rfloor  \in \llbracket 1, N \rrbracket,  \\
- \lfloor  \hq N \rfloor   &\mbox{ if } i \geq 1, s + \lfloor \kappa N \rfloor  \leq 0, \\
\lambda^1_{i} -  \lfloor \hq N \rfloor  &\mbox{ if } i \geq 1, s + \lfloor \kappa N \rfloor \geq N+1. 
\end{cases}
\end{equation}
As explained in Section \ref{Section6.1}, by linear interpolation we can view $\mathfrak{L}^{N}$ as random elements in $C(\mathbb{N} \times \mathbb{R})$. We also define the $\mathbb{N}$-indexed geometric line ensembles $\mathfrak{L}^{\mathrm{bot}, N} = \{L^{\mathrm{bot}, N}_i\}_{i \geq 1}$ on $\mathbb{Z}$ by setting $L_i^{\mathrm{bot}, N} = L^N_{i+1}$ for $i \geq 1$. Lastly, we define the scaled ensembles $\mathcal{L}^{N} = \{\mathcal{L}^{N}_i\}_{i \geq 1} \in C(\mathbb{N} \times \mathbb{R})$ and $\mathcal{L}^{\mathrm{bot},N} = \{\mathcal{L}^{\mathrm{bot},N}_i\}_{i \geq 1} \in C(\mathbb{N} \times \mathbb{R})$ by
\begin{equation}\label{Eq.ScaledDLEBot}
\begin{split}
&\mathcal{L}^{N}_i(t) = [\pq (1+ \pq)]^{-1/2} N^{-1/3} \cdot \left(L^{N}_i( tN^{2/3}) -  \pq t N^{2/3} \right) \mbox{ for } i \geq 1, t\in \mathbb{R} \mbox{, and } \\
& \mathcal{L}^{\mathrm{bot},N}_i(t) = \mathcal{L}^{N}_{i+1}(t) \mbox{ for } i \geq 1, t \in \mathbb{R}.
\end{split}
\end{equation}
\end{definition}
\begin{remark}\label{Rem.LClosetoX} We mention that if $\mathcal{T} = \{t_1, \dots, t_m\}$ and $X_i^{j,N}$ are as in Definition \ref{Def.ScalingBulk}, then for all large $N$, depending on $q,\kappa,c$ and $\mathcal{T}$, we have
\begin{equation}\label{Eq.LClosetoX}
\left| \mathcal{L}^{N}_i(\lfloor t_j N^{2/3} \rfloor ) - \frac{\sigmaq\zc}{\sqrt{\pq(1+\pq)}} \cdot X_{i}^{j,N} \right| \leq \frac{(i+\pq)\sigmaq\zc}{N^{1/3}\sqrt{\pq(1+\pq)}} \mbox{ for } i \geq 1, j \in \llbracket 1, m \rrbracket. 
\end{equation}
\end{remark}

We now state the main result we require for $\mathcal{L}^{\mathrm{bot},N}$, whose proof is given in Section \ref{Section8.4}.
\begin{proposition}\label{Prop.ConvBottom} Assume the same notation as in Definition \ref{Def.AuxLE}. The line ensembles $\mathcal{L}^{\mathrm{bot}, N}$ converge weakly to $\mathcal{L}^{\infty} = \{\mathcal{L}^{\infty}\}_{i \geq 1} \in C(\mathbb{N} \times \mathbb{R})$, defined by 
\begin{equation}\label{Eq.LimLEBulk}
\mathcal{L}^{\infty}_i(t) = (2\fq)^{-1/2} \cdot \left( \mathcal{A}_i(\fq t) - \fq^2 t^2 \right) \mbox{ for } i \geq 1, t \in \mathbb{R},
\end{equation}
where $\mathcal{A} = \{\mathcal{A}_i\}_{i \geq 1}$ is the Airy line ensemble from Definition \ref{Def.AiryLE}.
\end{proposition}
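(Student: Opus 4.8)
The plan is to combine the finite-dimensional convergence from Proposition~\ref{Prop.FinitedimBulk} with a tightness statement for the rescaled ensembles $\mathcal{L}^{\mathrm{bot},N}$, and then invoke the usual argument that tightness plus finite-dimensional convergence yields weak convergence in $C(\mathbb{N}\times\mathbb{R})$. First I would record that by Remark~\ref{Rem.LClosetoX} the value $\mathcal{L}^{N}_i(\lfloor t_j N^{2/3}\rfloor)$ differs from a fixed multiple of $X_i^{j,N}$ by $O(N^{-1/3})$; combining this with Proposition~\ref{Prop.FinitedimBulk} shows that $\mathcal{L}^{\mathrm{bot},N}$ converges in the finite-dimensional sense to $\mathcal{L}^{\infty}$ as in \eqref{Eq.LimLEBulk}, after matching the scaling constants (one checks $[\pq(1+\pq)]^{-1/2}\sigmaq\zc = (2\fq)^{-1/2}$, which follows directly from Definition~\ref{Def.ParametersBulk}). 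Since finite-dimensional sets form a separating class in $C(\mathbb{N}\times\mathbb{R})$ (cf.\ \cite[Example 1.3]{Bill}), once tightness is established the subsequential limits are all forced to equal $\mathcal{L}^{\infty}$, completing the proof.

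The substance of the argument is therefore the tightness of $\{\mathcal{L}^{\mathrm{bot},N}\}_{N\ge1}$ in $C(\mathbb{N}\times\mathbb{R})$. The natural tool is the tightness criterion \cite[Theorem 5.1]{dimitrov2024tightness} for interlacing geometric line ensembles. However, as explained in Section~\ref{Section1.3}, one cannot apply it directly to $\mathfrak{L}^{\mathrm{bot},N}$, because projecting $\mathfrak{L}^N$ onto curves of index $i\ge 2$ destroys the interlacing Gibbs property (the top curve $L^N_1$ acts as an upper boundary for $L^N_2$). The remedy is the auxiliary family $\hat{\mathfrak{L}}^{\mathrm{bot},N}$ from Section~\ref{Section8.3}, obtained by pushing the top curve to $+\infty$; Proposition~\ref{Prop.Proximity} asserts that $\hat{\mathfrak{L}}^{\mathrm{bot},N}$ and $\mathfrak{L}^{\mathrm{bot},N}$ are close with high probability (this is where Lemma~\ref{GLELemma2} is used, together with Theorem~\ref{Thm.Main1} to guarantee the top curve is macroscopically separated). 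So I would: (i) verify that $\hat{\mathfrak{L}}^{\mathrm{bot},N}$ is an $\mathbb{N}$-indexed interlacing geometric line ensemble satisfying the Gibbs property (using Lemma~\ref{Lem.InterlacingGibbs} and the structure of the modification); (ii) check the three hypotheses of \cite[Theorem 5.1]{dimitrov2024tightness}, namely the scaling hypotheses $d_N\to\infty$, $\hat A_N/d_N\to\alpha$, $\hat B_N/d_N\to\beta$, the one-point tightness of the rescaled curves at each fixed time (which comes from Proposition~\ref{Prop.FinitedimBulk} via the proximity estimate, using that the bottom curves of $\hat{\mathfrak{L}}^{\mathrm{bot},N}$ agree with those of $\mathfrak{L}^{\mathrm{bot},N}$ with high probability), and the Gibbs property from (i); (iii) conclude tightness of the rescaled $\hat{\mathcal{L}}^{\mathrm{bot},N}$ in $C(\mathbb{N}\times\mathbb{R})$; and finally (iv) transfer this tightness back to $\mathcal{L}^{\mathrm{bot},N}$ using Proposition~\ref{Prop.Proximity}.

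For step~(iv), the key observation is that if two sequences of random elements of a Polish space are close in probability (in the sense that for each compact $K\times[-T,T]$ the sup-distance over it tends to $0$ in probability) and one of them is tight, then so is the other; this is a standard consequence of Prohorov's theorem together with the fact that the modulus of continuity and the range over compacts can be controlled uniformly once we know they can be for the tight sequence. Concretely, for any $\eta>0$ one chooses a compact set $\mathcal{K}_\eta\subset C(\mathbb{N}\times\mathbb{R})$ with $\mathbb{P}(\hat{\mathcal{L}}^{\mathrm{bot},N}\in\mathcal{K}_\eta)>1-\eta$ for all $N$, then uses the proximity to produce a slightly enlarged compact set capturing $\mathcal{L}^{\mathrm{bot},N}$ with probability $>1-2\eta$. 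The main obstacle is really bookkeeping rather than conceptual: one must make sure the proximity in Proposition~\ref{Prop.Proximity} holds uniformly over the compact time-intervals and index-ranges appearing in the tightness criterion, and that the deterministic shift-and-rescale relating $L^N_i$ to $X^{j,N}_i$ does not introduce errors that survive the $N\to\infty$ limit. Given the preparatory results already in place (Theorem~\ref{Thm.Main1}, Proposition~\ref{Prop.FinitedimBulk}, Lemmas~\ref{GLELemma1}--\ref{GLELemma2}, and the Gibbs structure of Lemma~\ref{Lem.InterlacingGibbs}), each of these points is routine, and I expect the proof of Proposition~\ref{Prop.ConvBottom} to amount to assembling them in the order (i)--(iv) above and then running the separating-class argument.
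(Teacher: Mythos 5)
Your proposal follows essentially the same route as the paper: finite-dimensional convergence from Proposition~\ref{Prop.FinitedimBulk} (via Remark~\ref{Rem.LClosetoX} and the constant identity $\sigmaq\zc[\pq(1+\pq)]^{-1/2}=(2\fq)^{-1/2}$), tightness of the auxiliary ensemble $\hat{\mathfrak{L}}^{\mathrm{bot},N}$ via \cite[Theorem 5.1]{dimitrov2024tightness}, transfer to $\mathfrak{L}^{\mathrm{bot},N}$ via Proposition~\ref{Prop.Proximity}, and then the separating-class argument. One point where your step~(iv) is looser than the paper: you describe the proximity as ``sup-distance over compacts tends to $0$ in probability,'' which suggests a coupling in the sup metric, whereas Proposition~\ref{Prop.Proximity} only compares probabilities of measurable events for the two ensembles (it is a distributional statement, not a coupling). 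Relatedly, $\hat{\mathcal{L}}^{\mathrm{bot},N}$ is a $\llbracket 1,M-1\rrbracket$-indexed ensemble on $(-d,d)$ while $\mathcal{L}^{\mathrm{bot},N}$ lives in $C(\mathbb{N}\times\mathbb{R})$, so one cannot literally compare compact subsets of the two spaces as written. The paper avoids both issues by using the modulus-of-continuity characterization of tightness in \cite[Lemma 2.4]{DEA21}: the ``if'' direction reduces tightness in $C(\mathbb{N}\times\mathbb{R})$ to the curve-by-curve, compact-interval-by-compact-interval estimate (\ref{Eq.BulkIsTight2}); for each fixed $(k,a,b,\epsilon,\delta)$ this is a single measurable event depending only on finitely many curves over a fixed interval, so it is an event of the restricted ensemble; the ``only if'' direction gives this estimate for $\hat{\mathcal{L}}^{\mathrm{bot},N}$; and Proposition~\ref{Prop.Proximity} then transfers the probability bound directly. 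Your proposal is correct in spirit, but this reformulation is what makes the transfer step tidy rather than ``bookkeeping.''
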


In the remainder of this section we establish Theorem \ref{Thm.Main2}.
\begin{proof}[Proof of Theorem \ref{Thm.Main2}] Combining Definition \ref{Def.BotCurveScaledLPP} and Proposition \ref{Prop.LPPandSchur}, we see that $\mathcal{U}^{\mathrm{bot},N}$ has the same law as $\mathcal{L}^{\mathrm{bot},N}$ from (\ref{Eq.ScaledDLEBot}). The result now follows from Proposition \ref{Prop.ConvBottom}.
\end{proof}

%
\subsection{An auxiliary line ensemble}\label{Section8.3} In view of (\ref{Eq.LClosetoX}) and Proposition \ref{Prop.FinitedimBulk}, we will be able to establish $\mathcal{L}^{\mathrm{bot}, N} \overset{f.d.}{\rightarrow} \mathcal{L}^{\infty}$ in Proposition \ref{Prop.ConvBottom}. Thus, the main difficulty in proving Proposition \ref{Prop.ConvBottom} lies in showing that $\mathcal{L}^{\mathrm{bot}, N}$ forms a tight sequence. In Step 1 of the proof of Theorem \ref{Thm.Main1} the tightness of the top curve followed from \cite[Theorem 5.1]{dimitrov2024tightness}; however, this result cannot be applied directly to $\mathfrak{L}^{\mathrm{bot}, N}$. The obstruction is that the interlacing Gibbs property satisfied by $\mathfrak{L}^{N}$ is lost when we restrict to the curves of index $i \geq 2$. To overcome this, we introduce auxiliary ensembles $\hat{\mathfrak{L}}^{\mathrm{bot}, N} = \{\hat{L}^{\mathrm{bot}, N}_i\}_{i = 1}^H$ in Definition \ref{Def.AuxLE2}. These ensembles are constructed so that (1) they satisfy the interlacing Gibbs property, and (2) they are close in distribution to $\{L^{\mathrm{bot}, N}_i \}_{i = 1}^H$, as shown in Proposition \ref{Prop.Proximity}. In Section \ref{Section8.4} we will prove the tightness of $\hat{\mathfrak{L}}^{\mathrm{bot}, N}$ by applying \cite[Theorem 5.1]{dimitrov2024tightness}, and then transfer this property to $\mathfrak{L}^{\mathrm{bot}, N}$ via Proposition \ref{Prop.Proximity}.

\begin{definition}\label{Def.AuxLE2} Assume the same notation as in Definition \ref{Def.AuxLE}. Fix $H \in \mathbb{N}$ and $d > 0$, and set $\hat{A}_N = \lfloor -dN^{2/3} \rfloor$, $\hat{B}_N = \lfloor  d N^{2/3} \rfloor$. We define the $\llbracket 1, H \rrbracket$-indexed geometric line ensemble $\hat{\mathfrak{L}}^{\mathrm{bot}, N} = \{L^{\mathrm{bot}, N}_i \}_{i = 1}^H$ on $\llbracket \hat{A}_N, \hat{B}_N \rrbracket$, such that for each measurable $A \subseteq C(\llbracket 1, H \rrbracket \times [\hat{A}_N, \hat{B}_N])$
\begin{equation}\label{Eq.AuxLELaw}
\mathbb{P}(\hat{\mathfrak{L}}^{\mathrm{bot}, N} \in A) = \mathbb{E}\left[ \mathbb{P}_{\ice, \mathrm{Geom}}^{\hat{A}_N,\hat{B}_N, \vec{x}, \vec{y}, \infty, g} \left( \mathfrak{Q} \in A \right) \right],
\end{equation}
where 
\begin{equation}\label{Eq.Boundary}
\begin{split}
&\vec{x} = \left(L^{\mathrm{bot}, N}_1(\hat{A}_N), \dots, L^{\mathrm{bot}, N}_{H}(\hat{A}_N) \right), \hspace{2mm} \vec{y} = \left(L^{\mathrm{bot}, N}_1(\hat{B}_N), \dots, L^{\mathrm{bot}, N}_{H}(\hat{B}_N) \right), \hspace{2mm} \\
& g(s) = L^{\mathrm{bot}, N}_{H+1}(s) \mbox{ for $s \in \llbracket \hat{A}_N, \hat{B}_N \rrbracket$},
\end{split}
\end{equation}
and $\mathfrak{Q}$ has law $\mathbb{P}_{\ice, \mathrm{Geom}}^{\hat{A}_N,\hat{B}_N, \vec{x}, \vec{y}, \infty, g}$ as in Definition \ref{DefAvoidingLawBer}.
\end{definition}
\begin{remark}\label{Rem.AuxLE} The ensemble $\hat{\mathfrak{L}}^{\mathrm{bot}, N}$ is obtained as follows. First sample $\mathfrak{L}^{\mathrm{bot}, N}$, and record the boundary data $(\vec{x}, \vec{y}, g)$ as in (\ref{Eq.Boundary}). Conditioned on this data, sample $\mathfrak{Q}$ according to the uniform measure on $\Omega_{\ice}(\hat{A}_N, \hat{B}_N, \vec{x}, \vec{y}, \infty, g)$, and set $\hat{\mathfrak{L}}^{\mathrm{bot}, N} = \mathfrak{Q}$. Since $\mathfrak{Q}$ satisfies the interlacing Gibbs property as a $\llbracket 1, H \rrbracket$-indexed line ensemble on $\llbracket \hat{A}_N, \hat{B}_N\rrbracket $, see \cite[Lemma 2.10]{dimitrov2024tightness}, the same holds for $\hat{\mathfrak{L}}^{\mathrm{bot}, N}$. We also mention that the law of $\hat{\mathfrak{L}}^{\mathrm{bot}, N}$ depends on $d$ and $H$, although this dependence is suppressed from the notation.
\end{remark}

We now turn to the main result we establish for $\hat{\mathfrak{L}}^{\mathrm{bot}, N}$.
\begin{proposition}\label{Prop.Proximity} Assume the notation from Definitions \ref{Def.AuxLE} and \ref{Def.AuxLE2} for some fixed $d > 0$ and $H \in \mathbb{N}$. Define the $\llbracket 1,H\rrbracket$-indexed geometric line ensemble $\tilde{\mathfrak{L}}^{\mathrm{bot},N} = \{\tilde{L}^{\mathrm{bot},N}_{i}\}_{i = 1}^H$ on $\llbracket \hat{A}_N, \hat{B}_N\rrbracket$, by
$$ \tilde{L}^{\mathrm{bot},N}_{i}(s) = L^{\mathrm{bot},N}_{i}(s) \mbox{ for } i \in \llbracket 1, H \rrbracket, s \in \llbracket \hat{A}_N, \hat{B}_N \rrbracket.$$
Then, for any sequence of measurable sets $A_N \in C(\llbracket 1, H \rrbracket \times [\hat{A}_N, \hat{B}_N])$, we have
\begin{equation}\label{Eq.Proximity}
\lim_{N \rightarrow \infty} \left| \mathbb{P}\left(\tilde{\mathfrak{L}}^{\mathrm{bot},N} \in A_N\right) - \mathbb{P}\left(\hat{\mathfrak{L}}^{\mathrm{bot},N} \in A_N \right)\right| = 0.
\end{equation}
\end{proposition}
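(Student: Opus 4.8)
\textbf{Proof plan for Proposition \ref{Prop.Proximity}.} The idea is to exploit the interlacing Gibbs property of $\mathfrak{L}^{N}$ from Lemma \ref{Lem.InterlacingGibbs}, which tells us that conditionally on the boundary data of the top $H+1$ curves on $\llbracket \hat{A}_N, \hat{B}_N \rrbracket$, the bottom $H$ of them are distributed as $\mathbb{P}_{\ice,\mathrm{Geom}}^{\hat{A}_N,\hat{B}_N,\vec{x}',\vec{y}',f,g}$, where $f = L^{N}_1\llbracket \hat{A}_N,\hat{B}_N\rrbracket$ is the (random) first curve, $g = L^{N}_{H+2}\llbracket \hat{A}_N,\hat{B}_N\rrbracket$, and $\vec{x}',\vec{y}'$ are the values of curves $2,\dots,H+1$ at the endpoints. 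In the notation of Definition \ref{Def.AuxLE2}, $\tilde{\mathfrak{L}}^{\mathrm{bot},N}$ is thus a mixture (over the boundary data) of $\mathbb{P}_{\ice,\mathrm{Geom}}^{\hat{A}_N,\hat{B}_N,\vec{x},\vec{y},f,g}$, whereas $\hat{\mathfrak{L}}^{\mathrm{bot},N}$ is the analogous mixture with $f$ replaced by $+\infty$ (and using the same boundary data $\vec{x},\vec{y},g$ from the bottom $H+1$ curves, which is exactly the data appearing in (\ref{Eq.Boundary})). So (\ref{Eq.Proximity}) amounts to saying that the top curve $L^{N}_1$ does not, with high probability, constrain the bottom $H$ curves on $\llbracket \hat{A}_N,\hat{B}_N\rrbracket$. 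This is precisely the coupling statement of Lemma \ref{GLELemma2}, applied with $k = H+1$.

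The plan is as follows. First I would condition on the boundary data $\mathcal{G} = \big(L^{N}_i(\hat{A}_N), L^{N}_i(\hat{B}_N) : i \in \llbracket 1, H+1\rrbracket; \ L^{N}_{H+2}\llbracket \hat{A}_N,\hat{B}_N\rrbracket\big)$, and note that conditionally on $\mathcal{G}$ the law of $\tilde{\mathfrak{L}}^{\mathrm{bot},N}$ is obtained from the $(H+1)$-curve interlacing ensemble $\mathbb{P}_{\ice,\mathrm{Geom}}^{\hat{A}_N,\hat{B}_N,\vec{x}^+,\vec{y}^+,\infty,g}$ (where $\vec{x}^+,\vec{y}^+$ are the endpoint values of curves $1,\dots,H+1$) by dropping its top curve, while the law of $\hat{\mathfrak{L}}^{\mathrm{bot},N}$ is $\mathbb{P}_{\ice,\mathrm{Geom}}^{\hat{A}_N,\hat{B}_N,\vec{x},\vec{y},\infty,g}$ with $\vec{x},\vec{y}$ the endpoint values of curves $2,\dots,H+1$. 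Lemma \ref{GLELemma2} gives, on a good event for $\mathcal{G}$, a coupling under which these two ensembles agree on all of $\llbracket \hat{A}_N,\hat{B}_N\rrbracket$ with conditional probability $> 1 - \epsilon$. Integrating over $\mathcal{G}$ restricted to the good event, and bounding the complement, gives $|\mathbb{P}(\tilde{\mathfrak{L}}^{\mathrm{bot},N}\in A_N) - \mathbb{P}(\hat{\mathfrak{L}}^{\mathrm{bot},N}\in A_N)| \le \epsilon + \mathbb{P}(\mathcal{G}\notin \text{good})$, and then sending $N\to\infty$ and $\epsilon \downarrow 0$ finishes the proof.

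The work therefore reduces to verifying that, with probability tending to $1$, the boundary data $\mathcal{G}$ lies in the good event required by Lemma \ref{GLELemma2}: namely that with $p = \pq$, for suitable constants $M^{\mathrm{side}}, M^{\mathrm{bot}}$ and with $M^{\mathrm{sep}}$ as produced by that lemma, one has $|L^{N}_i(\hat{A}_N)| , |L^{N}_i(\hat{B}_N) - p\,(\hat{B}_N - \hat{A}_N)|$ appropriately bounded by $M^{\mathrm{side}} n^{1/2}$ (here $n = \hat{B}_N - \hat{A}_N \asymp N^{2/3}$) for $i \in \llbracket 2, H+1\rrbracket$, that $L^{N}_1$ is separated by at least $M^{\mathrm{sep}} n^{1/2}$ at the endpoints, that the curve $g = L^{N}_{H+2}$ stays below the line $pr + M^{\mathrm{bot}} n^{1/2}$, and that the relevant $\Omega_{\ice}$ sets are nonempty (automatic, since $\mathfrak{L}^N$ itself realizes them). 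The endpoint bounds for curves $2,\dots,H+1$ follow from Proposition \ref{Prop.FinitedimBulk} together with Remark \ref{Rem.LClosetoX}, which identifies $\mathcal{L}^N_i(\lfloor t_j N^{2/3}\rfloor)$ with $X^{j,N}_i$ up to a vanishing error and hence gives tightness of these rescaled endpoint values. The separation of $L^{N}_1$ at the endpoints follows from Theorem \ref{Thm.Main1}: near $t = \kappa$, the top curve $\lambda_1(\cdot,N)$ sits at height $\asymp n h^{\mathrm{top}}(\kappa) N$ while the bottom curves sit at $\asymp n h^{\mathrm{bot}}(\kappa) N$, and $h^{\mathrm{top}}(\kappa) > h^{\mathrm{bot}}(\kappa)$ for $\kappa > \kappa_0$ (equivalently $\hp(\kappa) - \hq = \tfrac{q(c-q)(\sqrt{\kappa}-\sqrt{\kappa_0})^2}{(1-q^2)(1-qc)} > 0$, as computed in Step 2 of Proposition \ref{Prop.FinitedimBulk}), so the gap at the endpoints is of order $N$, far exceeding any $M^{\mathrm{sep}} n^{1/2} = M^{\mathrm{sep}} N^{1/3}$. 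For the upper bound on $g = L^{N}_{H+2}$, one uses that this curve is squeezed between $L^{N}_{H+1}$ and $L^{N}_{H+3}$ by interlacing, and that by the same finite-dimensional-convergence reasoning its rescaled version has tight one-point marginals on a fine grid over a compact interval; combined with a crude monotonicity / modulus-of-continuity estimate for geometric bridges (or simply with Lemma \ref{GLELemma1} applied to the pair $(L^N_{H+2}, L^N_{H+3})$, which controls how much $L^N_{H+2}$ can rise), this gives a uniform upper bound of the form $pr + M^{\mathrm{bot}} n^{1/2}$ with probability $\to 1$. The main obstacle is the uniform-in-$r$ upper bound on $g$ and, relatedly, pinning down precisely which version of the tightness estimates from Section \ref{Section6.2} to invoke so that the hypotheses of Lemma \ref{GLELemma2} are met on an event of probability approaching one; everything else is an assembly of results already in hand.
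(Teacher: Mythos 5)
Your overall plan is the same as the paper's: condition on the boundary data, invoke Lemma \ref{GLELemma2} on a good event to couple $\tilde{\mathfrak{L}}^{\mathrm{bot},N}$ and $\hat{\mathfrak{L}}^{\mathrm{bot},N}$, and then show the good event has probability tending to one. You also correctly handle two of the three conditions: the endpoint bounds for curves $2,\dots,H+1$ via Proposition \ref{Prop.FinitedimBulk} and Remark \ref{Rem.LClosetoX}, and the separation of $L^N_1$ at the endpoints via the macroscopic gap $\hp(\kappa) - \hq > 0$ (which is exactly what the paper extracts from (\ref{Eq.XEscape})).

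Where your proposal has a genuine gap is in the uniform upper bound on the lower boundary curve $g = L^N_{H+2}$, and you flag it yourself as ``the main obstacle.'' Your suggestion to apply Lemma \ref{GLELemma1} to the pair $(L^N_{H+2}, L^N_{H+3})$ misreads what that lemma does: it says the \emph{second} curve of a two-curve interlacing ensemble does not \emph{dip} below the linear interpolant of its endpoints (minus a constant); it says nothing about preventing the curve from rising, and it certainly would not control $L^N_{H+2}$ — only $L^N_{H+3}$ from below. The paper's actual mechanism is different and more delicate: first reduce to controlling $L^N_2$ instead of $L^N_{H+2}$, using the deterministic domination $L^N_2(s) \geq L^N_{H+2}(s)$. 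Then run a stopping-time (first-exceedance) decomposition over $m \in \llbracket 0, \hat{B}_N \rrbracket$: on the event that $L^N_2$ first exceeds $\pq(\cdot) + M^{\mathrm{bot}}\tilde{n}^{1/2}$ at time $m$, condition on the data of $(L^N_1, L^N_2)$ on $\llbracket \hat{A}_N, m\rrbracket$ together with $L^N_3$ there, and apply Lemma \ref{GLELemma1} to this restricted two-curve interlacing ensemble. Since the left endpoint $L^N_2(\hat{A}_N)$ is not too low (tightness) and the right endpoint $L^N_2(m)$ is by assumption very high, the no-dip property forces $L^N_2(0)$ to be at least $R(-\hat{A}_N)^{1/2}$ with high conditional probability — contradicting the tightness of $L^N_2(0)$. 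Summing over $m$ and the admissible boundary data gives the claimed bound (the paper's (\ref{Eq.NoBigMax})). So the crucial ingredient you are missing is the stopping-time/contradiction argument that converts a ``no-dip'' bound on the correct pair of curves into a uniform ``no-rise'' bound; a crude modulus-of-continuity estimate on geometric bridges does not give this, because $g$ is not an unconditioned bridge but a curve under pressure from both above and below in a Gibbsian ensemble.
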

\begin{proof} In the proof below, all constants will depend on $q, \kappa, c, d$ and $H$ implicitly, in addition to other parameters that are listed. We will not mention this further. For clarity, we split the proof into two steps.\\

{\bf \raggedleft Step 1.} Set $\tilde{n} = \hat{B}_N - \hat{A}_N$, and $V_N = \lfloor \pq \hat{A}_N \rfloor$ for brevity, and note that $\tilde{n} \sim 2d N^{2/3}$ as $N \rightarrow \infty$. Fix $\epsilon \in (0,1)$. We claim that there exist $M^{\mathrm{bot}}, W_1 > 0$, depending on $\epsilon$, such that for $N \geq W_1$ 
\begin{equation}\label{Eq.NoBigMax}
\mathbb{P}\left( L_{H+2}^{N}(s) - V_N - \pq (s - \hat{A}_N) > \tilde{n}^{1/2} M^{\mathrm{bot}} \mbox{ for some } s \in \llbracket \hat{A}_N, \hat{B}_N \rrbracket \right) < \epsilon.
\end{equation}
We will establish (\ref{Eq.NoBigMax}) in the second step. Here, we assume its validity and conclude the proof of the proposition.\\

From (\ref{Eq.LClosetoX}) and Proposition \ref{Prop.FinitedimBulk}, there exists $M^{\mathrm{side}} > 0$, depending on $\epsilon$, such that for all $N \geq 1$ and $i \in \llbracket 2, H + 1 \rrbracket$
\begin{equation}\label{Eq.EndsWellBehaved}
\mathbb{P}\left( \left| L_{i}^{N}(\hat{A}_N) - V_N \right| >   M^{\mathrm{side}}\tilde{n}^{1/2} \right) < \epsilon, \hspace{2mm}  \mathbb{P}\left( \left| L_{i}^{ N}(\hat{B}_N) - V_N - \pq \tilde{n} \right| >   M^{\mathrm{side}} \tilde{n}^{1/2} \right) < \epsilon.
\end{equation}
Let $M^{\mathrm{sep}}, N_2$ be as in Lemma \ref{GLELemma2}, with parameters $\epsilon, M^{\mathrm{side}}, M^{\mathrm{bot}}$ as above, $p = \pq$, $k = H + 1$. From (\ref{Eq.LClosetoX}) and (\ref{Eq.XEscape}), there exists $W_2 \geq W_1$, such that for $N \geq W_2$ we have $\tilde{n} \geq N_2$, and moreover,
\begin{equation}\label{Eq.TopIsHigh}
\mathbb{P}\left( L_{1}^{N}(\hat{A}_N) - V_N  <  M^{\mathrm{sep}} \tilde{n}^{1/2} \right) < \epsilon, \hspace{2mm}  \mathbb{P}\left(  L_{1}^{N}(\hat{B}_N) - V_N - \pq \tilde{n}  <  M^{\mathrm{sep}} \tilde{n}^{1/2} \right) < \epsilon.
\end{equation}
Lastly, let $G_N$ denote the set of triplets $(\vec{x}, \vec{y}, g) \in \mathfrak{W}_{H+1} \times \mathfrak{W}_{H+1} \times \Omega(\hat{A}_N, \hat{B}_N)$, such that 
\begin{equation}\label{Eq.BoundaryConditions}
\begin{split}
&x_1 - V_N \geq  M^{\mathrm{sep}} \tilde{n}^{1/2}, \hspace{2mm}  y_1 - V_N + \pq \tilde{n} \geq  M^{\mathrm{sep}} \tilde{n}^{1/2}, \hspace{2mm} \max_{i \in \llbracket 2, H + 1 \rrbracket }\left|x_i - V_N \right| \leq  M^{\mathrm{side}}\tilde{n}^{1/2}, \\
&\max_{i \in \llbracket 2, H + 1 \rrbracket }\left|y_i - V_N - \pq \tilde{n} \right| \leq M^{\mathrm{side}}\tilde{n}^{1/2}, \hspace{2mm} \max_{s \in \llbracket \hat{A}_N, \hat{B}_N \rrbracket} g(s) - V_N - \pq (s - \hat{A}_N) \leq  M^{\mathrm{bot}} \tilde{n}^{1/2}, \\
& \mathbb{P}\left(\vec{L}^N(\hat{A}_N) = \vec{x}, \vec{L}^N(\hat{B}_N) = \vec{y}, L^N_{H+2}\llbracket \hat{A}_N, \hat{B}_N \rrbracket = g \right) > 0,
\end{split}
\end{equation}
where $\vec{L}^N(s) = (L_1^N(s), \dots, L_{H+1}^N(s))$ for $s \in \llbracket \hat{A}_N, \hat{B}_N\rrbracket$, and $\Omega(\hat{A}_N, \hat{B}_N) = \sqcup_{x \leq y} \Omega(\hat{A}_N, \hat{B}_N,x,y)$ denotes the set of all increasing paths on $\llbracket \hat{A}_N, \hat{B}_N \rrbracket$ as in Definition \ref{DefDLE}. By Lemma \ref{GLELemma2}, for any $N \geq W_2$, $(\vec{x}, \vec{y}, g) \in G_N$, and any measurable $A_N \subseteq C(\llbracket 1, H \rrbracket \times [\hat{A}_N, \hat{B}_N])$, we have 
\begin{equation}\label{Eq.ApplyCoupling}
\left|\mathbb{P}_{\ice, \mathrm{Geom}}^{\hat{A}_N, \hat{B}_N, \vec{x}, \vec{y}, \infty, g} (\mathfrak{Q}^{\mathrm{bot}} \in A_N) - \mathbb{P}_{\ice, \mathrm{Geom}}^{\hat{A}_N, \hat{B}_N, \vec{u}, \vec{v}, \infty, g} (\mathfrak{Q} \in A_N) \right|  \leq \epsilon,
\end{equation}
where $u_i = x_{i+1}$, $v_i = y_{i+1}$, and in the first probability $\mathfrak{Q}^{\mathrm{bot}}(i,s) = \mathfrak{Q}(i+1, s)$ for $i \in \llbracket 1, H \rrbracket$ and $s \in \llbracket \hat{A}_N, \hat{B}_N\rrbracket$. We mention that to deduce (\ref{Eq.ApplyCoupling}) from (\ref{Eq.EqualEnsembles}), one shifts $\mathfrak{Q}$, $\tilde{\mathfrak{Q}}$ horizontally by $\hat{A}_N$ and vertically by $V_N$.  \\

Define the events
$$F_N = \left\{(\vec{L}^N(\hat{A}_N), \vec{L}^{N}(\hat{B}_N), L_{H+2}\llbracket \hat{A}_N, \hat{B}_N \rrbracket ) \in G_N \right\}.$$
Since $\mathfrak{L}^N$ satisfies the interlacing Gibbs property (by Lemma \ref{Lem.InterlacingGibbs}), and by the definitions of $\tilde{\mathfrak{L}}^{\mathrm{bot}, N}$, $\hat{\mathfrak{L}}^{\mathrm{bot}, N}$, we obtain for $N \geq W_2$
\begin{equation}\label{Eq.GibbsDecomp}
\begin{split}
&\left| \mathbb{P}\left(\{ \tilde{\mathfrak{L}}^{\mathrm{bot},N} \in A_N \} \cap F_N\right) - \mathbb{P}\left( \{ \hat{\mathfrak{L}}^{\mathrm{bot},N} \in A_N \} \cap F_N \right)\right|  \\
& \leq \sum_{(\vec{x}, \vec{y}, g) \in G_N} \left|\mathbb{P}_{\ice, \mathrm{Geom}}^{\hat{A}_N, \hat{B}_N, \vec{x}, \vec{y}, \infty, g} (\mathfrak{Q}^{\mathrm{bot}} \in A_N) - \mathbb{P}_{\ice, \mathrm{Geom}}^{\hat{A}_N, \hat{B}_N, \vec{u}, \vec{v}, \infty, g} (\mathfrak{Q} \in A_N) \right| \\
&\times  \mathbb{P}\left( \vec{L}^N(\hat{A}_N) = \vec{x}, \vec{L}^{N}(\hat{B}_N) = \vec{y}, L_{H+2}\llbracket \hat{A}_N, \hat{B}_N \rrbracket = g \right) \leq \epsilon, 
\end{split}
\end{equation}
where the last inequality follows from (\ref{Eq.ApplyCoupling}). From (\ref{Eq.NoBigMax}), (\ref{Eq.EndsWellBehaved}), and (\ref{Eq.TopIsHigh}), we also have
\begin{equation}\label{Eq.ProbOfGoodBound}
\mathbb{P}(F_N) \geq 1 - (2H+3) \epsilon.
\end{equation}
Combining (\ref{Eq.GibbsDecomp}) and (\ref{Eq.ProbOfGoodBound}), we conclude 
$$\limsup_{N \rightarrow \infty}  \left| \mathbb{P}\left( \tilde{\mathfrak{L}}^{\mathrm{bot},N} \in A_N \right) - \mathbb{P}\left(  \hat{\mathfrak{L}}^{\mathrm{bot},N} \in A_N \right)\right|  \leq (2H + 4) \epsilon.$$
As $\epsilon \in (0,1)$ was arbitrary, this proves (\ref{Eq.Proximity}).\\

{\bf \raggedleft Step 2.} In this step, we prove (\ref{Eq.NoBigMax}). We claim there exist constants $M^{\mathrm{bot}}, W_1 > 0$, depending on $\epsilon$, such that 
\begin{equation}\label{Eq.NoBigMax2}
\begin{aligned}
& \mathbb{P}\left(\max_{s \in \llbracket \hat{A}_N, 0 \rrbracket}\left(L_2^N(s)- V_N - \pq (s - \hat{A}_N)\right) >  M^{\mathrm{bot}} \tilde{n}^{1/2}\right)<\epsilon/2, \text { and } \\
&\mathbb{P}\left(\max_{s \in \llbracket 0, \hat{B}_N\rrbracket}\left(L_2^N(s) - V_N - \pq (s - \hat{A}_N)\right) > M^{\mathrm{bot}} \tilde{n}^{1/2} \right)<\epsilon/2.
\end{aligned}
\end{equation}
Since $L_2^N(s) \geq L^N_{H+2}(s)$, this implies (\ref{Eq.NoBigMax}). In the remainder we only establish the second line in (\ref{Eq.NoBigMax2}), the first one being handled analogously.\\

From (\ref{Eq.LClosetoX}) and Proposition \ref{Prop.FinitedimBulk}, we can find $R > 0$ large enough, depending on $\epsilon$, so that
\begin{equation}\label{eq:NoBigMaxE4}
\mathbb{P}(E(R))<\epsilon / 8 \text { and } \mathbb{P}(F(R))<\epsilon/8,
\end{equation}
where
$$E(R)=\left\{L_2^N(\hat{A}_N)- V_N   < -R (-\hat{A}_N)^{1/2} \right\}, \quad F(R)=\left\{L_2^N(0)- V_N + \pq \hat{A}_N \geq  R (-\hat{A}_N)^{1/2} \right\}.$$ 
Let $M > 0$ be as in Lemma \ref{GLELemma1} with $\epsilon$ equal to $\epsilon/8$ in the present setup and $p = \pq$, and fix any
\begin{equation}\label{Eq.TechIneq2}
M^{\mathrm{bot}} \geq 3R + 2M.
\end{equation}
Let $N_1$ be as in Lemma \ref{GLELemma1} for $\epsilon$ equal to $\epsilon/8$ in the present setup, $p = \pq$, $M^{\mathrm{side}}_1 = -R$ and $M^{\mathrm{side}}_2 = M^{\mathrm{bot}}$. We let $W_1 > 0$ be large, depending on $\epsilon, R$ and $M^{\mathrm{bot}}$, such that for $N \geq W_1$ we have $-\hat{A}_N \geq N_1$. The last paragraph specifies our choice of $M^{\mathrm{bot}}, W_1 $ for the remainder of the proof.\\

For $m \in \llbracket 0, \hat{B}_N \rrbracket$, we let $G_N^m$ denote the set of triplets $(\vec{x}, \vec{y}, g) \in \mathfrak{W}_{2} \times \mathfrak{W}_{2} \times \Omega(\hat{A}_N, m)$, such that 
\begin{equation}\label{Eq.NBMBoundary}
\begin{split}
&x_2 - V_N \geq  - R (-\hat{A}_N)^{1/2}, \hspace{2mm}  y_2 - V_N + \pq (m - \hat{A}_N) >  M^{\mathrm{bot}} \tilde{n}^{1/2} , \\
& \mathbb{P}\left(\vec{L}^N(\hat{A}_N) = \vec{x}, \vec{L}^N(m) = \vec{y}, L^N_{3}\llbracket \hat{A}_N, m \rrbracket = g \right) > 0,
\end{split}
\end{equation} 
where, similarly to Step 1, we have denoted $\vec{L}^N(s) = (L_1^N(s),L_{2}^N(s))$ for $s \in \llbracket \hat{A}_N, \hat{B}_N\rrbracket$, and by $\Omega(\hat{A}_N, m) = \sqcup_{x \leq y} \Omega(\hat{A}_N, m,x,y)$ the set of all increasing paths on $\llbracket \hat{A}_N, m \rrbracket$ as in Definition \ref{DefDLE}. In addition, for $m \in \llbracket 0, \hat{B}_N \rrbracket$ and $(\vec{x}, \vec{y}, g) \in G_N^m$, we define the events
$$F_{N}^m = \{L^N_2(s) -V_N - \pq (s - \hat{A}_N) \leq M^{\mathrm{bot}} \tilde{n}^{1/2} \mbox{ for } s \in \llbracket m+1, \hat{B}_N \rrbracket \},$$
$$E_N^m(\vec{x}, \vec{y}, g) = \{\vec{L}^N(\hat{A}_N) = \vec{x}, \vec{L}^N(m) = \vec{y}, L_{3}^N\llbracket \hat{A}_N, m \rrbracket = g  \}.$$
Note that $\{F_{N}^m \cap E_N^m(\vec{x}, \vec{y}, g): m \in \llbracket 0, \hat{B}_N \rrbracket,  (\vec{x}, \vec{y}, g) \in G_N^m \}$ are pairwise disjoint and also
\begin{equation}\label{Eq.Decomp1}
\begin{split}
&\mathbb{P} \left( \left\{ \max_{s \in \llbracket 0, \hat{B}_N\rrbracket}\left(L_2^N(s) - V_N - \pq (s - \hat{A}_N)\right) > M^{\mathrm{bot}} \tilde{n}^{1/2} \right\} \cap E(R)^c \cap F(R)^c \right) \\
&= \sum_{m \in \llbracket 0, \hat{B}_N \rrbracket} \sum_{(\vec{x}, \vec{y}, g) \in G_N^m} \mathbb{P}(F_N^m \cap E_N^m(\vec{x}, \vec{y}, g) \cap F(R)^c ).
\end{split}
\end{equation}

Using that $\mathfrak{L}^N$ satisfies the interlacing Gibbs property (by Lemma \ref{Lem.InterlacingGibbs}) and \cite[Lemma 2.11]{dimitrov2024tightness}, we have
\begin{equation*}
\begin{split}
&\mathbb{P}(F_N^m \cap E_N^m(\vec{x}, \vec{y}, g) \cap F(R)^c) \\
&= \mathbb{E}\left[\mathbf{1}_{F_N^m \cap E_N^m(\vec{x}, \vec{y}, g) } \cdot \mathbb{P}_{\ice,\mathrm{Geom} }^{\hat{A}_N, m, \vec{x}, \vec{y}, \infty, g}\left( Q_2(0) - V_N + \pq \hat{A}_N < R (-\hat{A}_N)^{1/2}\right)\right]
\end{split}
\end{equation*}
From Lemma \ref{GLELemma1}, with parameters as above, we have for $(\vec{x}, \vec{y}, g) \in G_N^m$ that
\begin{equation*}
\begin{split}
\mathbb{P}_{\ice,\mathrm{Geom} }^{\hat{A}_N, m, \vec{x}, \vec{y}, \infty, g}\left( Q_2(0) - V_N + \pq \hat{A}_N < (-\hat{A}_N + m)^{1/2} \cdot [(1-u_{N,m}) M^{\mathrm{side}}_1  + u_{N,m} M^{\mathrm{side}}_2 - M ]\right) < \epsilon/8,
\end{split}
\end{equation*}
where $u_{N,m} = -\hat{A}_N/(-\hat{A}_N + m)$. We mention that to obtain the last inequality from (\ref{Eq.GLELemma1}), one needs to set $n = -\hat{A}_N + m$, translate horizontally by $\hat{A}_N$ and vertically by $V_N$. Notice that by construction we have for $m \in \llbracket 0, \hat{B}_N \rrbracket$ that $u_{N,m} = -\hat{A}_N/(-\hat{A}_N + m) \in [1/2,1]$ and so
\begin{equation*}
\begin{split}
&(-\hat{A}_N + m)^{1/2} \cdot [(1-u_{N,m}) M^{\mathrm{side}}_1  + u_{N,m} M^{\mathrm{side}}_2 - M ] \geq  R(-\hat{A}_N)^{1/2} ,
\end{split}
\end{equation*}
where we used $M^{\mathrm{side}}_1 = - R$, and $M^{\mathrm{side}}_2 = M^{\mathrm{bot}} \geq 3R + 2M$ from (\ref{Eq.TechIneq2}). Combining the last three displayed equations, we obtain 
\begin{equation}\label{Eq.UBF1}
\mathbb{P}(F_N^m \cap E_N^m(\vec{x}, \vec{y}, g) \cap F(R)^c) < (\epsilon/8) \cdot \mathbb{P}(F_N^m \cap E_N^m(\vec{x}, \vec{y}, g)).
\end{equation}

Combining (\ref{eq:NoBigMaxE4}), (\ref{Eq.Decomp1}) and (\ref{Eq.UBF1}) gives for $N \geq W_1$
\begin{equation*}
\begin{split}
&\mathbb{P} \left( \max_{s \in \llbracket 0, \hat{B}_N\rrbracket}\left(L_2^N(s) - V_N - \pq (s - \hat{A}_N)\right) > M^{\mathrm{bot}} \tilde{n}^{1/2} \right) \leq \mathbb{P}(F(R)) + \mathbb{P}(E(R)) \\
&+ \sum_{m \in \llbracket 0, \hat{B}_N \rrbracket} \sum_{(\vec{x}, \vec{y}, g) \in G_N^m}  (\epsilon/8) \cdot \mathbb{P}(F_N^m \cap E_N^m(\vec{x}, \vec{y}, g)) \leq 3\epsilon/8.
\end{split}
\end{equation*}
The last inequality implies the second line in (\ref{Eq.NoBigMax2}).
\end{proof}

%
\subsection{Proof of Proposition \ref{Prop.ConvBottom}}\label{Section8.4} We adopt the same notation as in the statement of the proposition. From (\ref{Eq.LClosetoX}) and Proposition \ref{Prop.FinitedimBulk}, we deduce that for any $m \in \mathbb{N}$, $i_1, \dots, i_m \in \mathbb{N}$, and $t_1 < t_2 < \cdots < t_m$, we have
\begin{equation}\label{Eq.FDLEBulk}
\left(\mathcal{L}^{\mathrm{bot}, N}_{i_1}(\lfloor t_1 N^{2/3} \rfloor ), \dots, \mathcal{L}^{\mathrm{bot}, N}_{i_m}(\lfloor t_m N^{2/3} \rfloor ) \right) \overset{f.d.}{\rightarrow} \left(\mathcal{L}^{\infty}_{i_1}(t_1), \dots, \mathcal{L}^{\infty}_{i_m}(t_m) \right),
\end{equation}
where we also used that $\sigmaq\zc [\pq(1+\pq)]^{-1/2} = (2\fq)^{-1/2}$, which follows directly from (\ref{Eq.ParBulk1}). We now claim that 
\begin{equation}\label{Eq.BulkIsTight}
\mbox{ the sequence $\mathcal{L}^{\mathrm{bot},N}$ is tight.}
\end{equation} 
Note that (\ref{Eq.FDLEBulk}) implies all subsequential limits of $\mathcal{L}^{\mathrm{bot},N}$ have the same finite-dimensional distribution as $\mathcal{L}^{\infty}$. As finite-dimensional sets form a separating class (see \cite[Lemma 3.1]{DM21}), $\mathcal{L}^{\infty}$ is the only possible subsequential limit of $\mathcal{L}^{\mathrm{bot},N}$. Together with (\ref{Eq.BulkIsTight}), this yields the proposition. Thus, it remains to establish (\ref{Eq.BulkIsTight}).\\

In view of (\ref{Eq.FDLEBulk}) and the ``if '' part of \cite[Lemma 2.4]{DEA21}, to prove (\ref{Eq.BulkIsTight}), it suffices to show that for each $a < b$, $\epsilon > 0$ and $k \geq 1$, we have
\begin{equation}\label{Eq.BulkIsTight2}
\lim_{\delta \rightarrow 0+ }\limsup_{N \rightarrow \infty} \mathbb{P} \left( \sup_{x,y \in [a,b], |x-y| \leq \delta} \left| \mathcal{L}^{\mathrm{bot},N}_k(x)- \mathcal{L}^{\mathrm{bot},N}_k(y) \right| \geq \epsilon  \right) = 0.
\end{equation} 
Fix $M \geq k+1$ and $d > 0$ large enough so that $[a,b] \subset (-d,d)$, and let $\hat{\mathfrak{L}}^{\mathrm{bot}, N}$ be as in Definition \ref{Def.AuxLE2}. Extend $\hat{\mathfrak{L}}^{\mathrm{bot}, N}$ to a geometric line ensemble on $\mathbb{Z}$ by setting, for $i \in \llbracket 1, M \rrbracket$,
$$\hat{\mathfrak{L}}^{\mathrm{bot}, N}(i,s) = \hat{\mathfrak{L}}^{\mathrm{bot}, N}(i,\hat{B}_N) \mbox{ for } s \geq \hat{B}_N + 1, \mbox{ and } \hat{\mathfrak{L}}^{\mathrm{bot}, N}(i,s) = \hat{\mathfrak{L}}^{\mathrm{bot}, N}(i,\hat{A}_N) \mbox{ for } s \leq \hat{A}_N - 1.$$
We next verify that $\hat{\mathfrak{L}}^{\mathrm{bot}, N}$ satisfies the conditions of \cite[Theorem 5.1]{dimitrov2024tightness} with $K = M-1$, $K_N = M$, $d_N = N^{2/3}$, $p = \pq$, $\alpha = -d$, $\beta = d$, and $\hat{A}_N, \hat{B}_N$ as in the present setup. Indeed, one readily observes that 
$$d_N \rightarrow \infty, \hspace{2mm} \hat{A}_N/d_N \rightarrow \alpha, \hspace{2mm} \hat{B}_N/d_N \rightarrow \beta, \hspace{2mm} K_N \rightarrow K+ 1 \mbox{ as $N \rightarrow \infty$},$$
verifying the first point in \cite[Theorem 5.1]{dimitrov2024tightness}. To verify the second point, we seek to show that 
$$[\pq (1 + \pq)]^{-1/2} N^{-1/3} \cdot \left(  \hat{L}^{\mathrm{bot}, N}_i(\lfloor t N^{2/3} \rfloor) - \pq t N^{2/3} \right),$$
are tight for each $t \in (-d,d)$ and $i \in \llbracket 1, M \rrbracket$. However, from (\ref{Eq.FDLEBulk}) and Proposition \ref{Prop.Proximity}, the latter variables converge weakly to $\mathcal{L}^{\infty}_i(t)$, and are hence tight. Lastly, as mentioned in Remark \ref{Rem.AuxLE}, we have that $\hat{\mathfrak{L}}^{\mathrm{bot}, N}$ satisfies the interlacing Gibbs property as a $\llbracket 1, M \rrbracket$-indexed geometric line ensemble on $\llbracket \hat{A}_N, \hat{B}_N\rrbracket$, verifying the third point in \cite[Theorem 5.1]{dimitrov2024tightness}.

From \cite[Theorem 5.1]{dimitrov2024tightness}, we conclude that $\hat{\mathcal{L}}^{\mathrm{bot}, N} = \{\hat{\mathcal{L}}^{\mathrm{bot}, N}_{i}\}_{i = 1}^{M-1} \in C(\llbracket 1, M-1 \rrbracket \times (-d,d))$, defined through 
$$\hat{\mathcal{L}}^{\mathrm{bot}, N}_i(t) = [\pq (1 + \pq)]^{-1/2} N^{-1/3} (\hat{L}^{\mathrm{bot},N}_i(tN^{2/3}) - \pq t N^{2/3}) \mbox{ for } i \in \llbracket 1, M-1 \rrbracket, t \in (-d,d),$$
is tight. Applying the ``only if'' part of \cite[Lemma 2.4]{DEA21} and Proposition \ref{Prop.Proximity} gives (\ref{Eq.BulkIsTight2}).

\subsection*{Acknowledgments} The authors would like to thank the American Institute of Mathematics and the organizers, Leonid Petrov and Axel Saenz Rodriguez, of the AIM Workshop \emph{All roads to the KPZ universality class}, where part of this project was carried out. We are also grateful to Vincent Zhang for writing the code used to generate some of the figures in the text. ED was partially supported by the Simons Award TSM-00014004 by the Simons Foundation International.

\bibliographystyle{alpha}
\bibliography{PD}

\end{document}